\newfont{\Fr}{eufm10}
\newfont{\Sc}{eusm10}
\newfont{\Bb}{msbm10}
\newfont{\Am}{msam10}
\newfont{\am}{msam7}
\numberwithin{equation}{section}
\newtheorem{theorem}{Theorem}[section]
\newtheorem{proposition}[theorem]{Proposition}
\newtheorem{lemma}[theorem]{Lemma}
\newtheorem{corollary}[theorem]{Corollary}
\newtheorem{claim}[theorem]{Claim}
\newtheorem{ftheorem}{Theorem}{\bf}{\it}
{\bf}{\it}
\theoremstyle{definition}
\newtheorem{definition}[theorem]{Definition}
\newtheorem{fdefinition}[ftheorem]{Definition}{\bf}{\rm}
\theoremstyle{remark}
\newtheorem{remark}[theorem]{Remark}
\newtheorem{fexample}[ftheorem]{Example}{\it}{\rm}
\newcommand{\Hom}{\mbox{\rm Hom}}
\newcommand{\MID}{\! \! \mid}
\newcommand{\h}{\mathfrac{\h}}
\title{Deformations of nilpotent cones and Springer correspondences}
\author{Syu \textsc{Kato} \footnote{Research Institute for Mathematical Sciences, Kyoto University, Oiwake Kita-Shirakawa Sakyo Kyoto 606-8502, Japan. \tt{E-mail:kato@kurims.kyoto-u.ac.jp}} \footnote{Current Address: Department of Mathematics, Kyoto University, Oiwake Kita-Shirakawa Sakyo Kyoto 606-8502, Japan. \tt{E-mail:syuchan@math.kyoto-u.ac.jp}} \footnote{The author was partially supported by JSPS Grant-in-Aid for Young Scientists (B) 20-740011 during this research.}}
\begin{document}
\maketitle

\begin{center}
{\small Dedicated to the long-standing friendship of Ken-ichi Shinoda and Toshiaki Shoji on the occasion of their 60th birthdays}
\end{center}

\begin{abstract}
Let $G = \mathop{Sp} ( 2n )$ be the symplectic group over $\mathbb Z$. We present a certain kind of deformation of the nilpotent cone of $G$ with $G$-action. This enables us to make direct links between the Springer correspondence of $\mathfrak{sp} _{2n}$ over $\mathbb C$, that over characteristic two, and our exotic Springer correspondence. As a by-product, we obtain a complete description of our exotic Springer correspondence.
\end{abstract}

\section*{Introduction}
Let $G = \mathop{Sp} ( 2n )$ be the symplectic group over $\mathbb Z$. Let $\Bbbk$ be an algebraically closed field. Let $\mathfrak g$ be the Lie algebra of $G$ defined over $\mathbb Z$. Let $\mathcal N$ denote the subscheme of nilpotent elements of $\mathfrak g$. Let $G _{\Bbbk}$, $\mathfrak g _{\Bbbk}$, and $\mathcal N _{\Bbbk}$ denote the specializations of $G$, $\mathfrak g$, and $\mathcal N$ to $\Bbbk$, respectively. 

Springer \cite{Spr} defines a correspondence between the set of $G _{\Bbbk}$-orbits in $\mathcal N _{\Bbbk}$ and a certain set of Weyl group representations (with a basis) when $\mathsf{char} \Bbbk$ is good (i.e. not equal to $2$). This correspondence, together with the so-called ``$A$-group data", lifts to a one-to-one correspondence.

This story is later deepened in two ways. One is Lusztig's generalized Springer correspondence \cite{L1}, which serves as a basis of his theories on Chevalley groups. The other is Joseph's realization \cite{J}, which serves a model of the structure of the primitive spectrum of the enveloping algebra of $\mathfrak g _{\mathbb C}$.

In our previous paper \cite{K1}, we found that a certain Hilbert nilcone $\mathfrak N$ gives a variant of one aspect of the above mentioned Lusztig's theory (c.f. \cite{KL2} and \cite{L2}). Quite unexpectedly, our correspondence gives a one-to-one correspondence without the ``$A$-group data", which is needed in the original Springer correspondence for Weyl groups of type $C$. Therefore, it seems natural to seek some meaning of $\mathfrak N$.

The main theme of this paper is to give one explanation of $\mathfrak N$. Roughly speaking, our conclusion is that $\mathfrak N$ is a model of $\mathcal N _{\mathbb F _2}$ over $\mathbb Z$, which is ``better" than $\mathcal N$ in a certain sense.

To see what we mean by this, we need a more precise formulation: Let $T$ be a maximal torus of $G$. We define the Weyl group of $(G, T)$ as $W := N _G ( T ) / T$. We denote the set of isomorphism classes of irreducible representations of $W$ by $W ^{\vee}$. Let $V _1$ be the vector representation. Put $V _2 := \wedge ^2 V _1$. We denote $V _1 \oplus V _2$ by $\mathbb V$. Let $\epsilon _1, \ldots, \epsilon _n$ be the standard choice of $T$-weight basis of $G$ (see eg. Bourbaki \cite{B}). We denote the ``positive part" of $\mathfrak g$ and $\mathbb V$ by $\mathfrak n$ and $\mathbb V ^+$, respectively (c.f. \S \ref{NT}). Let $\mathfrak N$ be the Hilbert nilcone of $( G, \mathbb V )$ over $\mathbb Z$. We have a natural map
$$\nu : G \times ^B \mathbb V ^+ \longrightarrow \mathfrak N,$$
which we regard as a counter-part of the Springer resolution.

\begin{ftheorem}\label{fnormal}
The variety $\mathfrak N$ is normal and flat over $\mathbb Z$. Moreover, the number of $G _{\Bbbk}$-orbits of $\mathfrak N _{\Bbbk}$ is independent of the characteristic of $\Bbbk$.
\end{ftheorem}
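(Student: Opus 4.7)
The plan is to exploit the collapsing map $\nu$ as a Springer-type resolution of $\mathfrak N$, using it both to establish $\Z$-flatness via a fiberwise dimension count and to derive normality from the smoothness of the source, and then to verify the orbit-count claim by an explicit combinatorial classification in each characteristic. The elementary geometric properties of $\nu$ come first: the total space $G \times ^B \mathbb V ^+$ is a $G$-equivariant vector bundle over $G / B$, hence smooth and $\Z$-flat, and $\nu$ is proper because $G / B$ is projective over $\Z$. The image lies in $\mathfrak N$ by the Hilbert--Mumford criterion applied to a dominant one-parameter subgroup acting on $\mathbb V ^+$, while surjectivity of $\nu _{\Bbbk}$ onto $\mathfrak N _{\Bbbk}$ for every algebraically closed $\Bbbk$ is the complementary statement that every null-cone point is $G _{\Bbbk}$-conjugate into $\mathbb V ^+$.

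Next I would show that $\nu _{\Bbbk}$ is birational for every $\Bbbk$ by exhibiting a dense open $G _{\Bbbk}$-orbit in $\mathfrak N _{\Bbbk}$ on which $\nu _{\Bbbk}$ restricts to an isomorphism. This gives the fiberwise formula $\dim \mathfrak N _{\Bbbk} = \dim G / B + \dim \mathbb V ^+$, manifestly independent of $\Bbbk$; equidimensionality over $\Spec \Z$ combined with reducedness of each special fiber (inherited from the smooth source along the birational $\nu _{\Bbbk}$) then implies flatness. For normality the key is the pushforward identity $\nu _{*} \mathcal{O} _{G \times ^B \mathbb V ^+} = \mathcal{O} _{\mathfrak N}$, which combined with smoothness of the source and birationality of $\nu$ forces $\mathfrak N$ to be normal by Zariski's main theorem. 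In characteristic $0$ this identity follows from Kempf's collapsing theorem or from Grauert--Riemenschneider; in positive characteristic I would instead construct a Frobenius splitting of $G \times ^B \mathbb V ^+$ compatible with both the zero section and $\nu$, built by twisting the Mehta--Ramanathan splitting of $G / B$ by a suitable $B$-representation.

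For the orbit-count claim, I would produce a parametrisation of $G _{\Bbbk}$-orbits on $\mathfrak N _{\Bbbk}$ by a combinatorial set independent of $\mathsf{char}\, \Bbbk$. To a null-cone point $(v, f) \in V _1 \oplus V _2$ one attaches Jordan-type data for the nilpotent operator encoded by $f$ (via the symplectic form) together with a distinguished-vector refinement read off from $v$; with careful handling over characteristic $2$, where the $G$-module structure of $V _2$ degenerates, the resulting parameter sets should match in cardinality across all $\Bbbk$. The main obstacle in the entire proof is expected to be the pushforward identity in characteristic $2$, since the standard Frobenius-splitting constructions must accommodate the non-semisimple behaviour of $V _2 = \wedge ^2 V _1$ there; should a direct splitting resist, the natural fallback---aligned with the paper's theme of deformations---is to construct a flat $\Z _{(2)}$-family interpolating between the nilpotent cone of $\mathfrak{sp} _{2n}$ in good characteristic and $\mathfrak N _{\overline{\mathbb F _2}}$, and to transport normality along the deformation.
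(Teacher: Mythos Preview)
Your resolution-based strategy is a genuinely different route from the paper's, and it runs into difficulties that the paper's more algebraic approach sidesteps.

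For normality and flatness, the paper does not rely on $\nu_* \mathcal O = \mathcal O_{\mathfrak N}$ in positive characteristic. Instead it produces explicit defining equations: writing $V_2 \cong \mathrm{Alt}(2n)$, the Pfaffian expansion $\mathsf{Pf}(t\mathbf J - X) = \sum t^{n-i} P_i(X)$ yields generators $P_1,\ldots,P_n$ of the invariant ideal. A Pittie--Steinberg argument shows $A[\mathbb V]$ is free over $A[\mathbb V/\mathbb V[0]] \otimes_A A[\mathbb V]^G$, so $A[\mathfrak N]$ is free over the polynomial ring $A[\mathbb V/\mathbb V[0]]$; this gives Cohen--Macaulayness and flatness at once. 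Regularity in codimension one is checked by verifying that $dP_1,\ldots,dP_n$ are independent off a codimension-two locus (reducing via a Levi to the regular nilpotent orbit of $\mathop{GL}(n)$), and Serre's criterion finishes normality. The map $\nu$ enters only afterwards, to identify $\mathrm{Im}\,\nu$ with $\mathfrak N$.

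Your proposal has two concrete gaps. First, birationality of $\nu_{\Bbbk}$ onto the set-theoretic image does not yield reducedness of the \emph{scheme} $\mathfrak N_{\Bbbk}$, so the ``equidimensional fibers plus reduced fibers'' flatness argument is incomplete; you still need $\mathfrak N$ integral, which is exactly what the paper extracts from the explicit equations. Second, the Frobenius-splitting route to the pushforward identity in characteristic $2$ is genuinely problematic for $\mathop{Sp}(2n)$ since $2$ is the bad prime, and your deformation fallback amounts to building the family the paper constructs in \S4---which in turn \emph{uses} the normality of $\mathfrak N_{\Bbbk}$ already established by the equation-based argument.

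For the orbit count, the paper also does not attempt a direct classification on $\mathfrak N_{\Bbbk}$ in characteristic $2$. It instead uses the $G_{\Bbbk}$-equivariant finite map $\mathsf m : \mathfrak N_{\Bbbk} \to \mathcal N_{\Bbbk}$ arising from $(X_1,X_2)\mapsto \mathsf{Sym}^2 X_1 + X_2$ (a bijection on $\Bbbk$-points) to transport the problem to the ordinary nilpotent cone, where Spaltenstein has already counted the orbits as bi-partitions of $n$; this matches the known count for $\mathfrak N_{\mathbb K}$.
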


\begin{ftheorem}\label{ffam}
Let $\Bbbk = \overline{\mathbb F} _2$. There exists a $G _{\Bbbk}$-equivariant flat family $\pi : \mathcal N _S \longrightarrow \mathbb A ^1 _{\Bbbk}$ with the following properties:
\begin{enumerate}
\item We have $\pi ^{-1} ( t ) \cong \mathcal N _{\Bbbk}$ for $t \neq 0$;
\item There exists an isogeny $\mathsf{F} _1 : \mathfrak N _{\Bbbk} \longrightarrow \pi ^{-1} ( 0 )$, which is an endomorphism as varieties.
\end{enumerate}
Moreover, for a $G _{\Bbbk}$-orbit $\mathcal O _{\Bbbk} \subset \mathfrak N _{\Bbbk}$, there exists a flat subfamily of single $G _{\Bbbk}$-orbits $\mathcal O _S \subset \mathcal N _S$ such that $\mathcal O _S \cap \pi ^{-1} ( 0 ) = \mathsf{F} _1 ( \mathcal O _{\Bbbk} )$.
\end{ftheorem}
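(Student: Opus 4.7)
The plan is to realise $\mathcal N_S$ as a Rees-type deformation of $\mathcal N_\Bbbk \subset \mathfrak g_\Bbbk$ along a $G_\Bbbk$-stable subspace $V_2 \subset \mathfrak g_\Bbbk$ that is available only in characteristic two, whose associated graded is isomorphic to $\mathbb V_\Bbbk$ as a $\Bbbk$-scheme up to a Frobenius twist on the $V_1$-factor; the isogeny $\mathsf F_1$ will then encode exactly this twist.

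First I would record the two characteristic-two ingredients. The symplectic form on $V_1$ is simultaneously symmetric, giving the $G_\Bbbk$-module isomorphism $\mathfrak g_\Bbbk\cong S^2 V_1$, a $G_\Bbbk$-submodule inclusion $\iota:V_2=\wedge^2V_1\hookrightarrow \mathfrak g_\Bbbk$, and a quotient $\mathfrak g_\Bbbk/\iota(V_2)\cong V_1^{(1)}$, the Frobenius twist of $V_1$. Since $V_1^{(1)}$ is just $V_1$ as a $\Bbbk$-scheme (the Frobenius twist only alters the $G_\Bbbk$-action), the associated graded $\mathrm{gr}\,\mathfrak g_\Bbbk=V_2\oplus V_1^{(1)}$ is isomorphic to $\mathbb V_\Bbbk$ as a $\Bbbk$-scheme, and this identification becomes $G_\Bbbk$-equivariant once one composes with the Frobenius $F:V_1\to V_1^{(1)}$ in the $V_1$-direction.

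Next I would take $\pi:\mathcal N_S\to \mathbb A^1_\Bbbk$ to be the $G_\Bbbk$-equivariant Rees deformation (equivalently, the deformation to the normal cone) of the closed subscheme $\mathcal N_\Bbbk\subset \mathfrak g_\Bbbk$ with respect to the subscheme $\iota(V_2)$. Standard Rees-algebra generalities give that $\pi$ is flat and $G_\Bbbk$-equivariant with $\pi^{-1}(t)\cong\mathcal N_\Bbbk$ for $t\neq 0$ and $\pi^{-1}(0)=\mathrm{in}\,\mathcal N_\Bbbk\subset \mathrm{gr}\,\mathfrak g_\Bbbk\cong\mathbb V_\Bbbk$. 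The isogeny $\mathsf F_1:\mathfrak N_\Bbbk\to\pi^{-1}(0)$ is then the morphism of $\Bbbk$-varieties induced by the Frobenius $F:V_1\to V_1^{(1)}$ on the $V_1$-factor (identity on the $V_2$-factor), restricted to $\mathfrak N_\Bbbk$. It is finite purely inseparable, hence an isogeny, and it is an endomorphism of the underlying $\Bbbk$-variety $\mathfrak N_\Bbbk=\pi^{-1}(0)$. For the orbit claim, applying the same Rees construction to the $G_\Bbbk$-orbit on $\mathcal N_\Bbbk$ corresponding to $\mathcal O_\Bbbk$ under $\mathsf F_1$ yields $\mathcal O_S\subset\mathcal N_S$ with $\mathcal O_S\cap\pi^{-1}(0)=\mathsf F_1(\mathcal O_\Bbbk)$ by naturality of the Rees construction.

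The main obstacle is the identification $\pi^{-1}(0)=\mathsf F_1(\mathfrak N_\Bbbk)$ as subschemes of $\mathbb V_\Bbbk$, i.e.\ the equality of ideals $\mathrm{in}\,I(\mathcal N_\Bbbk)=I(\mathsf F_1(\mathfrak N_\Bbbk))$ inside $\Bbbk[\mathbb V_\Bbbk]$. This reduces to a comparison between the initial forms of the fundamental $G_\Bbbk$-invariants of $\mathfrak g_\Bbbk$ (with respect to the filtration $F^\bullet$ dual to $\iota$) and the fundamental $G_\Bbbk$-invariants of $\mathbb V_\Bbbk$ from \cite{K1}; Theorem \ref{fnormal} is the essential input here, since its normality assertion forces the numbers, bidegrees, and $G_\Bbbk$-isotypic components of the generators to match on both sides (after the Frobenius correction in the $V_1$-direction), reducing the identification to an explicit Hilbert-series calculation. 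Flatness of $\pi$ is automatic from the Rees construction; alternatively it follows from the equality of fibre dimensions $\dim\pi^{-1}(t)=2n^2$ (by Theorem \ref{fnormal} at $t=0$ and classical symplectic nilpotent-cone dimension for $t\neq 0$) over the one-dimensional regular base $\mathbb A^1_\Bbbk$.
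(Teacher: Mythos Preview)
Your overall strategy---deform $\mathfrak g_\Bbbk$ along the two-step $G_\Bbbk$-filtration $(V_2)_\Bbbk\subset\mathfrak g_\Bbbk$ with quotient $(V_1)_\Bbbk^{[1]}$, and take $\mathsf F_1$ to be the partial Frobenius on the $V_1$-factor---is exactly the one the paper uses. The substantive difference lies in how $\mathcal N_S$ is cut out of the ambient family $\mathcal V\to\mathbb A^1_\Bbbk$, and this difference is where your argument breaks.

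The paper does \emph{not} take the Rees degeneration of $\mathcal N_\Bbbk$. It takes the $B_\Bbbk$-stable subfamily $\mathcal V^+\subset\mathcal V$ of positive parts and defines $\mathcal N_S:=(G_\Bbbk\times\mathbb A^1_\Bbbk)\cdot\mathcal V^+$, the image of the relative Springer-type map from $\mathcal F=(G_\Bbbk\times\mathbb A^1_\Bbbk)\times^{B_\Bbbk\times\mathbb A^1_\Bbbk}\mathcal V^+$. With this definition the fibre identifications are immediate: over $t\neq0$ one gets $G_\Bbbk\cdot\mathfrak n_\Bbbk=\mathcal N_\Bbbk$ by Hesselink's theorem, and over $t=0$ one gets $G_\Bbbk\cdot\mathsf F_1(\mathbb V^+_\Bbbk)=\mathsf F_1(G_\Bbbk\cdot\mathbb V^+_\Bbbk)=\mathsf F_1(\mathfrak N_\Bbbk)$ by Corollary~\ref{satk}. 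Flatness follows since $\mathcal F$ is smooth over $\mathbb A^1_\Bbbk$. The orbit statement is then proved (Theorem~\ref{pcflatex}) via the explicit $G_\Bbbk$-equivariant finite surjection $\mathsf{ml}:\mathbb A^1_\Bbbk\times\mathfrak N_\Bbbk\to\mathcal N_S$ induced by $(X_1,X_2)\mapsto\mathrm{Sym}^2X_1+X_2$, which is a bijection on $\Bbbk$-points over each fibre; one transports $\mathcal O_\Bbbk$ along $\mathsf{ml}$ and takes the closure.

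Your Rees route has a genuine gap at precisely the point you flag as the ``main obstacle'': the equality $\mathrm{in}\,I(\mathcal N_\Bbbk)=I(\mathsf F_1(\mathfrak N_\Bbbk))$. The sketch you give (``normality from Theorem~\ref{fnormal} forces the numbers, bidegrees, and isotypic components of generators to match, reducing to a Hilbert-series calculation'') is not a proof. Normality of $\mathfrak N$ tells you nothing about whether the initial forms of a generating set for $I(\mathcal N_\Bbbk)$ generate the initial ideal; in general they do not, and the special fibre of a Rees family can acquire embedded or non-reduced structure even when the generic fibre and the putative limit are both normal of the same dimension. A Hilbert-series match does not exclude this. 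You would at minimum need an honest containment $\mathsf F_1(\mathfrak N_\Bbbk)\subset\pi^{-1}(0)$ at the scheme level, and that is exactly what the paper's Springer-image definition supplies for free via Corollary~\ref{satk}. Your orbit argument is also circular: you invoke ``the $G_\Bbbk$-orbit on $\mathcal N_\Bbbk$ corresponding to $\mathcal O_\Bbbk$ under $\mathsf F_1$'' before any such bijection has been established, but that bijection of orbit sets is itself part of the content of the theorem (and is extracted in the paper from the map $\mathsf{ml}$, not assumed in advance).
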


Theorem \ref{fnormal} claims that our variety $\mathfrak N$ behaves well with respect to the specializations. Theorem \ref{ffam} claims that we can regard $\mathfrak N$ as a model of $\mathcal N _{\Bbbk}$ in a certain sense.

To illustrate these, let us describe the orbit correspondence of Theorem \ref{ffam}, together with the corresponding Springer correspondences:

\begin{fexample}[The orbit correspondence for $n = 2$]\label{eo2}
Let $\mathfrak t _{\mathbb C}$ be a Cartan subalgebra of $\mathfrak g _{\mathbb C}$. Let $R = \{ \pm \epsilon _i \pm \epsilon _j \} _{1\le i,j\le 2} \backslash \{ 0 \} \subset \mathfrak t ^* _{\mathbb C}$ be the set of roots of $\mathfrak g _{\mathbb C}$. We choose its positive simple roots as $\alpha _1 = \epsilon _1 - \epsilon _2$ and $\alpha _2 = 2 \epsilon _2$. Let $\mathbf x [ \lambda ] \in \mathfrak g$ and $\mathbf v [ \lambda ] \in \mathbb V$ be $T$-eigenvectors with $T$-weight $\lambda$, respectively. We refer the Springer correspondence of $\mathcal N$ by ordinary and that of $\mathfrak N$ by exotic. Then, representatives of $G _{\Bbbk}$-orbits of $\mathcal N _{\Bbbk}$ and $\mathfrak N _{\Bbbk}$ corresponding to each member of $W ^{\vee}$ are:
\begin{center}
\begin{tabular}{cc|c|c|c}
$W ^{\vee}$ & dim. & ordinary ($\mathsf{char} \Bbbk \neq 2$) & ordinary ($\mathsf{char} \Bbbk = 2$) & exotic \\ \hline
sign & 1 & 0 & 0 & 0 \\
Ssign & 1 & $\mathbf x [ 2 \epsilon _1 ]$ & $\mathbf x [ 2 \epsilon _1 ]$ & $\mathbf v [ \epsilon _1 ]$ \\
Lsign & 1 & $\mathbf x [ \alpha _1 ]$ & $\mathbf x [ \alpha _1 ]$ & $\mathbf v [ \alpha _1 ]$ \\
regular & 2 & $\mathbf x [ \alpha _1 ]$& $\mathbf x [ \alpha _1 ] + \mathbf x [ 2 \epsilon _1 ]$ & $\mathbf v [ \alpha _1 ] + \mathbf v [ \epsilon _1 ]$ \\
triv & 1 & $\mathbf x [ \alpha _1 ] + \mathbf x [ \alpha _2 ]$ & $\mathbf x [ \alpha _1 ] + \mathbf x [ \alpha _2 ]$ & $\mathbf v [ \alpha _1 ] + \mathbf v [ \epsilon _2 ]$ \\\hline
\end{tabular}
\end{center}
\end{fexample}

Theorem \ref{ffam} gives an isogeny between the Springer fibers of $\mathcal N _{\Bbbk}$ and $\mathfrak N _{\Bbbk}$ when $\mathsf{char} \Bbbk = 2$. This implies that the Springer correspondences associated to $\mathcal N _{\Bbbk}$ and $\mathfrak N _{\Bbbk}$ must coincide up to scalar multiplication of their basis.

To see this phenomenon more closely, we employ the Joseph model of the Springer representations. Following Joseph \cite{J}, we define the orbital variety attached to a $G _{\mathbb C}$-orbit $\mathbb O _{\mathbb C} \subset \mathcal N _{\mathbb C}$ as an irreducible component of the intersection $\mathbb O _{\mathbb C} \cap \mathfrak n _{\mathbb C}$. Let us denote the set of orbital varieties attached to $\mathbb O _{\mathbb C}$ by $\mathrm{Comp} ( \mathbb O _{\mathbb C} )$. Similarly, let $\mathcal O _{\mathbb C} \subset \mathfrak N _{\mathbb C}$ be a $G _{\mathbb C}$-orbit and let $\mathrm{Comp} ( \mathcal O _{\mathbb C} )$ be the set of irreducible components of $\mathcal O _{\mathbb C} \cap \mathbb V ^+ _{\mathbb C}$. We also call a member of $\mathrm{Comp} ( \mathcal O _{\mathbb C} )$ an orbital variety (attached to $\mathcal O _{\mathbb C}$).

Joseph found that the leading terms of $T$-equivariant Hilbert polynomials of $\mathrm{Comp} ( \mathbb O _{\mathbb C} )$ yield an irreducible $W$-module which is contained in the Springer representation attached to $\mathbb O _{\mathbb C}$. These polynomials are usually called the Joseph polynomials. In representation-theoretic context (originally pursued by Joseph), $\mathrm{Comp} ( \mathcal O _{\mathbb C} )$ is precisely the set of Lagrangian subvarieties of $\mathcal O _{\mathbb C}$ corresponding to highest weight modules of $\mathfrak g _{\mathbb C}$ and these polynomials build up ``asymptotic Goldie ranks" of primitive ideals of $U ( \mathfrak g _{\mathbb C} )$ (see eg. \cite{Jo}).

In view of Hotta \cite{Hot} (see also Joseph \cite{J2} or Chriss-Ginzburg \cite{CG}), it is straightforward to see that Joseph's construction extends to the case of our exotic Springer correspondence. In particular, we have the notion of Joseph polynomials attached to each orbit of $\mathfrak N$.

\begin{ftheorem}\label{fJoscomp}
Let $\Bbbk = \overline{\mathbb F} _2$. Let $\mathbb O _{\mathbb C}$ be a $G _{\mathbb C}$-orbit of $\mathcal N _{\mathbb C}$. Then, there exists $G$-stable locally closed subsets $\mathbb O \subset \mathcal N$ and $\mathcal O \subset \mathfrak N$ such that
\begin{enumerate}
\item We have $\mathbb O _{\mathbb C} = \mathbb O \otimes \mathbb C$;
\item The variety $\mathcal O \otimes \Bbbk$ is a single $G _{\Bbbk}$-orbit which corresponds to a unique dense open $G _{\Bbbk}$-orbit of $\mathbb O \otimes \Bbbk$;
\item The Joseph polynomials of $\mathcal O _{\mathbb C}$ and that of $\mathbb O _{\mathbb C}$ are equal up to scalar.
\end{enumerate}
\end{ftheorem}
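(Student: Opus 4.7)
My strategy is to combine Theorems~\ref{fnormal} and~\ref{ffam}: use $\mathbb Z$-flatness to transport Joseph-polynomial data between characteristic zero and characteristic two, and use the family of Theorem~\ref{ffam} to bridge $\mathcal N$ and $\mathfrak N$ in characteristic two.

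First, I construct the integral models. Theorem~\ref{fnormal} yields a natural bijection between $G_{\mathbb C}$-orbits of $\mathfrak N_{\mathbb C}$ and $G_{\Bbbk}$-orbits of $\mathfrak N_{\Bbbk}$ through $\mathbb Z$-flat orbit closures (same cardinality plus compatibility with closure order). For the given $\mathbb O_{\mathbb C} \subset \mathcal N_{\mathbb C}$, I take $\mathbb O \subset \mathcal N$ to be a $G$-stable locally closed $\mathbb Z$-flat form of $\mathbb O_{\mathbb C}$ (over a neighbourhood of $(2) \in \Spec \mathbb Z$), trimmed so that $\mathbb O \otimes \Bbbk$ has a unique dense open $G_{\Bbbk}$-orbit $\mathbb O'_{\Bbbk}$. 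The orbital subfamily of Theorem~\ref{ffam} attaches to $\mathbb O'_{\Bbbk}$ a unique $G_{\Bbbk}$-orbit $\mathcal O'_{\Bbbk} \subset \mathfrak N_{\Bbbk}$ satisfying $\mathsf{F}_1(\mathcal O'_{\Bbbk}) = \mathcal O_S \cap \pi^{-1}(0)$ and $\mathcal O_S \cap \pi^{-1}(t) = \mathbb O'_{\Bbbk}$ for $t \neq 0$. I define $\mathcal O_{\mathbb C}$ as the image of $\mathcal O'_{\Bbbk}$ under the above bijection and $\mathcal O \subset \mathfrak N$ as its $\mathbb Z$-form; then (1) and (2) are built in.

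For (3), I present the Joseph polynomial as the $T$-equivariant multiplicity at $0$ of the intersection of an orbit closure with the positive part ($\mathfrak n$ or $\mathbb V^+$), following Rossmann and Joseph--Hotta. This multiplicity is a polynomial invariant of the equivariant cycle class, hence is preserved by $T$-equivariant flat deformation. Flatness of $\mathbb O \cap \mathfrak n$ and $\mathcal O \cap \mathbb V^+$ (via Theorem~\ref{fnormal} together with generic flatness) over an open set of $\Spec \mathbb Z$ containing $(2)$ reduces the equality to characteristic two. There, the family $\pi$ provides a flat $T$-equivariant deformation from $\overline{\mathbb O'_{\Bbbk}}$ to $\mathsf{F}_1(\overline{\mathcal O'_{\Bbbk}})$; intersecting with a $B$-stable positive-part subfamily $\mathbb V^+_S \subset \mathcal N_S$ (interpolating $\mathfrak n_{\Bbbk}$ on generic fibres and $\mathsf{F}_1(\mathbb V^+_{\Bbbk})$ on the special fibre) retains flatness by generic flatness. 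Consequently the $T$-equivariant multiplicities at $0$ agree, and pulling back along the isogeny $\mathsf{F}_1$ (which rescales $T$-weights by a fixed power) introduces the scalar factor claimed in (3).

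The main obstacle is constructing the interpolating $B$-stable subfamily $\mathbb V^+_S \subset \mathcal N_S$ and verifying cycle-theoretically that the $T$-equivariant multiplicity is transported consistently by both the flat deformation and the isogeny $\mathsf{F}_1$. The component structure underlying $\mathrm{Comp}(\mathbb O_{\mathbb C})$ need not be preserved under specialization, so a formulation via equivariant fundamental classes in Borel--Moore homology (rather than a component-by-component comparison) appears essential, since it absorbs the ambiguity in components and their multiplicities into the single overall scalar allowed by the statement.
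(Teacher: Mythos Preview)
Your outline follows the paper's architecture (transport along $\mathrm{Spec}\,A$ via flatness, then bridge $\mathcal N_{\Bbbk}$ and $\mathfrak N_{\Bbbk}$ via the family of Theorem~\ref{ffam}), but two of the load-bearing steps are not justified, and the workaround you propose in the last paragraph does not rescue them.

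\textbf{Generic flatness is not enough.} You invoke ``generic flatness'' for $\mathbb O\cap\mathfrak n$, for $\mathcal O\cap\mathbb V^+$, and for the intersection of the orbit-family with a positive-part subfamily $\mathbb V^+_S$. Generic flatness only gives flatness over a dense open; it says nothing about the specific point $(2)\in\mathrm{Spec}\,A$ or the special fibre $t=0$ of $\pi$, which are precisely the points you need. The paper does not get flatness for free: it proves that each individual orbital variety $\mathfrak X\in\mathrm{Comp}(\mathcal O)$ is flat over $A$ (Proposition~\ref{fdef}) and that its reduction $\mathfrak X_{\Bbbk}$ stays \emph{irreducible} (Proposition~\ref{irrpres}). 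Both facts are obtained by a counting argument with the Steinberg variety $Z=F\times_{\mathfrak N}F$: since $Z_{\Bbbk}$ has exactly $\#W$ irreducible components and the exotic Springer correspondence is a bijection $(\spadesuit)_1$, any extra or split component in some $\mathrm{Comp}(\mathcal O_{\Bbbk})$ would force $Z_{\Bbbk}$ to have too many components. This is the missing idea in your plan.

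\textbf{The total-class workaround does not prove the statement.} The Joseph polynomials are attached to the \emph{individual} irreducible components of $\mathcal O\cap\mathbb V^+$ (resp.\ $\mathbb O\cap\mathfrak n$), and the assertion $\mathrm{Jos}(\mathcal O_{\mathbb C})=\mathrm{Jos}(\mathbb O_{\mathbb C})$ is an equality of \emph{sets} of polynomials up to $\mathbb Q^\times$. Passing to the total equivariant fundamental class of the whole intersection would at best relate the \emph{sums} of the Joseph polynomials, not match them componentwise; and since these polynomials span an irreducible $W$-module (Theorem~\ref{JosK}), knowing the span is the same is weaker than the claim. The paper instead uses the irreducibility of specializations (Proposition~\ref{irrpres}) to run a genuine component-by-component comparison: for each $\mathfrak X\in\mathrm{Comp}(\mathcal O)$ the $T$-character of $\overline{\mathfrak X}$ is constant along $A$ (Proposition~\ref{flatK}), and along the family $\mathcal N_S$ the $T$-character changes only by the predictable effect of $\mathsf F_1$ (Proposition~\ref{deformK}). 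The chain
\[
\mathrm{Jos}(\mathbb O_{\mathbb K})\subset\mathrm{Jos}(\mathbb O_{\Bbbk})=\mathrm{Jos}(\mathcal O_{\Bbbk})=\mathrm{Jos}(\mathcal O_{\mathbb K})
\]
is then closed into an equality using that $\mathrm{Jos}(\mathcal O_{\mathbb K})$ is already a basis of an irreducible $W$-module. Your proposal correctly identifies the obstacle (components may split or coalesce under specialization) but the resolution is not to abandon the componentwise viewpoint; it is to \emph{prove} that no splitting occurs, via the Steinberg-variety count.
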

It may worth to mention that there exists some orbit $\mathcal O$ of $\mathfrak N$ which does not correspond to an orbit of $\mathcal N _{\mathbb C}$. In this case, our version of Joseph polynomials realize a Weyl group representation which cannot be realized by the usual Joseph polynomials. To illustrate this phenomenon, we compare Joseph polynomials for $\mathop{Sp} ( 4 )$:

\begin{fexample}[Joseph polynomials for $n = 2$]
Keep the setting of Example \ref{eo2}. By the natural $W$-action on $\mathfrak t ^* _{\mathbb C}$, we have a $W$-action on $\mathbb C [ \mathfrak t _{\mathbb C} ] = \mathbb C [ \epsilon _1, \epsilon _2 ]$ preserving each degree. Each Joseph polynomial belongs to $\mathbb C [ \mathfrak t _{\mathbb C} ]$ and hence admits a $W$-action. The list of Joseph polynomials corresponding to each member of $W ^{\vee}$ via Example \ref{eo2} is:
\begin{center}
\begin{tabular}{cc|c|c|c}
$W ^{\vee}$ & dim. & ordinary ($\mathsf{char} \Bbbk \neq 2$) & ordinary ($\mathsf{char} \Bbbk = 2$) & exotic \\ \hline
sign & 1 & $4 \epsilon _1 \epsilon _2 ( \epsilon _1 ^2 - \epsilon _2 ^2 )$ & $4 \epsilon _1 \epsilon _2 ( \epsilon _1 ^2 - \epsilon _2 ^2 )$ & $\epsilon _1 \epsilon _2 ( \epsilon _1 ^2 - \epsilon _2 ^2 )$ \\
Ssign & 1 & $2 ( \epsilon _1 ^2 - \epsilon _2 ^2 )$& $2 ( \epsilon _1 ^2 - \epsilon _2 ^2 )$ & $( \epsilon _1 ^2 - \epsilon _2 ^2 )$ \\
Lsign & 1 & N/A & $4 \epsilon _1 \epsilon _2$ & $\epsilon _1 \epsilon _2$ \\
regular & 2 & $\alpha _1, 2 \epsilon _2$ & $\alpha _1, 2 \epsilon _2$ & $\alpha _1, \epsilon _2$ \\
triv & 1 & 1 & 1 & 1 \\\hline
\end{tabular}
\end{center}
Note that the $\mathbb C$-span of polynomials in each entry of the above table recovers the original member of $W ^{\vee}$.
\end{fexample}

Since our exotic Springer correspondence shares a similar flavor with the usual Springer correspondence of type $A$, it is natural to expect a combinatorial description. To state this, we need:

\begin{fdefinition}
Let $(\mu, \nu)$ be a pair of partitions such that $\left| \mu \right| + \left| \nu \right| = n$. For a partition $\lambda$, we put $\lambda _i ^{<} := \sum _{j < i} \lambda _j$ and $\lambda _i ^{\le} := \sum _{j \le i} \lambda _j$. We define
\begin{align*}
\mathsf{D} ^0 _i ( \mu ) := \prod _{\mu _i ^< < k < l \le \mu _i ^{\le}} ( \epsilon _k ^2 - \epsilon _l ^2 ), & \ \mathsf{D} ^+ _i ( \mu, \nu ) := \prod _{\nu _i ^< < k < l \le \nu _i ^{\le}} ( \epsilon _{k + \left| \mu \right|} ^2 - \epsilon _{l + \left| \mu \right|} ^2 ), \text{ and }\\
\mathsf{D} ( \mu, \nu ) := & \prod _{i > \left| \mu \right|} \epsilon _i \times \prod _{i = 1} ^{\infty} \mathsf{D} ^0 _i ( \mu ) \mathsf{D} ^+ _i ( \mu, \nu ).
\end{align*}
\end{fdefinition}

\begin{ftheorem}\label{fdetsp}
For each $G$-orbit $\mathcal O$ of $\mathfrak N$, there exists a pair of partitions $(\mu, \nu)$ and $\mathfrak X \in \mathrm{Comp} ( \mathcal O )$ such that the Joseph polynomial of $\mathfrak X$ is a scalar multiplication of $\mathsf{D} ( \mu, \nu )$.
\end{ftheorem}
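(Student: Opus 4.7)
The plan is to proceed in three stages: first identify the $G$-orbits on $\mathfrak N$ with bipartitions $(\mu, \nu)$ of $n$; next, for each such bipartition write down an explicit normal-form representative $v_{\mu,\nu} \in \mathbb V^+_{\mathbb C}$ together with a standard parabolic $P_{\mu,\nu} \subset G$ adapted to it; and finally compute the Joseph polynomial of the orbital variety $\mathfrak X := \overline{B \cdot v_{\mu,\nu}}$ via a block factorization forced by the Levi structure of $P_{\mu,\nu}$.

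For the classification step I would invoke the orbit description from \cite{K1}, or deduce it from Theorem \ref{ffam} together with the Hesselink--Spaltenstein parameterization of $G_{\overline{\mathbb F}_2}$-orbits on $\mathcal N_{\overline{\mathbb F}_2}$. For the normal form, take the $V_1$-component of $v_{\mu,\nu}$ to be $\sum_{i>|\mu|} \mathbf v[\epsilon_i]$, one vector for every part of $\nu$, and take the $V_2$-component to be a sum of ``Jordan-edge'' vectors $\mathbf v[\epsilon_k - \epsilon_{k+1}]$ running through consecutive indices inside each block of $\mu$ and inside each (appropriately shifted) block of $\nu$. The corresponding parabolic $P_{\mu,\nu} \subset G$ is the standard one whose Levi is $L_{\mu,\nu} := \prod_i GL(\mathbb C^{\mu_i}) \times \prod_j GL(\mathbb C^{\nu_j})$ placed block-diagonally along $V_1$. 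A direct dimension count comparing $\dim \overline{B \cdot v_{\mu,\nu}}$ with $\frac{1}{2} \dim \mathcal O_{\mathbb C}$ should then exhibit $\mathfrak X$ as Lagrangian in $\mathcal O_{\mathbb C}$, hence as a member of $\mathrm{Comp}(\mathcal O_{\mathbb C})$.

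For the final stage, recall that, by the Hotta--Joseph picture (\cite{J}, \cite{Hot}) extended in the exotic setting via \cite{CG}, the Joseph polynomial of $\mathfrak X$ coincides, up to a nonzero rational scalar, with the $T$-equivariant multidegree of $\mathfrak X$ in $\mathbb V^+_{\mathbb C}$ at the origin. The Levi decomposition of $L_{\mu,\nu}$ forces this multidegree to split as a product over the $GL$-blocks: the type-$A$ Joseph polynomial for the regular nilpotent orbit in each $GL(\mathbb C^{\mu_i})$-block contributes $\mathsf D^0_i(\mu)$ (respectively $\mathsf D^+_j(\mu,\nu)$ for the $\nu$-blocks after shifting), because the weights $\epsilon_k + \epsilon_l$ and $\epsilon_k - \epsilon_l$ of $V_2$ lying inside a single block pair up to yield the square factor $\epsilon_k^2 - \epsilon_l^2$. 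Meanwhile the linear factor $\prod_{i>|\mu|} \epsilon_i$ arises as the multidegree of the $B$-orbit closure of $\sum_{i>|\mu|} \mathbf v[\epsilon_i]$ inside $V^+_1$. Multiplying the block contributions with this linear factor reproduces $\mathsf D(\mu,\nu)$ up to scalar.

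The main obstacle will be verifying that $\mathfrak X$ is genuinely an irreducible component of $\mathcal O_{\mathbb C} \cap \mathbb V^+_{\mathbb C}$ of the expected dimension, rather than a proper Lagrangian sitting inside a larger component, and that the bipartition $(\mu, \nu)$ assigned to $v_{\mu,\nu}$ really labels $\mathcal O_{\mathbb C}$ in the scheme of \cite{K1}. Both should follow from an explicit description of the $B$-stabilizer of $v_{\mu,\nu}$ together with a combinatorial dimension count; the careful bookkeeping of root contributions --- especially the pairing of weights of $V_1$ and $V_2$ inside a given block to produce the square factors $\epsilon_k^2 - \epsilon_l^2$ --- is where most of the real work lies.
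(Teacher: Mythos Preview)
Your outline has a genuine gap at the third stage, and the paper explicitly flags it as ``the main difficulty'' (see the discussion of Theorem~\ref{fdetsp} in the introduction). The orbital variety $\mathfrak X_{\bm\lambda}=\overline{B\mathbb V^{\bm\lambda}}$ is \emph{not} a product of block pieces: although $\overline{B\mathbb V^{\bm\lambda}}=\overline{B\mathbb V^{\bm\lambda}_{01}}$ with $\mathbb V^{\bm\lambda}_{01}\subset\mathbb V_0\oplus\mathbb V_1$ (Lemma~\ref{basic identity}), the $B$-saturation drags in the $\mathbb V_2$-directions via the unipotent radical $G_2$ in a nonlinear way (Lemma~\ref{V01}). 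So the equivariant multidegree does not split as a product over Levi blocks. In particular, you only see the factor $\prod(\epsilon_k-\epsilon_l)$ from the linear projection $\texttt{pr}:\mathbb V^+\to\mathbb V^+/(\mathbb V_1\oplus\mathbb V_2)$; the matching factors $(\epsilon_k+\epsilon_l)$, which you need to assemble $\epsilon_k^2-\epsilon_l^2$, are \emph{not} produced by any block of your Levi. Your sentence ``the weights $\epsilon_k+\epsilon_l$ and $\epsilon_k-\epsilon_l$ of $V_2$ lying inside a single block pair up'' is exactly the step that fails.

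The paper's route (Theorem~\ref{MacG}) avoids this by an indirect argument. From the projection one gets only that
\[
q_0(\bm\lambda)=\prod_{i>|\mu|}\epsilon_i\cdot\prod_{i\ge 0}\prod_{d_i<j<k\le d_{i+1}}(\epsilon_j-\epsilon_k)
\]
divides $\mathsf J(\mathfrak X_{\bm\lambda})$, together with the degree equality $\deg\mathsf J(\mathfrak X_{\bm\lambda})=\deg\mathsf D(\mu,\nu)$. The missing $(\epsilon_j+\epsilon_l)$ factors are then forced by a representation-theoretic trick: one links $\mathfrak X_{\bm\lambda}$ to an orbital variety $\mathfrak X_{\bm\lambda}^{\sim}$ of $\mathcal O_{(\lambda,\mathbf 0)}$ via $(\prod_{i\le|\mu|}\epsilon_i)\,\mathsf J(\mathfrak X_{\bm\lambda})=\mathsf J(\mathfrak X_{\bm\lambda}^{\sim})$ (Lemma~\ref{orbtoA}), then runs a downward induction on $\deg\mathsf D(\mu^{(\lambda,\mathbf 0)},\nu^{(\lambda,\mathbf 0)})$. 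At each step the already-established bijection between $G$-orbits and Macdonald representations leaves only $W_l$-isotypic slots (Corollary~\ref{descEX}); the Galois-type observation that $(\epsilon_i-\epsilon_j)\mid f\in\mathbb C[\epsilon_1^2,\ldots,\epsilon_n^2]$ forces $(\epsilon_i+\epsilon_j)\mid f$ then upgrades the divisibility by $q_0$ to divisibility by $\mathsf D(\mu,\nu)$, and the degree match finishes.

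A minor point: your normal form has the $V_1$-indices reversed. In the paper's conventions the $V_1$-part of $\overline{\mathfrak X_{\bm\lambda}}$ is $\bigoplus_{i\le|\mu|}V_1[\epsilon_i]$, so the linear Joseph factor coming from $V_1$ is $\prod_{i>|\mu|}\epsilon_i$, the complement. Your choice $\sum_{i>|\mu|}\mathbf v[\epsilon_i]$ would give the factor $\prod_{i\le|\mu|}\epsilon_i$ instead.
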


Since $(\mu, \nu)$ in Theorem \ref{fdetsp} is easily computable, this completes a determination of our exotic Springer correspondence. Taking account into Theorem \ref{fJoscomp}, we have determined some special Joseph polynomials which we cannot compute easily from their naive definitions.

The organization of this paper is as follows:

In \S 1, we fix convention and introduce our variety $\mathfrak N$. Then, we describe its set of defining equations in \S 2. Our system of defining equations is explicit and behaves nice with respect to the restriction to certain linear subvariety. These facts enable us to prove that $\mathfrak N$ is normal in \S 3. This proves the first part of Theorem \ref{fnormal}. Also, we introduce a parameterization of orbits of $\mathfrak N$ over $\mathbb Z$ or $\Bbbk$. The \S4 contains the main observations of this paper. Namely, we observe:
\begin{itemize}
\item the adjoint representation $\mathfrak g$ of a symplectic group over characteristic two is not irreducible;
\item this reducibility enables one to define a natural deformation of $\mathfrak g$, and its subvariety $\mathcal N$ in characteristic two;
\item the special fiber such that (the deformation of) $\mathfrak g$ becomes decomposable is isogenous to $\mathbb V$;
\item the above three observations are sufficient to construct a ``deformation" from $\mathcal N$ (general fiber) to $\mathfrak N$ (special fiber) in characteristic two.
\end{itemize}
These observations enable us to prove Theorem \ref{ffam}. In \S 5, we define a particular element of the Weyl group attached to each orbit of $\mathfrak N$ which cut-off a particular part of the orbit. In \S 6, we see that every orbit of $\mathfrak N _{\Bbbk}$ extends to an orbit of $\mathfrak N$ in order to prove the second part of Theorem \ref{fnormal}. The \S\S 7--8 are devoted to the equi-dimensionality of the orbital varieties attached to $\mathfrak N$. Its proof is nothing but a minor modification of the Steinberg-Spaltenstein-Joseph theorem, which we present here for the reference purpose. (So I claim no originality here.) These are preparatory steps to the later sections. In \S 9, we use the results in the previous sections to prove Theorem \ref{fJoscomp}. With the help of previous sections and Joseph's theory, the only missing piece boils down to the rigidity of the torus character. In \S 10, we construct a special orbital variety from an orbit of $\mathfrak N$ in order to prove Theorem \ref{fdetsp}. The main difficulty in the couse of its proof is that we cannot expect some orbital variety to be a linear subspace contrary to the type $A$ case. We make a trick coming from the symmetry of Joseph polynomials to avoid this difficulty.\\

With the technique developed in this paper, a similar construction applied to $G ^{\vee} = \mathop{SO} ( 2n + 1 )$ yields an analogue of Theorem \ref{fJoscomp} for special representations of the Weyl groups of $\mathop{Sp} ( 2n )$ and $\mathop{SO} ( 2n + 1 )$. However, as the work of Tian Xue \cite{X} suggests, there might be a better formulation of the connection between these two cases.\\

Finally, one word of caution is in order. We work not over $\mathrm{Spec} \mathbb Z$ but a neighborhood of $\mathrm{Spec} \overline{\mathbb F} _2$ in the main body of this paper. The reason is that two is the only bad prime for symplectic groups and the corresponding statements are more or less trivial (or inexistent) with respect to the reduction to the other primes.

\section{Preliminaries}
\subsection{Convention}
Consider a ring
$$A : = \mathbb Z [ p ^{-1}, \zeta _N ; p, N \in \mathbb Z _{> 0}, ( p, 2 ) = 1, \zeta _N ^{2 ^N - 1} = 1 ] \subset \overline{\mathbb Q}.$$
This is a local ring with a unique maximal ideal $( 2 )$. Let $\mathbb K$ be the quotient field of $A$ and let $\Bbbk$ be the residual field of $A$. We have $\Bbbk = \overline{\mathbb F} _2$.

For a partition $\lambda = ( \lambda _1, \lambda _2, \ldots )$, we define $\lambda _i ^{<} := \sum _{j < i} \lambda _j$ and $\lambda _i ^{>} := \sum _{j > i} \lambda _j$ for each $i$. We also use the notation $\lambda _i ^{\le} := \lambda _{i + 1} ^{<}$ and $\lambda _i ^{\ge} := \lambda _{i - 1} ^>$. We put $\left| \lambda \right| := ( \lambda ) _1 ^{\ge}$. We denote the dual partition of $\lambda$ by ${} ^t \lambda$.

For a scheme $\mathcal X$ over $A$, we denote its specializations to $\Bbbk$ and $\mathbb K$ by $\mathcal X _{\Bbbk}$ and $\mathcal X _{\mathbb K}$, respectively. In addition, assume that $\mathcal X$ admits an action of a group scheme $G$ over $A$. By a $G$-orbit on $\mathcal X$, we refer a flat subfamily $\mathcal O$ of $\mathcal X$ over $A$ such that $\mathcal O _{\mathbb K}$ is a single $G _{\mathbb K}$-orbit. For a map of commutative rings $A \rightarrow D$, we define $\mathcal X ( D )$ the set of $D$-valued points of $\mathcal X$. For each $e \in G ( A )$, we denote by $\mathcal X ^e$ the $e$-fixed part of $\mathcal X$. We denote by $H _{\bullet} ( \mathcal X, \mathbb C )$ the Borel-Moore homology of $\mathcal X _{\mathbb C}$. 

We understand that the intersection $\cap$ of two (sub-)schemes are set-theoretic. (I.e. we consider the reduced part of the scheme-theoretic intersection.) The scheme-theoretic intersection is denoted by $\dot{\cap}$.

For a scheme $\mathcal Y$ over $\Bbbk$, we denote its (geometric) Frobenius endomorphism by $\mathsf{Fr}$. Here geometric means that the induced map $\mathsf{Fr} ^* : \mathcal O _{\mathcal Y} \rightarrow \mathcal O _{\mathcal Y}$ is $\Bbbk$-linear and (suitable) local coordinates are changed to its $2$nd power.

\subsection{Notation and Terminology}\label{NT}

Let $G = \mathop{Sp} ( 2n, A )$ be the symplectic group of rank $n$ over $A$. Let $B \supset T$ be its Borel subgroup and a maximal torus defined over $A$. Put $N := [ B, B ]$. Denote the opposite unipotent radical of $N$ (with respect to $T$) by $N ^-$. Let $W := N _G ( T ) / T$ be the Weyl group of $G$. We denote by $X ^* ( T )$ the weight lattice of $T$. Let $R$ be the set of roots of $( G, T )$ with its positive part $R ^+$ determined by $B$. Consider an $A$-module $V _1 := A ^{2n}$, for which $G$ acts by the multiplication of matrices. Let $V _2 := \wedge ^2 V _1$ $(\subset \wedge ^2 ( V _1 ) _{\mathbb C} )$ be the $A$-module with the natural $G$-module structure. Let $\mathfrak g$ be the Lie algebra of $G$ over $A$, whose integral structure is $\mathsf{Sym} ^2 V _1 = ( V _1 \otimes _A V _1 ) ^{\mathfrak S _2}$. Let $\mathfrak b, \mathfrak t, \mathfrak n$ be the intersections of Lie algebras corresponding to $B _{\mathbb C}, T _{\mathbb C}, N _{\mathbb C}$ with $\mathfrak g$ inside of $\mathfrak g _{\mathbb C}$, respectively.

Fix a $\mathbb Z$-basis $\epsilon _1, \ldots, \epsilon _n$ of $X ^* ( T )$ such that $R ^+= \{ \epsilon _i \pm \epsilon _j \} _{i < j} \cup \{ 2 \epsilon _i \} _i \subset X ^* ( T )$. For each $i$, we put $\alpha _i := \epsilon _i - \epsilon _{i + 1}$ ($1 \le i < n$), $2 \epsilon _n$ ($i = n$). Let $s _i$ be the reflection of $W$ corresponding to $\alpha _i$. Let $\ell : W \rightarrow \mathbb Z$ denote the length function on $W$ with respect to $s _1, \ldots, s _n$.

We put $\mathbb V := V _1 \oplus V _2$. For a $T$-weight $\lambda \neq 0$, let $\mathbb V [ \lambda ]$ be the $T$-eigenpart  of $\mathbb V$ with its weight $\lambda$. Let $\mathbb V ^+$ be the sum of $T$-weight spaces of $\mathbb V$ with its weights in $\mathbb Q _{\ge 0} R ^+ - \{ 0 \}$. For a $T$-subrepresentation $V \subset \mathbb V$, we denote by $\Psi ( V )$ the set of $T$-weights $\lambda$ with $\mathbb V [ \lambda ] \neq \{ 0 \}$.

For each $w \in W$, we denote (one of) its lift by $\dot{w} \in N _G ( T )$. For a $T$-stable subset $\mathcal S$ in $\mathbb V$ or $\mathfrak g$, we define ${} ^w \mathcal S := \dot{w} \mathcal S$.

We denote the flag variety $G / B$ by $\mathcal B$.

Let $\mathfrak N$ be the $G$-subscheme of $\mathbb V$ defined by the positive degree part $A [ \mathbb V ] ^G _+$ of $A [ \mathbb V ] ^G$.

Let $\mathcal N$ be the space of ad-nilpotent elements of $\mathfrak g$.

\begin{theorem}[Hesselink \cite{Hes}]\label{HLcls}
We have $\mathcal N ( \Bbbk ) = \bigcup _{g \in G ( \Bbbk )} \mathrm{Ad} ( g ) \mathfrak n ( \Bbbk )$.
\end{theorem}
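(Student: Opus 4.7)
The plan is to construct, for each $x \in \mathcal N ( \Bbbk )$, a complete isotropic flag $0 \subset F _1 \subset \cdots \subset F _n$ of $V _1 ( \Bbbk )$ satisfying $x F _i \subset F _{i - 1}$. Such a flag is stabilized by a unique Borel subgroup $B' \subset G ( \Bbbk )$, and the property just displayed is precisely $x \in \mathrm{Lie} ( U ( B' ) )$. By conjugacy of Borels one writes $B' = g B g ^{- 1}$ for some $g \in G ( \Bbbk )$, and then $\mathrm{Ad} ( g ^{- 1} ) x \in \mathfrak n ( \Bbbk )$ as required.

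The first step is to verify that $x$ acts nilpotently as an endomorphism of $V _1 ( \Bbbk )$. Reading $\mathcal N$ as the null cone of the adjoint action, the $G$-invariant functions $x \mapsto \Tr ( x ^{2 k} )$ on $\mathfrak g$ vanish on $\mathcal N$ for all $k \ge 1$, which forces every eigenvalue of $x$ on $V _1$ to vanish; hence $x |_{V _1}$ is nilpotent.

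The flag is then constructed by induction on $n$. Choose $v \in \ker ( x |_{V _1} ) \setminus \{ 0 \}$; it is automatically isotropic because the symplectic form $\omega$ on $V _1$ is alternating. For any $w$ in the annihilator $v ^{\perp}$, the relation $\omega ( x w, v ) = \omega ( w, x v ) = 0$ (using $x \in \mathfrak{sp} ( V _1 )$ and $x v = 0$) shows that $x$ preserves $v ^{\perp}$, and since $x v = 0 \in \langle v \rangle$ the element $x$ descends to the symplectic quotient $V _1 ' := v ^{\perp} / \langle v \rangle$ of dimension $2 ( n - 1 )$, still acting nilpotently. The inductive hypothesis supplies a complete isotropic flag $F _1 ' \subset \cdots \subset F _{n - 1} '$ of $V _1 '$ with $x F _i ' \subset F _{i - 1} '$; pulling back along $\pi : v ^{\perp} \to V _1 '$, set $F _1 := \langle v \rangle$ and $F _i := \pi ^{- 1} ( F _{i - 1} ' )$ for $i \ge 2$, obtaining a complete isotropic flag of $V _1$ with $x F _i \subset F _{i - 1} + \langle v \rangle = F _{i - 1}$ (using $F _1 \subset F _{i - 1}$ for $i \ge 2$).

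The main subtlety is the first step: over $\Bbbk = \overline{\mathbb F} _2$, the relationship between $\mathcal N$ and nilpotent action on $V _1$ is less transparent than in good characteristic because of the degeneration of the adjoint representation exploited in \S 4. The remaining ingredients---the isotropic-flag induction and the conjugacy of Borels---are uniform in characteristic. Alternatively one may phrase the whole argument through the Hilbert--Mumford criterion: a cocharacter $\lambda$ with $\lim _{t \to 0} \mathrm{Ad} ( \lambda ( t ) ) x = 0$ places $x$ in the unipotent radical of the associated parabolic $P ( \lambda )$, and any Borel inside $P ( \lambda )$ then contains $x$ in its nilpotent radical.
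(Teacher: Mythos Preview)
The paper does not prove this statement; it is quoted from Hesselink \cite{Hes} as a known result, so there is no ``paper's proof'' to compare against. Your overall strategy---show $x$ acts nilpotently on $V_1$, then build an $x$-stable complete isotropic flag inductively---is the natural one, and the flag construction in your second step is correct and characteristic-free.

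The genuine gap is in your first step. Over $\Bbbk = \overline{\mathbb F}_2$ the functions $x \mapsto \Tr(x^{2k})$ carry no information: every semisimple element of $\mathfrak g_{\Bbbk}$ is conjugate into $\mathfrak t_{\Bbbk}$, where the eigenvalues occur with even multiplicity, so $\Tr(x^m) = 0$ for \emph{all} $x \in \mathfrak g_{\Bbbk}$ and all $m$. Concretely, for $n=1$ the identity matrix lies in $\mathfrak{sp}(2)_{\Bbbk}$ (it is central), satisfies $\Tr(I^{2k}) = 0$, yet is not nilpotent on $V_1$. So vanishing of traces does not force nilpotency in characteristic $2$. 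The repair is to replace the power-trace invariants by the coefficients $c_i(x)$ of the characteristic polynomial $\det(tI - x) = t^{2n} + \sum_i c_i(x) t^{2n-i}$. These lie in $A[\mathfrak g]^G_+$ (they are $GL(2n)$-invariant, homogeneous of positive degree) and hence vanish on $\mathcal N$ over $A$; reducing mod $2$ they vanish on $\mathcal N_{\Bbbk}$, so any $x \in \mathcal N(\Bbbk)$ has characteristic polynomial $t^{2n}$ on $V_1$ and is therefore nilpotent there. With this correction your argument goes through. Your Hilbert--Mumford alternative is not an independent route: applying it requires first knowing that $x$ lies in the null cone over $\Bbbk$, which is exactly the content of the corrected first step.
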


Theorem \ref{HLcls} implies that the natural map (the Springer resolution over $A$)
$$\mu : G \times ^{B} \mathfrak n \longrightarrow \mathcal N$$
is surjective at the level of points.

For a $G _{\Bbbk}$-module $V$ over $\Bbbk$, we define its Frobenius twist $V ^{[1]}$ as the composition
$$G _{\Bbbk} \longrightarrow \mathop{GL} ( V ) \stackrel{\mathsf{Fr}}{\longrightarrow} \mathop{GL} ( V ) \subset \mathrm{End} _{\Bbbk} ( V ).$$

\section{Defining equations of $\mathfrak N$}
Let $e \in T$ be an element such that $\epsilon _i ( e ) = c$ (for every $1 \le i \le n$), where $c \in A$ is an element with sufficiently high order after taking modulo two. In particular, we assume $Z _G ( e ) \cong \mathop{GL} ( n, A )$, $V _1 ^e = \{ 0 \}$, and $V _2 ^e \cong \mathrm{Mat} ( n, A )$. Put $G _0 := Z _G ( e )$. Consider a direct sum decomposition
\begin{eqnarray}
\mathfrak g = \mathfrak g _{- 2} \oplus \mathfrak g _0 \oplus \mathfrak g _{2}, \text{ and } \mathbb V = \mathbb V _{-2} \oplus \mathbb V _{-1} \oplus \mathbb V _0 \oplus \mathbb V _1 \oplus \mathbb V _2\label{edec}
\end{eqnarray}
determined by the eigenvalues of the action of $e$ (indicated as subscript). Here $\mathfrak g _{\pm 2}$ and $\mathbb V _i$ ($- 2 \le i \le 2$) are $G _0$-modules. We have $\mathfrak g _0 \cong \mathbb V _0$ as $G _0$-modules. Let $\mathfrak n _0 := \mathrm{Lie} G _0 \cap \mathfrak n$, which we may regard as a subspace of $\mathbb V _0$. We define $G _{-2}, G _{2}, N _0$ to be the unipotent subgroups of $G$ corresponding to $\mathfrak g _{-2}, \mathfrak g _{2},$ and $\mathfrak n _0$, respectively. We fix an identification $\mathfrak S _n = N _{G _0} ( T ) / T \cong \left< s _i ; i < n\right> \subset W$. We define
$$\mathbf J := \left( \begin{matrix} 0 & - 1 _n \\ 1 _n & 0 \end{matrix} \right).$$

We have $V _2 \cong \mathrm{Alt} ( 2n, A )$ as $GL ( 2n, A )$-modules. Hence, it restricts to a $G$-module isomorphism. Let $\mathsf{Pf}$ be the Pfaffian associated to $X = \{ x _{ij} \} _{ij} = \{ - x _{ji} \} _{ij} \in \mathrm{Alt} ( 2n )$. It is defined as
\begin{eqnarray}
\mathsf{Pf} ( X ) = \frac{1}{n!} \sum _{\sigma} \mathrm{sgn} ( \sigma ) x _{\sigma ( 1 ) \sigma ( 2 )} \cdots x _{\sigma ( 2 n - 1 ) \sigma ( 2 n )},\label{dP}
\end{eqnarray}
where $\sigma$ runs over all permutations of $\mathfrak S _{2n}$ such that $\sigma ( 2 m - 1 ) < \sigma ( 2 m )$ for every $1 \le m \le n$. By using $\mathsf{Pf}$, we define polynomials $1 = P _0, P _1, \ldots, P _n$ on $V _2$ as
$$\sum _{i = 0} ^n t ^{n - i} P _i ( X ) = \mathsf{Pf} ( t \mathbf J - X ) \hskip 14mm ( X \in V _2 \cong \mathrm{Alt} ( 2n )).$$
We have $P _i \in A [ V _2 ] ^G$ by
$$\mathsf{Pf} ( t \mathbf J - X ) = \det ( g ) \mathsf{Pf} ( t \mathbf J - X ) = \mathsf{Pf} ( t g \mathbf J {} ^t g - g X {} ^t g ) = \mathsf{Pf} ( t \mathbf J - g X {} ^t g )$$
for each $g \in G ( \mathbb K )$.

\begin{proposition}\label{defeq}
By means of the $G$-module isomorphism $V _2 \cong \mathrm{Alt} ( 2n )$, the variety $( \mathfrak N \cap V _2 )$ is identified with the common zeros of $P _1, \ldots, P _n$.
\end{proposition}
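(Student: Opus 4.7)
The plan is to establish the two inclusions separately. For $\mathfrak N \cap V_2 \subseteq \{P_1 = \cdots = P_n = 0\}$: each $P_i$ lies in $A[V_2]^G_+$ by the computation preceding the statement, and extends trivially in the $V_1$-variables to an element of $A[\mathbb V]^G_+$, so $P_i$ vanishes on $\mathfrak N$. For the converse I pass to an algebraically closed extension of $A$ and apply the Hilbert--Mumford criterion: it suffices to show that every $X \in V_2$ with $P_i(X) = 0$ for all $i \geq 1$ admits a one-parameter subgroup $\lambda$ of $G$ with $\lim_{t \to 0} \lambda(t) \cdot X = 0$.

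The linear-algebraic input is that $\mathsf{Pf}(t\mathbf J - X)^2 = \det(t\mathbf J - X)$ together with $\det \mathbf J = 1$ makes the simultaneous vanishing of the $P_i$ equivalent to $\det(tI - \mathbf J^{-1}X) = t^{2n}$, i.e.\ to the nilpotency of $Y := \mathbf J^{-1}X \in \End V_1$. Moreover, $X^T = -X$ and $\mathbf J^T = -\mathbf J$ yield $Y^T \mathbf J = \mathbf J Y$, equivalently the self-adjointness $\langle Yu, v\rangle = \langle u, Yv\rangle$ for the pairing $\langle u, v\rangle := u^T \mathbf J v$; a dimension count then gives $\ker Y = (\IM Y)^\perp$. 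Picking any $0 \neq v_1 \in \ker Y$, $Y$ preserves $v_1^\perp$ and descends to the symplectic quotient $v_1^\perp/\langle v_1\rangle$ as a nilpotent self-adjoint operator. Recursion on the rank produces a Lagrangian flag $L_1 \subset \cdots \subset L_n$ of $V_1$ with $Y L_i \subset L_{i-1}$, and then self-adjointness gives $Y L_{n-j}^\perp \subset L_{n-j+1}^\perp$, so that setting $F_i := L_i$ for $i \leq n$ and $F_{n+j} := L_{n-j}^\perp$ for $j \geq 1$ produces a complete symplectic flag $F_\bullet$ with $Y F_i \subset F_{i-1}$ for all $i$.

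In a symplectic basis adapted to $F_\bullet$, the condition $Y F_i \subset F_{i-1}$ makes the matrix of $Y$ strictly triangular; a direct computation of $X = \mathbf J Y$ then shows that the nonzero $T'$-weight components of $X$ all lie strictly on one side of every wall of the Weyl chamber determined by $B'$, where $B' \supset T'$ is the Borel stabilising $F_\bullet$. Hence some cocharacter of $T'$ contracts $X$ to $0$, placing $X \in \mathfrak N$ by Hilbert--Mumford. The only non-routine step is the Lagrangian flag construction, where the self-adjointness of $Y$ (the direct reflection of the alternating nature of $X$) is indispensable; everything else is standard symplectic linear algebra and works uniformly over $A$, which is what is needed for the applications later in the paper.
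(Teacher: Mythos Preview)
Your argument is correct and takes a genuinely different route from the paper. The paper proceeds invariant-theoretically: it restricts to the Cartan slice $\mathbb V_0 \cong \mathrm{Mat}(n)$, identifies $\mathsf{Pf}(t\mathbf J - X)|_{\mathbb V_0}$ with a characteristic polynomial, and then invokes the Dadok--Kac classification of polar representations to conclude that $\mathbb C[V_2]^G = \mathbb C[P_1,\ldots,P_n]$, whence the $P_i$ generate the ideal $\langle A[V_2]^G_+\rangle$. Your approach instead runs Hilbert--Mumford directly: the Pfaffian identity $\mathsf{Pf}^2 = \det$ converts the vanishing of all $P_i$ into nilpotency of $Y = \mathbf J^{-1}X$, and the self-adjointness of $Y$ lets you build a complete symplectic flag along which $Y$ is strictly upper triangular, producing a destabilising cocharacter. (One small point: the map $X \mapsto Y$ intertwines the $G$-action on $\mathrm{Alt}(2n)$ with conjugation by $\mathbf J^{-1}g\mathbf J$ rather than by $g$, but since $\mathbf J \in G$ this is an inner twist and does not affect the conclusion.)

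The trade-off: your proof is self-contained and avoids the external reference to Dadok--Kac, and the flag construction works verbatim over $\Bbbk$. The paper's proof, however, establishes the stronger statement that the $P_i$ actually \emph{generate} $\mathbb C[V_2]^G$, not merely that their common zero locus is the nullcone. That stronger statement is what feeds into the next Corollary ($\mathfrak N \cong A[\mathbb V]/(P_i)$ as schemes) and into Proposition~\ref{CM} (Cohen--Macaulayness via freeness over $A[P_1,\ldots,P_n]$). So your closing remark that this ``is what is needed for the applications later in the paper'' is not quite right: your argument gives the set-theoretic equality stated in the Proposition, but the subsequent results rely on the algebraic generation of invariants, which Hilbert--Mumford alone does not supply.
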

\begin{proof}
Under the isomorphism $V _2 \cong \mathrm{Alt} ( 2n )$, the subspace $\mathbb V _0 \subset V _2$ corresponds to
$$\mathrm{Alt} ( 2n ) _0 = \{ \{ x _{ij} \} \in \mathrm{Alt} ( 2n ) ; x _{ij} = 0 \text{ for } 1 \le i, j \le n \text{ or } n < i, j \le 2n \}.$$
Substituting them into the definition of Pfaffians, we deduce
$$\mathsf{Pf} ( t \mathbf J - X ) = (- 1) ^n \det ( t 1 _n - Y ) \hskip 5mm \text{where } X = \left( \begin{matrix} 0 & - Y \\ {} ^t Y & 0 \end{matrix} \right).$$
This implies
$$A [ V _2 ] ^G \supset A [ P _i ; i \ge 1 ] \cong A [ \mathbb V [ 0 ] ] ^{\mathfrak S _n}$$
via the restriction map. By the Dadok-Kac classification \cite{DK} table 2, we know that
$$\mathbb C [ V _2 ] ^G \cong \mathbb C [ \mathbb V _0 ] ^{G _0} \cong \mathbb C [ \mathbb V [ 0 ] ] ^{\mathfrak S _n}.$$
This implies that $\{ P _i \} _{i = 1} ^n$ generates the ideal $\left< A [ V _2 ] ^G _+ \right>$ as desired.
\end{proof}

\begin{corollary}
We have $\mathfrak N \cong A [ \mathbb V ] / ( P _i ; 1 \le i \le n)$.
\end{corollary}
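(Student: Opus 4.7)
The corollary asserts that the ideal $(A[\mathbb V]^G_+) A[\mathbb V]$ defining $\mathfrak N$ coincides with the explicit ideal $(P_1, \ldots, P_n) A[\mathbb V]$. Each $P_i$ is pulled back from $A[V_2]^G$ along the projection $\mathbb V \twoheadrightarrow V_2$ with kernel $V_1$, so the inclusion $(P_1, \ldots, P_n) A[\mathbb V] \subset (A[\mathbb V]^G_+) A[\mathbb V]$ is immediate; the content is the reverse containment.

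My plan is to rerun the Dadok--Kac argument of Proposition \ref{defeq} with $\mathbb V$ in place of $V_2$. The key input is that the element $e \in T$ fixed at the start of \S 2 was chosen so that $V_1^e = \{0\}$; this forces $\mathbb V^e = V_2^e \cong \mathrm{Mat}(n, A)$ as a $G_0$-module, so the ``Cartan slice'' for the $G$-action on $\mathbb V$ lies entirely inside $V_2$ and coincides with the one already used for $V_2$. Applying the Dadok--Kac classification \cite{DK} to $\mathbb V$ then yields
$$\mathbb C[\mathbb V]^G \cong \mathbb C[\mathbb V^e]^{G_0} = \mathbb C[V_2^e]^{G_0} \cong \mathbb C[V_2]^G = \mathbb C[P_1, \ldots, P_n],$$
so over $\mathbb C$ every $G$-invariant on $\mathbb V$ is already a polynomial in the $P_i$. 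Combined with the ring-theoretic content of the proof of Proposition \ref{defeq}, this gives the desired ideal equality after tensoring with $\mathbb C$.

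It remains to descend from $\mathbb C$ to $A$. Since both ideals are homogeneous and $A$ is a DVR with uniformizer $2$, descent reduces to ruling out extra $G_\Bbbk$-invariants on $\mathbb V_\Bbbk$ not accounted for by the $P_i$. I would handle this by noting that the Pfaffian expansion (\ref{dP}) is manifestly integral and that every step of the Dadok--Kac identification used in Proposition \ref{defeq} admits an obvious integral refinement; alternatively one compares the Hilbert function of $A[\mathbb V]/(P_1, \ldots, P_n)$ in the generic and special fibres. The main obstacle in the plan is precisely this descent, since characteristic $2$ is the bad prime for $\mathop{Sp}(2n)$ and its invariant theory can be delicate; however, the pathology in characteristic $2$ flagged by the paper concerns the adjoint representation $\mathfrak g$ (see \S 4) rather than $\mathbb V = V_1 \oplus V_2$, and for $\mathbb V$ the Chevalley restriction remains well behaved, so the descent can be carried out without incident.
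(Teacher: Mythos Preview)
Your argument over $\mathbb C$ is exactly the paper's one-line proof: both read off $\mathbb C[\mathbb V]^G \cong \mathbb C[V_2]^G$ from the Dadok--Kac table, and your slice computation $\mathbb V^e = V_2^e$ merely unpacks that table entry. The paper says nothing further.

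Your descent paragraph, however, is aimed at the wrong target. Since $A \subset \overline{\mathbb Q}$ has characteristic zero, any $f \in A[\mathbb V]^G_+$ already lies in $\mathbb C[\mathbb V]^G = \mathbb C[P_1, \ldots, P_n]$, so $f = Q(P_1, \ldots, P_n)$ for some $Q$ with complex coefficients; restricting to the slice $\mathbb V[0]$ and using that $A[\mathbb V[0]]$ is free over $A[e_1, \ldots, e_n]$ (the elementary symmetric functions) forces $Q$ to have coefficients in $A$, whence $f \in (P_1, \ldots, P_n)A[\mathbb V]$. No characteristic-$2$ invariant theory of $G_\Bbbk$ on $\mathbb V_\Bbbk$ is needed, and your Hilbert-function alternative would establish flatness of $A[\mathbb V]/(P_i)$ over $A$---which is the content of a later corollary---rather than the ideal equality at hand.
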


\begin{proof}
It is clear from the isomorphism $\mathbb C [ \mathbb V ] ^G \cong \mathbb C [ V _2 ] ^G$, which can be read off from the Dadok-Kac classification (\cite{DK} table 2).
\end{proof}

For the later use, we prove a lemma.

\begin{lemma}\label{pi}
Let $Y \in \mathrm{Alt} ( n )$ and let $Z \in \mathrm{Mat} ( n )$. Then, we have
$$P _i \left( \begin{matrix} Y & Z \\ - {} ^t Z & 0 \end{matrix} \right) = P _i \left( \begin{matrix} 0 & Z \\ - {} ^t Z & 0 \end{matrix} \right)$$
for each $i$.
\end{lemma}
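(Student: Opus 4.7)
The plan is to work directly from the defining formula \eqref{dP} for the Pfaffian. Since by construction $\sum_{i=0}^n t^{n-i} P_i(X) = \mathsf{Pf}(t\mathbf{J} - X)$, it suffices to prove the identity
$$\mathsf{Pf}\left( t\mathbf{J} - \begin{pmatrix} Y & Z \\ -{}^tZ & 0 \end{pmatrix} \right) = \mathsf{Pf}\left( t\mathbf{J} - \begin{pmatrix} 0 & Z \\ -{}^tZ & 0 \end{pmatrix} \right)$$
as polynomials in $t$; comparing coefficients of $t^{n-i}$ then yields the lemma. Writing out the $2n \times 2n$ alternating matrix $M := t\mathbf{J} - \begin{pmatrix} Y & Z \\ -{}^tZ & 0 \end{pmatrix}$ in block form, its top-left block is $-Y$, its top-right is $-t\cdot 1_n - Z$, its bottom-left is $t\cdot 1_n + {}^tZ$, and, crucially, its bottom-right block vanishes.

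Partition $\{1,\ldots,2n\}$ into the ``top'' indices $T = \{1,\ldots,n\}$ and the ``bottom'' indices $B = \{n+1,\ldots,2n\}$, and classify each pair appearing in the Pfaffian expansion as type $TT$, $BB$, or $TB$ according to which blocks its two indices lie in. Any summand of \eqref{dP} that contains a $BB$ pair vanishes, since the $(B,B)$ block of $M$ is zero. The key point is that, after discarding these, every surviving pair partition consists entirely of $TB$ pairs: if such a partition uses $k$ pairs of type $TT$, $k'$ pairs of type $BB$, and $m$ pairs of type $TB$, then enumerating the elements of $T$ and of $B$ separately gives $2k+m = n = 2k'+m$, hence $k = k'$, so requiring $k'=0$ forces $k=0$ as well.

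Each surviving summand is therefore a signed product of $n$ matrix entries drawn exclusively from the top-right and bottom-left blocks of $M$, and neither of those blocks involves $Y$. Hence $\mathsf{Pf}(M)$ does not depend on $Y$, and setting $Y = 0$ finishes the proof. The argument is essentially formal manipulation of \eqref{dP}; the only non-cosmetic step is the counting identity $k = k'$ on pair partitions, and I anticipate no serious obstacle.
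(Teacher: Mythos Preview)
Your proof is correct and follows essentially the same approach as the paper: both arguments expand $\mathsf{Pf}(t\mathbf{J}-X)$ via \eqref{dP} and observe that any term using a pair from the top-left block must also use a pair from the (vanishing) bottom-right block. Your counting identity $k=k'$ is exactly the pigeonhole step the paper invokes, just made explicit.
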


\begin{proof}
By the pigeon hole principle, if a Pfaffian term
$$\mathrm{sgn} ( \sigma ) x _{\sigma ( 1 ) \sigma ( 2 )} \cdots x _{\sigma ( 2 n - 1 ) \sigma ( 2 n )}$$
in $(\ref{dP})$ satisfies $\sigma ( 2 m - 1 ), \sigma ( 2m ) \le n$ for some $m$, then there exists $m ^{\prime}$ such that $\sigma ( 2 m ^{\prime} - 1 ), \sigma ( 2 m ^{\prime} ) > n$. Then, this term cannot contribute to $\mathsf{Pf} \left( \begin{matrix} Y & Z \\ - {} ^t Z & 0 \end{matrix} \right)$, which implies the result.
\end{proof}



\section{Geometric construction of $\mathfrak N$}
We retain the setting of the previous section. 

Let $F := G \times ^B \mathbb V ^+$. Consider the map
$$\nu : F = G \times ^B \mathbb V ^+ \longrightarrow \mathbb V.$$
We denote the specialization of $\nu$ to $\mathbb K$ and $\Bbbk$ by $\nu _{\mathbb K}$ and $\nu _{\Bbbk}$, respectively. 
Since the fiber of $\nu$ is naturally isomorphic to a closed subscheme of a flag variety, $\nu$ is projective.

\begin{lemma}\label{satV}
We have $G ( \mathbb V _0 \oplus \mathbb V _1 \oplus \mathbb V _2 ) = G ( \mathbb V [ 0 ] \oplus \mathbb V ^+ ) = \mathbb V$.
\end{lemma}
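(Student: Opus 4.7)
The plan is to prove the second identity $G \cdot (\mathbb V[0] \oplus \mathbb V^+) = \mathbb V$; the first then follows from the inclusion $\mathbb V[0] \oplus \mathbb V^+ \subset \mathbb V_0 \oplus \mathbb V_1 \oplus \mathbb V_2$, which holds because every nonzero weight of $\mathbb V^+$ (namely $\epsilon_i$, $\epsilon_i + \epsilon_j$, or $\epsilon_i - \epsilon_j$ with $i < j$) has non-negative $e$-weight, and $\mathbb V[0] \subset \mathbb V_0$.

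Writing $X := \mathbb V[0] \oplus \mathbb V^+$, note that $X$ is a sum of $T$-weight spaces whose set of weights is stable under addition by $R^+$, hence $B$-stable. One then has the morphism
\[
\eta \colon G \times^B X \longrightarrow \mathbb V, \qquad [g, x] \mapsto g x.
\]
The fibre of $\eta$ over $v$ equals the closed subvariety $\{ gB \in G/B : g^{-1} v \in X \}$ of the projective variety $G/B$, so $\eta$ is projective and its image $G \cdot X$ is closed in $\mathbb V$. It therefore suffices to show that $G \cdot X$ is dense in $\mathbb V$, as irreducibility will then force equality.

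For density, I would first compute the $G$-stabilizer of a regular element $v_0 = \sum_{i=1}^n t_i\, e_i \wedge f_i \in \mathbb V[0]$ with pairwise distinct units $t_i$. Under the identification $V_2 \cong \mathrm{Alt}(2n)$, the element $v_0$ corresponds to $\bigl(\begin{smallmatrix} 0 & T \\ -T & 0 \end{smallmatrix}\bigr)$ with $T = \mathrm{diag}(t_i)$, and the equations $g \mathbf J g^t = \mathbf J$ together with $g v_0 g^t = v_0$ force $g$ to commute with $\mathrm{diag}(T, T)$. Distinctness of the $t_i$ then forces $g$ to preserve each symplectic $2$-plane $\mathrm{span}(e_i, f_i)$, whence $Z_G(v_0) \cong \mathop{SL}(2)^n$, acting componentwise on $V_1 = \bigoplus_i \mathrm{span}(e_i, f_i)$.

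By the polar-representation structure of $(G, V_2)$ invoked in the proof of Proposition 2.1, every semisimple element of $V_2$ is $G$-conjugate into $\mathbb V[0]$, so $G \cdot \mathbb V[0]$ is the semisimple locus and hence dense open in $V_2$; consequently $V_1 \oplus (G \cdot \mathbb V[0])$ is dense open in $\mathbb V$. For any $v = v_1 + v_2$ in this subset, I would first apply some $g \in G$ to place $v_2$ into $\mathbb V[0]$, and then use the factorwise transitivity of $\mathop{SL}(2)^n = Z_G(g v_2)$ on nonzero vectors in each $2$-plane to move the $V_1$-component into $\mathrm{span}(e_1, \ldots, e_n) \subset \mathbb V^+$. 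This exhibits $v$ as an element of $G \cdot X$ and supplies the required dense open set. The main obstacle is the explicit identification $Z_G(v_0) \cong \mathop{SL}(2)^n$; once this matrix calculation is in hand, the rest of the proof is a routine combination of properness with the polar-representation density statement already embedded in the invariant theory invoked earlier.
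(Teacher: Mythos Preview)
Your argument is essentially correct and takes a genuinely different route from the paper's proof. One small imprecision: the semisimple locus $G \cdot \mathbb V[0]$ in $V_2$ is dense but not open (already for $\mathfrak{sl}_2$ the origin lies in the closure of nonzero nilpotents). What you actually need, and what your stabilizer computation uses, is the \emph{regular} semisimple locus $G \cdot \mathbb V[0]^{\mathrm{reg}}$, where $\mathbb V[0]^{\mathrm{reg}}$ consists of those $v_0$ with pairwise distinct $t_i$; this locus is open and dense, and on it the identification $Z_G(v_0) \cong \mathop{SL}(2)^n$ holds. With that adjustment the argument goes through: properness of $\eta$ gives closedness over $A$, and your explicit construction over $\mathbb K$ (or $\mathbb C$) exhibits a dense subset of $\mathbb V$ inside $G \cdot X$, forcing equality.

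The paper proceeds quite differently for the density step. After the same properness reduction, it invokes the identity $\mathfrak N_{\mathbb C} = \mathrm{Im}\,\nu_{\mathbb C}$ from \cite{K1} together with the transversality $\mathbb V[0]_{\mathbb C} \cap \mathfrak N_{\mathbb C} = \{0\}$ (read off from Proposition~\ref{defeq}), and concludes by a dimension count: since $\mathrm{Im}\,\widetilde\nu_{\mathbb C}$ contains both $\mathfrak N_{\mathbb C}$ and $\mathbb V[0]_{\mathbb C}$, its dimension is at least $\dim \mathfrak N_{\mathbb C} + \dim \mathbb V[0]_{\mathbb C} = \dim \mathbb V_{\mathbb C}$. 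Your approach is more self-contained, avoiding the forward reference to the structure of $\mathfrak N$ and the citation to \cite{K1}, at the cost of the explicit matrix verification of $Z_G(v_0)$. The paper's approach, by contrast, ties the lemma directly into the geometry of the exotic nilcone that is the paper's main object, which is economical given that those facts are needed elsewhere anyway.
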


\begin{proof}
The subgroup $G_0 B \subset G$ is parabolic. It preserves $( \mathbb V _0 \oplus \mathbb V _1 \oplus \mathbb V _2 ) \subset \mathbb V$. Hence, $G ( \mathbb V _0 \oplus \mathbb V _1 \oplus \mathbb V _2 )$ is a closed subscheme of $\mathbb V$. Thus, it suffices to show the corresponding statement over $\mathbb C$. It is well-known that
$$( G _0 ) _{\mathbb C} ( \mathbb V _0 \cap ( \mathbb V [0] \oplus \mathbb V ^+ ) ) _{\mathbb C} = ( \mathbb V _0 ) _{\mathbb C}.$$
Therefore, it suffices to show the map
$$\widetilde{\nu} : G \times ^{B} ( \mathbb V [0] \oplus \mathbb V ^+ ) \longrightarrow \mathbb V$$
extending $\nu$ is surjective after specializing to $\mathbb C$. We have $\mathfrak N _{\mathbb C} = \mathrm{Im} \nu _{\mathbb C}$ by \cite{K1} 1.2 1). By the proof of Proposition \ref{defeq}, we have $\mathbb V [ 0 ] _{\mathbb C} \cap \mathfrak N _{\mathbb C} = \{ 0 \}$. Therefore, we deduce that
$$\dim \mathrm{Im} \widetilde{\nu} _{\mathbb C} \ge \dim \mathfrak N _{\mathbb C} + \dim \mathbb V [ 0 ] _{\mathbb C} = \dim \mathbb V _{\mathbb C}$$
by $\mathbb V [ 0 ] _{\mathbb C} \subset \mathrm{Im} \widetilde{\nu} _{\mathbb C}$. Since $\mathrm{Im} \widetilde{\nu} _{\mathbb C} \subset \mathbb V _{\mathbb C}$ is a closed subscheme, we conclude the result.
\end{proof}

\begin{corollary}\label{satk}
We have $\mathfrak N ( \Bbbk ) = G ( \Bbbk ) \mathbb V ^+ ( \Bbbk ) \subset \mathbb V ( \Bbbk )$. \hfill $\Box$
\end{corollary}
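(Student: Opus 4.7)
The plan is to combine Lemma~\ref{satV} (specialized to $\Bbbk$) with a $T$-contraction argument, and then invoke Proposition~\ref{defeq} to rule out a residual zero-weight contribution.

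The easy direction $G(\Bbbk)\mathbb V^+(\Bbbk)\subset\mathfrak N(\Bbbk)$ is dispatched by choosing a regular dominant cocharacter $\rho:\Gm\to T$: every $v\in\mathbb V^+(\Bbbk)$ satisfies $\rho(t)v\to 0$ as $t\to 0$, and any homogeneous $f\in A[\mathbb V]^G_+$ is $T$-invariant, so the polynomial $t\mapsto f(\rho(t)v)$ is constant equal to $f(v)$ and vanishes at $t=0$; hence $f(v)=0$. This gives $\mathbb V^+(\Bbbk)\subset\mathfrak N(\Bbbk)$, and $G$-stability finishes the inclusion.

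For the reverse inclusion I would pick $v\in\mathfrak N(\Bbbk)$ and use Lemma~\ref{satV} specialized to $\Bbbk$ to write $v=g\cdot(v_0+v_+)$ with $g\in G(\Bbbk)$, $v_0\in\mathbb V[0](\Bbbk)$ and $v_+\in\mathbb V^+(\Bbbk)$; surjectivity of the associated proper morphism on $\Bbbk$-points uses that $\Bbbk$ is algebraically closed. Since $\mathfrak N$ is $G$- and $T$-stable, $v_0+v_+\in\mathfrak N(\Bbbk)$, and applying the same contraction $\rho$ the limit $v_0=\lim_{t\to 0}\rho(t)(v_0+v_+)$ lies in $\mathfrak N(\Bbbk)$ by closedness. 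It therefore suffices to show $\mathbb V[0](\Bbbk)\cap\mathfrak N(\Bbbk)=\{0\}$.

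This last step is a direct consequence of the computation carried out in the proof of Proposition~\ref{defeq}: since $V_1$ has no zero $T$-weight we have $\mathbb V[0]\subset V_2$, and under the identification $V_2\cong\mathrm{Alt}(2n)$ the subspace $\mathbb V[0]$ is the set of matrices $\bigl(\begin{smallmatrix}0&-Y\\{}^tY&0\end{smallmatrix}\bigr)$ with $Y$ diagonal. Restricting $P_1,\dots,P_n$ to such matrices gives, up to sign, the elementary symmetric functions in the diagonal entries of $Y$, whose common vanishing forces $Y=0$ in every characteristic. Thus $v_0=0$ and $v=g\cdot v_+\in G(\Bbbk)\mathbb V^+(\Bbbk)$, as required. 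The only point deserving a second look is that the $\rho$-contraction survives passage to characteristic two, which is painless since $\rho$ may be chosen over $\mathbb Z$ (for instance as the sum of fundamental coweights).
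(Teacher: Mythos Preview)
Your proof is correct, but it takes a genuinely different route from the paper's own argument.

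The paper uses the \emph{first} equality in Lemma~\ref{satV}, namely $G(\mathbb V_0\oplus\mathbb V_1\oplus\mathbb V_2)=\mathbb V$ (the grading by the semisimple element $e$), specializes it to $\Bbbk$, and then invokes Lemma~\ref{pi} to split $(\mathfrak N\cap(\mathbb V_0\oplus\mathbb V_1\oplus\mathbb V_2))(\Bbbk)$ as $(\mathfrak N\cap\mathbb V_0)(\Bbbk)\times(\mathbb V_1\oplus\mathbb V_2)(\Bbbk)$. The remaining piece $(\mathfrak N\cap\mathbb V_0)(\Bbbk)$ is identified with the nilpotent cone of $\mathfrak{gl}_n$ over $\Bbbk$, for which the equality $G_0(\Bbbk)(\mathbb V_0\cap\mathbb V^+)(\Bbbk)=(\mathfrak N\cap\mathbb V_0)(\Bbbk)$ is classical.

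You instead use the \emph{second} equality in Lemma~\ref{satV}, $G(\mathbb V[0]\oplus\mathbb V^+)=\mathbb V$, and then a torus contraction along a regular dominant cocharacter to isolate the $\mathbb V[0]$-component, reducing everything to the characteristic-free fact $\mathbb V[0]\cap\mathfrak N=\{0\}$, which you read off from the elementary symmetric function description in the proof of Proposition~\ref{defeq}. This is more self-contained: it bypasses Lemma~\ref{pi} entirely and requires no appeal to the structure of the $\mathop{GL}(n)$ nilpotent cone. The paper's approach, by contrast, exercises the parabolic reduction to $G_0\cong\mathop{GL}(n)$ that is reused throughout the later sections, so it fits the overall architecture better even if it is less economical for this single statement.
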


\begin{proof}
By Lemma \ref{satV} and the fact that the map $\widetilde{\nu}$ (borrowed from the proof of Lemma \ref{satV}) is flat over $A$, we deduce that $G ( \Bbbk ) ( \mathbb V _0 \oplus \mathbb V _1 \oplus \mathbb V _2 ) ( \Bbbk ) = \mathbb V ( \Bbbk )$. Since $\mathfrak N ( \Bbbk )$ is $G ( \Bbbk )$-stable, we have
$$\mathfrak N ( \Bbbk ) = G ( \Bbbk ) ( \mathfrak N \cap  \mathbb V _0 \oplus \mathbb V _1 \oplus \mathbb V _2 ) ( \Bbbk ).$$
By Lemma \ref{pi}, we have
$$( \mathfrak N \cap  \mathbb V _0 \oplus \mathbb V _1 \oplus \mathbb V _2 ) ( \Bbbk ) = ( \mathfrak N \cap  \mathbb V _0 )  ( \Bbbk ) \times (  \mathbb V _1 \oplus \mathbb V _2 ) ( \Bbbk ).$$
By the description of the nilpotent cone of $\mathfrak g _0 ( \Bbbk ) \cong \mathbb V _0 ( \Bbbk )$, we deduce
$$( \mathfrak N \cap  \mathbb V _0 )  ( \Bbbk ) = G _0 ( \Bbbk ) (  \mathbb V _0 \cap \mathbb V ^+ )  ( \Bbbk ).$$
Taking account into the fact that $\mathbb V _1$ and $\mathbb V _2$ are $G _0$-stable, we conclude the result.
\end{proof}

\begin{lemma}\label{exotic semi-small}
The map $\nu$ is semi-small with respect to the stratification given by $G$-orbits. The same holds for $\nu _{\Bbbk}$ provided if $G ( \Bbbk ) \backslash \mathfrak N ( \Bbbk ) < \infty$.
\end{lemma}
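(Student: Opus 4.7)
The plan is to translate semi-smallness into an upper bound on the dimensions of the orbital varieties $\mathcal O \cap \mathbb V^{+}$, and then to invoke the equidimensionality statement developed in \S\S 7--8.

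First, I would realize the fiber over $x \in \mathfrak N$ as a closed subvariety of the flag variety $\mathcal B$:
$$\nu^{-1}(x) \;\cong\; \bigl\{\, gB \in \mathcal B : g^{-1} x \in \mathbb V^{+} \,\bigr\}.$$
Since $\nu$ is $G$-equivariant, $\dim \nu^{-1}(x)$ depends only on the $G$-orbit $\mathcal O$ of $x$, and set-theoretically $\nu^{-1}(\mathcal O) = G \times^{B} (\mathcal O \cap \mathbb V^{+})$. Passing to a top-dimensional component yields
$$\dim \nu^{-1}(x) = \dim G/B + \dim (\mathcal O \cap \mathbb V^{+}) - \dim \mathcal O.$$

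Second, I use the numerical coincidence $\dim G/B = \dim \mathbb V^{+} = n^{2}$, so that $\dim F = 2 n^{2}$. Combined with Corollary \ref{satk} and the birationality of $\nu_{\mathbb K}$ on the open $G_{\mathbb K}$-orbit of $\mathfrak N_{\mathbb K}$ (from \cite{K1}), one obtains $\dim F = \dim \mathfrak N$. An elementary manipulation then shows that the semi-small inequality $2\dim \nu^{-1}(x) \le \dim F - \dim \mathcal O$ is equivalent to the Lagrangian-type estimate
$$\dim (\mathcal O \cap \mathbb V^{+}) \;\le\; \tfrac{1}{2} \dim \mathcal O. \qquad (\star)$$

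Third, I would verify $(\star)$. This is the exotic analogue of the Spaltenstein equidimensionality theorem for orbital varieties, and is precisely the content of \S\S 7--8 of the present paper, which can be invoked here (or, if preferred, proved independently by a Bruhat induction on $W$ adapted to the filtration of $\mathbb V^{+}$ by $B$-stable subspaces). For $\nu_{\Bbbk}$: under the finiteness assumption on $G(\Bbbk)$-orbits, the orbit stratification is well-defined on $\mathfrak N_{\Bbbk}$; using the $A$-flatness of $F$ and $\mathbb V$, the specialization of orbit closures, and upper semicontinuity of fiber dimensions, the bound $(\star)$ specializes from $\mathbb K$ to $\Bbbk$, and the same dimension count gives semi-smallness of $\nu_{\Bbbk}$.

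The main obstacle is the bound $(\star)$: it encodes the fact that each orbital variety $\mathcal O \cap \mathbb V^{+}$ is Lagrangian inside $\mathcal O$ with respect to a suitable (exotic) symplectic form. Adapting Spaltenstein's original adjoint-case argument to the present non-adjoint setting $\mathbb V = V_{1} \oplus V_{2}$ is the principal geometric difficulty, and is the reason the full proof of $(\star)$ is deferred to \S\S 7--8.
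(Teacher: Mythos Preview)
Your reduction of semi-smallness to the bound $(\star)$ is correct, but the argument is circular as written. You propose to deduce $(\star)$ from \S\S 7--8, yet Lemma~\ref{cgineq} there opens with ``by the (strict) semi-smallness of $\nu$'': the equidimensionality of orbital varieties is a \emph{consequence} of semi-smallness in this paper, not an input to it. The logical flow is Lemma~\ref{exotic semi-small} $\Rightarrow$ Lemma~\ref{cgineq} $\Rightarrow$ Theorem~\ref{equigen}, so invoking \S\S 7--8 here would beg the question.

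The paper's own proof defers to \cite{K1} \S 1.1, where the argument is the direct Steinberg-variety computation you allude to only as an aside. The key combinatorial fact is that the weight set $\Psi(\mathbb V^+)$ has the same ``length'' behaviour as a positive root system: each simple reflection $s_i$ sends exactly one weight of $\mathbb V^+$ to a negative weight (namely $\epsilon_i - \epsilon_{i+1}$ for $i<n$, and $\epsilon_n$ for $i=n$), so $\dim(\mathbb V^+ \cap {}^w\mathbb V^+) = \dim \mathbb V^+ - \ell(w)$ for all $w \in W$. Stratifying $Z = F \times_{\mathfrak N} F$ by relative position then gives $\dim Z_w = \dim \mathcal B + \ell(w) + \dim(\mathbb V^+ \cap {}^w\mathbb V^+) = \dim \mathfrak N$ for every $w$, whence $\dim Z = \dim \mathfrak N$ and semi-smallness follows. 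This is the argument you should have made primary rather than parenthetical.

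Your treatment of $\nu_{\Bbbk}$ is also problematic: upper semicontinuity of fibre dimension says $\dim \nu_{\Bbbk}^{-1}(x_{\Bbbk}) \ge \dim \nu_{\mathbb K}^{-1}(x_{\mathbb K})$, which is the wrong inequality for transporting $(\star)$ to $\Bbbk$. The Bruhat-stratification argument above, however, is purely combinatorial and works verbatim over any field; the finiteness hypothesis on $G(\Bbbk)\backslash \mathfrak N(\Bbbk)$ is needed only so that the orbit stratification makes sense.
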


\begin{proof}
This is a straight-forward generalization of the results in \cite{K1} \S 1.1.
\end{proof}

\begin{remark}\label{exotic str semi-small}
By a result of Borho-MacPherson (c.f. \cite{CG} \S 8.9), our exotic Springer correspondence (a bijection between $G _{\mathbb C}$-orbits of $\mathfrak N _{\mathbb C}$ and irreducible representations of $W$) implies that $\nu _{\mathbb C}$ must be strictly semi-small. (Otherwise there must be some $G _{\mathbb C}$-orbit which does not correspond to an irreducible representation of $W$.)
\end{remark}

\begin{proposition}\label{LI}
The differentials $d P _1, \ldots, d P _n$ of the polynomials $P _1, \ldots, P _n$ are linearly independent up to codimension two subscheme of $\mathfrak N$.
\end{proposition}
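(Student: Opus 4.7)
The strategy is to reduce, via a $G$-equivariant slice argument, to the classical fact that in the nilpotent cone of $\mathfrak{gl} _n$ the non-regular locus has codimension two.

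First, because each $P _i$ depends only on the $V _2$-component of $\mathbb V = V _1 \oplus V _2$, the bad locus $Z \subset \mathfrak N$ where $dP _1, \ldots, dP _n$ fail to be linearly independent has the form $V _1 \times Z'$ for some $Z' \subset \mathfrak N \cap V _2 =: \mathfrak N'$, and $\mathrm{codim} _{\mathfrak N} Z = \mathrm{codim} _{\mathfrak N'} Z'$. It therefore suffices to bound this codimension by two.

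Consider the slice $S := \mathbb V _0 \oplus \mathbb V _2 \subset V _2$, which is stable under the parabolic subgroup $Q := G _0 G _2 \subset G$, and observe that the argument of Lemma \ref{satV} applied to $V _2$ in place of $\mathbb V$ yields $G \cdot S = V _2$. By Lemma \ref{pi}, $P _i | _S$ depends only on the $\mathbb V _0$-coordinate and equals, up to sign, the $i$-th coefficient of the characteristic polynomial of the $n \times n$ matrix $Y \in \mathbb V _0 \cong \mathrm{Mat} ( n )$; consequently $\mathfrak N' \cap S = \mathcal N _{\mathfrak{gl} _n} \oplus \mathbb V _2$ and $Z' \cap S = Z _{\mathfrak{gl} _n} \oplus \mathbb V _2$, where $Z _{\mathfrak{gl} _n}$ denotes the subset of $\mathcal N _{\mathfrak{gl} _n}$ on which the differentials of the characteristic-polynomial coefficients are linearly dependent. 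Classically, $Z _{\mathfrak{gl} _n}$ coincides with the closure of the subregular nilpotent orbit of Jordan type $( n - 1, 1 )$, hence is of codimension two in $\mathcal N _{\mathfrak{gl} _n}$.

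To transfer this estimate to $\mathfrak N'$, note that $Z'$ is $G$-stable (the $P _i$ being $G$-invariant), so $Z' = G \cdot ( Z' \cap S )$. For the surjection $\phi : G \times ^Q ( \mathfrak N' \cap S ) \twoheadrightarrow \mathfrak N'$ one has $\phi ^{-1} ( Z' ) = G \times ^Q ( Z' \cap S )$; denoting by $d$ the generic fiber dimension of $\phi$, upper semi-continuity of fiber dimension gives $\dim Z' \leq \dim G / Q + \dim ( Z' \cap S ) - d$, while $\dim \mathfrak N' = \dim G / Q + \dim ( \mathfrak N' \cap S ) - d$. Subtracting yields $\mathrm{codim} _{\mathfrak N'} Z' \geq \mathrm{codim} _{\mathfrak N' \cap S} ( Z' \cap S ) = 2$, as required.

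The main obstacle is the identification of a $G$-saturating slice $S$ on which the restrictions of the $P _i$ are tractable; once Lemmas \ref{satV} and \ref{pi} provide such a slice, both the classical codimension estimate on $\mathcal N _{\mathfrak{gl} _n}$ and the semi-continuity transfer are essentially formal.
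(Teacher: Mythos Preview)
Your argument is correct and shares the same core reduction as the paper: both use Lemma~\ref{satV} to pass to the slice $\mathbb V_0 \oplus \mathbb V_2$ (the paper uses $\mathbb V_0 \oplus \mathbb V_1 \oplus \mathbb V_2$, but the $\mathbb V_1$ factor is inert), and both use Lemma~\ref{pi} to identify the restricted invariants with the characteristic-polynomial coefficients of $\mathfrak{gl}_n$. The difference lies in how the codimension bound is concluded. The paper invokes \cite{K1}~1.2~6) to get that every non-dense $G_{\mathbb K}$-orbit in $\mathfrak N_{\mathbb K}$ has codimension $\geq 2$, so over $\mathbb K$ it suffices that the bad locus is proper; then over $\Bbbk$ it only checks linear independence at a single regular nilpotent point. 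Your route instead transfers the codimension-two estimate directly from $\mathcal N_{\mathfrak{gl}_n}$ via the collapsing map $G \times^Q(\mathfrak N' \cap S) \to \mathfrak N'$ and semi-continuity of fibre dimension. This is more self-contained (no appeal to \cite{K1}) and works uniformly in the two fibres, at the cost of a short extra geometric argument.

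Two small points. First, your claimed equality $Z' \cap S = Z_{\mathfrak{gl}_n} \oplus \mathbb V_2$ is only justified as the inclusion $\subset$: Lemma~\ref{pi} controls the $\mathbb V_0$- and $\mathbb V_2$-components of $dP_i$ at points of $S$, but says nothing about the $\mathbb V_{-2}$-component, so a priori the full differentials could remain independent even when their $\mathbb V_0$-restrictions are not. Fortunately only the inclusion is needed for your dimension estimate. Second, since the proposition is about the scheme $\mathfrak N$ over $A$, you should either run the transfer argument over $A$ directly, or note that codimension $\geq 2$ in each fibre (over $\mathbb K$ and over $\Bbbk$) forces codimension $\geq 2$ in the total space, since $A$ is a DVR.
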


\begin{proof}
By \cite{K1} 1.2 6), a non-dense orbit of $G _{\mathbb K}$ in $\mathfrak N _{\mathbb K}$ has codimension at least two. Hence it suffices to check the assertion for an open subset of $\mathfrak N _{\Bbbk}$. By Lemma \ref{satV}, we can replace $\mathfrak N _{\Bbbk}$ with $\mathfrak N _{\Bbbk} \cap ( \mathbb V _0 \oplus \mathbb V _1 \oplus \mathbb V _2 ) _{\Bbbk}$. By Lemma \ref{pi} and the proof of Proposition \ref{defeq}, it suffices to prove the assertion on the dense open $( G _0 ) _{\Bbbk}$-orbit of $( \mathfrak N _{\Bbbk} \cap (\mathbb V _0 ) _{\Bbbk} )$, which is the regular nilpotent orbit of $\mathop{GL} ( n ) _{\Bbbk}$. This is well-known (or is easily checked).
\end{proof}

\begin{corollary}
The scheme $\mathfrak N$ is regular in codimension one.
\end{corollary}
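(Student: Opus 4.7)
The plan is to derive the corollary from Proposition \ref{LI} via the Jacobian criterion for smoothness. First I would record the structural input: by the preceding corollary, $\mathfrak N$ is the closed subscheme of the $A$-smooth affine space $\mathbb V$ (of relative dimension $2n^2+n$) cut out by the $n$ functions $P_1,\ldots,P_n$. Combined with the dimension count $\dim \mathfrak N_{\mathbb K}=2n^2$ from \cite{K1} (and the analogous bound on the special fiber that is implicit in Corollary \ref{satk}), this exhibits $\mathfrak N$ as a relative complete intersection over $\Spec A$ of relative dimension $2n^2$; in particular the $P_i$ form a regular sequence and $\mathfrak N$ is flat over $A$.

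Next, at any point $x \in \mathfrak N$ at which $dP_1(x),\ldots,dP_n(x)$ are linearly independent in the cotangent space of the fiber through $x$, the Jacobian criterion gives that $\mathfrak N \to \Spec A$ is smooth at $x$; since $A$ is a regular local ring (its maximal ideal is principal, generated by $2$), smoothness over the base transports regularity, so $\mathcal O_{\mathfrak N,x}$ is a regular local ring. Proposition \ref{LI} says precisely that the complementary locus has codimension at least two in $\mathfrak N$, so the non-regular locus of $\mathfrak N$ has codimension at least two. This is exactly the assertion that $\mathfrak N$ is regular in codimension one.

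In short, the real content has already been packaged into Proposition \ref{LI}; the corollary itself is a formal consequence. The only step that warrants any care is the transfer from the fiberwise linear independence that Proposition \ref{LI} actually verifies (at both the generic and the special fiber) to absolute regularity of the total scheme, and this is routine once one knows that the base $\Spec A$ is regular and that $\mathfrak N$ is a complete intersection of the expected relative dimension. I do not expect any serious obstacle beyond this bookkeeping.
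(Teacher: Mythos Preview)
Your argument is correct and is essentially the same as the paper's: the one-line proof in the paper (``$\mathfrak N$ is a complete intersection up to codimension two locus'') is exactly the Jacobian-criterion deduction from Proposition~\ref{LI} that you spell out, just compressed. Your extra care about passing from fiberwise smoothness to absolute regularity via the regularity of $A$ is the right bookkeeping, and nothing is missing.
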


\begin{proof}
The scheme $\mathfrak N$ is a complete intersection up to codimension two locus.
\end{proof}

\begin{proposition}\label{CM}
The scheme $\mathfrak N$ is Cohen-Macaulay.
\end{proposition}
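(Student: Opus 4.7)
The strategy is to exhibit $\mathfrak N$ as a complete intersection, from which the Cohen--Macaulay property is automatic. By the corollary to Proposition~\ref{defeq}, $\mathfrak N$ is cut out of the smooth $A$-variety $\mathbb V$ (of relative dimension $2n+(2n^2-n)=2n^2+n$) by the $n$ equations $P_1,\ldots,P_n$. Since $A[\mathbb V]$ is a polynomial ring over the regular local ring $A$ and hence is Cohen--Macaulay, the standard complete-intersection criterion reduces the claim to showing the ideal $(P_1,\ldots,P_n)$ has height exactly $n$, or equivalently $\dim\mathfrak N_s=\dim\mathbb V_s-n=2n^2$ on each fiber $s\in\Spec A$.

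For the upper bound $\dim\mathfrak N_s\leq 2n^2$, one invokes Corollary~\ref{satk}: $\mathfrak N(\Bbbk)$ is the image of the proper morphism $\nu_{\Bbbk}:F=G\times^B\mathbb V^+\to\mathbb V_{\Bbbk}$, and a direct $T$-weight count gives $\dim F=\dim G/B+\dim\mathbb V^+=n^2+n^2=2n^2$ (the $n$ positive weights $\epsilon_i$ of $V_1$ together with the $n(n-1)$ weights $\epsilon_i\pm\epsilon_j$ with $i<j$ of $V_2$, against the $n^2$ positive roots of type $C_n$). The corresponding bound over $\mathbb K$ follows from \cite{K1}~1.2. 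The matching lower bound $\dim\mathfrak N_s\geq 2n^2$ is just Krull's Hauptidealsatz applied to $n$ generators in the regular ring $\kappa(s)[\mathbb V]$.

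Combining the two fiberwise dimension equalities forces $(P_1,\ldots,P_n)$ to have height $n$ in $A[\mathbb V]$; because the $P_i$ have integer coefficients, this is most cleanly checked first over the Noetherian local ring $\mathbb Z_{(2)}$ and then transported to $A$ via the flat extension $\mathbb Z_{(2)}\to A$. The sequence $P_1,\ldots,P_n$ is therefore regular in the Cohen--Macaulay ring $A[\mathbb V]$, so $\mathfrak N$ is a complete intersection, hence Cohen--Macaulay.

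The only substantive step is the bound $\dim\mathfrak N_{\Bbbk}\leq 2n^2$: one cannot appeal directly to the classical characteristic-zero dimension formula for nilpotent cones, and the result rests on the surjectivity statement of Corollary~\ref{satk} together with the explicit weight count for $\mathbb V^+$. Everything else (Krull's bound, the equivalence between height $n$ and regular sequence of length $n$ in a Cohen--Macaulay ambient, and flat base change) is formal.
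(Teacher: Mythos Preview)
Your proof is correct but takes a genuinely different route from the paper's. You show $\mathfrak N$ is a complete intersection via a fiberwise dimension count: the upper bound $\dim\mathfrak N_{\Bbbk}\le 2n^2$ comes from Corollary~\ref{satk} and the dimension of $F=G\times^B\mathbb V^+$, the lower bound from Krull, and then a regular sequence in the Cohen--Macaulay ring $A[\mathbb V]$ does the rest. The paper instead argues that $A[\mathfrak N]$ is \emph{free} over the polynomial subring $A[\mathbb V/\mathbb V[0]]$: it invokes the Pittie--Steinberg theorem to get $A[\mathbb V[0]]$ free over $A[\mathbb V[0]]^{\mathfrak S_n}=A[P_1,\ldots,P_n]$, tensors up so that $A[\mathbb V]$ is free over $A[\mathbb V/\mathbb V[0]]\otimes_A A[\mathbb V]^G$, and then base-changes along $A[\mathbb V]^G\to A$.

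Your argument is more elementary and bypasses the Pittie--Steinberg input; it is in fact the natural continuation of Proposition~\ref{LI}, which already shows the $dP_i$ are independent generically. The paper's argument buys a bit more: freeness over a polynomial ring immediately gives the subsequent corollary (flatness over $A$), whereas from complete intersection you would recover flatness via miracle flatness over the regular base $A$. One small simplification: the detour through $\mathbb Z_{(2)}$ is unnecessary, since $A$ is itself a DVR (a directed union of DVRs with common uniformizer $2$), hence Noetherian and regular, so the height computation can be done directly in $A[\mathbb V]$.
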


\begin{proof}
We have
$$A [ \mathbb V [ 0 ] ] ^{\mathfrak S _n} \cong A [ \mathbb V ] ^G = A [ P _1, \ldots P _n ].$$
As a consequence, we deduce that $A [ \mathbb V [ 0 ] ]$ is a free $A [ \mathbb V [ 0 ] ] ^{\mathfrak S _n}$-module by the Pittie-Steinberg theorem. It follows that the multiplication map $A [ \mathbb V / \mathbb V [ 0 ] ] \otimes _A A [ \mathbb V ] ^G \rightarrow A [ \mathbb V ]$ equip $A [ \mathbb V ]$ the structure of a free $A [ \mathbb V / \mathbb V [ 0 ] ] \otimes _A A [ \mathbb V ] ^G$-module. We have $A [ \mathfrak N ] \cong A [ \mathbb V ] \otimes _{A [ \mathbb V ] ^G} A$. This is a free $A [ \mathbb V / \mathbb V [ 0 ] ]$-module, which implies that $\mathfrak N$ is Cohen-Macaulay.
\end{proof}

\begin{corollary}
The scheme $\mathfrak N$ is flat over $A$.
\end{corollary}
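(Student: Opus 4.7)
The plan is to read off the flatness directly from the freeness statement that was just established in the proof of Proposition \ref{CM}. Recall that $A$ is a local ring with maximal ideal $(2)$ and residue field $\Bbbk = \overline{\mathbb F}_2$; flatness of an $A$-algebra is equivalent to its underlying $A$-module being torsion-free, and in particular it is implied by freeness over $A$.

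First, I would note that the proof of Proposition \ref{CM} actually produces more than Cohen–Macaulayness: it exhibits $A[\mathfrak N] \cong A[\mathbb V]\otimes_{A[\mathbb V]^G} A$ as a free module over the polynomial subring $A[\mathbb V/\mathbb V[0]] \subset A[\mathbb V]$. Since $A[\mathbb V/\mathbb V[0]]$ is a polynomial algebra over $A$, it is itself a free $A$-module (with the obvious monomial basis in the linear coordinates on $\mathbb V/\mathbb V[0]$). Composing the two free-module structures, $A[\mathfrak N]$ is free as an $A$-module, and hence flat over $A$.

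Thus the only real step is the bookkeeping of free bases: choose an $A[\mathbb V/\mathbb V[0]]$-basis $\{f_\alpha\}$ of $A[\mathfrak N]$ (produced by the Pittie–Steinberg argument in the proof of Proposition \ref{CM}) and an $A$-basis $\{m_\beta\}$ of monomials in the variables generating $A[\mathbb V/\mathbb V[0]]$; then $\{f_\alpha m_\beta\}$ is an $A$-basis of $A[\mathfrak N]$. No further obstacle arises: there is no hidden torsion phenomenon at the prime $(2)$ precisely because the Pittie–Steinberg freeness is valid over $\mathbb Z$, and hence over $A$, so the characteristic-two specialization behaves as well as the generic one. The corollary follows.
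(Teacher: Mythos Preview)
Your proposal is correct and follows essentially the same route as the paper: both deduce flatness from the freeness of $A[\mathfrak N]$ over $A[\mathbb V/\mathbb V[0]]$ established in the proof of Proposition~\ref{CM}, combined with the fact that the polynomial ring $A[\mathbb V/\mathbb V[0]]$ is itself free (hence flat) over $A$. Your version is slightly more explicit than the paper's one-line proof in spelling out the composition of free-module structures, but the content is the same.
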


\begin{proof}
A free $A [ \mathbb V / \mathbb V [ 0 ] ]$-module is automatically flat over $A [ \mathbb V / \mathbb V [ 0 ] ]$.
\end{proof}

\begin{theorem}\label{gen_normal}
The scheme $\mathfrak N$ is normal.
\end{theorem}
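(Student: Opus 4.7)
The plan is to apply Serre's criterion for normality: a Noetherian scheme is normal if and only if it satisfies conditions R1 (regular in codimension one) and S2. Both ingredients have been assembled in the preceding sequence of results, so the proof should amount to citing them in the right order.

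First, I would invoke the corollary to Proposition \ref{LI}, which asserts precisely R1. The underlying reason, already set up, is that $(P_1, \ldots, P_n)$ cuts $\mathfrak N$ out of $\mathbb V$ as a complete intersection away from a codimension-two subscheme, by the linear independence of $dP_1, \ldots, dP_n$ established through reduction to the regular nilpotent locus in $\mathbb V_0$ for $\mathop{GL}(n)$. Hence the singular locus of $\mathfrak N$ lives in codimension at least two, which is R1.

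Next, S2 comes for free from the Cohen-Macaulay property proved in Proposition \ref{CM}: the concrete freeness of $A[\mathbb V]$ over $A[\mathbb V/\mathbb V[0]] \otimes_A A[\mathbb V]^G$ (via Pittie-Steinberg) identifies $A[\mathfrak N]$ with a free module over a polynomial ring, so every local ring of $\mathfrak N$ is Cohen-Macaulay and therefore satisfies $S_k$ for all $k$. Serre's criterion R1 $+$ S2 then forces $\mathfrak N$ to be normal, completing the proof.

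I do not expect a substantive obstacle here; the hard work was already carried out in Sections 2 and 3. The only point to keep in mind is that Serre's criterion is applied to the local rings of the total space $\mathfrak N$ over $A$ and not fiberwise, but since both R1 and S2 are intrinsic conditions on those local rings (and the statements \emph{Proposition \ref{LI}} and \emph{Proposition \ref{CM}} have been established at this integral level), no extra argument is needed to pass from the ingredients to normality.
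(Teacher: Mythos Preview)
Your argument is correct: Serre's criterion (R1 from the corollary to Proposition~\ref{LI}, S2 from the Cohen--Macaulay property of Proposition~\ref{CM}) already yields that every local ring of $\mathfrak N$ is an integrally closed domain, which is precisely normality. If one prefers the version of Serre's criterion stated for integral schemes, the missing observation is that $\mathfrak N$ is connected---immediate because $A[\mathfrak N]$ is positively graded with degree-zero part the local ring $A$, hence has no nontrivial idempotents---so R1 + S2 forces it to be a single normal domain.

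The paper takes a noticeably longer route. After invoking Serre, it spends the bulk of the proof establishing \emph{integrality} of $\mathfrak N$ directly: it shows that the slice $\mathfrak N \,\dot\cap\, (V_1 \oplus \mathbb V_0 \oplus \mathbb V_2)$ is integral, then uses Zariski density of $G(A)$ in $G$ together with Lemma~\ref{satV} to identify the defining ideal $(P_1,\ldots,P_n)$ with $\langle \mathbb K[\mathbb V]^G_+\rangle \cap A[\mathbb V]$, and finally appeals to $(\nu_{\mathbb C})_*\mathcal O_{F_{\mathbb C}} \cong \mathbb C[\mathfrak N]$ (from \cite{K1}) to see that this ideal is prime. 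Your shortcut avoids all of this.

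What the paper's approach buys is the intermediate identity
\[
\bigl\langle A[\mathbb V]^G_+ \bigr\rangle \;=\; \bigl\langle \mathbb K[\mathbb V]^G_+ \bigr\rangle \cap A[\mathbb V],
\]
which is quoted verbatim in the very next proof (Theorem~\ref{moment image}). So while your argument suffices for Theorem~\ref{gen_normal} itself, you would need to supply this ideal comparison separately when you reach Theorem~\ref{moment image}.
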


\begin{proof}
By the Serre criterion and Propositions \ref{LI} and \ref{CM}, it suffices to show that $\mathfrak N$ is integral. The intersection 
$$\mathfrak N \dot{\cap} ( V _1 \oplus \mathbb V _0 \oplus \mathbb V _2 )$$
is integral since $\mathfrak N \dot{\cap} \mathbb V _0$ is so. Let $I _1 = ( P _i )$ and $I _2 := ( x _{i j} = 0 ; i , j > n )$ be ideals of $A [ \mathbb V ]$, where $\{ x _{ij} \} \in \mathrm{Alt} ( 2n ) \cong V _2$. Then, the ideal $I := I _0 + I _1$ is prime. The set of $A$-valued points $T ( A ) \cong ( A ^{\times} ) ^n$ is Zariski dense in $T$. By the Bruhat decomposition, it follows that $G ( A )$ is Zariski dense in $G$. By Lemma \ref{satV}, this implies that
\begin{align*}
I _1 = I _1 + \bigcap _{g \in G ( A )} ( g ^* I _2  ) = &  \bigcap _{g \in G ( A )} ( I _1 + g ^* I _2  ) = \bigcap _{g \in G ( A )} g ^* I \\
 = & \bigcap _{g \in G ( A )} ( g ^* I \otimes _A \mathbb K ) \cap A [ \mathbb V ] = \left< \mathbb K [ \mathbb V ] _+ ^G \right> \cap A [ \mathbb V ].
\end{align*}
We have $( \nu _{\mathbb C} ) _* \mathcal O _{F _{\mathbb C}} \cong \mathbb C [ \mathfrak N ]$ by \cite{K1} 1.2 and the Zariski main theorem. In particular, $\mathbb C [ \mathfrak N ]$ and its subring $\mathbb K [ \mathfrak N ]$ are integral. Therefore, the RHS is an ideal whose quotient does not contain a zero divisor.
\end{proof}

\begin{theorem}\label{moment image}
The image of $\nu$ is equal to $\mathfrak N$.
\end{theorem}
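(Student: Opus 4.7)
The plan is to prove the two inclusions $\mathrm{Im}(\nu) \subseteq \mathfrak{N}$ and $\mathfrak{N} \subseteq \mathrm{Im}(\nu)$ separately, and then promote the resulting set-theoretic equality to a scheme-theoretic statement via reducedness. Since $\nu$ is projective (as noted just before Lemma \ref{satV}), $\mathrm{Im}(\nu)$ is a closed subscheme of $\mathbb{V}$; since $F = G \times^B \mathbb{V}^+$ is a vector bundle over the smooth scheme $G/B$ and is therefore reduced, the scheme-theoretic image agrees with its reduced-induced subscheme structure in $\mathbb{V}$. Meanwhile, $\mathfrak{N}$ is integral by (the proof of) Theorem \ref{gen_normal}. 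Hence it will suffice to establish the equality on underlying topological spaces.

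For the inclusion $\mathrm{Im}(\nu) \subseteq \mathfrak{N}$, I will show $\mathbb{V}^+ \subseteq \mathfrak{N}$ and then invoke the $G$-stability of $\mathfrak{N}$. Choose the cocharacter $\rho : \mathbb{G}_m \to T$ determined by $\epsilon_i(\rho(t)) = t^{n+1-i}$. A direct inspection of the weight list for $\mathbb{V}^+$---namely $\epsilon_i$, $\epsilon_i + \epsilon_j$ ($i<j$), and $\epsilon_i - \epsilon_j$ ($i<j$)---shows that every $T$-weight appearing in $\mathbb{V}^+$ pairs strictly positively with $\rho$. Consequently $\lim_{t \to 0} \rho(t) \cdot v = 0$ for every $v \in \mathbb{V}^+$, and for any $f \in A[\mathbb{V}]^G_+$ we obtain $f(v) = f(\rho(t) \cdot v) = f(0) = 0$. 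Thus $\mathbb{V}^+ \subseteq \mathfrak{N}$, and $G$-stability yields $\mathrm{Im}(\nu) = G \cdot \mathbb{V}^+ \subseteq \mathfrak{N}$.

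For the reverse inclusion, \cite{K1} 1.2 1) (already invoked in the proof of Lemma \ref{satV}) yields $\nu_{\mathbb{K}}(F_{\mathbb{K}}) = \mathfrak{N}_{\mathbb{K}}$. Since $\mathfrak{N}$ is integral and flat over $A$ (the Corollary following Proposition \ref{CM}), the unique generic point $\eta$ of $\mathfrak{N}$ necessarily lies over the generic point of $\mathrm{Spec}\, A$, whence $\eta \in \mathfrak{N}_{\mathbb{K}} \subseteq \mathrm{Im}(\nu)$. Taking closures in $\mathbb{V}$ and using irreducibility of $\mathfrak{N}$ gives $\mathrm{Im}(\nu) \supseteq \overline{\{\eta\}} = \mathfrak{N}$.

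Combining the two inclusions gives set-theoretic equality, which by the reducedness observation from the opening paragraph is automatically scheme-theoretic. The argument is essentially routine given the accumulated prior results: the only mildly substantive step is producing an explicit cocharacter contracting $\mathbb{V}^+$ to the origin, which I do not regard as a genuine obstacle. (If one wished to avoid \cite{K1} entirely, one could alternatively combine Corollary \ref{satk} with a similar generic-fiber surjectivity argument, but invoking \cite{K1} is more direct.)
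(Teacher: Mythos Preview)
Your argument is correct. Both inclusions are clean: the contracting cocharacter shows $\mathbb V^+ \subseteq \mathfrak N$, and the generic-point closure argument shows $\mathfrak N \subseteq \mathrm{Im}(\nu)$ once you know surjectivity on the generic fiber. The passage from set-theoretic to scheme-theoretic equality via reducedness of $F$ and integrality of $\mathfrak N$ is also fine. One small remark: the result you cite from \cite{K1} is stated over $\mathbb C$, not $\mathbb K$, but since $\mathrm{Im}(\nu_{\mathbb K}) \subseteq \mathfrak N_{\mathbb K}$ are closed subschemes defined over $\mathbb K$ and become equal after the faithfully flat base change to $\mathbb C$, the equality over $\mathbb K$ follows.

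Your route differs from the paper's. The paper argues ring-theoretically: it sets $B := \Gamma(F,\mathcal O_F)$, observes that $B$ is torsion-free (hence flat) over $A$, that the map $A[\mathbb V]\to B$ factors through $A[\mathfrak N]$, and then uses the set-level equalities on \emph{both} the $\mathbb C$-fiber and the $\Bbbk$-fiber (Corollary \ref{satk}) together with the normality of $A[\mathfrak N]$ to conclude that the injection $A[\mathfrak N]\hookrightarrow B$ is an isomorphism. Your approach is more elementary in that it bypasses Corollary \ref{satk} and the normality input, relying only on irreducibility and generic-fiber surjectivity. On the other hand, the paper's argument actually establishes the stronger conclusion $\Gamma(F,\mathcal O_F)\cong A[\mathfrak N]$ (equivalently $\nu_*\mathcal O_F \cong \mathcal O_{\mathfrak N}$), i.e.\ that $\mathfrak N$ is the affinization of $F$; your proof yields only the stated image equality.
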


\begin{proof}
Since $F$ is smooth over $A$, the $A$-algebra
$$B := \Gamma ( \mathbb V, \nu _* \mathcal O _{F} ) = \Gamma ( F, \mathcal O _F )$$
is torsion-free over $A$. Hence, $B$ is flat and integral over $A$.\\
We have $\mathbb C \otimes _A B \cong ( \nu _{\mathbb C} ) _* \mathcal O _{F _{\mathbb C}} \cong \mathbb C [ \mathfrak N ]$. By the proof of Theorem \ref{gen_normal}, we have
$$\left< A [ \mathbb V ] ^G _+ \right> = \left< \mathbb C [ \mathbb V ] ^{G _{\mathbb C}} _+ \right> \cap A [ \mathbb V ].$$
In particular, the natural map $A [ \mathbb V ] \ni f \mapsto f. 1 \in B$ factors through $A [ \mathfrak N ]$. Here we have $( \mathrm{Im} \nu ) ( \mathbb C ) = \mathfrak N ( \mathbb C )$ and $( \mathrm{Im} \nu ) ( \Bbbk ) = \mathfrak N ( \Bbbk )$ as sets. This implies that $B$ must be normal since $A [ \mathfrak N ]$ is so. Therefore, we conclude $B = A [ \mathfrak N ]$ as desired.
\end{proof}

\begin{definition}[Marked partitions]
A marked partition ${\bm \lambda} = ( \lambda, a )$ is a partition $\lambda = ( \lambda _1 \ge \lambda _2 \ge \ldots )$ of $n$, together with a sequence $a = ( a _1, a _2, \ldots )$ of integers such that:
\begin{enumerate}
\item $0 \le a _k \le \lambda _k$ for each $k$;
\item $a _k = 0$ if $\lambda _{k + 1} = \lambda _k$;
\item $\lambda _p - \lambda _q > a _p - a _q > 0$ if $p < q$ and $a _p \neq 0 \neq a _q$.
\end{enumerate}
\end{definition}

Let $X = X _1 \oplus X _2 \in \mathfrak N$. Since $V _1 ^* \cong V _1$, we can regard $X _2 \in \mathrm{End} ( V _1 )$ via the embedding
\begin{eqnarray}
X _2 \in V _2 \stackrel{\cong}{\longrightarrow} \mathrm{Alt} ( V _1 ) \subset ( V _1 ) \boxtimes ( V _1 ) \cong \mathrm{End} _A ( V _1 ).\label{Howto}
\end{eqnarray}

\begin{definition}\label{kinv}
Let ${\bm \lambda} = ( \lambda, a )$ be a marked partition of $n$. An element $X \in \mathfrak N$ is said to have $\mathbb K$-invariant ${\bm \lambda}$ if the following conditions hold:
\begin{itemize}
\item $X _2$ is a nilpotent element with its Jordan type $( \lambda _1,\lambda _1, \lambda _2, \lambda _2, \ldots)$;
\item There exists a family of vectors $\{ \xi ( i ) \} _{i \ge 0} \in ( V _1 ) _{\mathbb K}$ such that:
\begin{enumerate}
\item $\xi ( i ) \neq 0$ if and only if $a _i \neq 0$;
\item $X _1 = \sum _j X _2 ^{\lambda _j - a _j} \xi ( j )$;
\item $X _2 ^{\lambda _i - 1} \xi ( i ) \neq 0$ and $X _2 ^{\lambda _i} \xi ( i ) = 0$ for each $i$ such that $\xi ( i ) \neq 0$.
\end{enumerate}
\end{itemize}
We say that $X \in \mathfrak N$ have $\Bbbk$-invariant ${\bm \lambda}$ if the same conditions hold by replacing every $\mathbb K$ by $\Bbbk$. The $\mathbb K$-invariant (resp. the $\Bbbk$-invariant) of $X$ is denoted by ${\bm \lambda} ( X _{\mathbb K} ) = ( \lambda ( X _{\mathbb K} ), a ( X _{\mathbb K} ) )$ (resp. ${\bm \lambda} ( X _{\Bbbk} )$). We define $\mathcal O _{\bm \lambda}$ to be the locally closed subscheme of $\mathfrak N _{\mathbb K}$ whose $\mathbb K$-valued points shares the $\mathbb K$-invariant ${\bm \lambda}$.
\end{definition}

It is standard that the $\mathbb K$-invariants and $\Bbbk$-invariants are invariants under the $G$-action.

\begin{theorem}[\cite{K1} Theorem 1.14]\label{clsn}
Two points $X, Y \in \mathfrak N _{\mathbb K}$ are $G _{\mathbb K}$-conjugate iff $X$ and $Y$ share the same $\mathbb K$-invariant. In particular, $( \mathcal O _{\bm \lambda} ) _{\mathbb K}$ is a single $G _{\mathbb K}$-orbit. \hfill $\Box$
\end{theorem}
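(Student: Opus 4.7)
The plan is to split the biconditional into an easy direction and a hard direction. The easy direction --- that conjugate elements share the same $\mathbb K$-invariant --- reduces to verifying that $\lambda(X_{\mathbb K})$ and $a(X_{\mathbb K})$ are phrased in a $G_{\mathbb K}$-equivariant manner. The partition $\lambda$ is just the Jordan type of $X_2$ viewed as an element of $\mathrm{End}((V_1)_{\mathbb K})$ via (\ref{Howto}), and this embedding is $G$-equivariant. The sequence $a$ records the heights at which $X_1$ hits each Jordan block of $X_2$, and these heights can be recovered intrinsically from the filtration $\ker(X_2) \supset \ker(X_2^2) \supset \ldots$ intersected with the $X_2$-cyclic subspace generated by $X_1$, so the recipe is manifestly preserved under simultaneous $G_{\mathbb K}$-conjugation.

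For the converse I would reduce to a normal-form argument in two stages. First, apply the classical classification of nilpotent alternating endomorphisms of a symplectic space to bring $X_2$ into a fixed standard form depending only on $\lambda$, with $(V_1)_{\mathbb K}$ decomposing into block pairs of sizes $(\lambda_i,\lambda_i)$ adapted to the symplectic form. Since by hypothesis $X$ and $Y$ share the same $\lambda$, we may then assume $X_2 = Y_2$ equals this chosen element. Second, study the residual action of $Z := Z_{G_{\mathbb K}}(X_2)$ on the admissible $X_1 \in (V_1)_{\mathbb K}$. The centralizer factors, up to a finite group, as a product of block-wise pieces indexed by the parts of $\lambda$, together with off-diagonal components that mix block pairs of equal size. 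The normal form $X_1 = \sum_j X_2^{\lambda_j - a_j}\xi(j)$ is obtained by choosing, in each block pair with $a_j \neq 0$, a cyclic generator $\xi(j)$ and placing the $X_1$-contribution at height $\lambda_j - a_j$ from the top.

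The main obstacle is verifying that conditions (1)--(3) in the definition of marked partition are exactly what pins down a unique $Z$-orbit representative in this normal form. Condition (2) --- that $a_k = 0$ whenever $\lambda_{k+1} = \lambda_k$ --- reflects that $Z$ mixes block pairs of equal size by $\mathrm{GL}_2$-type rotations, so any contribution in the upper pair can be pushed entirely into the lower one. Condition (3) --- the strict inequality $\lambda_p - \lambda_q > a_p - a_q > 0$ for $p < q$ with $a_p, a_q \neq 0$ --- forbids further reductions by those off-diagonal components of $Z$ that exchange contributions between block pairs of unequal size, and falls out of tracking which linear combinations of the $X_2^k \xi(j)$ are equivalent under such motions. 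Once this combinatorial analysis is completed, two points $X, Y \in \mathfrak N_{\mathbb K}$ with the same $\mathbb K$-invariant are conjugate into the same explicit normal form and hence to each other; the final claim that $(\mathcal O_{\bm \lambda})_{\mathbb K}$ is a single $G_{\mathbb K}$-orbit is then immediate.
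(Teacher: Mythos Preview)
The paper does not prove this statement: it is quoted from \cite{K1} Theorem 1.14 and the theorem ends with a bare $\Box$, signalling that the proof is omitted here and the reader is referred to the earlier paper. So there is nothing in the present paper to compare your attempt against.

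That said, your outline is the standard and natural strategy for such a classification, and is in the spirit of how results of this type are established: first normalize $X_2$ using the classical theory of nilpotent alternating endomorphisms of a symplectic space, then analyze the residual action of $Z_{G_{\mathbb K}}(X_2)$ on the vector component $X_1$. One small slip: you write that ``the partition $\lambda$ is just the Jordan type of $X_2$'', but Definition~\ref{kinv} specifies the Jordan type of $X_2$ to be the doubled partition $(\lambda_1,\lambda_1,\lambda_2,\lambda_2,\ldots)$, not $\lambda$ itself; you implicitly correct this later when speaking of block pairs. Your interpretation of conditions (2) and (3) on marked partitions as exactly the constraints that kill redundant $Z$-motions is correct in spirit, though turning this into an actual proof requires a careful case analysis of the unipotent radical of $Z$ acting on a Jordan basis, which you have only gestured at. As a sketch of the argument it is sound; as a proof it would need that bookkeeping filled in.
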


\section{A geometric family of nilcones}\label{gfn}
We assume the same setting as in the previous section. The $G _{\Bbbk}$-module $\mathfrak g _{\Bbbk}$ has a non-trivial $B$-eigenvector with its weight $\epsilon _1 + \epsilon _2$. This yields the following short exact sequence of $G _{\Bbbk}$-modules:
\begin{eqnarray}
0 \longrightarrow \left( V _2 \right) _{\Bbbk} \longrightarrow \mathfrak g _{\Bbbk} \longrightarrow \left( V _1 \right) _{\Bbbk} ^{[ 1 ]} \longrightarrow 0.\label{extseq}
\end{eqnarray}
Thus, we have a $G _{\Bbbk}$-equivariant flat deformation
$$\pi _V : \mathcal V \longrightarrow \mathbb A ^1 _{\Bbbk}$$
of $\mathfrak g _{\Bbbk}$ such that $\pi ^{-1} _V ( 0 ) \cong \left( V _1 \right) _{\Bbbk} ^{[1]} \oplus ( V _2 ) _{\Bbbk}$ and $\pi ^{-1} _V ( x ) \cong \mathfrak g _{\Bbbk}$ ($x \neq 0$) as $G _{\Bbbk}$-modules. Let $\mathcal V ^+ \subset \mathcal V$ be its $B _{\Bbbk}$-equivariant smooth subfamily such that $\mathcal V ^+ \cap \pi ^{-1} _V ( 0 ) \cong \left( V _1 ^+ \right) _{\Bbbk} ^{[1]} \oplus ( V _2 ^+ ) _{\Bbbk}$ and $\mathcal V ^+ \cap \pi ^{-1} _V ( x ) \cong \mathfrak n _{\Bbbk}$ ($x \neq 0$).

By (\ref{extseq}) and the isomorphism $\mathsf{Sym} ^2 V _1 \cong \mathfrak g$ over $A$, we have the following $G _{\Bbbk}$-equivariant map
$$ml : \mathbb V _{\Bbbk} = ( V _1 ) _{\Bbbk} \oplus ( V _2 ) _{\Bbbk} \ni ( X _1 \oplus X _2 ) \mapsto \mathsf{Sym} ^2 X _1 + X _2 \in \mathfrak g _{\Bbbk}.$$
This map is $G _{\Bbbk}$-equivariant and finite as a map between affine algebraic varieties. By restriction, we obtain a commutative diagram
\begin{eqnarray}\xymatrix{
\mathbb A ^1 _{\Bbbk} \times \mathbb V ^+ _{\Bbbk} \ar[r] ^{\underline{\mathsf{ml}}} & \mathcal V ^+\\
\{ 0 \} \times \mathbb V ^+ _{\Bbbk} \ar[u] \ar[r] ^{\mathsf{F} _1} & \{ 0 \} \times ( V ^+ _1 ) ^{[1]} _{\Bbbk} \oplus ( V _2 ^+ ) _{\Bbbk} \ar[u]
},\label{positive degeneration}
\end{eqnarray}
where the map $\underline{\mathsf{ml}}$ is the natural prolongization of the map $ml$, and the map $\mathsf{F} _1$ is the product of the Frobenius map of the first component of $\mathbb V ^+ _{\Bbbk}$ and the identity map of the second component of $\mathbb V ^+ _{\Bbbk}$.

Let $\mathcal F := ( G _{\Bbbk} \times \mathbb A ^1 _{\Bbbk} ) \times ^{( B _{\Bbbk} \times \mathbb A ^1 _{\Bbbk} )} \mathcal V ^+$. This is a flat family over $\mathbb A ^1 _{\Bbbk}$. We define $\mathcal N _S := ( G _{\Bbbk} \times \mathbb A ^1 _{\Bbbk} ) \mathcal V ^+ \subset \mathcal V$. We define $\pi := \pi _V \MID _{\mathcal N _S}$. The diagram (\ref{positive degeneration}) gives a $G _{\Bbbk}$-equivariant commutative diagram
\begin{eqnarray}\xymatrix{
\mathbb A ^1 _{\Bbbk} \times F _{\Bbbk} \ar[d] \ar[r] ^{\widetilde{\mathsf{ml}}}& \mathcal F \ar[d] & \{ 0 \} \times F _{\Bbbk} \ar[d] ^{\nu _{\Bbbk}} \ar[l]\\
\mathbb A ^1 _{\Bbbk} \times \mathfrak N _{\Bbbk} \ar[r] ^{\mathsf{ml}}& \mathcal N _S & \{ 0 \} \times \mathfrak N _{\Bbbk} \ar[l]
}.\label{induction to whole space}
\end{eqnarray}
Here the vertical arrows are defined as $( G _{\Bbbk} \times \mathbb A ^1 _{\Bbbk} )$- or $G _{\Bbbk}$-translations of (\ref{positive degeneration}) inside $\mathbb A ^1 _{\Bbbk} \times \mathbb V _{\Bbbk}$, $\mathcal V$, or $\mathbb V _{\Bbbk}$, respectively. Thanks to the surjectivity of $\underline{\mathsf{ml}}$ at (\ref{positive degeneration}) and Theorem \ref{HLcls} and Corollary \ref{satk}, the map $\mathsf{ml}$ at (\ref{induction to whole space}) is surjective at the level of points. Since $\mathcal F$ is flat over $\mathbb A ^1 _{\Bbbk}$, it follows that $\mathcal N _S$ is flat over $\mathbb A ^1 _{\Bbbk}$. Let $\mathsf{m} : \mathfrak N _{\Bbbk} \longrightarrow \mathcal N _{\Bbbk}$ be the map obtained by the specialization of $\mathsf{ml}$ to the fiber at the point $\{ 1 \} \in \mathbb A ^1 _{\Bbbk}$.

\begin{theorem}\label{pcflatex}
Each $G _{\Bbbk}$-orbit $\mathcal O$ of $\mathfrak N _{\Bbbk}$ extends to a flat family of $G _{\Bbbk}$-orbits in $\mathcal N _S$ with its general fiber isomorphic to $\mathsf{m} ( \mathcal O )$. Moreover, this yields a one-to-one correspondence between $G _{\Bbbk}$-orbits of $\mathfrak N _{\Bbbk}$ and $\mathcal N _{\Bbbk}$ which preserves the closure relations.
\end{theorem}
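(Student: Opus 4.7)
Define the subfamily $\mathcal O _S \subset \mathcal N _S$ as the schematic image of $\mathbb A ^1 _{\Bbbk} \times \mathcal O$ under the map $\mathsf{ml}$ of diagram (\ref{induction to whole space}). Since $\mathsf{ml}$ is $G _{\Bbbk}$-equivariant (with the trivial action on $\mathbb A ^1 _{\Bbbk}$), the subfamily $\mathcal O _S$ is $G _{\Bbbk}$-stable. The first step is to analyze the two distinguished fibers. At $t = 0$, the lower square of (\ref{induction to whole space}) together with (\ref{positive degeneration}) identifies the fiber with $G _{\Bbbk} \cdot \mathsf{F} _1 ( \mathbb V ^+ _{\Bbbk} \cap \mathcal O )$; since $\mathsf{F} _1$ is radicial (Frobenius on the first summand, identity on the second), it is bijective on $\Bbbk$-points, so the fiber is exactly $\mathsf{F} _1 ( \mathcal O )$, a single $G _{\Bbbk}$-orbit. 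At $t \ne 0$, $G _{\Bbbk}$-equivariance of $\mathsf{ml}$ ensures that $\mathsf{m} ( \mathcal O )$ is the image of a single orbit under an equivariant map, hence a single $G _{\Bbbk}$-orbit.

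Next I would establish flatness of $\mathcal O _S \to \mathbb A ^1 _{\Bbbk}$. Because $\mathbb A ^1 _{\Bbbk}$ is a smooth curve and $\mathcal O _S$ is an irreducible closed subscheme dominating it, it suffices to check that all fibers are equidimensional of the common dimension $\dim \mathcal O$. This is immediate because $\mathsf{F} _1$ is finite and purely inseparable while $\mathsf{m}$ is finite (coming from the finite map $ml : \mathbb V _{\Bbbk} \to \mathfrak g _{\Bbbk}$), so $\dim \mathsf{F} _1 ( \mathcal O ) = \dim \mathcal O = \dim \mathsf{m} ( \mathcal O )$, and upper-semicontinuity of fiber dimension forces equality on every intermediate fiber.

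For the bijection statement, as $\mathcal O$ ranges over $G _{\Bbbk}$-orbits of $\mathfrak N _{\Bbbk}$, the images $\mathsf{F} _1 ( \mathcal O )$ are pairwise disjoint single orbits whose union covers $\pi ^{-1} ( 0 )$ (by the surjectivity of $\underline{\mathsf{ml}}$ in (\ref{positive degeneration}) together with Corollary \ref{satk}). Hence the irreducible flat families $\mathcal O _S$ are pairwise disjoint in $\mathcal N _S$, and their general fibers $\mathsf{m} ( \mathcal O )$ are pairwise disjoint $G _{\Bbbk}$-orbits that exhaust $\mathcal N _{\Bbbk}$ via Theorem \ref{HLcls} together with surjectivity of $\mathsf{ml}$ at $t = 1$. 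Closure relations then transfer across the family because $\overline{\mathcal O _S}$ is itself an irreducible flat family over the smooth curve $\mathbb A ^1 _{\Bbbk}$ whose fiber at each $t$ equals the closure of the corresponding orbit fiber; thus $\mathcal O ^{\prime} \subset \overline{\mathcal O}$ iff $\mathsf{F} _1 ( \mathcal O ^{\prime} ) \subset \overline{\mathsf{F} _1 ( \mathcal O )}$ iff $\mathcal O ^{\prime} _S \subset \overline{\mathcal O _S}$ iff $\mathsf{m} ( \mathcal O ^{\prime} ) \subset \overline{\mathsf{m} ( \mathcal O )}$.

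The main obstacle is ensuring that each $\mathcal O _S$ is literally a flat family of single $G _{\Bbbk}$-orbits rather than a scheme whose generic fiber is an orbit plus strictly smaller residual pieces. This relies crucially on the radiciality of $\mathsf{F} _1$ (so that no orbit gets split at $t = 0$) and the finiteness of $\mathsf{m}$ (so that no dimension jump occurs on specialization); once both are in hand, the equidimensionality argument above rules out the pathological scenario and yields flatness automatically.
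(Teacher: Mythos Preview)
Your overall strategy matches the paper's: push $\mathbb A^1_\Bbbk \times \mathcal O$ forward via $\mathsf{ml}$, use equidimensionality over a smooth curve for flatness, and read off the orbit bijection and closure relations from the family. But there is a genuine gap in your injectivity step.

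You argue: the special fibers $\mathsf F_1(\mathcal O)$ are pairwise disjoint in $\pi^{-1}(0)$, ``hence the irreducible flat families $\mathcal O_S$ are pairwise disjoint in $\mathcal N_S$.'' Disjointness at $t=0$ does not imply disjointness over $\mathbb A^1_\Bbbk$; two irreducible subfamilies can meet away from the special fiber. What you actually need is that $\mathsf m$ is a \emph{bijection on $\Bbbk$-points}, not merely finite. This is true---the map $ml:(X_1,X_2)\mapsto \mathrm{Sym}^2 X_1 + X_2$ is Frobenius on the $V_1$ factor and the identity on $V_2$, hence bijective on $\Bbbk$-points in characteristic two---and the paper invokes exactly this fact at the start of its proof (``The map $\mathsf m$ is a $G(\Bbbk)$-equivariant isomorphism at the level of $\Bbbk$-valued points''). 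With pointwise bijectivity in hand, $\mathcal O \mapsto \mathsf m(\mathcal O)$ is automatically a bijection of orbit sets, the families are fiberwise disjoint, and your closure-relation argument goes through (indeed, a finite bijective morphism of varieties is a homeomorphism, so closure relations transfer directly without even using the family). Replace ``finite'' by ``bijective on $\Bbbk$-points'' for $\mathsf m$ and your argument is complete.
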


\begin{proof}
The map $\mathsf{m}$ is a $G ( \Bbbk )$-equivariant isomorphism at the level of $\Bbbk$-valued points. Hence, we have an equi-dimensional one-to-one correspondence between the $G _{\Bbbk}$-orbits of $\mathfrak N _{\Bbbk}$ and $\mathcal N _{\Bbbk}$.

We have an equi-dimensional family $\mathbb O _{S} := \mathsf{ml} ( \mathbb A ^1 _{\Bbbk} \times \mathcal O )$ for each $G _{\Bbbk}$-orbit $\mathcal O$. Here the map $\mathsf{F} _1$ is finite. As a result, $\overline{\mathbb O _{S}}$ is an equi-dimensional family such that each fiber contains a unique dense open $G _{\Bbbk}$-orbit.

The number of $G _{\Bbbk}$-orbits of $\mathcal N _{\Bbbk}$ is finite by \cite{Hes}. It is clear that $\overline{\mathbb O _{S}}$ is irreducible. The family $\mathcal N _S$ is locally trivial along $\mathbb A ^1 \backslash \{ 0 \}$. It follows that the open dense $G _{\Bbbk}$-orbit of $\overline{\mathbb O _{S}}$ is constant along $\mathbb A ^1 \backslash \{ 0 \}$.

Conversely, each $G _{\Bbbk}$-orbit $\mathbb O \subset \mathcal N _{\Bbbk}$ determines a $G _{\Bbbk}$-orbit $\mathcal O ^{\prime} \subset \mathsf{F} _1 ( \mathfrak N )$ via
\begin{eqnarray}
\mathcal O ^{\prime} \stackrel{\text{open}}{\hookrightarrow} \pi ^{-1} ( 0 ) \cap \overline{\mathbb O _S ^{\prime}} \subset \overline{\mathbb O _S ^{\prime}},\label{cinv}
\end{eqnarray}
where $\mathbb O _S ^{\prime}$ is a family of single $G _{\Bbbk}$-orbits $\mathbb O$ in $\mathcal N _S$ along $\mathbb A ^1 \backslash \{ 0 \}$. Here $\mathcal O^{\prime}$ is uniquely determined since $\overline{\mathbb O _S ^{\prime}} = \overline{\mathsf{ml} ( \mathbb A ^1 \times \mathcal O ^{\prime\prime} )}$ for some $G _{\Bbbk}$-orbit $\mathcal O ^{\prime\prime}$ of $\mathfrak N _{\Bbbk}$.

Since $\mathcal O ^{\prime} = \mathsf{F} _1 ( \mathcal O ^{\prime\prime} )$, this establishes a one-to-one correspondences between $G _{\Bbbk}$-orbits in $\mathfrak N _{\Bbbk}$, that of $\mathcal N _{\Bbbk}$, and families of single $G _{\Bbbk}$-orbits of $\mathcal N_S$, which is compatible with the bijection induced by $\mathsf{m}$. Now the closure relation of $\mathfrak N _{\Bbbk}$ extends to the closure relation of $\mathbb O _S$, which guarantees that the closure relation is preserved by $\mathsf{m}$.

Since $\mathbb A ^1 _{\Bbbk}$ is one-dimensional, $\overline{\mathbb O _S}$ yields the required flat family. 
\end{proof}

In the below, we denote the one-to-one correspondence from the set of $G _{\Bbbk}$-orbits of $\mathfrak N _{\Bbbk}$ to that of $\mathcal N _{\Bbbk}$ described in Theorem \ref{pcflatex} by $\mathsf{df}$.

\begin{corollary}
The numbers of $G _{\Bbbk}$-orbits in $\mathcal N _{\Bbbk}$ and $\mathfrak N _{\Bbbk}$ are equal to the number of $G _{\mathbb K}$-orbits in $\mathfrak N _{\mathbb K}$.
\end{corollary}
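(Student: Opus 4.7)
The equality $|G_\Bbbk \backslash \mathcal N_\Bbbk| = |G_\Bbbk \backslash \mathfrak N_\Bbbk|$ is immediate from Theorem \ref{pcflatex}, which exhibits an explicit bijection $\mathsf{df}$ between the two orbit sets. So the remaining content of the corollary is the further equality $|G_\Bbbk \backslash \mathfrak N_\Bbbk| = |G_\mathbb{K} \backslash \mathfrak N_\mathbb{K}|$.

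My plan is to construct this bijection via the marked-partition parameterization. By Theorem \ref{clsn}, the right-hand side is parametrized by marked partitions $\bm \lambda$ of $n$ through the orbits $\mathcal O_{\bm \lambda}$. The defining data of Definition \ref{kinv} (a nilpotent $X_2$ of prescribed Jordan type $(\lambda_1, \lambda_1, \lambda_2, \lambda_2, \ldots)$ together with a family $\{\xi(i)\}$ satisfying the listed non-vanishing conditions) can be realized by explicit $A$-valued matrix data, yielding representatives $x_{\bm \lambda} \in \mathfrak N(A)$. Setting $\mathsf{sp}(\bm \lambda) := G_\Bbbk \cdot (x_{\bm \lambda} \bmod 2)$ produces a map from marked partitions to $G_\Bbbk$-orbits of $\mathfrak N_\Bbbk$, and the task reduces to showing $\mathsf{sp}$ is a bijection. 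Injectivity is obtained by verifying that the $\Bbbk$-invariant of Definition \ref{kinv} is a well-defined $G_\Bbbk$-equivariant function on $\mathfrak N(\Bbbk)$ and that $\bm \lambda(x_{\bm \lambda} \bmod 2) = \bm \lambda$. Surjectivity follows from Corollary \ref{satk}: every $G_\Bbbk$-orbit meets $\mathbb V^+(\Bbbk)$, and after further applying the $G_0(\Bbbk)$-action and putting the $\mathbb V_0$-component into Jordan normal form (exactly as in the proof of Corollary \ref{satk}), the resulting representative matches some $x_{\bm \lambda} \bmod 2$.

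The main obstacle is the well-definedness of the $\Bbbk$-invariant on $G_\Bbbk$-orbits — that is, showing that the required Jordan-type data and the existence and non-vanishing pattern of the $\xi(i)$ depend only on the $G_\Bbbk$-conjugacy class of $X$, and not on the chosen representative or the auxiliary decomposition $X = X_1 + X_2$. This is the characteristic-two analog of the argument behind Theorem \ref{clsn}, and the Jordan-decomposition step is slightly delicate because of coincidences such as $\mathsf{Pf} = \pm \det$ on the Lagrangian slice exploited in the proof of Proposition \ref{defeq}. It can be handled by transporting the problem to $\mathcal N_\Bbbk$ through the $G_\Bbbk$-equivariant finite map $\mathsf{m}$ of Section \ref{gfn}, where Hesselink's classification of nilpotent $\mathop{Sp}(2n, \Bbbk)$-orbits makes the invariance transparent and thereby closes the count.
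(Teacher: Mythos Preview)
Your first sentence (the equality $|G_\Bbbk \backslash \mathcal N_\Bbbk| = |G_\Bbbk \backslash \mathfrak N_\Bbbk|$ via Theorem \ref{pcflatex}) matches the paper. After that, your route diverges substantially from the paper's, and the divergence is not to your advantage.

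The paper does not attempt any direct parametrization of $G_\Bbbk \backslash \mathfrak N_\Bbbk$ at this stage. Its argument is a pure count: Spaltenstein \cite{Spa} 3.9 gives $|G_\Bbbk \backslash \mathcal N_\Bbbk| = \#\{\text{bi-partitions of } n\}$; Theorem \ref{clsn} together with Theorem \ref{mp<->bp} gives $|G_{\mathbb K} \backslash \mathfrak N_{\mathbb K}| = \#\{\text{marked partitions of } n\} = \#\{\text{bi-partitions of } n\}$; and Theorem \ref{pcflatex} transfers the first count to $\mathfrak N_\Bbbk$. That is the whole proof. The statement that $\Bbbk$-invariants separate $G_\Bbbk$-orbits (Corollary \ref{kcomplete}) is then \emph{deduced} from this count by pigeonhole, not used as input to it.

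Your plan inverts this logic: you try to establish the bijection $\mathsf{sp}$ directly, which amounts to proving Corollary \ref{kcomplete} first. There is a real gap in your surjectivity step. Knowing that an orbit meets $\mathbb V^+(\Bbbk)$ and putting the $\mathbb V_0$-component into Jordan form does not by itself produce a representative of the shape $x_{\bm\lambda}\bmod 2$: you still have to control the $\mathbb V_1$- and $\mathbb V_2$-components and verify that the resulting $X_1$ admits a decomposition $\sum_j X_2^{\lambda_j-a_j}\xi(j)$ with the prescribed non-vanishing pattern. The paper eventually does prove that every $G_\Bbbk$-orbit lifts to a $G$-orbit over $A$ (Theorem \ref{flat_prolong_A}), but that requires the machinery of \S\ref{seW} (Proposition \ref{base_set}, Lemma \ref{basic identity}) which is developed \emph{after} the corollary you are proving. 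Your sketch does not supply a substitute for that machinery.

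Finally, your closing sentence concedes the point: you propose to ``close the count'' by transporting to $\mathcal N_\Bbbk$ via $\mathsf m$ and invoking Hesselink's classification. But Hesselink/Spaltenstein is exactly the input the paper uses directly; once you grant yourself that count, the entire representative-construction $\mathsf{sp}$ becomes superfluous. You should simply run the counting argument from the start.
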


\begin{proof}
By Spaltenstein \cite{Spa} 3.9, we deduce that the number of $G _{\Bbbk}$-orbits in $\mathcal N _{\Bbbk}$ is equal to the set of bi-partitions of $n$. Hence, Theorem \ref{clsn} implies the result (see also Theorem \ref{mp<->bp}).
\end{proof}

\begin{corollary}\label{kcomplete}
Two $G _{\Bbbk}$-orbits in $\mathfrak N _{\Bbbk}$ are equal if and only if their $\Bbbk$-invariants are equal. \hfill $\Box$
\end{corollary}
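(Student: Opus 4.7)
The forward direction is immediate: each clause of Definition~\ref{kinv} defining $\bm{\lambda}(X_\Bbbk)$ — the Jordan type of $X_2$, the relation $X_1 = \sum_j X_2^{\lambda_j - a_j}\xi(j)$, and the non-vanishing of $X_2^{\lambda_i - 1}\xi(i)$ together with $X_2^{\lambda_i}\xi(i) = 0$ — is patently preserved under simultaneous $G(\Bbbk)$-translation of the datum $(X, \{\xi(i)\})$, as was already recorded just after that definition. Thus the $\Bbbk$-invariant descends to a well-defined map $\iota_\Bbbk \colon G(\Bbbk)\backslash\mathfrak{N}(\Bbbk) \longrightarrow \{\text{marked partitions of } n\}$, and it remains to prove $\iota_\Bbbk$ injective.

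The plan for injectivity is to combine a counting argument with an explicit construction. By the corollary immediately preceding the present statement, $\#(G(\Bbbk)\backslash\mathfrak{N}(\Bbbk))$ equals $\#(G(\mathbb{K})\backslash\mathfrak{N}(\mathbb{K}))$, which by Theorem~\ref{clsn} is the cardinality of the set of marked partitions of $n$. Since $\iota_\Bbbk$ is a map between finite sets of equal cardinality, injectivity is equivalent to surjectivity, and it suffices to exhibit, for each marked partition $\bm{\lambda} = (\lambda, a)$, an element of $\mathfrak{N}(\Bbbk)$ whose $\Bbbk$-invariant is $\bm{\lambda}$.

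For the construction I would exploit that, by the corollary after Proposition~\ref{defeq}, the defining ideal of $\mathfrak{N}$ inside $A[\mathbb{V}]$ is generated by the Pfaffian coefficients $P_1, \ldots, P_n$, which involve only the $V_2$-coordinates. Consequently any $X = X_1 \oplus X_2$ with $X_2 \in V_2(\Bbbk)$ nilpotent and $X_1 \in V_1(\Bbbk)$ arbitrary lies in $\mathfrak{N}(\Bbbk)$. Pick $X_2$ to be an alternating nilpotent whose associated operator on $V_1(\Bbbk)$ has Jordan type $(\lambda_1, \lambda_1, \lambda_2, \lambda_2, \ldots)$, obtained by reducing a standard integral Jordan form in $\mathrm{Alt}(2n, A)$. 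For each $i$ with $a_i \neq 0$, take $\xi(i) \in V_1(\Bbbk)$ to be a cyclic generator of one of the two Jordan blocks of length $\lambda_i$, so that $X_2^{\lambda_i - 1}\xi(i) \neq 0$ and $X_2^{\lambda_i}\xi(i) = 0$; set $\xi(i) = 0$ otherwise; and define $X_1 := \sum_{a_j \neq 0} X_2^{\lambda_j - a_j}\xi(j)$. By inspection, the data $(X_1, X_2, \{\xi(i)\})$ witnesses the $\Bbbk$-invariant $\bm{\lambda}$.

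The main obstacle is that the non-vanishing assertions in Definition~\ref{kinv}(3) would be delicate under reduction modulo $(2)$ for an arbitrary $A$-lift: the relevant vectors $X_2^{\lambda_i-1}\xi(i)$ could collapse, and the Jordan type of $X_2$ could only degenerate. Working with a standard integral model in $\mathrm{Alt}(2n, A)$ whose entries lie in $\{0, \pm 1\}$ circumvents this, since the Jordan type and the cyclic-vector relations are then characteristic-free and transfer from $\mathbb{K}$ to $\Bbbk$ verbatim.
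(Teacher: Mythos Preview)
Your argument is correct and is essentially the one the paper has in mind: the corollary is stated with a bare $\Box$, relying on the preceding count of $G_\Bbbk$-orbits together with the easy construction of a representative in $\mathfrak N(\Bbbk)$ for each marked partition. You have simply written out what the paper leaves implicit.

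One small expository point: when you write ``since $\iota_\Bbbk$ is a map between finite sets of equal cardinality, injectivity is equivalent to surjectivity'', you are tacitly assuming that every $G_\Bbbk$-orbit actually possesses a $\Bbbk$-invariant, i.e.\ that $\iota_\Bbbk$ is everywhere defined. This is not literally contained in Definition~\ref{kinv} or in the remark following it. However, your own construction already supplies the missing step: the explicit elements $X_{\bm\lambda}$ you build for distinct marked partitions $\bm\lambda$ lie in pairwise distinct orbits (by the forward direction), and since their number equals the number of orbits, every orbit contains one of them and hence carries a $\Bbbk$-invariant. It would be cleaner to phrase the argument in that order rather than to assume total definedness at the outset.
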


\section{Special elements of Weyl groups}\label{seW}

Keep the setting of the previous section. In this section, we always work over an algebraically closed field and drop the specific field from the notation.

\begin{theorem}[\cite{K1} Theorem 1.14 and Proof of Proposition 1.16]\label{mp<->bp}
Let ${\bm \lambda} = ( \lambda, a )$ be a marked partition of $n$. We define a sequence of integers $b = ( b _1, b _2, \ldots )$ as
$$b _i := \begin{cases} a _i & (a _i \neq 0)\\ \max \left\{ \{ a _j + \lambda _i - \lambda _j ; j < i \} \cup \{ a _j ; j \ge i\} \right\} & (a _i = 0)\end{cases}.$$
Then, we define two partitions $\mu ^{{\bm \lambda}}$ and $\nu ^{{\bm \lambda}}$ as
$$\mu ^{{\bm \lambda}} _i = b _i, \nu ^{{\bm \lambda}} _i = \lambda _i - b _i.$$
The pair $( \mu ^{{\bm \lambda}}, \nu ^{{\bm \lambda}} )$ gives a bi-partition of $n$. Moreover, this assignment establishes a bijection between the set of marked partitions of $n$ and the set of bi-partitions of $n$. \hfill $\Box$
\end{theorem}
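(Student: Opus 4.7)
The statement packages two distinct claims: that $(\mu^{{\bm \lambda}}, \nu^{{\bm \lambda}})$ is a bi-partition of $n$, and that the assignment is a bijection. I would prove these separately.

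For well-definedness, the total-size identity $|\mu^{{\bm \lambda}}| + |\nu^{{\bm \lambda}}| = |\lambda| = n$ is immediate from $\mu^{{\bm \lambda}}_i + \nu^{{\bm \lambda}}_i = \lambda_i$. The bounds $0 \le b_i \le \lambda_i$ follow because each term in the max defining $b_i$ (when $a_i = 0$) is at most $\lambda_i$: for $j < i$, one has $a_j + \lambda_i - \lambda_j \le \lambda_i$ from $a_j \le \lambda_j$, and for $j \ge i$, $a_j \le \lambda_j \le \lambda_i$. The main content is the weak monotonicity of $(b_i)_i$ and $(\lambda_i - b_i)_i$. I would split into cases depending on whether $a_i$ and $a_{i+1}$ are zero. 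In the doubly-nonzero case, condition (3) directly yields both $b_i > b_{i+1}$ and $\lambda_i - b_i > \lambda_{i+1} - b_{i+1}$; in the mixed and doubly-zero cases one uses that the max-formula for $b_i$ ``inherits'' any nonzero contribution relevant for $b_{i+1}$ (through the index $j = i+1$ or later, shifted by $\lambda_i - \lambda_{i+1}$), together with the partition inequality $\lambda_i \ge \lambda_{i+1}$.

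For bijectivity, I would construct the inverse explicitly. Given a bi-partition $(\mu, \nu)$ of $n$, set $\lambda_i := \mu_i + \nu_i$; monotonicity of $\mu$ and $\nu$ ensures $\lambda$ is a partition. To recover the marking $a$, observe that in the forward direction $a_i \neq 0$ forces $\mu_i = a_i$, and condition (3) forces the nonzero-marking indices $i_1 < i_2 < \cdots$ to satisfy both $\mu_{i_1} > \mu_{i_2} > \cdots$ and $\nu_{i_1} > \nu_{i_2} > \cdots$ strictly. I would therefore select the set $I$ of nonzero-marking indices by a right-to-left greedy procedure: include $i$ in $I$ whenever $\lambda_i > \lambda_{i+1}$, $\mu_i > 0$, and both $\mu_i$ and $\nu_i$ strictly exceed the corresponding values at the next already-chosen index to its right. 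Setting $a_i := \mu_i$ for $i \in I$ and $a_i := 0$ otherwise produces a marked partition, and a direct check against the defining formula for $b_i$ shows the two maps are mutually inverse.

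The principal obstacle is the case analysis for monotonicity in the first step --- particularly verifying $b_i - b_{i+1} \le \lambda_i - \lambda_{i+1}$ when the maximizing indices for $b_i$ and $b_{i+1}$ sit in different ``regimes''. This is combinatorial bookkeeping rather than a deep issue, and in fact the result is already on record in \cite{K1} as Theorem 1.14 together with the proof of Proposition 1.16.
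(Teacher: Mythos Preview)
The paper itself does not prove this statement: it is quoted from \cite{K1} (Theorem~1.14 together with the proof of Proposition~1.16) and closed with a $\Box$. So there is no in-paper argument against which to compare your sketch; only its internal correctness is at issue.

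Your well-definedness argument is essentially fine: the bounds $0 \le b_i \le \lambda_i$ and the case split for monotonicity of $(b_i)$ and $(\lambda_i - b_i)$ go through as you indicate.

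The inverse construction, however, has a genuine gap. Your right-to-left greedy rule does not recover the correct marking in general. Take $(\mu,\nu) = ((2,1),(0,0))$, so $\lambda = (2,1)$. Scanning from the right, at $i=2$ you have $\lambda_2 > \lambda_3$, $\mu_2 = 1 > 0$, and no already-chosen index to the right, so the strict-exceedance clause is vacuous and you mark $i=2$, setting $a_2 = 1$. At $i=1$ you then require $\nu_1 > \nu_2$, i.e.\ $0 > 0$, which fails, so $a_1 = 0$. The output $((2,1),(0,1))$ is a valid marked partition, but the forward map sends it to $b_1 = \max\{0,1\} = 1$, $b_2 = 1$, hence $\mu = (1,1) \neq (2,1)$. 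The correct preimage is $a = (2,0)$, which your procedure misses because it commits to marking $i=2$ before seeing that $\nu$ is already constant there. In general, whether an index should be marked depends on the pattern of $\mu$ and $\nu$ on \emph{both} sides simultaneously, and no single-pass greedy (in either direction, with the conditions you list) captures this; the ``direct check'' you defer to would fail at exactly this point. The combinatorics of the inverse are genuinely more delicate than your sketch allows, which is presumably why the paper simply cites \cite{K1} rather than reproducing the argument.
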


We refer the bi-partition $( \mu ^{\bm \lambda}, \nu ^{\bm \lambda} )$ the associated bi-partition of a marked partition $\bm \lambda$. Let $\mathcal O _{\bm \lambda}$ be the $G$-orbit with its $\mathbb K$-invariant (or $\Bbbk$-invariant, depending on the base field) ${\bm \lambda}$.

\begin{definition}[Special elements]\label{se}
Let ${\bm \lambda}$ be a marked partition and let $( \mu, \nu ) := ( \mu ^{{\bm \lambda}}, \nu ^{{\bm \lambda}} )$ be its associated bi-partition. We define an element $w _{\bm \lambda} \in W$ as
$$w _{\bm \lambda} \epsilon _i = \begin{cases} \epsilon _{n - m + 1} & (i = ( {} ^t \mu ) _m ^{\ge})\\ - \epsilon _{\left| \nu \right| + ( {} ^t \mu ) ^< _m + i - ( {} ^t \mu ) _m ^{>} - m + 1} & (( {} ^t \mu ) _m ^{>} < i < ( {} ^t \mu ) _m ^{\ge} ) \\ - \epsilon _{( {} ^t \nu ) ^> _m + i - ( {} ^t \nu ) ^< _m - \left| \mu \right|} & (\left| \mu \right| + ( {} ^t \nu ) ^{<} _m < i \le \left| \mu \right| + ( {} ^t \nu ) ^{\le} _m ) \end{cases},$$
where $m$ is some natural number. We put $\mathbb V ^{{\bm \lambda}} := \mathbb V ^+ \cap {} ^{w _{\bm \lambda} ^{-1}} \mathbb V ^+$. We define a sequence of integers $d ^{\bm \lambda} := \{ d _i ^{\bm \lambda }\} _{i = 0} ^{\mu _1 + \nu _1}$ as $d _0 ^{{\bm \lambda}} := 0$ and
$$d _1 ^{{\bm \lambda}} := ( {} ^t \mu ) ^{\ge} _{\mu _1}, \ldots, d _{\mu _1} ^{{\bm \lambda}} := ( {} ^t \mu ) ^{\ge} _{1}, d _{\mu _1 + 1} ^{{\bm \lambda}} := \left| \mu \right| +  ( {} ^t \nu ) ^{\le} _{1}, \ldots, d _{\mu _1 + \nu _1} ^{{\bm \lambda}} := \left| \mu \right| + ( {} ^t \nu ) ^{\le} _{\nu _1}.$$
We may drop the superscript ${} ^{\bm \lambda}$ if the meaning is clear from the context.
\end{definition}

\begin{lemma}\label{wdlambda}
Keep the setting of Definition \ref{se}.
\begin{enumerate}
\item We have $\epsilon _i - \epsilon _j \not\in \Psi ( \mathbb V ^{{\bm \lambda}} )$ if and only if $i \ge j$ or one of the following conditions hold for some natural number $m$:
\begin{enumerate}
\item $( {} ^t \mu ) _m ^{>} < i, j < ( {} ^t \mu ) _m ^{\ge}$;
\item $\left| \mu \right| + ( {} ^t \nu ) _m ^{<} < i, j \le \left| \mu \right| + ( {} ^t \nu ) _m ^{\le}$;
\item $j = ( {}^t \mu ) ^{\ge} _m$, $1 \le i \le \left| \mu \right|$, and $i \not\in \{ ( {} ^t \mu ) ^{\ge} _l \} _{l > m}$.
\end{enumerate}
\item We have $\epsilon _i + \epsilon _j \in \Psi ( \mathbb V ^{{\bm \lambda}} )$ if and only if $i \neq j$ and $i, j \in \{ ( {}^t \mu ) ^{\ge} _m \} _m$;
\item We have $\epsilon _i \in \Psi ( \mathbb V ^{{\bm \lambda}} )$ if and only if $i \in \{ ( {}^t \mu ) ^{\ge} _m \} _m$.
\end{enumerate}
\end{lemma}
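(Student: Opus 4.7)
The plan is to reduce the statement to checking signs and indices of the signed permutation $w _{\bm \lambda}$ acting on individual weight vectors. Since $\mathbb V = V _1 \oplus \wedge ^2 V _1$ and each weight of $V _1$ has multiplicity one, every nonzero weight $\alpha$ of $\mathbb V$ satisfies $\dim \mathbb V [ \alpha ] = 1$. Consequently $\alpha \in \Psi ( \mathbb V ^{\bm \lambda} )$ if and only if $\alpha$ lies in both $\Psi ( \mathbb V ^+ )$ and $w _{\bm \lambda} ^{-1} \Psi ( \mathbb V ^+ )$; equivalently, both $\alpha$ and $w _{\bm \lambda} \alpha$ lie in $\Psi ( \mathbb V ^+ )$.

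First I would read off the signed-permutation structure of $w _{\bm \lambda}$ from Definition \ref{se}. The positively signed images arise exactly from Case 1: each $i \in \{ ( {} ^t \mu ) ^{\ge} _m \} _m$ is sent to $\epsilon _{n-m+1}$, and these exhaust $\{ \epsilon _{n - \mu _1 + 1}, \ldots, \epsilon _n \}$. The negatively signed images (Cases 2 and 3) fill the complementary set $\{ - \epsilon _1, \ldots, - \epsilon _{n - \mu _1} \}$, and within any single $m$-block of Case 2 (resp.\ Case 3) the map sending $i$ to the index of $- w _{\bm \lambda} \epsilon _i$ is an order-preserving affine shift with increment $+1$.

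Parts (2) and (3) follow immediately from this structure. For (3), $\epsilon _i$ is always in $\Psi ( \mathbb V ^+ )$, so $\epsilon _i \in \Psi ( \mathbb V ^{\bm \lambda} )$ iff $w _{\bm \lambda} \epsilon _i$ still carries the $+$ sign, forcing $i \in \{ ( {} ^t \mu ) ^{\ge} _m \} _m$. For (2) with $i \neq j$, I would inspect the four sign patterns of $w _{\bm \lambda} ( \epsilon _i + \epsilon _j )$: pattern $( + , + )$ yields $\epsilon _a + \epsilon _b \in \Psi ( \mathbb V ^+ )$; pattern $( - , - )$ yields $- \epsilon _a - \epsilon _b \not\in \Psi ( \mathbb V ^+ )$; and either mixed pattern yields $\pm ( \epsilon _a - \epsilon _b )$ with $a \in \{ n - \mu _1 + 1, \ldots, n \}$ and $b \in \{ 1, \ldots, n - \mu _1 \}$, forcing $a > b$ and excluding the result from $\Psi ( \mathbb V ^+ )$. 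Thus only pattern $( + , + )$ survives, giving exactly the stated condition.

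For part (1), I would assume $i < j$ (otherwise $\epsilon _i - \epsilon _j \not\in \Psi ( \mathbb V ^+ )$ trivially) and case-split on how $i$ and $j$ distribute among Cases 1--3 of Definition \ref{se}. The three failure modes (a), (b), (c) correspond respectively to: both indices in the same Case 2 block; both in the same Case 3 block; and $j = ( {} ^t \mu ) ^{\ge} _m$ together with the constraint $i \not\in \{ ( {} ^t \mu ) ^{\ge} _l \} _{l > m}$, which forces $i$ into Case 2. In each of these configurations $w _{\bm \lambda} ( \epsilon _i - \epsilon _j )$ is visibly not in $\Psi ( \mathbb V ^+ )$ for the structural reasons identified above. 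The converse---that every other configuration yields $w _{\bm \lambda} ( \epsilon _i - \epsilon _j ) \in \Psi ( \mathbb V ^+ )$---is the main obstacle. The hardest subcase is that of two Case 2 indices in distinct blocks (and its Case 3 and mixed Case 2/3 analogues): here one must combine the telescoping identity $A _{m+1} - A _m = c _m + c _{m+1} - 1$ (where $c _m := ( {} ^t \mu ) _m$ and $A _m$ is the affine shift for block $m$) with the endpoint bound $j - i \le \sum _{l = m _j} ^{m _i} c _l - 2$ to conclude that the images of $i$ and $j$ are ordered in reverse, placing $w _{\bm \lambda} ( \epsilon _i - \epsilon _j )$ back in $\Psi ( \mathbb V ^+ )$. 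The remaining subcases---one of $i, j$ in Case 1 but outside the configuration of (c), or one in Case 2 and the other in Case 3---are handled by the same interval-endpoint bookkeeping.
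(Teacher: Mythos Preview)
Your proposal is correct and is precisely the computation the paper has in mind: the paper's entire proof is the single word ``Straight-forward,'' and what you have written is exactly that straight-forward verification carried out in full. Your reduction to the condition ``both $\alpha$ and $w_{\bm\lambda}\alpha$ lie in $\Psi(\mathbb V^+)$'' via multiplicity one, your sign analysis of $w_{\bm\lambda}$ for parts (2) and (3), and your telescoping/endpoint estimate for the cross-block subcase of part (1) are all sound; in particular the inequality $A_{m_i}-A_{m_j} > j-i$ follows from $c_l\ge 1$ for $m_j<l<m_i$ exactly as you indicate.
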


\begin{proof}
Straight-forward.
\end{proof}

For each $\alpha \in R ^+$, we denote the corresponding unipotent one-parameter subgroup of $G$ by $U _{\alpha} \subset N$.

\begin{lemma}\label{V01}
Keep the setting of Definition \ref{se}. We have
$$\mathbb V ^{{\bm \lambda}} \subset \overline{B ( \mathbb V ^{{\bm \lambda}} \cap ( \mathbb V _0 \oplus \mathbb V _1 ) )},$$
where $\mathbb V _0 \subset V _2$ and $\mathbb V _1 \subset V _1$ are $G _0$-stable subspaces defined at (\ref{edec}).
\end{lemma}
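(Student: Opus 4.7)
My plan is to exploit the abelian long-root unipotent subgroup $N^{(2)} \subset N$, generated by $U_{2\epsilon_k}$ and $U_{\epsilon_j + \epsilon_k}$ with $j < k$. Under the standard identifications $\mathbb V_0 \cong \mathrm{Mat}(n)$, $\mathbb V_2 \cong \mathrm{Alt}(n)$, and $N^{(2)} \cong \mathsf{Sym}^2(A^n)$, a direct computation using the symplectic form $\mathbf J$ gives
\[
u_M \cdot (B, v_1) = (B, \ v_1, \ B M - M B^t) \quad \text{in } \mathbb V_0 \oplus \mathbb V_1 \oplus \mathbb V_2,
\]
so $N^{(2)}$ fixes $\mathbb V_1$ pointwise (a long positive root would shift any $\epsilon_i$ out of the weight lattice of $V_1$) and carries $\mathbb V_0$ into $\mathbb V_2$ on the $V_2$-factor.

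Setting $L := \mathbb V^{\bm \lambda} \cap (\mathbb V_0 \oplus \mathbb V_1)$ and $S := \{({}^t \mu)^{\ge}_m\}_m = \{d_1 < \cdots < d_{\mu_1}\}$, I would apply Lemma \ref{wdlambda} to identify $\Psi(\mathbb V^{\bm \lambda}) \cap \Psi(\mathbb V_2) = \{\epsilon_i + \epsilon_j : i \neq j, \ i, j \in S\}$. The key compatibility is that for every $i < j$ with $i, j \in S$, the companion weight $\epsilon_i - \epsilon_j$ also lies in $\Psi(\mathbb V^{\bm \lambda})$: writing $j = d_{m'}$ and $i = d_{m_0}$ with $m_0 < m'$, we have $i \in \{d_s : s < m'\}$, so hypothesis (c) of Lemma \ref{wdlambda}(1) fails, while (a) and (b) fail trivially. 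On the other hand, for $i \notin S$ with $j \in S$ and $i < j$, condition (c) does hold, so $\epsilon_i - \epsilon_j \notin \Psi(\mathbb V^{\bm \lambda})$. Thus the $n \times n$ matrix representing any $v_0 \in \mathbb V^{\bm \lambda} \cap \mathbb V_0$ is strictly upper triangular and vanishes in positions $(i, j)$ with $i \notin S \ni j$.

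Given $v = v_0 + v_1 + v_2 \in \mathbb V^{\bm \lambda}$ (with components in $\mathbb V_0$, $\mathbb V_1$, $\mathbb V_2$ respectively), I would carry out a one-parameter degeneration: take $\ell(\epsilon) := v_0 + \epsilon w + v_1 \in L$ and $u(\epsilon) := u_{M_0 + M_1/\epsilon}$ for symmetric matrices $M_0, M_1$ and an auxiliary $w \in L \cap \mathbb V_0$. Expanding $u(\epsilon) \cdot \ell(\epsilon)$ and demanding cancellation of the $1/\epsilon$-pole imposes $v_0 M_1 = M_1 v_0^t$, and the $\epsilon \to 0$ limit is then $v_0 + v_1 + (v_0 M_0 - M_0 v_0^t) + (w M_1 - M_1 w^t)$, which I set equal to $v$. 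The main obstacle is the combinatorial verification that $(w, M_0, M_1)$ can be chosen simultaneously for an arbitrary $v \in \mathbb V^{\bm \lambda}$: splitting $v_0$ by the block structure indexed by $S$ versus its complement, the direct $U_{2\epsilon_{d_q}}$-action on the $\mathbb V[\epsilon_{d_p} - \epsilon_{d_q}]$-component of the $S \times S$-block yields the target $\mathbb V[\epsilon_{d_p} + \epsilon_{d_q}]$-component, while the complementary part of $v_0$ is annihilated by each $X_{2\epsilon_k}$ with $k \in S$ (its column indices lie outside $S$) and thus creates no obstruction. Assembling these pieces should yield $v \in \overline{B \cdot L}$.
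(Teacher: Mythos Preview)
Your structural analysis via Lemma~\ref{wdlambda} matches the paper's: the $\mathbb V_2$-weights in $\mathbb V^{\bm\lambda}$ are exactly $\epsilon_i + \epsilon_j$ with $i \neq j$ and $i, j \in S$, and for each such pair the companion weight $\epsilon_i - \epsilon_j$ also lies in $\Psi(\mathbb V^{\bm\lambda})$. From here the paper is more direct. It sets $U_+ := \prod_{l \le m} U_{\gamma_l + \gamma_m}$ (the full block of long-type root groups indexed by $S$, including the $U_{2\epsilon_i}$), $\mathbb V_0^\circ := \bigoplus_{i<j;\, i,j\in S} \mathbb V[\epsilon_i - \epsilon_j]$, and $\mathbb V_2^\circ := \bigoplus_{i<j;\, i,j\in S} \mathbb V[\epsilon_i + \epsilon_j]$. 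A weight comparison shows $U_+$ moves $\mathbb V^{\bm\lambda}$ only in the $(\mathbb V_0^\circ + \mathbb V_2^\circ)$-direction, and the sole remaining point is that $U_+ \mathbb V_0^\circ$ is open dense in $\mathbb V_0^\circ + \mathbb V_2^\circ$. In your matrix language this is just the statement that $M \mapsto BM - MB^t$ is surjective from symmetric matrices on $S$ onto $\mathrm{Alt}(S)$ for a generic strictly upper-triangular $B$; no limiting procedure is required, because $\mathbb V^{\bm\lambda}$ is irreducible and density suffices.

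Your one-parameter degeneration is therefore an unnecessary detour, and as written it leaves a genuine gap. The closing paragraph appeals only to the diagonal subgroups $U_{2\epsilon_{d_q}}$, but $U_{2\epsilon_{d_q}}$ scales every entry $\mathbb V[\epsilon_{d_p} - \epsilon_{d_q}]$ (for all $p < q$) by the \emph{same} parameter, so once $\#S \ge 3$ these alone cannot manufacture an arbitrary element of $\mathrm{Alt}(S)$ from a fixed $v_0$. The off-diagonal groups $U_{\epsilon_{d_p} + \epsilon_{d_q}}$ are essential, and you never verify that the coupled system $v_0 M_1 = M_1 v_0^t$ together with $v_0 M_0 - M_0 v_0^t + w M_1 - M_1 w^t = v_2$ is actually solvable in $(w, M_0, M_1)$ for an arbitrary $v$. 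The fix is to drop the degeneration entirely: for generic $v_0$ take $M_1 = 0$ and solve $v_0 M_0 - M_0 v_0^t = v_2$ with $M_0$ symmetric on $S$, which is precisely the paper's density step.
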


\begin{proof}
We put $\gamma _m := \epsilon _{( {} ^t \mu ) ^{\ge} _m}$. By Lemma \ref{wdlambda}, we have $\epsilon _i + \epsilon _j \in \Psi ( \mathbb V ^{{\bm \lambda}} )$ only if $i, j \in \{ ( {}^t \mu ) ^{\ge} _m \} _m$. Moreover, $i < j$ and $\epsilon _i + \epsilon _j \in \Psi ( \mathbb V ^{{\bm \lambda}} )$ implies $\epsilon _i - \epsilon _j \in \Psi ( \mathbb V ^{{\bm \lambda}} )$. We put
$$U _+ := \prod _{l \le m} U _{\gamma _l + \gamma _m}, \mathbb V _0 ^{\circ} := \bigoplus _{l > m} \mathbb V [ \gamma _l - \gamma _m ], \text{ and } \mathbb V _2 ^{\circ} := \bigoplus _{l < m} \mathbb V [ \gamma _l + \gamma _m ].$$
We have $\mathbb V _0 ^{\circ}, \mathbb V _2 ^{\circ} \subset \mathbb V ^{{\bm \lambda}}$. By a weight comparison, we deduce
$$U _+ \mathbb V ^{{\bm \lambda}} = U _+ ( ( \mathbb V _0 ^{\circ} + \mathbb V _2 ^{\circ} ) \cap \mathbb V ^{{\bm \lambda}} ) + \mathbb V ^{{\bm \lambda}}.$$
Here $U _+ \mathbb V _0 ^{\circ} \subset \mathbb V _0 ^{\circ} + \mathbb V _2 ^{\circ}$ is a dense open subset. Thus, we conclude
$$\mathbb V ^{{\bm \lambda}} \subset \overline{U _+ ( \mathbb V ^{{\bm \lambda}} \cap ( \mathbb V _0 \oplus \mathbb V _1 ) )} \subset \overline{B ( \mathbb V ^{{\bm \lambda}} \cap ( \mathbb V _0 \oplus \mathbb V _1 ) )}$$
as desired.
\end{proof}

\begin{lemma}\label{basic identity}
Keep the setting of Definition \ref{se}. We define
$$\mathbb V ^{{\bm \lambda}} _{01} := \bigoplus _{i \le d _{\mu _1}} \mathbb V [ \epsilon _i ] \oplus \bigoplus _{l < m} \bigoplus _{\scriptsize \begin{matrix} d _l < i \le d _{l + 1}\\ d _m < j \le d _{m + 1}\end{matrix}} \mathbb V [ \epsilon _i - \epsilon _j].$$
Then, we have $\overline{B \mathbb V ^{{\bm \lambda}} _{01}} = \overline{B \mathbb V ^{{\bm \lambda}}}$.
\end{lemma}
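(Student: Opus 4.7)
The plan is to prove both inclusions $\overline{B\mathbb{V}^{\bm\lambda}} \subseteq \overline{B\mathbb{V}^{\bm\lambda}_{01}}$ and $\overline{B\mathbb{V}^{\bm\lambda}_{01}} \subseteq \overline{B\mathbb{V}^{\bm\lambda}}$ separately; the first follows quickly from Lemma \ref{V01}, while the second requires explicit one-parameter degenerations.

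For the first inclusion I would verify the $T$-stable subspace inclusion $\mathbb{V}^{\bm\lambda} \cap (\mathbb{V}_0 \oplus \mathbb{V}_1) \subseteq \mathbb{V}^{\bm\lambda}_{01}$ using Lemma \ref{wdlambda}. The $\epsilon_i$-weights appearing in $\mathbb{V}^{\bm\lambda}$ occur only at block endpoints $i = d_l \le d_{\mu_1}$, which are among the indices $i \le d_{\mu_1}$ allowed in $\mathbb{V}^{\bm\lambda}_{01}$. For the $\epsilon_i - \epsilon_j$-weights with $i < j$, a case analysis on the block containing $j$ (whether $j$ is a $\mu$-endpoint $d_l$, strictly interior to a $\mu$-block, or in a $\nu$-block) combined with the exclusions (a), (b), (c) of Lemma \ref{wdlambda} forces $i$ and $j$ to lie in distinct blocks with the block of $i$ strictly preceding that of $j$, matching exactly the indexing $l < m$ in the definition of $\mathbb{V}^{\bm\lambda}_{01}$. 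Lemma \ref{V01} then gives $\mathbb{V}^{\bm\lambda} \subseteq \overline{B(\mathbb{V}^{\bm\lambda} \cap (\mathbb{V}_0 \oplus \mathbb{V}_1))} \subseteq \overline{B\mathbb{V}^{\bm\lambda}_{01}}$, and taking the $B$-closure of the left-hand side yields the desired inclusion.

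For the reverse inclusion, the $T$-weights appearing in $\mathbb{V}^{\bm\lambda}_{01}$ but absent from $\mathbb{V}^{\bm\lambda}$ are of two kinds: (I) $\epsilon_i$ for $i$ strictly interior to a $\mu$-block of endpoint $d_l$, and (II) $\epsilon_i - \epsilon_{d_l}$ for $d_l$ a $\mu$-endpoint and $i$ strictly interior to an earlier $\mu$-block with endpoint $d_{l'}$ ($l' < l$). For (I), $\mathbb{V}^{\bm\lambda}$ contains $\mathbb V[\epsilon_{d_l}]$, and the positive-root subgroup $U_{\epsilon_i - \epsilon_{d_l}}$ transports a weight vector $\mathbf v[\epsilon_{d_l}]$ to $\mathbf v[\epsilon_{d_l}] + t\mathbf v[\epsilon_i]$; a subsequent $T$-rescaling yields $\mathbb V[\epsilon_i] \subseteq \overline{B\mathbb V[\epsilon_{d_l}]} \subseteq \overline{B\mathbb{V}^{\bm\lambda}}$. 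For (II), $\mathbb{V}^{\bm\lambda}$ contains $\mathbb V[\epsilon_{d_{l'}} - \epsilon_{d_l}]$ (since both indices are $\mu$-endpoints with $l' < l$), and the analogous mechanism using $U_{\epsilon_i - \epsilon_{d_{l'}}}$ shows $\mathbb V[\epsilon_i - \epsilon_{d_l}] \subseteq \overline{B\mathbb{V}^{\bm\lambda}}$.

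The main obstacle is to pass from these pointwise weight-space inclusions $\mathbb V[\lambda] \subseteq \overline{B\mathbb{V}^{\bm\lambda}}$ to the full linear subspace inclusion $\mathbb{V}^{\bm\lambda}_{01} \subseteq \overline{B\mathbb{V}^{\bm\lambda}}$, since the variety $\overline{B\mathbb{V}^{\bm\lambda}}$ need not be closed under sums of its elements. To handle this, I would apply the one-parameter subgroups of types (I) and (II) simultaneously to a sufficiently generic element $v^* \in \mathbb{V}^{\bm\lambda}$ whose seed components in every $\mathbb V[\epsilon_{d_l}]$ and $\mathbb V[\epsilon_{d_{l'}} - \epsilon_{d_l}]$ are nonzero, forming a multi-parameter family $\prod_\alpha \exp(t_\alpha X_\alpha) \cdot v^*$ inside $\overline{B\mathbb{V}^{\bm\lambda}}$; a combined $T$-rescaling should then exhibit a Zariski-dense open subset of $\mathbb{V}^{\bm\lambda}_{01}$ inside the resulting variety, and the irreducibility of $\mathbb{V}^{\bm\lambda}_{01}$ together with the closedness of $\overline{B\mathbb{V}^{\bm\lambda}}$ will force the full inclusion.
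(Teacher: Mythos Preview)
Your proposal is correct and follows essentially the same strategy as the paper's proof. Both arguments obtain the inclusion $\overline{B\mathbb V^{\bm\lambda}}\subseteq\overline{B\mathbb V^{\bm\lambda}_{01}}$ from Lemma~\ref{V01} together with the containment $\mathbb V^{\bm\lambda}\cap(\mathbb V_0\oplus\mathbb V_1)\subset\mathbb V^{\bm\lambda}_{01}$, and then produce the reverse inclusion by applying suitable positive root subgroups $U_{\epsilon_i-\epsilon_j}$ (with $j$ a $\mu$-endpoint) to seed vectors in $\mathbb V^{\bm\lambda}$, followed by a $T$-rescaling.

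The only organizational difference is that the paper works entirely inside $TN_0\bigl(\mathbb V^{\bm\lambda}\cap(\mathbb V_0\oplus\mathbb V_1)\bigr)$: since $N_0\subset G_0$ preserves the grading $\mathbb V_0\oplus\mathbb V_1$, the paper can treat the $\mathbb V_1$-piece and the $\mathbb V_0$-piece separately, showing in each case that the orbit of the seed under an explicit unipotent subgroup is a dense open subset of the corresponding piece of $\mathbb V^{\bm\lambda}_{01}$ (for $\mathbb V_1$ the single seed line $\mathbb V[\epsilon_{|\mu|}]$ suffices). This cleanly sidesteps the ``main obstacle'' you identify, because the density is established component-by-component rather than by a single multi-parameter family through one generic point $v^*\in\mathbb V^{\bm\lambda}$. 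Your approach of acting on a generic $v^*$ with all relevant root subgroups simultaneously also works, but you should restrict $v^*$ to $\mathbb V^{\bm\lambda}\cap(\mathbb V_0\oplus\mathbb V_1)$ and use only root subgroups from $N_0$; otherwise the $\mathbb V_2$-components and cross-terms complicate the verification that the resulting family is actually dense in $\mathbb V^{\bm\lambda}_{01}$.
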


\begin{proof}
We put $\Psi := \Psi ( \mathbb V ^{{\bm \lambda}} \cap ( \mathbb V _0 \oplus \mathbb V _1 ) )$. Since $\mathbb V ^{{\bm \lambda}} \cap ( \mathbb V _0 \oplus \mathbb V _1 ) \subset \mathbb V ^{{\bm \lambda}} _{01}$, it suffices to prove the inclusion
$$\mathbb V ^{{\bm \lambda}} _{01} \subset \overline{T N _0 ( \mathbb V ^{{\bm \lambda}} \cap ( \mathbb V _0 \oplus \mathbb V _1 ) )} \subset \overline{B \mathbb V ^{{\bm \lambda}}}.$$
Here the second inclusion is obvious. We have
$$\Psi ( \mathbb V ^{{\bm \lambda}} _{01} ) \backslash \Psi = \{ \epsilon _i ; i \in [ 1, d _{\mu _1} ] \backslash \{ ( {} ^t \mu ) _m ^{\ge} \} _m \} \cup \{ \epsilon _i - \epsilon _j ; i < j, i \not\in \{ ( {} ^t \mu ) _m ^{\ge} \} _m \ni j \}.$$
We deduce that
$$\left( \prod _{i < \left| \mu \right|} U _{\epsilon _i - \epsilon _{\left| \mu \right|}} \right) \mathbb V [ \epsilon _{\left| \mu \right|} ] \subset \mathbb V ^{{\bm \lambda}} _{01} \cap \mathbb V _1$$
is a dense open subset by the comparison of weights. We put
$$U _- := \prod _{i < j ; j \in \{ ( {} ^t \mu ) _m ^{\ge} \} _{m}} U _{\epsilon _i - \epsilon _{j}} \subset N _0.$$
It is easy to see that $U _-$ does not depend on the order of the product. By the comparison of weights, we have a dense open subset
$$U _- ( \mathbb V ^{{\bm \lambda}} \cap \mathbb V _0 ) \subset \mathbb V ^{{\bm \lambda}} _{01} \cap \mathbb V _0.$$
Applying the $T$-action, we conclude that the first inclusion is dense.
\end{proof}

In the below, we denote by $\mathbb V ^{\bm \lambda} _i$ ($i = 0, 1$) the spaces $\mathbb V ^{\bm \lambda} _{01} \cap \mathbb V _i$ coming from the statement of Lemma \ref{basic identity} for each marked partition $\bm \lambda$.

\begin{proposition}\label{base_set}
Let ${\bm \lambda}$ be a marked partition. We have $\overline{G \mathbb V ^{{\bm \lambda}}} = \overline{\mathcal O _{{\bm \lambda}}}$.
\end{proposition}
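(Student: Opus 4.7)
The plan is to identify $\overline{G \mathbb V^{\bm\lambda}}$ by exhibiting a single element inside $\mathbb V^{\bm\lambda}_{01}$ whose $\bm\lambda$-invariant is $\bm\lambda$, and then using upper-semicontinuity to control every other element. Since the image $G \mathbb V^{\bm\lambda}$ of the irreducible variety $G \times \mathbb V^{\bm\lambda}$ is irreducible and $G$-stable, Theorem \ref{clsn} (or Corollary \ref{kcomplete}) identifies $\overline{G \mathbb V^{\bm\lambda}} = \overline{\mathcal O_{\bm\mu}}$ for a unique marked partition $\bm\mu$, and it suffices to check $\bm\mu = \bm\lambda$. By Lemmas \ref{V01} and \ref{basic identity} I may replace $\mathbb V^{\bm\lambda}$ by the more manageable $\mathbb V^{\bm\lambda}_{01} = \mathbb V^{\bm\lambda}_0 \oplus \mathbb V^{\bm\lambda}_1$.

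The heart of the argument is to construct $X = X_1 + X_2 \in \mathbb V^{\bm\lambda}_{01}$ whose invariant in the sense of Definition \ref{kinv} is $\bm\lambda$. Identifying $V_2 \cong \mathrm{Alt}(2n)$, I observe that the intervals $(d^{\bm\lambda}_l, d^{\bm\lambda}_{l+1}]$ partition $\{1, \ldots, n\}$, and by Lemma \ref{wdlambda}(1) the admissible weights $\epsilon_i - \epsilon_j$ in $\Psi(\mathbb V^{\bm\lambda}_0)$ are precisely those connecting an earlier interval to a strictly later one. Choosing a generic $X_2 \in \mathbb V^{\bm\lambda}_0$ therefore yields a ``staircase'' operator whose rank pattern is dictated by the interval sizes. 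Because those sizes reproduce the columns of $\mu$ (read from shortest to longest) followed by the columns of $\nu$ (read from longest to shortest), a direct rank-counting argument shows the Jordan type of $X_2$ on $V_1$ is exactly $(\lambda_1, \lambda_1, \lambda_2, \lambda_2, \ldots)$, matching the first requirement of Definition \ref{kinv}.

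For $X_1$, Lemma \ref{wdlambda}(3) restricts the admissible $\epsilon_i$-weights in $\mathbb V^{\bm\lambda}_1$ to $i \in \{({}^t\mu)^\ge_m\}_m$. I would pick $\xi(i)$ at the top of the $i$-th Jordan chain of $X_2$ whenever $a_i \ne 0$ and $\xi(i) = 0$ otherwise, then set $X_1 = \sum_j X_2^{\lambda_j - a_j}\xi(j)$. The condition $X_2^{\lambda_i - 1}\xi(i) \ne 0$ then says that the staircase carries the chosen starting vector through exactly $\lambda_i - 1$ nonzero steps before exiting; this is encoded in the very construction of the $d_l$ from the column heights. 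Granted this, Theorem \ref{clsn} places $X$ in $\mathcal O_{\bm\lambda}$, giving $\mathcal O_{\bm\lambda} \subset \overline{G\mathbb V^{\bm\lambda}}$.

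For the reverse inclusion, I would invoke upper-semicontinuity: the Jordan type of $X_2$ and the non-vanishing of the vectors $X_2^{\lambda_i - 1}\xi(i)$ are open conditions on $\mathbb V^{\bm\lambda}_{01}$, so the locus of elements with invariant equal to $\bm\lambda$ is open and contains $X$; hence every element of $\mathbb V^{\bm\lambda}_{01}$ has invariant at most $\bm\lambda$ in the closure order, forcing $G\mathbb V^{\bm\lambda}_{01} \subset \overline{\mathcal O_{\bm\lambda}}$. The main obstacle will be the combinatorial bookkeeping in this Jordan-type analysis: precisely matching the column heights of $\mu$ and $\nu$ (encoded in $d^{\bm\lambda}$) to the Jordan blocks of size $\lambda_i$ and to the marks $a_i$, and verifying that a generic element of $\mathbb V^{\bm\lambda}_0$ really does achieve the maximal possible ranks at every step of the staircase.
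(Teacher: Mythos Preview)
Your approach is essentially the same as the paper's: both arguments reduce to showing that a generic element of $\mathbb V^{\bm\lambda}_{01}$ has invariant $\bm\lambda$, after which the equality of closures follows from irreducibility. The paper phrases this as ``take generic $(\xi,x)$ and decompose $\xi$,'' while you phrase it as ``build a specific $X$ and invoke openness,'' but these are the same step.

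One technical point you gloss over, and which the paper handles explicitly, is the distinction between the marks $a_i$ and the auxiliary sequence $b_i = \mu^{\bm\lambda}_i$ from Theorem~\ref{mp<->bp}. The space $\mathbb V^{\bm\lambda}_1 = \bigoplus_{i\le |\mu|}\mathbb V[\epsilon_i]$ (note: this is larger than what Lemma~\ref{wdlambda}(3) gives for $\mathbb V^{\bm\lambda}$ itself) is built from $(\mu,\nu)$, hence from $b$, not from $a$. So when you decompose a generic $\xi \in \mathbb V^{\bm\lambda}_1$ against a generic staircase $x$, the natural decomposition you obtain is $\xi = \sum_j x^{\lambda_j - b_j}\xi(j)$ with $\xi(j)$ nonzero for \emph{all} $j$, not just those with $a_j\neq 0$. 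The paper then performs an explicit rearrangement: whenever $a_i = 0$ but $b_i\neq 0$, one absorbs $\xi(i)$ into some $\xi(j)$ with $a_j\neq 0$ (using the two cases $b_i = a_j$ or $b_i = a_j - \lambda_j + \lambda_i$ that come straight out of the definition of $b$). After this absorption the remaining $\xi(j)$'s witness the invariant $(\lambda,a)$ rather than $(\lambda,b)$. Your sketch ``pick $\xi(i)$ at the top of the $i$-th chain whenever $a_i\neq 0$'' hides this step; without it one cannot verify that the constructed $X_1$ actually lies in $\mathbb V^{\bm\lambda}_1$, or conversely that a generic $\xi$ there admits a decomposition indexed by $a$ rather than $b$. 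This is exactly the ``combinatorial bookkeeping'' you flag at the end, and the rearrangement procedure is how it is discharged.
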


\begin{proof}
By Definition \ref{kinv} and Lemma \ref{wdlambda}, we deduce that ${\mathcal O} _{{\bm \lambda}} \subset G \mathbb V ^{{\bm \lambda}}$. Thus, it suffices to check $\mathbb V ^{{\bm \lambda}} _{01} \subset \overline{\mathcal O _{{\bm \lambda}}}$. We define an increasing filtration
$$\{ 0 \} = F _0 \subsetneq F _1 \subsetneq \cdots \subsetneq F _{\mu _1 + \nu _1 - 1} \subsetneq F _{\mu _1 + \nu _1} = V _1 ^+ := \mathbb V ^+ \cap V _1$$
as $F _k := \bigoplus _{i \le d _k} \mathbb V [ \epsilon _i ]$. By a weight comparison, each $x \in \mathbb V ^{{\bm \lambda}} _0$ preserves the flag $\{ F _k \} _k$ when regarded as an element of $\mathrm{End} ( V _1 )$ as in (\ref{Howto}). Moreover, the set of elements $x$ in $\mathbb V ^{{\bm \lambda}} _0$ which satisfies
$$\dim x ^l F _k /  F _{k - l - 1} = \min \{ \dim F _{k - m + 1} / F _{k - m} \} _{m = 1} ^{l} \text{ for every } l$$
is dense in $\mathbb V ^{{\bm \lambda}} _0$. Let $\xi \in F _{\mu _1} \cap V _1 ^+$ be an element such that there exists $\{ \xi ( i ) \} _{i} \subset V _1 ^+$ which satisfies
$$\xi = \sum _{i \ge 1} x ^{\lambda _i - b _i} \xi ( i ), \text{ and } x ^{\lambda _i - 1} \xi ( i ) \neq 0 = x ^{\lambda _i} \xi ( i ) \text{ if } \xi ( i ) \neq 0.$$
Under the above choice of $x$, this condition is an open condition. We rearrange $\{\xi ( i )\}$ according to the following rules: If $b _i = a _j$ for some $j < i$, then we rearrange $\xi ( i ), \xi ( j )$ as $0, x ^{\lambda _j - \lambda _i} \xi ( i ) + \xi ( j )$, and let others unchanged. If $b _i = a _j - \lambda _j + \lambda _i$ for $i > j$, then we rearrange $\xi ( i ), \xi ( j )$ with $0, \xi ( i ) + \xi ( j )$, and let others unchanged.\\
We repeat this procedure for all possible pairs $(i, j)$. We conclude that if we have $\xi ( i ) \neq 0$ for every $i$ such that $a _i \neq 0$ after applying these procedures, then $\xi \oplus x$ has $\mathbb K$-invariant $( \lambda, a )$. Since the former is an open condition on $\xi$, we conclude the result.
\end{proof}

\begin{corollary}\label{dim_change}
Under the setting of Proposition \ref{base_set}, we have
$$\dim \mathcal O _{{\bm \lambda}} = \dim \mathcal O _{( \lambda, \mathbf 0 )} + 2 \left| \mu \right|.$$
\end{corollary}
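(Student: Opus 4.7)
The plan is to reduce the dimension comparison to a centralizer-orbit computation via the projection to the $V_2$-component of $\mathfrak N$, and then to evaluate this dimension combinatorially using the $b$-sequence of Theorem~\ref{mp<->bp}.

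By the corollary to Proposition~\ref{defeq}, the ideal of $\mathfrak N \subset V_1 \oplus V_2$ is generated by polynomials $P_1, \ldots, P_n$ that depend only on the $V_2$-coordinates, so the projection $\mathrm{pr}_2 : \mathfrak N \to V_2$ is $G$-equivariant. By Definition~\ref{kinv}, the Jordan type of the $V_2$-component of any element of $\mathcal O_{{\bm \lambda}}$ depends only on $\lambda$, not on the marking $a$. Hence both $\mathcal O_{{\bm \lambda}}$ and $\mathcal O_{(\lambda, \mathbf 0)}$ project onto the same $G$-orbit $\mathbb O \subset V_2$. Since $\mathcal O_{(\lambda, \mathbf 0)} \subset \{0\} \times V_2$, the projection identifies this orbit with $\mathbb O$. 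For a fixed $X_2 \in \mathbb O$ and a generic $X_1 \in V_1$ with $(X_1, X_2) \in \mathcal O_{{\bm \lambda}}$, the fiber $\mathrm{pr}_2^{-1}(X_2) \cap \mathcal O_{{\bm \lambda}}$ is the $Z_G(X_2)$-orbit of $X_1$, and we obtain
$$
\dim \mathcal O_{{\bm \lambda}} - \dim \mathcal O_{(\lambda, \mathbf 0)} = \dim Z_G(X_2) \cdot X_1.
$$

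It remains to show this orbit has dimension $2|\mu|$. I would decompose $V_1 = \bigoplus_j (V_j^+ \oplus V_j^-)$ as a $\langle X_2 \rangle$-module, where each $V_j^\pm$ is a Jordan block of length $\lambda_j$ and the symplectic form pairs $V_j^+$ nondegenerately with $V_j^-$ (reflecting the doubling of the Jordan type to $(\lambda_1, \lambda_1, \lambda_2, \lambda_2, \ldots)$). Using Definition~\ref{kinv}, write $X_1 = \sum_{j : a_j \neq 0} X_2^{\lambda_j - a_j}\xi(j)$ with each $\xi(j)$ a Jordan generator lying in $V_j^+$. The centralizer $Z_G(X_2)$ acts on each paired block $V_j^+ \oplus V_j^-$ with a $\mathrm{GL}_2$-type symplectic freedom (the source of the factor $2$), and between blocks of different sizes performs Jordan shifts. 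Applying the rearrangement procedure already used in the proof of Proposition~\ref{base_set} (which rewrites $X_1$ so that each $\xi(j)$ with $b_j > 0$ becomes an effective generator at level $\lambda_j - b_j$), the contribution of the $j$-th block pair to the orbit has dimension $2 b_j$, and summing yields $\sum_j 2 b_j = 2|\mu|$.

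The main obstacle is this last block-by-block count. The subtle case is when $a_j = 0$ but $b_j > 0$: here $X_1$ has no direct $\xi(j)$-contribution, yet the centralizer shifts generators $\xi(i)$ from other blocks into block $j$ at level $\lambda_j - b_j$, producing a $2 b_j$-dimensional contribution. Matching this geometric effect to the combinatorial ``max'' formula defining $b_j$ in Theorem~\ref{mp<->bp} is exactly the content of the rearrangement already carried out in the proof of Proposition~\ref{base_set}; once that rearrangement is available, the dimension count is immediate.
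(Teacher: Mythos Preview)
Your overall strategy---project to $V_2$, identify the fiber over a fixed $X_2$ as a $Z_G(X_2)$-orbit in $V_1$, and show this orbit has dimension $2|\mu|$---is exactly what the paper does. The difference lies in how the fiber dimension is extracted.

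The paper stays inside the concrete setup of the proof of Proposition~\ref{base_set}: having fixed a generic $x \in \mathbb V_0^{\bm\lambda}$, it observes directly that $\xi \oplus x \in \overline{\mathcal O_{\bm\lambda}}$ (for $\xi \in V_1^+$) if and only if $\xi$ lies in the linear subspace $F_{\mu_1} = \bigoplus_{i \le |\mu|} V_1[\epsilon_i]$, so the fiber in $V_1^+ = \mathbb V_1$ has dimension $|\mu|$. Since $x$ has the same Jordan type $\lambda$ on the other half $\mathbb V_{-1}$ of $V_1$, the full fiber has dimension $2|\mu|$. No centralizer computation is ever performed.

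Your block-by-block count---assigning $2b_j$ to the $j$-th Jordan pair---lands on the right total, but the additivity is not justified: $Z_G(X_2)$ does not act on the block pairs independently, and the off-block Jordan shifts you mention are precisely what obstructs a clean per-block decomposition of the orbit. Your appeal to the rearrangement procedure of Proposition~\ref{base_set} is pointing at the right place, but what that rearrangement actually delivers is the identification of the fiber closure (in $V_1^+$) with the linear space $F_{\mu_1}$, not a per-block dimension contribution. Once you recognize this, your centralizer-orbit detour collapses to the paper's one-line observation: the fiber closure is linear of dimension $|\mu|$ in each half, hence $2|\mu|$ in $V_1$.
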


\begin{proof}
We retain the setting of the proof of Proposition \ref{base_set}. Let $\xi \in V _1 ^+$. Then, we have $\xi \oplus x \in \overline{\mathcal O _{{\bm \lambda}}}$ if and only if $\xi \in \bigoplus _{i \le \left| \mu \right|} V _1 [ \epsilon _i ]$. The Jordan type of $x$ is $\lambda$ (unchanged) if we regard $x \in \mathrm{End} ( V _1 )$ as either $x \in \mathrm{End} ( \mathbb V _1 )$ or $x \in \mathrm{End} ( \mathbb V _{-1} )$. Therefore, the fiber of the projection $\mathcal O _{{\bm \lambda}} \rightarrow \mathcal O _{( \lambda, \mathbf 0 )}$ has dimension $2 \dim F _{\mu _1} = 2 \left| \mu \right|$ as desired.
\end{proof}

\section{Normality of nilcones in characteristic two}\label{nnt}

We assume the setting of \S \ref{gfn}.

\begin{theorem}\label{flat_prolong_A}
For each $G _{\Bbbk}$-orbit $\mathcal O _{\Bbbk}$ of $\mathfrak N _{\Bbbk}$, there exists a $G$-orbit $\mathcal O$ of $\mathfrak N$ such that $\mathcal O \otimes \Bbbk = \mathcal O _{\Bbbk}$.
\end{theorem}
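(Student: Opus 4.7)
The ring $A$ is a discrete valuation ring with uniformizer $2$, so $A$-flatness of a finitely generated module is equivalent to being $2$-torsion-free. Let $\bm\lambda$ denote the $\Bbbk$-invariant of $\mathcal O_{\Bbbk}$, and let $\mathcal O_{\bm\lambda,\mathbb K}$ be the $G_{\mathbb K}$-orbit of $\mathfrak N_{\mathbb K}$ with $\mathbb K$-invariant $\bm\lambda$ furnished by Theorem \ref{clsn}. My plan is to lift $\mathcal O_{\bm\lambda,\mathbb K}$ and $\mathcal O_{\Bbbk}$ simultaneously to the two fibers of a single $G$-stable, $A$-flat subscheme of $\mathfrak N$, using the subvarieties $\mathbb V^{\bm\mu}$ from \S\ref{seW} which are defined already over $A$.

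For each marked partition $\bm\mu$ of $n$, consider the $A$-morphism
$$\varphi_{\bm\mu}: G \times_A \mathbb V^{\bm\mu} \longrightarrow \mathfrak N,\qquad (g,v)\mapsto g\cdot v,$$
and let $\mathfrak X_{\bm\mu}\subseteq \mathfrak N$ be its scheme-theoretic image. Since $\mathbb V^{\bm\mu}$ is a free $A$-module and $G$ is $A$-smooth, the source is $A$-flat and its coordinate ring is $2$-torsion-free; the coordinate ring of $\mathfrak X_{\bm\mu}$ injects into it and is therefore $2$-torsion-free, hence $\mathfrak X_{\bm\mu}$ is $A$-flat. Flatness of $A\to \mathbb K$ gives $(\mathfrak X_{\bm\mu})_{\mathbb K} = \overline{G_{\mathbb K}\mathbb V^{\bm\mu}_{\mathbb K}} = \overline{\mathcal O_{\bm\mu,\mathbb K}}$ by Proposition \ref{base_set} over $\mathbb K$; on the special fiber $(\mathfrak X_{\bm\mu})_{\Bbbk}$ contains $\overline{G_{\Bbbk}\mathbb V^{\bm\mu}_{\Bbbk}} = \overline{\mathcal O_{\bm\mu,\Bbbk}}$ by Proposition \ref{base_set} over $\Bbbk$, and by $A$-flatness together with Corollary \ref{dim_change} both fibers are of pure dimension $d(\bm\mu):=\dim \mathcal O_{\bm\mu}$ (a value independent of characteristic).

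I would then define
$$\mathcal O := \mathfrak X_{\bm\lambda}\,\setminus\, \bigcup_{\bm\mu\,:\,\overline{\mathcal O_{\bm\mu,\mathbb K}}\subsetneq \overline{\mathcal O_{\bm\lambda,\mathbb K}}} \mathfrak X_{\bm\mu},$$
a $G$-stable open subscheme of the $A$-flat scheme $\mathfrak X_{\bm\lambda}$, and hence itself $A$-flat. On the generic fiber this subtracts from $\overline{\mathcal O_{\bm\lambda,\mathbb K}}$ every strictly smaller orbit closure, leaving exactly the open orbit $\mathcal O_{\bm\lambda,\mathbb K}$; thus $\mathcal O$ qualifies as a $G$-orbit in the sense of \S 1.1.

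The main obstacle is the special-fiber identification $\mathcal O\otimes \Bbbk=\mathcal O_{\Bbbk}$. The delicate point is that $(\mathfrak X_{\bm\mu})_{\Bbbk}$ may set-theoretically exceed $\overline{\mathcal O_{\bm\mu,\Bbbk}}$, because $\mathbb K$-points of $\mathcal O_{\bm\mu,\mathbb K}$ could specialize into $G_{\Bbbk}$-orbits other than $\mathcal O_{\bm\mu,\Bbbk}$. Any non-empty intersection $\mathcal O_{\bm\lambda,\Bbbk}\cap (\mathfrak X_{\bm\mu})_{\Bbbk}$ would, by $G_{\Bbbk}$-invariance and closedness, force $\overline{\mathcal O_{\bm\lambda,\Bbbk}}\subseteq (\mathfrak X_{\bm\mu})_{\Bbbk}$; since the right-hand side is pure of dimension $d(\bm\mu)$ this makes $\overline{\mathcal O_{\bm\lambda,\Bbbk}}$ a top-dimensional component, forcing $d(\bm\lambda)=d(\bm\mu)$. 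I would eliminate this possibility by invoking Theorem \ref{pcflatex} and its orbit-count corollary: the three orbit sets for $\mathfrak N_{\mathbb K}$, $\mathfrak N_{\Bbbk}$ and $\mathcal N_{\Bbbk}$ are in natural closure-preserving bijection through marked partitions, which leaves no room for a spurious top-dimensional orbit closure inside $(\mathfrak X_{\bm\mu})_{\Bbbk}$ labelled by $\bm\lambda\neq \bm\mu$. Combined with Corollary \ref{kcomplete}, this identifies $\mathcal O\otimes \Bbbk$ with the given $\mathcal O_{\Bbbk}$.
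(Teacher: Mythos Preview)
Your overall framework---form the $A$-flat closure $\mathfrak X_{\bm\lambda}=\overline{G\mathbb V^{\bm\lambda}}$ and subtract the smaller ones---is exactly the paper's strategy, and your flatness and generic-fiber identifications are correct. The gap is in the last paragraph, where you need to show that $(\mathfrak X_{\bm\lambda})_{\Bbbk}=\overline{\mathcal O_{\bm\lambda,\Bbbk}}$, i.e.\ that the special fiber is \emph{irreducible}. You correctly handle the easy direction (the subtraction does not remove $\mathcal O_{\bm\lambda,\Bbbk}$, by pure dimension comparison). But for the hard direction you only invoke Theorem~\ref{pcflatex} and its orbit-count corollary, and this does not suffice. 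Theorem~\ref{pcflatex} gives a closure-preserving bijection between $G_{\Bbbk}$-orbits of $\mathfrak N_{\Bbbk}$ and of $\mathcal N_{\Bbbk}$; it says nothing a priori relating the $\Bbbk$-closure order to the $\mathbb K$-closure order on $\mathfrak N$. If $(\mathfrak X_{\bm\lambda})_{\Bbbk}$ had a second top-dimensional component $\overline{\mathcal O_{\bm\nu,\Bbbk}}$ with $d(\bm\nu)=d(\bm\lambda)$ and $\bm\nu\neq\bm\lambda$, nothing in your argument produces a contradiction: knowing that some $A$-point of $\overline{\mathcal O_{\bm\lambda,\mathbb K}}$ reduces into $\mathcal O_{\bm\nu,\Bbbk}$ does \emph{not} yield $\overline{\mathcal O_{\bm\nu,\mathbb K}}\subseteq\overline{\mathcal O_{\bm\lambda,\mathbb K}}$, and the equality of orbit counts does not rule out such overlaps either. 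Your sentence ``leaves no room for a spurious top-dimensional orbit closure'' is where the proof stops being a proof.

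The paper closes this gap by a direct computation rather than a counting argument. It works with the smaller model $\mathbb V^{\bm\lambda}_{01}$ (Lemma~\ref{basic identity}), observes that $B\mathbb V^{\bm\lambda}_{01}=G_2\mathbb V^{\bm\lambda}_{01}$, and notes that for generic $X\in\mathbb V^{\bm\lambda}_{01}(A)$ the stabilizer $\mathsf{Stab}_{G_2}X$ is cut out by \emph{linear} equations; hence $(\overline{\mathsf{Stab}_{(G_2)_{\mathbb K}}X_{\mathbb K}})_{\Bbbk}=\mathsf{Stab}_{(G_2)_{\Bbbk}}X_{\Bbbk}$. This pins down the leading term of the $T$-equivariant Hilbert polynomial of $\overline{B\mathbb V^{\bm\lambda}_{01}}$ on both fibers and forces $\overline{B\mathbb V^{\bm\lambda}_{01}}\otimes\Bbbk=\overline{B_{\Bbbk}(\mathbb V^{\bm\lambda}_{01})_{\Bbbk}}$, whence $(\mathfrak X_{\bm\lambda})_{\Bbbk}=\overline{\mathcal O_{\bm\lambda,\Bbbk}}$ by Proposition~\ref{base_set}. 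That is the missing ingredient you need to supply.
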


\begin{proof}
Assume that $\mathcal O _{\Bbbk}$ has $\Bbbk$-invariant $\bm \lambda$. Let $\mathcal O _{\mathbb K}$ be a $G_{\mathbb K}$-orbit of $\mathfrak N _{\mathbb K}$ with $\mathbb K$-invariant $\bm \lambda$. By Proposition \ref{base_set} and Lemma \ref{basic identity}, we have $\overline{\mathcal O _{\mathbb K}} = \overline{G \mathbb V ^{\bm \lambda} _{01}} \subset \mathbb V$ over $A$. Since $\overline{\mathcal O _{\mathbb K}}$ is irreducible and dominates $\mathrm{Spec} A$, it is flat over $A$. Therefore, $\overline{\mathcal O _{\mathbb K}}$ is equi-dimensional over $A$. We have $B \mathbb V_{01} ^{\bm \lambda} = G _2 \mathbb V _{01} ^{\bm \lambda}$. For some (and in fact generic) $X \in \mathbb V _{01} ^{\bm \lambda} ( A )$ such that $X _{\mathbb K}$ has $\mathbb K$-invariant $\bm \lambda$ and $X _{\Bbbk}$ has $\Bbbk$-invariant $\bm \lambda$, we have $( \overline{\mathsf{Stab} _{ ( G _2 ) _{\mathbb K}} X _{\mathbb K}} ) _{\Bbbk} = \mathsf{Stab} _{( G _2 ) _{\Bbbk}} X _{\Bbbk}$ by identifying defining equations, which are all linear. Applying the $G _0$-action, we deduce that $B \mathbb V _{01} ^{\bm \lambda}$ contains a dense open $\mathbb G _m$-stable subvariety, which is flat over $A$. Therefore, we conclude $\overline{B \mathbb V _{01} ^{\bm \lambda}} \otimes \Bbbk = \overline{B _{\Bbbk} ( \mathbb V _{01} ^{\bm \lambda} ) _{\Bbbk}}$ via the equality of the leading terms of Hilbert polynomials of $\Bbbk [ \mathbb V ^+ _{\Bbbk} ]$-modules. By Proposition \ref{base_set}, we have $\overline{\mathcal O _{\mathbb K}} \otimes \Bbbk = \overline{\mathcal O} _{\Bbbk}$ and the latter is irreducible. Hence, setting
$$\mathcal O := \overline{\mathcal O _{\mathbb K}} - \bigcup _{\mathcal O ^{\prime}: G _{\mathbb K}\text{-orbit}; \overline{\mathcal O ^{\prime}} \subsetneq \overline{\mathcal O}} \overline{\mathcal O ^{\prime} _{\mathbb K}}$$
yields the result.
\end{proof}

\begin{corollary}\label{normal2en}
The variety $\mathfrak N _{\Bbbk}$ is normal.
\end{corollary}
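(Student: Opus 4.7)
The plan is to invoke Serre's criterion $(R_1)+(S_2)$ on $\mathfrak{N}_{\Bbbk}$. The $(S_2)$ condition is immediate from what has been established: $\mathfrak{N}$ is Cohen--Macaulay by Proposition~\ref{CM} and flat over $A$, and $(2)\subset A$ is principal and generated by a non-zerodivisor, so $\mathfrak{N}_{\Bbbk}=\mathfrak{N}\otimes_A A/(2)$ is the quotient of a Cohen--Macaulay scheme by a regular element, hence is itself Cohen--Macaulay.

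The real content lies in verifying $(R_1)$. I would first observe that $\mathfrak{N}_{\Bbbk}$ carries only finitely many $G_{\Bbbk}$-orbits (Theorem~\ref{pcflatex} together with Hesselink's theorem), so there is a unique open dense orbit $\mathcal{O}_{\Bbbk,0}$, which is smooth as a homogeneous space for $G_{\Bbbk}$. The singular locus of $\mathfrak{N}_{\Bbbk}$ is therefore contained in $\mathfrak{N}_{\Bbbk}\setminus\mathcal{O}_{\Bbbk,0}$, so it suffices to show that every non-dense $G_{\Bbbk}$-orbit $\mathcal{O}_{\Bbbk}$ has codimension at least two in $\mathfrak{N}_{\Bbbk}$. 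For this, Theorem~\ref{flat_prolong_A} produces a $G$-orbit $\mathcal{O}\subset\mathfrak{N}$ flat over $A$ with $\mathcal{O}\otimes\Bbbk=\mathcal{O}_{\Bbbk}$ and generic fibre a single $G_{\mathbb{K}}$-orbit $\mathcal{O}_{\mathbb{K}}$. By flatness of $\mathcal{O}$ and of $\mathfrak{N}$ over the one-dimensional base $A$, the codimension of $\mathcal{O}_{\Bbbk}$ in $\mathfrak{N}_{\Bbbk}$ equals the codimension of $\mathcal{O}_{\mathbb{K}}$ in $\mathfrak{N}_{\mathbb{K}}$. Since the open dense orbit on either side is the unique one of maximal dimension, this forces $\mathcal{O}_{\mathbb{K}}$ to be non-dense. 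Invoking \cite{K1}~1.2\,6) (exactly as in the proof of Proposition~\ref{LI}) we obtain $\mathrm{codim}_{\mathfrak{N}_{\mathbb{K}}}\mathcal{O}_{\mathbb{K}}\ge 2$, and the same then holds for $\mathcal{O}_{\Bbbk}$ in $\mathfrak{N}_{\Bbbk}$, which establishes $(R_1)$.

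The main potential obstacle is the flatness bookkeeping: one must know both that $\mathfrak{N}$ is flat over $A$ and that each orbit closure provided by Theorem~\ref{flat_prolong_A} is flat over $A$, so that codimensions transport faithfully from the generic to the special fibre. Both ingredients are already in place by the preceding sections, and combined with the $(S_2)$ remark above, Serre's criterion delivers normality of $\mathfrak{N}_{\Bbbk}$.
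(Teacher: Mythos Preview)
Your proof is correct and follows essentially the same route as the paper: deduce $(S_2)$ from Cohen--Macaulayness and flatness of $\mathfrak N$ over $A$, then use Theorem~\ref{flat_prolong_A} to transport the codimension-$\ge 2$ bound on non-dense orbits from the generic fibre (via \cite{K1}~1.2\,6)) to the special fibre, giving $(R_1)$. You have simply made explicit the dimension-comparison step that the paper leaves implicit.
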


\begin{proof}
Since $\mathfrak N$ is Cohen-Macauley and flat over $A$, it suffices to prove that $\mathfrak N _{\Bbbk}$ is regular in codimension one. By Theorem \ref{flat_prolong_A}, it follows that every non-dense $G _{\Bbbk}$-orbit in $\mathfrak N _{\Bbbk}$ is codimension at least two. Therefore, we deduce the result.
\end{proof}

\begin{corollary}
The map $\nu _{\Bbbk}$ is strictly semi-small with respect to the stratification given by $G _{\Bbbk}$-orbits.
\end{corollary}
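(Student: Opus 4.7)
The plan is to combine three ingredients: (i) semi-smallness of $\nu_{\Bbbk}$, already provided by Lemma \ref{exotic semi-small} once we know the number of $G_{\Bbbk}$-orbits on $\mathfrak{N}_{\Bbbk}$ is finite (as ensured by the corollary to Theorem \ref{pcflatex}); (ii) strict semi-smallness of $\nu_{\mathbb{K}}$, which follows from Remark \ref{exotic str semi-small} applied over $\mathbb{K} \subset \mathbb{C}$; and (iii) Chevalley's upper semi-continuity of fiber dimensions for the proper morphism $\nu : F \to \mathfrak{N}$ over $A$. The idea is to propagate the equality version of the dimension estimate from the generic to the special fiber. Since the upper bound is automatic, only the matching lower bound $\dim \nu_{\Bbbk}^{-1}(y_{\Bbbk}) \ge \tfrac{1}{2}(\dim \mathfrak{N}_{\Bbbk} - \dim \mathcal{O}_{\Bbbk})$ for points of each orbit requires real work.

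Concretely, fix a $G_{\Bbbk}$-orbit $\mathcal{O}_{\Bbbk}$ with a closed point $y_{\Bbbk}$, and use Theorem \ref{flat_prolong_A} to extend $\mathcal{O}_{\Bbbk}$ to an $A$-orbit $\mathcal{O}$ inside $\mathfrak{N}$. Then $\mathcal{O}$ is irreducible, flat over $\mathrm{Spec}\,A$, and has generic fiber $\mathcal{O}_{\mathbb{K}}$ a single $G_{\mathbb{K}}$-orbit. Let $\eta$ be the generic point of $\mathcal{O}$, so $\eta \in \mathcal{O}_{\mathbb{K}}$ while $y_{\Bbbk} \in \overline{\{\eta\}}$. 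Upper semi-continuity yields
\begin{equation*}
\dim \nu^{-1}(y_{\Bbbk}) \ge \dim \nu^{-1}(\eta),
\end{equation*}
and $G_{\mathbb{K}}$-equivariance of $\nu_{\mathbb{K}}$ together with transitivity on $\mathcal{O}_{\mathbb{K}}$ gives $\dim \nu^{-1}(\eta) = \dim \nu_{\mathbb{K}}^{-1}(y_{\mathbb{K}})$ for every closed $y_{\mathbb{K}} \in \mathcal{O}_{\mathbb{K}}$. Strict semi-smallness of $\nu_{\mathbb{C}}$ transfers to $\mathbb{K}$, producing
\begin{equation*}
\dim \nu_{\mathbb{K}}^{-1}(y_{\mathbb{K}}) = \tfrac{1}{2}\bigl(\dim \mathfrak{N}_{\mathbb{K}} - \dim \mathcal{O}_{\mathbb{K}}\bigr).
\end{equation*}
Flatness of $\mathfrak{N}$ (corollary to Proposition \ref{CM}) and of $\mathcal{O}$ over $A$ identify $\dim \mathfrak{N}_{\Bbbk} = \dim \mathfrak{N}_{\mathbb{K}}$ and $\dim \mathcal{O}_{\Bbbk} = \dim \mathcal{O}_{\mathbb{K}}$, turning the chain into $\dim \nu_{\Bbbk}^{-1}(y_{\Bbbk}) \ge \tfrac{1}{2}(\dim \mathfrak{N}_{\Bbbk} - \dim \mathcal{O}_{\Bbbk})$. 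Combined with the semi-small upper bound from Lemma \ref{exotic semi-small}, this forces equality, which is precisely strict semi-smallness.

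The only delicate point is confirming that $y_{\Bbbk}$ is genuinely a specialization of $\eta$ in the $A$-scheme $\mathfrak{N}$; this is exactly where the irreducibility and flatness of the $A$-orbit $\mathcal{O}$ delivered by Theorem \ref{flat_prolong_A} are indispensable. Without that input, one would only have semi-continuity across the two separate fibers of $\mathrm{Spec}\,A$ and could not compare fiber dimensions of $\nu$ over characteristic $0$ and characteristic $2$ points. Once irreducibility and flatness of $\mathcal{O}$ are in hand, the argument is a routine application of Chevalley's theorem and the numerical coincidences supplied by flatness.
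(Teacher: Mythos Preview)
Your proof is correct and follows essentially the same approach as the paper. The paper's argument is a one-line version of yours: it picks $X \in \mathcal{O}(A)$ (an $A$-valued point rather than the generic point) and invokes upper semi-continuity of $\dim \nu^{-1}(X)$ along $\mathrm{Spec}\,A$ to transport strict semi-smallness from $\nu_{\mathbb{K}}$ to $\nu_{\Bbbk}$; your use of the generic point of $\mathcal{O}$ in place of an $A$-section is a minor variant that avoids checking such a section exists, but the underlying mechanism is identical.
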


\begin{proof}
For each $G$-orbit $\mathcal O$ of $\mathfrak N$, we choose $X \in \mathcal O ( A )$. Then, the upper-semicontinuity of fiber dimensions of $\nu ^{-1} ( X )$ along $A$ implies the strictness of $\nu _{\Bbbk}$ from that of $\nu _{\mathbb K}$.
\end{proof}

%


\begin{theorem}
The variety $\mathcal N _{\Bbbk}$ is normal.
\end{theorem}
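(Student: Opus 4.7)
I would verify Serre's criterion $R_1 + S_2$ for $\mathcal N_{\Bbbk}$, mirroring the proof of Corollary \ref{normal2en} and using the flat family $\pi : \mathcal N_S \to \mathbb A^1_{\Bbbk}$ from Section \ref{gfn} to transfer properties from $\mathfrak N_{\Bbbk}$.

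For regularity in codimension one, Theorem \ref{pcflatex} provides a bijection $\mathsf{df}$ between $G_{\Bbbk}$-orbits of $\mathfrak N_{\Bbbk}$ and those of $\mathcal N_{\Bbbk}$ preserving closure relations, hence codimensions of orbits (both varieties share the same total dimension, being fibers of the flat family $\mathcal N_S$). From the proof of Corollary \ref{normal2en}, non-dense orbits of $\mathfrak N_{\Bbbk}$ have codimension at least two, so the same holds in $\mathcal N_{\Bbbk}$. Consequently the codimension-one locus of $\mathcal N_{\Bbbk}$ lies in the dense regular nilpotent orbit, which is smooth as an open subvariety of $\mathcal N_{\Bbbk}$ by standard group-action arguments.

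For the Cohen-Macaulay property, my plan is to show that the total space $\mathcal N_S$ is CM; then flatness of $\pi$ over the regular curve $\mathbb A^1_{\Bbbk}$ forces every fiber, and in particular $\mathcal N_{\Bbbk}$, to be Cohen-Macaulay. To prove $\mathcal N_S$ is CM, I would use the finite surjective morphism $\mathsf{ml} : \mathbb A^1_{\Bbbk} \times \mathfrak N_{\Bbbk} \to \mathcal N_S$ from (\ref{induction to whole space}), whose source is CM by Proposition \ref{CM}. The key point is that $\mathsf{ml}$ is fiberwise a universal homeomorphism: over $t \ne 0$ it restricts to the finite bijective morphism $\mathsf m$, and over $t = 0$ it factors through the isogeny $\mathsf F_1$.

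\textbf{Main obstacle:} The delicate step is descending the Cohen-Macaulay property from $\mathbb A^1_{\Bbbk} \times \mathfrak N_{\Bbbk}$ to $\mathcal N_S$ through the finite surjection $\mathsf{ml}$, since finite surjections do not preserve CM in general. The saving grace is that $\mathsf{ml}$ is universally bijective, so the induced extensions of function fields are purely inseparable, and descent of CM along such Frobenius-type morphisms is available via Kunz-style arguments. If that descent proves awkward, an alternative would be to realize $\mathcal N_{\Bbbk}$ directly as a complete intersection in $\mathfrak g_{\Bbbk}$ by verifying that the generators of $\Bbbk[\mathfrak g_{\Bbbk}]^{G_{\Bbbk}}$ cut out a regular sequence of codimension $n$, which combined with the $R_1$ argument above would immediately yield normality.
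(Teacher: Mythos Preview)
Your $R_1$ argument is fine: non-dense $G_{\Bbbk}$-orbits in $\mathcal N_{\Bbbk}$ have codimension at least two (via Theorem~\ref{pcflatex} or directly from Hesselink), and the dense orbit, being open and a homogeneous space, consists of smooth points; hence the singular locus has codimension $\geq 2$.

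The genuine gap is in your primary route to Cohen--Macaulayness. Descent of CM along a finite universal homeomorphism is not a standard fact, and ``Kunz-style arguments'' do not supply it: Kunz's theorem characterizes \emph{regularity} via \emph{flatness} of the Frobenius endomorphism, and neither the hypothesis nor the conclusion matches what you need. To descend CM from $\mathbb A^1_{\Bbbk} \times \mathfrak N_{\Bbbk}$ to $\mathcal N_S$ you would essentially need $\mathsf{ml}$ to be flat, and establishing that already requires control of the scheme structure of the target---which is precisely what is in question. Moreover, $\mathcal N_S$ is defined in \S\ref{gfn} as a set-theoretic image, so even its scheme-theoretic fibers over $\mathbb A^1_{\Bbbk}$ are not a priori the ones you want.

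Your fallback is exactly what the paper does, and the substance of the proof lies entirely there. The paper uses the injection $ml^* : \Bbbk[\mathfrak g_{\Bbbk}] \hookrightarrow \Bbbk[\mathbb V_{\Bbbk}]$ to identify the defining invariants of $\mathcal N_{\Bbbk}$ (reductions of the degree $2,4,\ldots,2n$ generators of $A[\mathfrak g]^G$) with the polynomials $P_k(a_{ij}^2)$, then checks by an explicit leading-term computation in the long-root coordinates $c_i = c_{ii}$ that the differentials of these $n$ polynomials are linearly independent along the dense orbit. A degree count using the inclusions $\Bbbk[\mathbb V_{\Bbbk}^{[1]}] \subset \Bbbk[\mathfrak g_{\Bbbk}] \subset \Bbbk[\mathbb V_{\Bbbk}]$ confirms that these are precisely the defining equations of $\mathcal N_{\Bbbk}$, so $\mathcal N_{\Bbbk}$ is a complete intersection in $\mathfrak g_{\Bbbk}$. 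Combined with the codimension-two fact, Serre's criterion finishes. So your alternative is correct in outline, but identifying the equations and carrying out the Jacobian check is the whole argument, not a side remark.
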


\begin{proof}
The isogeny $ml : \mathbb V \mapsto \mathfrak g$ induces an injective map $\Bbbk [ \mathfrak g _{\Bbbk} ] \hookrightarrow \Bbbk [ \mathbb V _{\Bbbk} ]$. Since this map is $G ( \Bbbk )$-equivariant, we have an inclusion $\Bbbk [ \mathfrak g _{\Bbbk} ] ^G \hookrightarrow \Bbbk [ \mathbb V _{\Bbbk} ] ^G$. The variety $\mathcal N _{\Bbbk}$ is defined from $\mathcal N$ by reduction modulo $2$. Hence, its defining equations are coming from $A [ \mathfrak g _{A} ] ^G$, which are given by polynomials of degree $2, 4, \ldots, 2 n$ (c.f. \cite{B}). In this proof, we understand that $\epsilon _{-i} = - \epsilon _i$ for every $1 \le i \le n$. We set $\{ a _{ij} \} _{ij} \in \mathrm{Alt} ( V _1 ) _{\Bbbk} \cong ( V _2 ) _{\Bbbk}$ as the coordinates with respect to the $T$-eigenbasis so that $a _{ij}$ has eigenvalue $\epsilon _i + \epsilon _j$. Let $\{ v _i \} _i \in ( V _1 ) _{\Bbbk}$ be the $T$-eigenbasis such that $v _i$ is of weight $\epsilon _i$. Let $c _{ij}$ be the $T$-eigenbasis of $\mathfrak g _{\Bbbk}$ of weight $\epsilon _i + \epsilon _j$. We set $c _{i} := c _{ii}$ and $\bar{c} _{i} = c _{-i,-i}$. We have
$$ml ^* ( c _{ij} ) = a _{ij} + v_{i} v_{j} \hskip 2mm (i \neq j) \hskip 2mm \text{ or } v _{i} ^2 \hskip 2mm (i = j).$$
Here we have $P _k ( a_{ij} ^2 ) \in \Bbbk [ \mathfrak g _{\Bbbk} ] \cap \Bbbk [ \mathbb V _{\Bbbk} ] ^{G}$. By explicit calculation, we have
$$P _k ( a_{ij} ^2 ) = \sum _{I \subset [ 1, n ]; \# I = k} \prod _{i \in I} c _i \bar{c} _i + \text{ lower terms with respect to } \{ c _i \}.$$
It follows that the differentials of $P _k ( a_{ij} ^2 )$ with respect to $c _1, \ldots, c _n$ defines a collection of linear independent differentials along the generic point, regardless the values of $a _{ij}$.\\
By degree counting using the inclusions $\Bbbk [ \mathbb V ^{[1]} _{\Bbbk} ] \subset \Bbbk [ \mathfrak g _{\Bbbk} ] \subset \Bbbk [ \mathbb V _{\Bbbk} ]$, these are precisely the defining equations of $\mathcal N _{\Bbbk}$ embedded into $\Bbbk [ \mathbb V _{\Bbbk} ]$. The union of non-dense orbit of $\mathcal N _{\Bbbk}$ has codimension two (c.f. \cite{Hes} or Theorem \ref{pcflatex}). Therefore, the defining equations of $\mathcal N _{\Bbbk}$ defines a set of linearly independent differentials up to codimension two. In conclusion, the same proof as the normality of $\mathfrak N _{\Bbbk}$ implies the result.
\end{proof}

\begin{remark}
The normality of the nilpotent cone of $\mathfrak{sp} ( 2n )$ over a field of characteristic $\neq 2$ is well-known. (See eg. Brion-Kumar \cite{BK} \S 5.)
\end{remark}

\section{Exotic orbital varieties: statement}\label{eovs}
In this section, the term ``flat" means that the object is a flat scheme over $\mathrm{Spec} A$.

Let $\mathcal O$ be a $G$-orbit of $\mathfrak N$. We denote the set of irreducible components of $\mathcal O \cap \mathbb V ^+$  by $\mathrm{Comp} ( \mathcal O )$. We define $\mathrm{Comp} ( \mathbb O )$ for a $G$-orbit of $\mathbb O \subset \mathcal N$ by replacing $\mathbb V ^+$ with $\mathfrak n$.

Similarly, we denote the set of irreducible components of $\mathcal O _{*} \cap \mathbb V ^+ _{*}$ (or $\mathbb O _* \cap \mathfrak n _*$) by $\mathrm{Comp} ( \mathcal O _* )$ or $\mathrm{Comp} ( \mathbb O _* )$ for $* = \mathbb C, \mathbb K, \Bbbk$.

An element of $\mathrm{Comp} ( \mathcal O )$ or $\mathrm{Comp} ( \mathbb O )$ is called an orbital variety.

\begin{theorem}\label{equigen}
Let $\mathcal O _{\mathbb K}$ be a $G _{\mathbb K}$-orbit. Let $\mathfrak X _{\mathbb K} \in \mathrm{Comp} ( \mathcal O _{\mathbb K} )$. Then, we have
\begin{enumerate}
\item $\dim \mathfrak X _{\mathbb K} = \frac{1}{2} \dim \mathcal O _{\mathbb K}$;
\item There exists $w \in W$ such that $\overline{\mathfrak X _{\mathbb K}} = \overline{B _{\mathbb K} ( \mathbb V ^+ _{\mathbb K} \cap {} ^w \mathbb V ^+ _{\mathbb K})}$.
\end{enumerate}
Moreover, the same statements hold when we replace $\mathbb K$ with $\Bbbk$.
\end{theorem}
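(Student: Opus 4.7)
The plan is to adapt the classical Steinberg--Spaltenstein--Joseph argument to the exotic Springer resolution $\nu : F = G \times^B \mathbb{V}^+ \to \mathfrak{N}$, whose essential inputs are its strict semi-smallness with respect to the $G$-orbit stratification (Remark \ref{exotic str semi-small} over $\mathbb{K}$; the corollary to Theorem \ref{flat_prolong_A} over $\Bbbk$) and the numerical coincidence $\dim \mathbb{V}^+ = n^2 = \dim \mathcal{B}$.

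For the dimension assertion (1), I would first note that the second projection $p : F \to \mathcal{B}$ is $G$-equivariant with fiber $\mathbb{V}^+$ over the basepoint, so restriction yields a $G_{\mathbb{K}}$-equivariant isomorphism $\nu_{\mathbb{K}}^{-1}(\mathcal{O}_{\mathbb{K}}) \cong G_{\mathbb{K}} \times^{B_{\mathbb{K}}} (\mathcal{O}_{\mathbb{K}} \cap \mathbb{V}^+_{\mathbb{K}})$. In particular, this furnishes a dimension-preserving bijection between $\mathrm{Comp}(\mathcal{O}_{\mathbb{K}})$ and the set of irreducible components of $\nu_{\mathbb{K}}^{-1}(\mathcal{O}_{\mathbb{K}})$. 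Strict semi-smallness says each Springer fiber $\nu_{\mathbb{K}}^{-1}(x)$ for $x \in \mathcal{O}_{\mathbb{K}}$ is equidimensional of dimension $\tfrac{1}{2}(\dim \mathfrak{N}_{\mathbb{K}} - \dim \mathcal{O}_{\mathbb{K}})$; combining, $\nu_{\mathbb{K}}^{-1}(\mathcal{O}_{\mathbb{K}})$ is equidimensional of dimension $\tfrac{1}{2}(\dim \mathfrak{N}_{\mathbb{K}} + \dim \mathcal{O}_{\mathbb{K}})$, and subtracting $\dim \mathcal{B}_{\mathbb{K}}$ (using $\dim \mathfrak{N}_{\mathbb{K}} = 2\dim \mathcal{B}_{\mathbb{K}}$) yields $\dim \mathfrak{X}_{\mathbb{K}} = \tfrac{1}{2}\dim \mathcal{O}_{\mathbb{K}}$.

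For the Bruhat form (2), fix a generic point $x \in \mathfrak{X}_{\mathbb{K}}$ and let $\mathfrak{C}$ be an irreducible component of the Springer fiber $\nu_{\mathbb{K}}^{-1}(x)$ whose $B_{\mathbb{K}}$-sweep gives the component of $\nu_{\mathbb{K}}^{-1}(\mathcal{O}_{\mathbb{K}})$ matched to $\mathfrak{X}_{\mathbb{K}}$ under the above bijection. The Bruhat decomposition $\mathcal{B}_{\mathbb{K}} = \bigsqcup_w BwB/B$ intersects $\mathfrak{C}$ in a unique dense open cell $\mathfrak{C} \cap BwB/B$; designate the resulting $w \in W$. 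For any $gB$ in this cell, write $g = b_1 w b_2$ with $b_i \in B$; the condition $g^{-1}x \in \mathbb{V}^+$, combined with the $B$-stability of $\mathbb{V}^+$, forces $b_1^{-1}x \in \mathbb{V}^+ \cap {}^w \mathbb{V}^+$, so $x \in B(\mathbb{V}^+ \cap {}^w \mathbb{V}^+)$. As this holds on a dense open subset of $\mathfrak{X}_{\mathbb{K}}$, one gets $\mathfrak{X}_{\mathbb{K}} \subset \overline{B(\mathbb{V}^+ \cap {}^w \mathbb{V}^+)}$; both sides being irreducible $B$-stable closed subvarieties of $\mathbb{V}^+_{\mathbb{K}}$, a dimension comparison upgrades this inclusion to equality. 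The corresponding argument over $\Bbbk$ is formally identical in view of the Corollary after Theorem \ref{flat_prolong_A}.

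The key technical step I anticipate as delicate is the dimension matching $\dim \overline{B(\mathbb{V}^+ \cap {}^w \mathbb{V}^+)} = \tfrac{1}{2}\dim \mathcal{O}_{\mathbb{K}}$ for the selected $w$. Concretely, one must analyse the generic $B$-stabilizer of a point in the $T$-stable linear subspace $\mathbb{V}^+ \cap {}^w \mathbb{V}^+$ and verify that the contribution of $B/(B \cap {}^w B)$ of dimension $\ell(w)$ combines correctly with $\dim(\mathbb{V}^+ \cap {}^w \mathbb{V}^+)$ to give exactly $\tfrac{1}{2}\dim \mathcal{O}_{\mathbb{K}}$. This mirrors Spaltenstein's classical weight-counting, but with the positive weights of $\mathbb{V}^+$ (split as $V_1^+ \oplus V_2^+$) replacing the positive roots of $\mathfrak{n}$, and should be controllable by a direct inspection of the weight combinatorics entering Definition \ref{se} and Lemma \ref{wdlambda}.
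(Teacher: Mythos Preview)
Your outline has two genuine gaps that the paper's argument is specifically engineered to avoid.

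\textbf{Part (1): strict semi-smallness does not give equidimensional fibers.} What Remark~\ref{exotic str semi-small} (and the corollary after Theorem~\ref{flat_prolong_A} over $\Bbbk$) provides is that for every orbit $\mathcal O$ the \emph{maximal} fiber dimension equals $\tfrac12\,\mathrm{codim}\,\mathcal O$; it says nothing about lower-dimensional components of $\nu^{-1}(x)$. Your deduction that $\nu^{-1}(\mathcal O)$ is equidimensional, and hence that every $\mathfrak X\in\mathrm{Comp}(\mathcal O)$ has dimension $\tfrac12\dim\mathcal O$, therefore assumes what you are trying to prove. The paper handles this by first establishing only the inequality $\dim\mathfrak X\le\tfrac12\dim\mathcal O$ together with equality for \emph{some} $\mathfrak X$ (Lemma~\ref{cgineq}), and then propagating both assertions to all components via the minimal-parabolic linkage of Lemma~\ref{inheritance} and Proposition~\ref{linkage}. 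Equidimensionality of $\mathcal O\cap\mathbb V^+$ is an \emph{output} of that linkage argument, not an input.

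\textbf{Part (2): the dimension matching fails as stated.} Your one-sided Bruhat argument correctly yields the inclusion $\mathfrak X\subset\overline{B(\mathbb V^+\cap{}^w\mathbb V^+)}$, but the ``dimension comparison'' you propose cannot close it. Since the $T$-weights of $\mathbb V^+$ are exactly the positive roots of type $B_n$, one has $\dim(\mathbb V^+\cap{}^w\mathbb V^+)=n^2-\ell(w)$ for every $w$, so the combination $\ell(w)+\dim(\mathbb V^+\cap{}^w\mathbb V^+)=n^2$ is independent of $w$ and cannot equal $\tfrac12\dim\mathcal O$ except for the regular orbit. The genuine obstruction is the generic $B$-stabilizer on $\mathbb V^+\cap{}^w\mathbb V^+$, which is not $B\cap{}^wB$ and is not accessible by the special-element combinatorics of Definition~\ref{se} and Lemma~\ref{wdlambda} (those concern only the particular elements $w_{\bm\lambda}$, not the $w$ produced by your Bruhat-cell choice). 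The paper sidesteps any direct computation of $\dim\overline{B(\mathbb V^+\cap{}^w\mathbb V^+)}$: the two-sided Steinberg variety argument in Proposition~\ref{descwseed} shows instead that $\mathbb V^+\cap{}^w\mathbb V^+\cap\mathcal O$ is \emph{dense} in $\mathbb V^+\cap{}^w\mathbb V^+$, which immediately forces $\overline{B(\mathbb V^+\cap{}^w\mathbb V^+)}$ to lie inside $\overline{\mathcal O}\cap\mathbb V^+$ and hence to coincide with $\overline{\mathfrak X}$.

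In short, the paper's proof is not a cosmetic variant of yours: the propagation step (Lemma~\ref{inheritance}, Proposition~\ref{linkage}) and the two-sided density argument (Proposition~\ref{descwseed}) are the load-bearing ideas, and both are missing from your plan.
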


\begin{proof}
Postponed to \S \ref{eovp}.
\end{proof}

\begin{remark}
{\bf 1)} Theorem \ref{equigen} is an ``exotic" analogue of Joseph's version of the Steinberg-Spaltenstein theorem.\\
{\bf 2)} If the variety $G \times ^B \mathbb V ^+ _{\mathbb C}$ or $\mathcal O _{\mathbb C}$ admits a symplectic structure, then Theorem \ref{equigen} follows from Kaledin \cite{Kal} or \cite{CG}. However, there exists no $G$-invariant holomorphic symplectic form on both of them. We do not know whether it exists when we drop the invariance.
\end{remark}

Let $Z := F \times _{\mathfrak N} F$ and let $\mathsf{p} : Z \rightarrow \mathfrak N$ be the projection. The following result is a straight-forward generalization of \cite{K1} \S 1.2 or \cite{Ste}:

\begin{theorem}[Steinberg] The variety $Z$ is flat of relative dimension $\dim \mathfrak N$. Moreover, it consists of $\# W$ irreducible components. \hfill $\Box$
\end{theorem}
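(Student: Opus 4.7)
The plan is to adapt the classical Steinberg argument to the exotic setting. Consider the composite projection $\pi_{\mathcal B} : Z \hookrightarrow F \times_A F \to \mathcal B \times \mathcal B$, and stratify the target by the $G$-orbits $Y_w := G \cdot (B/B,\, \dot w B/B)$ for $w \in W$, each of relative dimension $\dim \mathcal B + \ell(w)$ over $A$. Over the base point of $Y_w$, the fibre of $Z$ inside $\mathbb V^+ \times \mathbb V^+$ is cut out by the incidence $v_1 = \dot w\, v_2$, which identifies it with $\mathbb V^+ \cap {}^w \mathbb V^+$. Hence $\pi_{\mathcal B}^{-1}(Y_w)$ is a $G$-equivariant affine bundle over $Y_w$ with fibre $\mathbb V^+ \cap {}^w \mathbb V^+$.

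The core numerical identity is
$$\dim \mathbb V^+ \;-\; \dim(\mathbb V^+ \cap {}^w \mathbb V^+) \;=\; \ell(w).$$
I would prove it by weight comparison: every weight in $\Psi(\mathbb V^+) = \{\epsilon_i\}_i \cup \{\epsilon_i \pm \epsilon_j\}_{i<j}$ occurs in $\mathbb V$ with multiplicity one, and there is a sign-preserving $W$-equivariant bijection $\Psi(\mathbb V^+) \leftrightarrow R^+$ given by $\epsilon_i \leftrightarrow 2\epsilon_i$ together with the identity on remaining elements. Hence the number of $\lambda \in \Psi(\mathbb V^+)$ with $w^{-1}\lambda \notin \Psi(\mathbb V^+)$ equals the number of $\alpha \in R^+$ with $w^{-1}\alpha \in R^-$, which is $\ell(w)$. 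Combined with $\dim F = \dim \mathcal B + \dim \mathbb V^+ = \dim \mathfrak N$ (where $\dim \mathbb V^+ = n^2 = \dim \mathcal B$), we obtain $\dim \pi_{\mathcal B}^{-1}(Y_w) = \dim \mathfrak N$ uniformly in $w$.

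Setting $Z_w := \overline{\pi_{\mathcal B}^{-1}(Y_w)} \subset Z$, each $Z_w$ is integral (being the closure of an irreducible $A$-flat vector bundle over the smooth $A$-flat base $Y_w$) and of relative dimension $\dim \mathfrak N$ over $A$. The decomposition $Z = \bigcup_{w \in W} Z_w$ then exhibits $Z$ as equidimensional with exactly $|W|$ irreducible components. Flatness over $A$ follows since each $Z_w$ is $A$-integral and dominates $\Spec A$; equivalently, both specialisations $Z_w \otimes \mathbb K$ and $Z_w \otimes \Bbbk$ have dimension $\dim \mathfrak N$ by the uniform calculation above.

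The step I expect to be most delicate is to verify that the $Z_w$ exhaust all the irreducible components of $Z$ and that no spurious component is contributed by the boundaries of the open strata $\pi_{\mathcal B}^{-1}(Y_w)$. Because the stratification $\{Y_w\}$ already covers $\mathcal B \times \mathcal B$, every component of $Z$ maps into some closure $\overline{Y_w}$ and meets the open stratum $Y_{w'}$ for a unique maximal $w'$, so must agree with $Z_{w'}$; the uniform relative dimension just established then forces the count and the equidimensionality. This, together with $A$-integrality of each $Z_w$, yields the flatness of $Z$ over $A$ with the stated relative dimension and the component count $\#W$.
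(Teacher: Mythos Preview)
Your proof is correct and is precisely the classical Steinberg argument that the paper invokes but does not spell out (the paper gives no proof, only a reference to \cite{K1} \S 1.2 and \cite{Ste}). The weight bijection $\epsilon_i \leftrightarrow 2\epsilon_i$ you use to transport the $\ell(w)$ count from $R^+$ to $\Psi(\mathbb V^+)$ is exactly the point that makes the adjoint-case argument go through verbatim in the exotic setting.
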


\begin{lemma}\label{ordorb}
Let $\mathbb O$ be a $G$-orbit in $\mathcal N$. Then, $\mathbb O _{\Bbbk}$ is a union of a single $G _{\Bbbk}$-orbit $\mathbb O _{\Bbbk} ^{\prime}$ of dimension $\dim \mathbb O$ and $G _{\Bbbk}$-orbits of dimension $< \dim \mathbb O$.
\end{lemma}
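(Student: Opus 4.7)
The plan has two parts: first show that $\mathbb O_{\Bbbk}$ contains at least one $G_{\Bbbk}$-orbit of the full dimension $d := \dim \mathbb O_{\mathbb K}$, then establish uniqueness of such a top-dimensional orbit.

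\textbf{Existence.} By the definition of a $G$-orbit over $A$, $\mathbb O$ is flat over $A$ and $\mathbb O _{\mathbb K}$ is a single $G _{\mathbb K}$-orbit, so $\dim \mathbb O _{\Bbbk} = \dim \mathbb O _{\mathbb K} = d$. Theorem \ref{HLcls} implies that $\mathcal N _{\Bbbk}$ contains only finitely many $G _{\Bbbk}$-orbits, so $\mathbb O _{\Bbbk}$ decomposes as a finite disjoint union of $G _{\Bbbk}$-orbits. Since $\dim \mathbb O _{\Bbbk} = d$, at least one such orbit has dimension $d$.

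\textbf{Uniqueness.} Here the plan is to use the exotic-ordinary correspondence $\mathsf{df}$ of Theorem \ref{pcflatex} together with the flat lift of Theorem \ref{flat_prolong_A}. Since the map $\mathsf m : \mathfrak N _{\Bbbk} \to \mathcal N _{\Bbbk}$ is $G _{\Bbbk}$-equivariant and bijective on $\Bbbk$-points, the preimage $\mathsf m ^{-1} ( \mathbb O _{\Bbbk} ) \subset \mathfrak N _{\Bbbk}$ is a finite union of $G _{\Bbbk}$-orbits $\mathcal O ^{(i)} _{\Bbbk}$ of the same dimensions as their $\mathsf m$-images. By Theorem \ref{flat_prolong_A}, each $\mathcal O ^{(i)} _{\Bbbk}$ extends to a $G$-orbit $\mathcal O ^{(i)}$ of $\mathfrak N$ over $A$; its generic fiber $\mathcal O ^{(i)} _{\mathbb K}$ is a single $G _{\mathbb K}$-orbit of $\mathfrak N _{\mathbb K}$ parametrized by some marked partition $\bm \lambda ^{(i)}$ via Theorem \ref{clsn}, and by flatness
\[
\dim \mathcal O ^{(i)} _{\Bbbk} = \dim \mathcal O _{\bm \lambda ^{(i)}} = \dim \mathcal O _{(\lambda ^{(i)}, \mathbf 0)} + 2 \left| \mu ^{(i)} \right|
\]
by Corollary \ref{dim_change}. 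The claim then reduces to: among the marked partitions $\bm \lambda ^{(i)}$ that arise this way from a single char-$0$ orbit $\mathbb O _{\mathbb K}$ of $\mathcal N _{\mathbb K}$, exactly one achieves the maximum value $d$ in the above formula.

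\textbf{Main obstacle.} The heart of the argument is the combinatorial last step: showing that the set of marked partitions $\{ \bm \lambda ^{(i)} \}$ corresponding to $\mathsf m ^{-1} ( \mathbb O _{\Bbbk} )$ has a unique dimension-maximizer. This is a compatibility statement between the exotic classification by marked partitions (Theorem \ref{clsn}, \ref{mp<->bp}), the Spaltenstein classification of $G _{\Bbbk}$-orbits of $\mathcal N _{\Bbbk}$ by bi-partitions, and the char-$0$ classification of $G _{\mathbb K}$-orbits of $\mathcal N _{\mathbb K}$ by partitions. Once this monotonicity is in hand, the unique maximizer corresponds to the desired orbit $\mathbb O _{\Bbbk} '$ and all other orbits in $\mathbb O _{\Bbbk}$ have strictly smaller dimension.
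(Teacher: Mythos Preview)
Your existence argument is fine, but the uniqueness argument has a genuine gap: you reduce to a combinatorial claim about the marked partitions $\bm\lambda^{(i)}$ and then stop. The difficulty is not merely that the step is unfinished --- it is that the detour through $\mathfrak N$ gives you no purchase on it. To identify which marked partitions occur as $\bm\lambda^{(i)}$, you must know which $G_{\Bbbk}$-orbits of $\mathcal N_{\Bbbk}$ lie inside $\mathbb O_{\Bbbk}$; but that is exactly the structure of $\mathbb O_{\Bbbk}$ you are trying to determine. There is no map $\mathfrak N_{\mathbb K}\to\mathcal N_{\mathbb K}$ in characteristic zero linking the lifts $\mathcal O^{(i)}_{\mathbb K}$ back to $\mathbb O_{\mathbb K}$, so the only route to pinning down the $\bm\lambda^{(i)}$ is through~$\Bbbk$, which is circular.

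The paper's proof avoids the exotic side entirely and is much shorter. One observes that the inclusion $\mathcal N_{\mathbb K}\subset\mathrm{Mat}(2n)_{\mathbb K}$ sends distinct $G_{\mathbb K}$-orbits to distinct $\mathop{GL}(2n)_{\mathbb K}$-orbits, so $\mathbb O_{\mathbb K}$ is determined by a single Jordan type. Since Jordan type is a discrete invariant defined by rank conditions, $\mathbb O_{\Bbbk}$ is the locus in $\mathcal N_{\Bbbk}$ of that same Jordan type, hence a union of $G_{\Bbbk}$-orbits sharing a common Jordan normal form. Hesselink's classification of symplectic nilpotent orbits in characteristic two then gives directly that among all $G_{\Bbbk}$-orbits with a fixed Jordan type there is a unique one of maximal dimension. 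Your approach, if pushed through, would in the end have to invoke the same two ingredients (constancy of Jordan type on $\mathbb O_{\Bbbk}$, and Hesselink's uniqueness), so the passage through $\mathsf m$, Theorem~\ref{flat_prolong_A}, and Corollary~\ref{dim_change} adds nothing.
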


\begin{proof}
Consider the natural embedding $\iota : \mathcal N _{\mathbb K} \subset \mathrm{Mat} ( 2n ) _{\mathbb K}$. It is well-known that the induced map $G _{\mathbb K} \backslash \mathcal N _{\mathbb K} \hookrightarrow \mathop{GL} ( 2n ) _{\mathbb K} \backslash \mathrm{Mat} ( 2n ) _{\mathbb K}$ is injective. (See eg. Tanisaki \cite{T} P152 for this kind of phenomenon.) Hence, $\mathbb O _{\Bbbk}$ is a union of $G _{\Bbbk}$-orbits with the same Jordan normal form (in $\mathrm{Mat} ( 2n, \Bbbk )$.) By Hesselink \cite{Hes}, the maximal dimension of $G _{\Bbbk}$-orbits in $\mathbb O _{\Bbbk}$ is attained by a unique orbit as desired.
\end{proof}

\begin{definition}\label{free}
Let $X \in \mathfrak N$. Then, we define a subscheme
$$\mathcal G _X := \{ g \in G ; X \in g \mathbb V ^+ \} \subset G.$$
It is clear that $( \mathcal G _X ) _{\mathbb K}$ admits a free left $\mathrm{Stab} _G ( X )_{\mathbb K}$-action and a free right $B _{\mathbb K}$-action. The same statement holds if we replace $\mathbb K$ with $\Bbbk$.
\end{definition}

Let $\mathcal O$ be a $G$-orbit of $\mathfrak N$. For each $X \in \mathcal O$, we define
$$\mathcal E _X := \mathcal G _X / B \subset \mathcal B$$
and call it the (exotic) Springer fiber along $X$. By taking conjugation, we know that $( \mathcal E _X ) _\mathbb K \cong ( \mathcal E _Y ) _\mathbb K$ and $( \mathcal E _X ) _\Bbbk \cong ( \mathcal E _Y ) _\Bbbk$ hold if $X, Y \in \mathcal O ( A )$.

\begin{lemma}\label{oc_A}
Keep the setting of Definition \ref{free}. Let $\mathcal O$ be a $G$-orbit such that $X \in \mathcal O ( \mathbb K )$. Let $\{ \mathcal G _X ^i \} _i$ be the set of irreducible components of $( \mathcal G _X ) _{\mathbb K}$. Then, the assignment
$$\mathrm{Comp} ( \mathcal O _{\mathbb K} ) \ni \mathcal G _X ^i X \mapsto \mathcal G _X ^i / B _{\mathbb K} \subset ( \mathcal E _X ) _\mathbb K$$
establishes one-to-one correspondences between the sets of irreducible components of $\mathcal O _{\mathbb K} \cap \mathbb V ^+ _{\mathbb K}, ( \mathcal G _X ) _\mathbb K$, and $( \mathcal E _X )_{\mathbb K}$. The same statement holds if we replace $\mathbb K$ with $\Bbbk$. 
\end{lemma}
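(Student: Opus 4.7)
The plan is to realize $\mathcal G _X$ as the common total space of two surjective morphisms, one to $\mathcal E _X$ on the right and one to $\mathcal O \cap \mathbb V ^+$ on the left, and to derive the three-way bijection of irreducible components by comparing them.

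On the right, the quotient $p : \mathcal G _X \to \mathcal G _X / B = \mathcal E _X$ is a right $B$-torsor by Definition \ref{free}. Since $B$ is smooth, irreducible, and connected, every fiber of $p$ is irreducible of dimension $\dim B$, so $p$ induces a dimension-shifting bijection between the irreducible components of $\mathcal G _X$ and those of $\mathcal E _X$, sending $\mathcal G _X ^i$ to $\mathcal G _X ^i / B$. This step is equally valid over $\mathbb K$ and over $\Bbbk$.

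On the left, consider the morphism $\psi : \mathcal G _X \to \mathbb V ^+$, $g \mapsto g ^{-1} X$. It lands in $\mathcal O \cap \mathbb V ^+$ (by the definition of $\mathcal G _X$ together with $G$-invariance of $\mathcal O$) and is surjective onto it: for $Y \in \mathcal O \cap \mathbb V ^+$, any $g \in G$ with $g ^{-1} X = Y$ automatically satisfies $X = g Y \in g \mathbb V ^+$, hence $g \in \mathcal G _X$. The fibers of $\psi$ are left cosets of $H := \mathrm{Stab} _G ( X )$, so $\psi$ makes $\mathcal G _X$ a left $H$-torsor over $\mathcal O \cap \mathbb V ^+$. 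Over $\mathbb K$, the group $H _{\mathbb K}$ is connected: the $A$-group-free nature of the exotic Springer correspondence established in \cite{K1} amounts to $\pi _0 ( \mathrm{Stab} _{G _{\mathbb K}} ( X ) )$ being trivial for every $X \in ( \mathfrak N ) _{\mathbb K}$. Consequently $\psi$ has irreducible fibers and yields the second bijection $\mathcal G _X ^i \mapsto \overline{\mathcal G _X ^i \cdot X}$, with each image of dimension $\tfrac{1}{2} \dim \mathcal O _{\mathbb K}$ as demanded by Theorem \ref{equigen}(1).

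The main obstacle is passing this argument to $\Bbbk$, since connectedness of stabilizers need not be preserved under specialization. The strategy is to lift $X$ to a point of $\mathcal O ( A )$ using Theorem \ref{flat_prolong_A}, so that both $\mathcal G _X$ and $H$ become flat families over $A$. By Theorem \ref{equigen}(2), the irreducible components on both sides are described via Bruhat-type subvarieties $\overline{ B ( \mathbb V ^+ \cap {} ^w \mathbb V ^+ )}$ indexed by a subset of $W$, and this indexing is constant along the family. One then transports the bijection from $\mathbb K$ to $\Bbbk$ through this shared $W$-indexing, using Theorem \ref{pcflatex} to control the specialization of orbits, thereby establishing the three-way correspondence over $\Bbbk$ as well.
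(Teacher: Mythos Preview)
Your argument over $\mathbb K$ is correct and essentially matches the paper's: both quotients $\mathcal G_X \to \mathcal E_X$ and $\mathcal G_X \to \mathcal O \cap \mathbb V^+$ have irreducible fibers because $B_{\mathbb K}$ and $\mathrm{Stab}_{G_{\mathbb K}}(X)$ are connected, the latter by \cite{K1} Proposition~4.5.

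The gap is in your treatment of the $\Bbbk$ case. You correctly worry that connectedness of stabilizers need not survive specialization in general, but your proposed workaround does not close the gap. Theorem~\ref{equigen}(2) describes the orbital varieties $\mathrm{Comp}(\mathcal O_{\Bbbk})$ in terms of $w \in W$, but it says nothing about the components of $(\mathcal G_X)_{\Bbbk}$; the surjection $(\mathcal G_X)_{\Bbbk} \to \mathcal O_{\Bbbk} \cap \mathbb V^+_{\Bbbk}$ could still identify several components of the source if $\mathrm{Stab}_{G_{\Bbbk}}(X_{\Bbbk})$ were disconnected. Your phrase ``this indexing is constant along the family'' is neither made precise nor justified, and Theorem~\ref{pcflatex} controls orbits, not their stabilizer component groups. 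Moreover, any argument that implicitly counts components on both sides and compares them across $\mathbb K$ and $\Bbbk$ is dangerously close to invoking Propositions~\ref{fdef} and~\ref{irrpres}, whose proofs \emph{use} this lemma.

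The paper's resolution is much simpler: the proof of \cite{K1} Proposition~4.5 (connectedness of exotic stabilizers) is characteristic-free, so it applies verbatim over $\Bbbk$. Once you know $\mathrm{Stab}_{G_{\Bbbk}}(X_{\Bbbk})$ is connected, the same torsor argument you gave over $\mathbb K$ works over $\Bbbk$ with no further input. You should replace your final paragraph with this observation.
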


\begin{proof}
The assignments $\mathcal G _X ^i \mapsto \mathcal G _X ^i / \mathrm{Stab} _G ( X ) _{\mathbb K} \cong \mathcal G _X ^i X \in \mathrm{Comp} ( \mathcal O _{\mathbb K} )$ and $\mathcal G _X ^i \mapsto \mathcal G _X ^i / B _{\mathbb K}$ gives a surjection from the set of irreducible components of $( \mathcal G _X ) _{\mathbb K}$ and the other two sets. Hence, these assignments fail to be bijective only if $\mathrm{Stab} _G ( X ) _{\mathbb K}$ or $B _{\mathbb K}$ is not connected. The group $\mathrm{Stab} _G ( X ) _{\mathbb K}$ is connected by \cite{K1} Proposition 4.5. The Borel subgroup $B _{\mathbb K} \subset G _{\mathbb K}$ is clearly connected. Entirely the same proof works for $\Bbbk$ (including \cite{K1} Proposition 4.5).
\end{proof}

Let $N ( \mathfrak O )$ be the number of orbital varieties attached to some orbit $\mathfrak O$. In the rest of this section, we assume
\begin{itemize}
\item[$(\spadesuit)_1$] We have $\sum _{\mathcal O _{\mathbb K} \in G _{\mathbb K} \backslash \mathfrak N _{\mathbb K}} N ( \mathcal O _{\mathbb K} ) ^2 = \# W$.
\item[$(\spadesuit)_2$] There are at least $N ( \mathcal O _{\mathbb K} )$ orbital varieties of $\mathcal O _{\Bbbk}$ contained in the closures of $\mathrm{Comp} ( \mathcal O _{\mathbb K} )$ for every $G$-orbit of $\mathfrak N$.
\end{itemize}
The statements $(\spadesuit) _1$, $( \spadesuit) _2$ themselves are proved at Corollary \ref{PW} and Corollary \ref{Jind}, respectively. (The results presented in the rest of this section is used only in the proof of Theorem \ref{Josmain}.)

\begin{proposition}\label{fdef}
Let $\mathcal O$ be a $G$-orbit of $\mathfrak N$. Then, every element of $\mathrm{Comp} ( \mathcal O )$ is flat with relative dimension $\frac{1}{2} \dim \mathcal O _{\mathbb K}$.
\end{proposition}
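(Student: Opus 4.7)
The plan is to identify each component of $\mathrm{Comp}(\mathcal O)$ with the Zariski $A$-closure of a unique element of $\mathrm{Comp}(\mathcal O_{\mathbb K})$. For each $\mathfrak X_{\mathbb K} \in \mathrm{Comp}(\mathcal O_{\mathbb K})$, its Zariski $A$-closure $\widehat{\mathfrak X} := \overline{\mathfrak X_{\mathbb K}}^A$ in $\mathbb V^+$ (over $A$) is irreducible and dominates $\mathrm{Spec}\, A$. Since $A$ is local with principal maximal ideal $(2)$, $\widehat{\mathfrak X}$ is torsion-free, hence flat over $A$. Theorem \ref{equigen}(1) gives $\dim \mathfrak X_{\mathbb K} = \tfrac{1}{2}\dim \mathcal O_{\mathbb K}$; combined with flatness of $\mathcal O$ (so $\dim \mathcal O_{\mathbb K} = \dim \mathcal O_{\Bbbk}$) and flatness of $\widehat{\mathfrak X}$, every fiber of $\widehat{\mathfrak X}\to \mathrm{Spec}\, A$ has dimension $\tfrac{1}{2}\dim \mathcal O_{\mathbb K}$. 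Theorem \ref{equigen} applied over $\Bbbk$ then exhibits $\widehat{\mathfrak X}_{\Bbbk}$ as a union of orbital varieties of $\mathcal O_{\Bbbk}$.

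Any $\mathfrak X \in \mathrm{Comp}(\mathcal O)$ is irreducible over $A$, so it either (i) equals some $\widehat{\mathfrak X}$ as above (in which case the proposition holds for it), or (ii) is supported entirely in the special fiber. Case (ii) is excluded once the counts satisfy $N(\mathcal O_{\Bbbk}) = N(\mathcal O_{\mathbb K})$: by $(\spadesuit)_2$, the specializations $\{\widehat{\mathfrak X}_{\Bbbk}\}_{\mathfrak X_{\mathbb K}}$ already contain at least $N(\mathcal O_{\mathbb K})$ distinct orbital varieties of $\mathcal O_{\Bbbk}$, so equality of the counts exhausts $\mathrm{Comp}(\mathcal O_{\Bbbk})$ and leaves no room for a special-fiber-only component.

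To obtain $N(\mathcal O_{\Bbbk}) = N(\mathcal O_{\mathbb K})$, the inequality $\geq$ is immediate from $(\spadesuit)_2$, and I plan to obtain $\leq$ by double-counting irreducible components of the Steinberg variety $Z = F \times_{\mathfrak N} F$. Strict semismallness of $\nu_{\mathbb K}$ (Remark \ref{exotic str semi-small}) and of $\nu_{\Bbbk}$ (the corollary in \S\ref{nnt}), together with Lemma \ref{oc_A}, yield $|\mathrm{Comp}(Z_*)| = \sum_{\mathcal O} N(\mathcal O_*)^2$ for $* = \mathbb K, \Bbbk$. The Steinberg theorem asserts that $Z$ is flat over $A$ with exactly $\#W$ irreducible components, so $|\mathrm{Comp}(Z_{\mathbb K})| = \#W$, recovering $(\spadesuit)_1$. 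The main obstacle is the reverse bound $|\mathrm{Comp}(Z_{\Bbbk})| \leq \#W$: one must show that each irreducible component $Z^i$ of $Z$ has an irreducible special fiber. I expect to handle this by identifying each $Z^i$ with the $A$-flat closure of a single $B \times B$-orbit analogous to the cells constructed in \S\ref{seW}, whose irreducibility is preserved under specialization. With this in hand the two component counts agree, forcing $\sum N(\mathcal O_{\Bbbk})^2 = \sum N(\mathcal O_{\mathbb K})^2$ and, combined with $(\spadesuit)_2$, giving the termwise equality $N(\mathcal O_{\Bbbk}) = N(\mathcal O_{\mathbb K})$.
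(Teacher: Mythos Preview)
Your argument is essentially the same as the paper's: reduce to excluding a special-fiber-only component, and derive a contradiction by overcounting the irreducible components of $Z_{\Bbbk}$. The paper argues directly by contradiction (assume such a $\mathfrak Y$ exists, conclude $\sum_{\mathcal O} N(\mathcal O_{\Bbbk})^2 > \#W$, hence $Z_{\Bbbk}$ has more than $\#W$ components), while you reorganize this into first proving $N(\mathcal O_{\Bbbk}) = N(\mathcal O_{\mathbb K})$ for every $\mathcal O$; the substance is identical.

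The one place you make things harder than necessary is what you flag as ``the main obstacle'', namely $|\mathrm{Comp}(Z_{\Bbbk})| \le \#W$. This does not require showing that each $A$-component $Z^i$ of $Z$ has irreducible special fiber. The Steinberg theorem quoted just above the proposition is a statement whose proof is uniform over any base field: the components of $Z_*$ (for $* = \mathbb K$ or $\Bbbk$) are exactly the closures of the locally closed pieces
\[
Z_w \;=\; \{\,(g_1B, g_2B, v) : g_1^{-1}g_2 \in B\dot w B,\ v \in g_1(\mathbb V^+ \cap {}^w\mathbb V^+)\,\},
\]
one for each $w \in W$, each of dimension $\dim \mathfrak N$. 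So $|\mathrm{Comp}(Z_{\Bbbk})| = \#W$ is immediate, and the paper simply invokes it. Your proposed route (tracking the specialization of each $Z^i$) would recover the same fact, but it is a detour: what you describe as ``the $A$-flat closure of a single $B\times B$-orbit'' is precisely the closure of $Z_w$, and its irreducibility over $\Bbbk$ is part of the Steinberg description over $\Bbbk$ rather than something to be deduced from the $A$-picture.
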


\begin{proof}
Let $\mathfrak Y \in \mathrm{Comp} ( \mathcal O )$. If $\mathfrak Y$ dominates $\mathrm{Spec} A$, then it is flat since $A$ is one-dimensional. Hence, we assume that $\mathfrak Y ( \mathbb K ) = \emptyset$ in order to deduce contradiction. By Theorem \ref{equigen}, we have $\mathfrak Y ( \Bbbk )  \subset \mathfrak X _{\Bbbk} ( \Bbbk )$ for some $\mathfrak X _{\Bbbk} \in \mathrm{Comp} ( \mathcal O _{\Bbbk})$. Since $\mathcal O _{\Bbbk}$ is an single $G _{\Bbbk}$-orbit, we can further assume $\mathfrak Y = \mathfrak X _{\Bbbk}$. Here $\mathfrak Y$ does not appear in the specializations of the closure of $\mathrm{Comp} ( \mathcal O _{\mathbb K} )$. It follows that $N ( \mathcal O _{\Bbbk} ) > N ( \mathcal O _{\mathbb K} )$. Hence, we have
\begin{eqnarray}
\sum _{\mathcal O _{\Bbbk} \in G _{\Bbbk} \backslash \mathfrak N _{\Bbbk}} N ( \mathcal O _{\Bbbk} ) ^2 > \sum _{\mathcal O _{\mathbb K} \in G _{\mathbb K} \backslash \mathfrak N _{\mathbb K}} N ( \mathcal O _{\mathbb K} ) ^2 = \# W.\label{compest}
\end{eqnarray}
For $X \in \mathcal O _{\Bbbk}$, we have $\mathsf{p} ^{-1} _{\Bbbk} ( X ) \cong \nu _{\Bbbk} ^{-1} ( X ) \times \nu _{\Bbbk} ^{-1} ( X )$. We have
$$\dim \mathsf{p} ^{-1} _{\Bbbk} ( \mathcal O _{\Bbbk} ) = 2 \dim \nu _{\Bbbk} ^{-1} ( X ) + \dim \mathcal O _{\Bbbk} = \dim \mathfrak N _{\Bbbk} = \dim Z _{\Bbbk}.$$
By Theorem \ref{equigen} 1) and Lemma \ref{oc_A}, $\overline{\mathsf{p} ^{-1} _{\Bbbk} ( \mathcal O _{\Bbbk} )}$ is a union of $N ( \mathcal O _{\Bbbk} ) ^2$ irreducible components of $Z _{\Bbbk}$. By (\ref{compest}), $Z _{\Bbbk}$ has more than $\# W$ irreducible components. This contradicts the existence of $\mathfrak Y$ as required.
\end{proof}

\begin{proposition}\label{irrpres} Let $\mathcal O$ be a $G$-orbit of $\mathfrak N$ and let $\mathbb O$ be a $G$-orbit of $\mathcal N$.
\item {\bf 1)} Let $\mathfrak X \in \mathrm{Comp} ( \mathcal O )$. The variety $\mathfrak X _{\Bbbk}$ is irreducible.
\item {\bf 2)} Let $\mathfrak Y \in \mathrm{Comp} ( \mathbb O )$ be such that $\mathfrak Y _{\mathbb K} \in \mathrm{Comp} ( \mathbb O _{\mathbb K} )$. Then, $\mathfrak Y _{\Bbbk}$ is irreducible.
\end{proposition}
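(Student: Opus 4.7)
I would prove (1) and (2) by combining the $A$-flatness supplied by Proposition \ref{fdef} with the explicit $B$-saturation description of orbital varieties from Theorem \ref{equigen}.

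For (1), the first step is to identify the possible irreducible components of $\mathfrak X_{\Bbbk}$. By Proposition \ref{fdef}, $\mathfrak X$ is $A$-flat of relative dimension $d := \tfrac12 \dim \mathcal O_{\mathbb K}$, so $\mathfrak X_{\Bbbk}$ is equi-dimensional of dimension $d$. Since $\mathcal O \otimes \Bbbk = \mathcal O_{\Bbbk}$ is a single $G_{\Bbbk}$-orbit (Theorem \ref{flat_prolong_A}) and $\mathfrak X \subseteq \mathcal O \cap \mathbb V^+$, one has $\mathfrak X_{\Bbbk} \subseteq \mathcal O_{\Bbbk} \cap \mathbb V^+_{\Bbbk}$. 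Theorem \ref{equigen} applied over $\Bbbk$ then forces every irreducible component of $\mathcal O_{\Bbbk} \cap \mathbb V^+_{\Bbbk}$ (and hence of $\mathfrak X_{\Bbbk}$) to have dimension exactly $d$, so the components of $\mathfrak X_{\Bbbk}$ are themselves orbital varieties of $\mathcal O_{\Bbbk}$, and the remaining task is to exclude more than one such component.

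The next step is to exploit Theorem \ref{equigen} over $\mathbb K$ to write $\overline{\mathfrak X_{\mathbb K}} = \overline{B_{\mathbb K}(L_{\mathbb K})}$ for a linear $A$-subscheme $L = \mathbb V^+ \cap {}^w \mathbb V^+$. The scheme-theoretic closure $\overline{B \cdot L}$ in $\mathbb V^+$ over $A$ is automatically $A$-flat, and its generic fiber coincides with $\overline{\mathfrak X_{\mathbb K}}$, so by uniqueness of $A$-flat extensions one obtains $\overline{\mathfrak X} = \overline{B \cdot L}$. The remaining ingredient is the identification $\overline{B \cdot L} \otimes \Bbbk = \overline{B_{\Bbbk} \cdot L_{\Bbbk}}$, which makes $\mathfrak X_{\Bbbk}$ the closure of the image of the irreducible scheme $B_{\Bbbk} \times L_{\Bbbk}$, hence irreducible. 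This identification is the \emph{main obstacle}, because scheme-theoretic closure does not commute with arbitrary base change; my plan is to imitate the proof of Theorem \ref{flat_prolong_A}, decomposing $B = T \cdot G_2 \cdot N_0$ from \S 4 so that the stabilizer of a generic point of $L$ inside $G_2$ is cut out by \emph{linear} equations that specialize without jump, and then comparing leading terms of the $T$-equivariant Hilbert polynomials of $A [\mathbb V^+]$-modules to conclude that $\overline{B \cdot L} \otimes \Bbbk$ and $\overline{B_{\Bbbk} \cdot L_{\Bbbk}}$ have the same Hilbert polynomial (and therefore coincide, given the obvious containment). As a consistency check, the count coming from $(\spadesuit)_1$, $(\spadesuit)_2$, Lemma \ref{oc_A}, and the Steinberg identity $\sum N(\mathcal O)^2 = \#W$ (valid over both $\mathbb K$ and $\Bbbk$) forces $N(\mathcal O_{\Bbbk}) = N(\mathcal O_{\mathbb K})$ for every orbit $\mathcal O$, so a reducible $\mathfrak X_{\Bbbk}$ would in any case be incompatible with the bijectivity of $\mathrm{Comp}(\mathcal O) \to \mathrm{Comp}(\mathcal O_{\Bbbk})$ forced by $(\spadesuit)_2$.

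For (2), the argument is entirely parallel, with two adjustments. First, by Lemma \ref{ordorb}, $\mathbb O_{\Bbbk}$ is not a single orbit but consists of a unique dense open $G_{\Bbbk}$-orbit $\mathbb O'_{\Bbbk}$ of dimension $\dim \mathbb O_{\mathbb K}$ together with orbits of strictly smaller dimension; the hypothesis $\mathfrak Y_{\mathbb K} \in \mathrm{Comp}(\mathbb O_{\mathbb K})$ guarantees that $\mathfrak Y$ is $A$-flat with relative dimension $\tfrac12 \dim \mathbb O_{\mathbb K}$, so all top-dimensional components of $\mathfrak Y_{\Bbbk}$ lie in $\overline{\mathbb O'_{\Bbbk}} \cap \mathfrak n_{\Bbbk}$. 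Second, Theorem \ref{equigen} is replaced by its classical analogue for the ordinary nilpotent cone (the Steinberg--Spaltenstein theorem over $\Bbbk$), which provides both the orbital-variety description $\overline{B_{\Bbbk} \cdot L_{\Bbbk}}$ and the Steinberg count needed to run the same linear-stabilizer plus Hilbert-polynomial argument as in (1), yielding the irreducibility of $\mathfrak Y_{\Bbbk}$.
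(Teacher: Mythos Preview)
Your main route for (1) has a genuine gap. The argument you want to borrow from Theorem \ref{flat_prolong_A} works there because $\mathbb V^{\bm\lambda}_{01}$ is $N_0$-stable, so that $B\mathbb V^{\bm\lambda}_{01}=G_2\mathbb V^{\bm\lambda}_{01}$ and the only nontrivial stabilizer computation is the one in the abelian unipotent group $G_2$ acting on $\mathbb V_0\oplus\mathbb V_1$; that action is visibly linear (weight reasons force $\exp(Y)X=X+Y\cdot X_0$), and this is what makes the ``equations specialize without jump'' step go through. For a general $L=\mathbb V^+\cap{}^w\mathbb V^+$ there is no reason for $L$ to be $N_0$-stable, and the $N_0$-stabilizer of a generic point of $L$ is not cut out by linear equations, so the Hilbert-polynomial comparison you propose does not go through as stated. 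Your ``consistency check'' is also not enough on its own: the equalities $N(\mathcal O_\Bbbk)=N(\mathcal O_{\mathbb K})$ do follow from $(\spadesuit)_1$, $(\spadesuit)_2$ and the Steinberg count, but they do not by themselves preclude the scenario where two distinct $\mathfrak X,\mathfrak X'\in\mathrm{Comp}(\mathcal O)$ specialize to the \emph{same} union of two orbital varieties of $\mathcal O_\Bbbk$. What is missing is a reason why the specializations of distinct $\mathfrak X$'s cannot share a component.

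The paper supplies exactly that missing ingredient, and it does so by passing to the Steinberg variety $Z=F\times_{\mathfrak N}F$ rather than by analysing $\overline{B\cdot L}$ directly. For $X\in\mathcal O(A)$ and for components $\mathcal G_X^i,\mathcal G_X^j$ of $\mathcal G_X$, the closure of $G(\mathcal G_X^i/B\times\mathcal G_X^j/B)$ is an irreducible component of $Z$; since $Z$ has exactly $\#W$ irreducible components over $A$ and over $\Bbbk$ (and these are matched via the $W$-indexing coming from the Bruhat decomposition), distinct pairs $(i,j)$ give components of $Z$ whose $\Bbbk$-fibres are distinct irreducible components of $Z_\Bbbk$. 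If $(\mathcal G_X^i)_\Bbbk$ and $(\mathcal G_X^j)_\Bbbk$ shared a component for $i\neq j$, the corresponding diagonal piece would lie in two different irreducible components of $Z_\Bbbk$, a contradiction. This non-sharing, together with the count $N(\mathcal O_\Bbbk)=N(\mathcal O_{\mathbb K})$ you already have, forces each $(\mathcal G_X^i)_\Bbbk$ to be irreducible; taking the quotient by $\mathrm{Stab}_G(X)$ gives (1). For (2) the paper runs the same $Z$-argument with $\mathfrak n$ in place of $\mathbb V^+$, but since $\mathrm{Stab}_{G_{\mathbb K}}X$ is no longer connected one must group the $\mathcal G_X^i$ into $C_X$-orbits $\mathcal G_X^+$ before concluding, and Lemma \ref{ordorb} is used to pin down which $G_\Bbbk$-orbit of $\mathbb O_\Bbbk$ carries the top-dimensional part of $\mathfrak Y_\Bbbk$.
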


\begin{proof}
Let $X \in \mathcal O ( A )$. For two irreducible components $\mathcal G _X ^i, \mathcal G _X ^j \subset \mathcal G _X$, we set
$$Z _0 := G ( \mathcal G _X ^i / B \times \mathcal G _X ^j / B ) \subset Z.$$
We have
\begin{equation}
\dim Z _0 = \dim \mathcal G _X ^i / B + \dim \mathcal G _X ^j / B + \dim \mathcal O = \dim Z.\label{Stest}
\end{equation}
Hence, $\overline{Z _0} \subset Z$ is an irreducible component of $Z$. It follows that $( \mathcal G _X ^i ) _{\Bbbk} \cap ( \mathcal G _X ^j ) _{\Bbbk}$ does not contain an irreducible component of $( \mathcal G _X ) _{\Bbbk}$ unless $\mathcal G _X ^i = \mathcal G _X ^j$. By taking quotient by $\mathsf{Stab} _G ( X )$, we conclude the first assertion.

For the second assertion, we uniformly change $\mathfrak N$ by $\mathcal N$ and $\mathbb V ^+$ by $\mathfrak n$. Then, we use the extra assumption to guarantee the flatness of $\mathcal G _X ^i$. Set $C _X$ to be the component group of $\mathsf{Stab} _{G _{\mathbb K}} X$. Then, the problem reduces to show:
\begin{itemize}
\item[$(\clubsuit)$] $\mathfrak Y _{\Bbbk} = ( \mathsf{Stab} _{G _{\Bbbk}} X _{\Bbbk} ) \backslash ( \mathcal G _X ^+ ) _{\Bbbk}$ is irreducible for each $C _X$-orbit $\mathcal G _X ^{+}$ of $\mathcal G _X ^i$.
\end{itemize}
(Notice that $C _X$ is not defined over $A$, and hence $\mathcal G _X ^+$ cannot defined directly.) By Lemma \ref{ordorb}, an irreducible component $\mathfrak Y _{\Bbbk} ^{\prime} \subset \mathfrak Y _{\Bbbk}$ is a union of subvarieties of orbital varieties of some $G _{\Bbbk}$-orbits of $\mathbb O_{\Bbbk}$. We have $\dim \mathfrak Y _{\Bbbk} ^{\prime} \le \dim \mathfrak Y _{\mathbb K}$ with the equality holds only when $\mathfrak Y _{\Bbbk} ^{\prime} \in \mathrm{Comp} ( \mathbb O _{\Bbbk} ^{\prime} )$. By the upper-semicontinuity of fiber dimensions applied to $\mathfrak Y$ over $A$, we have necessarily $\dim \mathfrak Y _{\Bbbk} ^{\prime} = \dim \mathfrak Y _{\mathbb K}$. In particular, we have an irreducible component $\mathcal G _X ^0 \subset \mathcal G _X ^{+}$ corresponding to $\mathrm{Comp} ( \mathbb O _{\Bbbk} ^{\prime} )$. Assume that $X _{\Bbbk} \in \mathbb O _{\Bbbk} ^{\prime} ( \Bbbk )$. Then, $G_{\Bbbk} ( \mathcal G _X ^0 \times \mathcal G _X ^0 ) _{\Bbbk}$ is a union of irreducible components of $Z _{\Bbbk}$ thanks to the dimension estimate as in (\ref{Stest}). Therefore, we deduce that $\overline{G_{\Bbbk} ( \mathcal G _X ^0 \times \mathcal G _X ^0 ) _{\Bbbk}} \subset ( \overline{G _{\mathbb K} ( \mathcal G _X ^0 \times \mathcal G _X ^0 )_{\mathbb K}} ) _{\Bbbk} \subset Z _{\Bbbk}$ is an irreducible component, which implies that $( \mathcal G _X ^0 ) _{\Bbbk}$ is irreducible. It follows that an irreducible component of $\mathrm{Comp} ( \mathbb O _{\Bbbk} ^{\prime} )$ obtained as
$$( \mathsf{Stab} _{G _{\Bbbk}} X _{\Bbbk} ) \backslash ( \mathcal G _X ^0 )_{\Bbbk} = ( \overline{( \mathsf{Stab} _{G _{\mathbb K}} X _{\mathbb K} )^{\circ} \backslash ( \mathcal G _X ^0 ) _{\mathbb K}} )_{\Bbbk} = ( \overline{\mathsf{Stab} _{G _{\mathbb K}} X _{\mathbb K} \backslash ( \mathcal G _X^+ ) _{\mathbb K}} )_{\Bbbk}$$
is unique as required.
\end{proof}

\section{Exotic orbital varieties: proof}\label{eovp}
This section is devoted to the proof of Theorem \ref{equigen}.

The proof itself is a modification of the arguments of Steinberg \cite{Ste}, Spaltenstein \cite{S77}, and Joseph \cite{J}. The only essential diffusion in the proof is contained in the strict semi-smallness of the map $\nu$, which follows from \cite{K1} \S 1 and \S 8. Since the literature is little scattered, we provide a proof with its necessary modifications.

In the below, we assume the same settings as in Theorem \ref{equigen}, but we drop the subscript ${} _{\mathbb K}$ or ${} _{\Bbbk}$ for the sake of simplicity.

\begin{lemma}\label{cgineq}
We have $\dim \mathfrak X \le \frac{1}{2} \dim \mathcal O$. Moreover, there exists $\mathfrak X \in \mathrm{Comp} ( \mathcal O )$ which satisfies the equality.
\end{lemma}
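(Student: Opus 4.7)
The plan is to compare $\dim \mathfrak X$ with $\dim \mathcal E _X$ via the Springer-type map $\nu : F = G \times ^B \mathbb V ^+ \longrightarrow \mathfrak N$. Since $\mathcal O$ is $G$-stable, $\nu ^{-1} ( \mathcal O ) = G \times ^B ( \mathbb V ^+ \cap \mathcal O )$, and the connectedness of $B$ implies that its irreducible components are exactly the subvarieties $G \times ^B \mathfrak X$ for $\mathfrak X \in \mathrm{Comp} ( \mathcal O )$, each of dimension $\dim G/B + \dim \mathfrak X$. Equivalently, fixing $X \in \mathcal O$ and invoking Lemma \ref{oc_A}, each $\mathfrak X$ is matched with an irreducible component $\mathcal E _X ^{\mathfrak X}$ of $\mathcal E _X$, and
\begin{equation*}
\dim \mathfrak X \; = \; \dim \mathcal E _X ^{\mathfrak X} + \dim \mathcal O - \dim G/B .
\end{equation*}

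The upper bound $\dim \mathfrak X \le \frac 12 \dim \mathcal O$ then reduces to two inputs: the semi-smallness of $\nu$ (Lemma \ref{exotic semi-small}), which gives $\dim \mathcal E _X ^{\mathfrak X} \le \frac 12 ( \dim \mathfrak N - \dim \mathcal O )$, and the numerical identity $\dim \mathfrak N = 2 \dim G/B$. The latter holds in the exotic setting because $\dim \mathbb V ^+ = n + n(n-1) = n ^2 = \dim G/B$ in type $C _n$, while $\nu$ is surjective (Theorem \ref{moment image}) between irreducible varieties of the same dimension $\dim F = \dim G/B + \dim \mathbb V ^+$. Substituting yields the claim for every $\mathfrak X$.

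For the existence of equality I would choose $\mathfrak X$ so that $\mathcal E _X ^{\mathfrak X}$ is a top-dimensional irreducible component of $\mathcal E _X$. Then the bound above becomes an equality precisely when $\dim \mathcal E _X = \frac 12 ( \dim \mathfrak N - \dim \mathcal O )$, i.e. when $\mathcal O$ is a relevant stratum for $\nu$. This is nothing but the strict semi-smallness of $\nu$: over $\mathbb K$ it is the content of Remark \ref{exotic str semi-small}, via the completeness of the exotic Springer correspondence from \cite{K1}, and over $\Bbbk$ it follows from the $\mathbb K$-case by the upper-semicontinuity of fiber dimensions (the corollary at the end of \S \ref{nnt}).

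The main obstacle is to ensure strict semi-smallness is available uniformly in both characteristics; once that is on the table, the proof is a routine dimension count resting on the coincidence $\dim \mathbb V ^+ = \dim G/B$, which plays the role of the classical equality $\dim \mathfrak n = \dim G/B$ in the exotic setup.
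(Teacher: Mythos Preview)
Your argument is correct and follows essentially the same route as the paper: both proofs translate the dimension of an orbital variety into the dimension of the matching component of the exotic Springer fiber via the free $\mathrm{Stab}_G(X)$- and $B$-actions on $\mathcal G_X$, then feed in (strict) semi-smallness of $\nu$ together with the numerical coincidence $\dim \mathfrak N = 2 \dim G/B$. Your presentation is slightly cleaner in that it separates the inequality (needing only semi-smallness) from the existence of equality (needing strictness, supplied by Remark~\ref{exotic str semi-small} over $\mathbb K$ and by the corollary at the end of \S\ref{nnt} over $\Bbbk$), whereas the paper invokes strict semi-smallness from the outset; but the substance is the same.
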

\begin{proof}
Let $X \in \mathfrak X$. We have
$$\frac{1}{2} \dim G \mathfrak X + \dim \nu ^{-1} ( X ) = \dim G / B$$
by the (strict) semi-smallness of $\nu$. Since $\nu ^{-1} ( X ) = \mathcal G _X / B$, we have
$$\frac{1}{2} \dim G \mathfrak X + \dim \mathcal G _X - \dim B = \dim G / B.$$
We have $\mathfrak X \subset \mathcal G _X / \mathrm{Stab} _{G} ( X )$. In particular, we have
$$\frac{1}{2} \dim G \mathfrak X + \dim \mathfrak X + \dim \mathrm{Stab} _{G} ( X ) \le \dim G.$$
Therefore, we have
\begin{eqnarray}
\dim \mathfrak X \le \dim G - \dim \mathrm{Stab} _{G} ( X ) - \frac{1}{2} \dim G \mathfrak X = \frac{1}{2} \dim \mathcal O,\label{dimestO}
\end{eqnarray}
which proves the first assertion. The second assertion follows by choosing $\mathfrak X$ so that $\dim \mathfrak X = \dim \mathcal G _X / \mathrm{Stab} _{G} ( X )$.
\end{proof}

\begin{proposition}\label{descwseed}
Assume that $\dim \mathfrak X = \frac{1}{2} \dim \mathcal O$. Then there exists $w \in W$ such that
$$\mathfrak X \subset \overline{B ( \mathbb V ^+ \cap {} ^w \mathbb V ^+ )}$$
is a dense open subset.
\end{proposition}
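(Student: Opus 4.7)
\medskip
\noindent\textit{Sketch of proof.} The plan is to adapt the classical Steinberg--Spaltenstein--Joseph argument, using strict semi-smallness of $\nu$ (Remark \ref{exotic str semi-small} over $\mathbb K$, and the corollary following Theorem \ref{flat_prolong_A} over $\Bbbk$) as the main geometric input. Choose a generic $X \in \mathfrak X$; by strict semi-smallness and Lemma \ref{oc_A}, the Springer fiber $\mathcal E _X = \nu ^{-1} ( X )$ is equidimensional of dimension $\dim \mathcal B - \frac{1}{2} \dim \mathcal O$, with irreducible components parametrized by $\mathrm{Comp} ( \mathcal O )$. Let $E _{\mathfrak X} \subset \mathcal E _X$ denote the component corresponding to $\mathfrak X$, and stratify $\mathcal E _X = \bigsqcup _{w \in W} \mathcal E _X ^w$ by the Bruhat cells $\mathcal E _X ^w := \mathcal E _X \cap B w B / B$. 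I would take $w$ to be the unique element for which $E _{\mathfrak X} \cap \mathcal E _X ^w$ is dense in $E _{\mathfrak X}$, so that $\dim \mathcal E _X ^w = \dim \mathcal E _X$.

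Given $g B \in \mathcal E _X ^w$, writing $g = b \dot{w}$ with $b \in B$ gives $b ^{-1} X = \dot{w} ( g ^{-1} X ) \in {} ^w \mathbb V ^+$, while $b ^{-1} X \in \mathbb V ^+$ by $B$-stability, so $b ^{-1} X \in \mathbb V ^+ \cap {} ^w \mathbb V ^+$ and $X \in B ( \mathbb V ^+ \cap {} ^w \mathbb V ^+ )$. Genericity of $X$ yields the containment $\mathfrak X \subset \overline{B ( \mathbb V ^+ \cap {} ^w \mathbb V ^+ )}$. To upgrade this to a dense open inclusion, I would analyse the incidence variety
$$\Xi := \{ ( Y, g B ) \in \mathfrak X \times \mathcal B : g \in B w B, \ g ^{-1} Y \in \mathbb V ^+ \}.$$
Its projection to $\mathfrak X$ has generic fiber $\mathcal E _X ^w$, yielding $\dim \Xi = \dim \mathfrak X + \dim \mathcal E _X = \dim \mathcal B = n ^2$. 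Its projection to $B w B / B$ is $B$-equivariantly surjective with all fibers $B$-translates of the linear slice $\mathfrak X \cap {} ^w \mathbb V ^+$, yielding $\dim \Xi = \ell ( w ) + \dim ( \mathfrak X \cap {} ^w \mathbb V ^+ )$. Comparing the two expressions, $\dim ( \mathfrak X \cap {} ^w \mathbb V ^+ ) = n ^2 - \ell ( w )$.

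The main obstacle is the combinatorial identity $\ell ( w ) + \dim ( \mathbb V ^+ \cap {} ^w \mathbb V ^+ ) = n ^2$, which replaces the classical fact $\ell ( w ) + \dim ( \mathfrak n \cap {} ^w \mathfrak n ) = \dim \mathfrak n$. I plan to prove it via a bijection $\phi : R ^+ \to \Psi ( \mathbb V ^+ )$ that is the identity on the short positive roots $\epsilon _i \pm \epsilon _j$ and sends each long root $2 \epsilon _i$ to $\epsilon _i$; since $W$ acts on $\{ \epsilon _i \}$ by signed permutations, a short case analysis on the signs of $w \epsilon _i$ and $w \epsilon _j$ shows that $w \alpha \in R ^+$ iff $w \phi ( \alpha ) \in \Psi ( \mathbb V ^+ )$, which gives the identity by counting. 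Combined with the dimension count above, $\mathfrak X \cap {} ^w \mathbb V ^+$ has the same dimension as the irreducible linear subspace $\mathbb V ^+ \cap {} ^w \mathbb V ^+$, hence is dense in it. $B$-saturating and using $B \mathfrak X = \mathfrak X$ then gives $\dim B ( \mathbb V ^+ \cap {} ^w \mathbb V ^+ ) \le \dim \mathfrak X$, which together with the reverse inequality from the first paragraph yields $\overline{\mathfrak X} = \overline{B ( \mathbb V ^+ \cap {} ^w \mathbb V ^+ )}$. Finally, $\mathfrak X = \overline{\mathfrak X} \cap \mathcal O$ is open in $\overline{\mathfrak X}$ since $\mathcal O$ is open in $\overline{\mathcal O}$, completing the sketch.
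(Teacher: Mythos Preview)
Your argument is correct and follows the same Steinberg--Spaltenstein--Joseph strategy as the paper, but packaged differently. The paper works with the doubled incidence variety
\[
\mathcal S = \{ (g_1 B, g_2 B, v) \in \mathcal B \times \mathcal B \times \mathcal O : v \in g_1 \mathbb V^+ \cap g_2 \mathbb V^+ \},
\]
stratifies it by relative position $\mathcal S = \bigsqcup_w \mathcal S_w$, and identifies the irreducible component $\mathcal S^{i,i} = G(\mathcal E_X^i \times \mathcal E_X^i \times \{X\})$ with some $\overline{\mathcal S_w}$; the key identity $\ell(w) + \dim(\mathbb V^+ \cap {}^w\mathbb V^+) = \dim\mathbb V^+$ appears there as the computation $\dim\mathcal S_w \le \dim G - n$. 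You instead stratify a single Springer fiber by Bruhat cells and run the dimension count through the one-sided incidence variety $\Xi$. Both routes are equivalent; your bijection $\phi: R^+ \to \Psi(\mathbb V^+)$ (identity on short roots, $2\epsilon_i \mapsto \epsilon_i$) is a clean way to make explicit the identity that the paper uses without comment.

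Two small remarks. First, the phrase ``generic fiber $\mathcal E_X^w$'' hides a step: since $w$ is chosen \emph{after} $X$, you should note that among the finitely many $w$'s the one giving a dense Bruhat cell in $E_{\mathfrak X}$ is constant on a dense open of $\mathfrak X$, so the fiber dimension really is $\dim\mathcal E_X$ generically. Second, you invoke strict semi-smallness to get equidimensionality of $\mathcal E_X$, but this is stronger than needed: the hypothesis $\dim\mathfrak X = \tfrac12\dim\mathcal O$ together with ordinary semi-smallness (Lemma~\ref{exotic semi-small}) already forces the single component $E_{\mathfrak X}$ to have dimension $\dim\mathcal B - \tfrac12\dim\mathcal O$, which is all your count uses. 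The paper's doubled-variety formulation avoids the ``generic $X$'' bookkeeping at the cost of introducing $\mathcal S$; your single-sided version is marginally more elementary once that bookkeeping is spelled out.
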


\begin{proof}
Let $X \in \mathfrak X$. We assume that $\mathfrak X \cong \mathcal G ^i X$ for an irreducible component $\mathcal G ^i$ of $\mathcal G _X \subset G$. We put $\mathcal E _X ^i := \mathcal G ^i / B$, which is an irreducible component of $\mathcal E _X$. By $\dim \mathfrak X = \frac{1}{2} \dim \mathcal O$, it follows that $\mathcal E _X ^i$ has the maximal dimension among the irreducible components of $\mathcal E _X$. In other words, we have
\begin{eqnarray}
\dim \mathcal E _X ^i = \frac{1}{2} ( \dim \mathfrak N - \dim \mathcal O ). \label{SSS}
\end{eqnarray}
Consider the variety
$$\mathcal S := \{ ( g _1 B, g _2 B, v ) \in \mathcal B \times \mathcal B \times \mathbb V; v \in g _1 \mathbb V ^+ \cap g _2 \mathbb V ^+ \cap \mathcal O \}$$
and its subvarieties
$$\mathcal S _w := \{ ( g _1 B, g _2 B, v ) \in \mathcal S; g _1 ^{-1} g _2 \in B \dot{w} B \}$$
for each $w \in W$. It is straight-forward to check $\mathcal S = \sqcup _{w \in W} \mathcal S _w$ (the arguments in \cite{Ste} p133 L14-L20 works merely by changing the meaning of the symbols appropriately). By considering the third projection $p _3 : \mathcal S \rightarrow \mathcal O$, we deduce that
$$p _3 ^{-1} ( X ) \cong \mathcal E _X \times \mathcal E _X.$$
Consider the projection $p _{12} : \mathcal S \rightarrow \mathcal B \times \mathcal B$ of $\mathcal S$ to the first two components. By definition, we have $p _{12} ( \mathcal S _w ) = G ( [B \times \dot{w} B ] )$ or $\emptyset$. It follows that
\begin{align*}
\dim \mathcal S _w = & \dim \mathcal B + \ell ( w ) + \dim ( \mathbb V ^+ \cap {} ^w \mathbb V ^+ \cap \mathcal O )\\
& \le \dim \mathcal B + \ell ( w ) + \dim ( \mathbb V ^+ \cap {} ^w \mathbb V ^+ ) = \dim G - n = \dim \mathcal S
\end{align*}
whenever $\mathcal S _w \neq \emptyset$. Define
$$\mathcal S ^{i,i} := G ( \mathcal E _X ^i \times \mathcal E _X ^i \times \{ X \} ) \subset G p _3 ^{-1} ( X ) \subset \mathcal S.$$
This is an irreducible component of $\mathcal S$. Since $G p _3 ^{-1} ( X ) = \mathcal S$ and $(\ref{SSS})$, we conclude
$$\dim \mathcal S ^{i, i} = \dim \mathcal O + 2 \dim \mathcal E _X ^i = \dim \mathcal O + 2 \times \frac{1}{2} \mathrm{codim} \mathcal O = \mathfrak N = \dim G - n.$$
There exists $w \in W$ such that $\mathcal S _w \cap \mathcal S ^{i, i} \subset \mathcal S ^{i, i}$ is a dense open subset. By dimension counting, we deduce $\mathcal S ^{i, i} = \overline{\mathcal S _w}$. Now we have
\begin{eqnarray}
\mathcal S _w = \{ ( g _1 B, g _2 B, v ) ; g _1 ^{-1} g _2 \in B \dot{w} B, v \in g _1 ( \mathbb V ^+ \cap {} ^w \mathbb V ^+ \cap \mathcal O )\}. \label{indep}
\end{eqnarray}
Since $\dim \mathcal S = \dim \mathcal S ^{i, i} = \dim \mathcal S _w$, we deduce that
$$\mathbb V ^+ \cap {} ^w \mathbb V ^+ \cap \mathcal O \subset \mathbb V ^+ \cap {} ^w \mathbb V ^+$$
is dense.\\
Consider the image $\mathcal G _w$ of $\mathcal S _w$ under the first and third projection $p_{13} : \mathcal S \rightarrow \mathcal B \times \mathbb V$. Its second projection $q _3 : \mathcal G _w \rightarrow \mathcal O$ satisfies $q _3 \circ p _{13} = p _3$. In the RHS of (\ref{indep}), $g _2$ plays no r\^ole for the restriction on $v$. Therefore, we deduce $q _3 ^{-1} ( X ) \subset \mathcal E _X ^i$ (dense open subset). By construction, we have
$$q _3 ^{-1} ( X ) = \{ ( g B, X ) ; X \in g ( \mathbb V ^+ \cap {} ^w \mathbb V ^+ \cap \mathcal O )\}.$$
As a consequence, we deduce
$$\mathfrak X = \mathcal G ^i X \subset \overline{\{ g ^{-1} X ; ( g B, X ) \in q _3 ^{-1} ( X ) \}} = \overline{B ( \mathbb V ^+ \cap {} ^w \mathbb V ^+ )}.$$
Since the second inclusion is dense by construction, we conclude the result.
\end{proof}

Lemma \ref{cgineq} and Proposition \ref{descwseed} claim that the both assertions of Theorem \ref{equigen} hold for at least one $\mathfrak X \in \mathrm{Comp} ( \mathcal O )$. To derive Theorem \ref{equigen} for general irreducible components, we need some preparation:

For each $1 \le i \le n$, we put $\mathbb V ( i ) := U ( \mathfrak b ) {} ^{s _i} \mathbb V ^+$. We define $\mathbb V ^+ _i := \mathbb V ^+ / ( \mathbb V ^+ \cap {} ^{s _i} \mathbb V ^+ )$ and $\mathbb V _i := \mathbb V ( i ) / ( \mathbb V ^+ \cap {} ^{s _i} \mathbb V ^+ )$.

For each $1 \le i \le n$, we put $P _{i} := B \dot{s} _i B \cup B$. It is a parabolic subgroup of $G$.

The derived group of the Levi part of $P _i$ is isomorphic to $\mathop{SL} ( 2 )$. Its action on $\mathbb V _i$ is equivalent to either $\mathfrak{sl} ( 2 )$ (adjoint representation, $1 \le i < n$) or $\mathbb K ^2$ (vector representation, $i = n$).

Since the both of $\mathbb V ( i )$ and $( \mathbb V ^+ \cap {} ^{s _i} \mathbb V ^+ )$ are $P _i$-stable, it follows that $\mathbb V _i$ admits a natural $P _i$-action. Let $\pi _{i} : \mathbb V ( i ) \longrightarrow \mathbb V _i$. The map $\pi _i$ is $P _i$-equivariant. We define $\mathfrak X _i := \pi _i ( \mathfrak X )$.

\begin{lemma}\label{inheritance}
Let $1 \le i \le n$. Assume that the both assertions of Theorem \ref{equigen} hold for $\mathfrak X \in \mathrm{Comp} ( \mathcal O )$. Then, $P _i \mathfrak X \cap \mathbb V ^+$ is a union of elements of $\mathrm{Comp} ( \mathcal O )$ which satisfy the both assertions of Theorem \ref{equigen}.
\end{lemma}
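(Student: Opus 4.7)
The proof follows the standard Steinberg-Spaltenstein-Joseph ``inheritance'' pattern: we control $\overline{P _i \mathfrak X}$ via its projection $\pi _i$ to $\mathbb V _i$, exploiting that the fibre of $P _i / B$ is $\mathbb P ^1$. Let $\beta _i \in \Psi ( \mathbb V ^+ )$ be the unique weight with $s _i \beta _i \notin \Psi ( \mathbb V ^+ )$ (namely $\beta _i = \alpha _i$ for $i < n$ and $\beta _i = \epsilon _n$ for $i = n$; in both cases $\dim \mathbb V [ - \beta _i ] = 1$). A weight inspection yields $\overline{P _i \mathfrak X} \subset \mathbb V ( i )$, $\pi _i ( \mathbb V ^+ ) = \mathbb V _i [ \beta _i ]$ (one-dimensional), and $\pi _i ^{-1} ( \mathbb V _i [ \beta _i ] ) = \mathbb V ^+$; in particular $\mathfrak X _i \subset \mathbb V _i [ \beta _i ]$ is either $\{ 0 \}$ or the entire line $\mathbb V _i [ \beta _i ]$.

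If $\mathfrak X _i = 0$, then $\mathfrak X \subset \mathbb V ^+ \cap {} ^{s _i} \mathbb V ^+$, which is $P _i$-stable. Hence $P _i \mathfrak X \subset \mathbb V ^+$ is an irreducible subvariety of $\mathcal O \cap \mathbb V ^+$, and Lemma \ref{cgineq} bounds its dimension by $\frac{1}{2} \dim \mathcal O = \dim \mathfrak X$, forcing $P _i \mathfrak X = \mathfrak X$ and the lemma holds trivially.

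Otherwise $\mathfrak X _i = \mathbb V _i [ \beta _i ]$, and $\pi _i ( \overline{P _i \mathfrak X} ) = \overline{P _i \cdot \mathbb V _i [ \beta _i ]}$ is two-dimensional (being the closure of the $\mathfrak{sl} _2$-orbit of the highest weight line in $\mathbb V _i$); therefore $\dim \overline{P _i \mathfrak X} = \dim \mathfrak X + 1$, and the dominant map $\pi _i|_{\overline{P _i \mathfrak X}}$ has generic fibre of dimension $\dim \mathfrak X - 1$. Put $Y := \overline{P _i \mathfrak X} \cap \mathbb V ^+ = \pi _i ^{-1} ( \mathbb V _i [ \beta _i ] ) \cap \overline{P _i \mathfrak X}$. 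Since $\mathbb V _i [ \beta _i ]$ is one-dimensional, $\dim Y \le 1 + ( \dim \mathfrak X - 1 ) = \dim \mathfrak X$, while $\mathfrak X \subset Y$ gives $\dim Y \ge \dim \mathfrak X$; upper semi-continuity of fibre dimension applied to $\pi _i|_{\overline{P _i \mathfrak X}}$ then forces every irreducible component of $Y$ to have dimension exactly $\dim \mathfrak X$.

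Finally, let $Z$ be such a component. Since $P _i \mathfrak X$ is constructible and dense in $\overline{P _i \mathfrak X}$, its boundary has strictly smaller dimension, so the generic point of $Z$ lies in $P _i \mathfrak X \cap \mathbb V ^+ \subset \mathcal O \cap \mathbb V ^+$. Hence $Z$ is contained in some orbital variety, which itself has dimension $\le \frac{1}{2} \dim \mathcal O = \dim Z$ by Lemma \ref{cgineq}, so equality holds and $Z \in \mathrm{Comp} ( \mathcal O )$. Proposition \ref{descwseed} then produces $w' \in W$ with $Z = \overline{B ( \mathbb V ^+ \cap {} ^{w'} \mathbb V ^+ )}$, which is the second assertion. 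The main technical obstacle is the equidimensionality of $Y$: one must rule out smaller-dimensional components of $Y$ that could arise from a jump of the fibre dimension of $\pi _i$ over $0 \in \mathbb V _i [ \beta _i ]$, and this is precisely what upper semi-continuity provides.
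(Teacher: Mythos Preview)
Your overall architecture matches the paper's: split on whether $\mathfrak X _i$ is $\{0\}$ or the full line, and in the nontrivial case show the ``new'' pieces of $P_i \mathfrak X \cap \mathbb V^+$ again have dimension $\tfrac12\dim\mathcal O$, then invoke Proposition~\ref{descwseed}. The trivial case is fine. In the main case, however, two steps do not go through as written.

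First, the equidimensionality of $Y=\overline{P_i\mathfrak X}\cap\mathbb V^+$ does not follow from upper semi-continuity of fibre dimension. Upper semi-continuity only tells you that every fibre of $\pi_i\!\mid_{\overline{P_i\mathfrak X}}$ has dimension $\ge \dim\mathfrak X-1$; it says nothing about components of a preimage that might sit over a single point. For $i=n$ the space $\mathbb V^+$ is a genuine hyperplane in $\mathbb V(i)$ and Krull's principal ideal theorem gives what you want; for $i<n$ it has codimension two in $\mathbb V(i)$, and you must first observe that $\overline{P_i\mathfrak X}$ lies in the hypersurface $\pi_i^{-1}(\overline{\mathcal N_{\mathfrak{sl}_2}})$, inside which $\mathbb V^+$ is cut out set-theoretically by one equation, before Krull applies.

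Second, and more seriously, your passage from ``the boundary $\overline{P_i\mathfrak X}\setminus P_i\mathfrak X$ has strictly smaller dimension'' to ``the generic point of $Z$ lies in $P_i\mathfrak X$'' is invalid: the boundary has dimension $\le\dim\mathfrak X$, which is exactly $\dim Z$, so $Z$ could a priori be contained in it (equivalently, in $\overline{\mathcal O}\setminus\mathcal O$). The paper avoids this by never passing to the closure: it introduces the divisor $\mathfrak D=\mathfrak X\cap\pi_i^{-1}(0)$ (purely codimension one in $\mathfrak X$ since $\pi_i^{-1}(0)\subset\mathbb V^+$ is a hyperplane), checks by an explicit $SL(2)$ computation that $P_i(\mathfrak X\setminus\mathfrak D)\cap\mathbb V^+=\mathfrak X\setminus\mathfrak D$, and concludes that every new orbital variety in $P_i\mathfrak X\cap\mathbb V^+$ is of the form $P_i\mathfrak D_0$ for a component $\mathfrak D_0$ of $\mathfrak D$. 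Since $\mathfrak D_0\subset\mathcal O$ and $\mathcal O$ is $G$-stable, $P_i\mathfrak D_0\subset\mathcal O\cap\mathbb V^+$ automatically, and $\dim P_i\mathfrak D_0=\dim\mathfrak D_0+1=\dim\mathfrak X$. Your gap can be patched by invoking Lemma~\ref{cgineq} for the smaller orbits in $\overline{\mathcal O}\setminus\mathcal O$ to force $\dim\bigl((\overline{\mathcal O}\setminus\mathcal O)\cap\mathbb V^+\bigr)<\dim\mathfrak X$, but as stated the argument is incomplete.
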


\begin{proof}
By Proposition \ref{descwseed}, it suffices to verify Theorem \ref{equigen} 1).

By construction, $\mathfrak X _i \subset \mathbb V ^+ _i$ is a $B$-stable subset. We have $\dim \mathbb V ^+ _i = 1$. Hence, we have $\mathfrak X _i = \{ 0 \}$ or $\overline{\mathfrak X _i} = \mathbb V ^+ _i$. If $\mathfrak X _i = \{ 0 \}$, then $\mathfrak X$ is $P _i$-stable. Thus, the assertion trivially holds.

Therefore, we concentrate ourselves to the case $\overline{\mathfrak X _i} = \mathbb V ^+ _i$ in the below. Let $\mathfrak X ^{\prime} \in \mathrm{Comp} ( \mathcal O )$ such that $\mathfrak X ^{\prime} \cap P _i \mathfrak X \not\subset \mathfrak X$. If $\mathfrak X ^{\prime}$ does not exist, then we have $P _i \mathfrak X \cap \mathbb V ^+ = \mathfrak X$. Hence, the assertion trivially holds. Thus, we assume the existence of $\mathfrak X ^{\prime}$.

Let $\mathfrak D := \mathfrak X \cap \pi _i ^{-1} ( \{ 0 \} ) \subset \mathfrak X$. This is a purely codimension one subscheme of $\mathfrak X$. Since $\pi _i ^{-1} ( \{ 0 \} ) = ( \mathbb V ^+ \cap {} ^{s _i} \mathbb V ^+ )$ is $P _i$-stable, it follows that
$$P _i \mathfrak D \subset P _i \mathfrak X \cap \pi _i ^{-1} ( \{ 0 \} ) \subset \mathbb V ^+.$$

Let $0 \neq X \in \mathbb V _i ^+$. By an explicit $\mathop{SL} ( 2 )$-computation, we have $g X \in \mathbb V _i ^+$ ($g \in P _i$) if and only if $g \in P _i \cap B$. This implies
$$P _i ( \mathfrak X - \mathfrak D ) \cap \mathbb V ^+ = \mathfrak X - \mathfrak D.$$
Hence, we have $\mathfrak X ^{\prime} \cap P _i \mathfrak D \neq \emptyset$.  Let $\mathfrak D _0 \subset \mathfrak D$ be an irreducible component such that $\mathfrak X ^{\prime} \cap P _i \mathfrak D _0 \not\subset \mathfrak D$. We have necessarily $P _{i} \mathfrak D _0 \neq \mathfrak D _0$. This implies
$$\dim P _{i} \mathfrak D _0 = \dim \mathfrak D _0 + 1 = \dim \mathfrak X.$$
As a consequence, $\overline{P _i \mathfrak D _0}$ contains a (unique) element of $\mathrm{Comp} ( \mathcal O )$ which is different from $\mathfrak X$. Letting $\mathfrak X ^{\prime}$ and $\mathfrak D _0$ vary arbitrary, we conclude the result.
\end{proof}

In order to complete the proof of Theorem \ref{equigen}, it suffices to prove that a successive application of Lemma \ref{inheritance} eventually exhausts the whole of $\mathrm{Comp} ( \mathcal O )$. This is guaranteed by the following:

\begin{proposition}\label{linkage}
Let $\mathfrak X, \mathfrak X ^{\prime} \in \mathrm{Comp} ( \mathcal O )$. Assume that $\mathfrak X ^{\prime}$ satisfies the both assertions of Theorem \ref{equigen}. Then, there exists a sequence of integers $i _1, i _2, \ldots, i _{m} \in [ 1, n ]$ and a sequence $\mathfrak X _1, \mathfrak X _2, \ldots, \mathfrak X _m \in \mathrm{Comp} ( \mathcal O )$ such that
$$\mathfrak X ^{\prime} = \mathfrak X _1, \hskip 2mm \mathfrak X = \mathfrak X _m, \text{ and } \mathfrak X _{k - 1} \subset P _{i _k} \mathfrak X _{k}$$
hold for every $2 \le k \le m$.
\end{proposition}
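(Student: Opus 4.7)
The plan is to translate the assertion into a connectedness statement about the exotic Springer fiber $\mathcal E_X$ for some fixed $X \in \mathcal O$, and then link adjacent components via the $\mathbb P^1$-fibrations $\pi_i : \mathcal B \to G/P_i$ attached to the minimal parabolics. By Lemma \ref{oc_A}, each $\mathfrak Y \in \mathrm{Comp}(\mathcal O)$ corresponds to a unique irreducible component $C_{\mathfrak Y}$ of $\mathcal E_X$, and unwinding the definitions shows that the relation $\mathfrak X_{k-1} \subset P_{i_k} \mathfrak X_k$ translates into the inclusion $C_{\mathfrak X_{k-1}} \subset \overline{\pi_{i_k}^{-1}(\pi_{i_k}(C_{\mathfrak X_k}))}$. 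The proposition thus reduces to constructing a chain of irreducible components of $\mathcal E_X$ from $C_{\mathfrak X'}$ to $C_{\mathfrak X}$ in which each consecutive pair shares a common $\pi_i$-image for some $i$.

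First I would establish that $\mathcal E_X$ is connected. The map $\nu$ is projective, $\mathfrak N$ is normal by Theorem \ref{gen_normal}, and $\nu_* \mathcal O_F \cong \mathcal O_{\mathfrak N}$ is recorded in the proof of Theorem \ref{moment image}; Zariski's main theorem then forces every geometric fibre of $\nu$, and in particular $\mathcal E_X$, to be connected. Connectedness of $\mathcal E_X$ means that the incidence graph whose vertices are the irreducible components of $\mathcal E_X$ and whose edges join pairs of components with nonempty intersection is connected. I would pick any path $C_{\mathfrak X'} = C_1, C_2, \ldots, C_m = C_{\mathfrak X}$ in this graph; each consecutive intersection $C_{k-1} \cap C_k$ is then a nonempty proper closed subvariety of each of the two components.

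The main obstacle is the local step: given two distinct components $C, C'$ of $\mathcal E_X$ which meet nontrivially, produce an index $i$ with $\pi_i(C) = \pi_i(C')$. Here I would follow Spaltenstein's approach for the ordinary Springer fiber. At a generic point $gB \in C \cap C'$ the tangent cone of $\mathcal E_X$ decomposes through the tangent spaces of the components through $gB$; the strict semi-smallness of $\nu$ (Remark \ref{exotic str semi-small}) forces the top-dimensional components to share a common dimension and forces $C \cap C'$ to be of codimension one in each at such a generic point, so a transverse tangent direction to $C \cap C'$ inside $C$ (and similarly inside $C'$) has to be a fibre direction of some $\pi_i$, forcing $\pi_i(C) = \pi_i(C') = \pi_i(C \cap C')$ and exhibiting $\pi_i^{-1}(\pi_i(C \cap C'))$ as a $\mathbb P^1$-bundle of the correct dimension containing both $C$ and $C'$. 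Once this chain is in place, iterating Lemma \ref{inheritance} along it (which is symmetric on components of maximal dimension, since $\mathfrak X_{k-1} \subset P_{i_k} \mathfrak X_k$ and its reverse both reduce to the common saturation $P_{i_k}\mathfrak X_{k-1} = P_{i_k}\mathfrak X_k$) propagates the conclusion of Theorem \ref{equigen} from $\mathfrak X'$ to $\mathfrak X$.
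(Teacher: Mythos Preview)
Your overall strategy---reduce to connectedness of $\mathcal E_X$ and link adjacent components via the $\pi_i$---is natural, but the ``local step'' is where the argument breaks. The specific claim that two intersecting components $C,C'$ of $\mathcal E_X$ satisfy $\pi_i(C)=\pi_i(C')$ for some $i$ is already false in the simplest nontrivial case: for the subregular Springer fiber in type $A_2$, the two components are a $\pi_1$-fibre and a $\pi_2$-fibre meeting at a point, and neither $\pi_1$ nor $\pi_2$ collapses both. What actually holds there is the weaker inclusion $C\subset\pi_1^{-1}(\pi_1(C'))$, which is what the proposition requires; but your tangent-cone argument does not produce even this. The assertion that the transverse direction to $C\cap C'$ inside $C$ ``has to be a fibre direction of some $\pi_i$'' has no justification: strict semi-smallness controls dimensions of fibres, not the position of tangent spaces of components relative to the simple-root line bundles, and there is no reason the normal direction at a generic point of $C\cap C'$ should be one of the $n$ distinguished tangent lines. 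Nor does strict semi-smallness force $C\cap C'$ to have codimension one in each component. Spaltenstein's 1977 argument establishes equidimensionality and connectedness, but not this $P_i$-linkage statement.

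The paper avoids this obstacle entirely by a Bruhat-type induction: one first observes that $\mathfrak X'\subset P_{i_1}\cdots P_{i_m}\overline{\mathfrak X}$ for \emph{some} sequence (e.g.\ a reduced word for $w_0$, since then the product is all of $G$), takes such a sequence of minimal length, and then peels off $P_{i_1}$ by an explicit weight computation showing that $B\dot s_{i_1}\mathfrak X'\cap B\dot s_{i_2}B\cdots B\dot s_{i_m}\overline{\mathfrak X}$ lands back in $\mathbb V^+$ and has the correct dimension. This produces an intermediate component $\mathfrak X''\subset P_{i_1}\mathfrak X'\cap\mathbb V^+$ to which the induction hypothesis applies. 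No appeal to tangent directions or codimension-one intersections is needed; the linkage is manufactured from the Bruhat decomposition rather than read off from the incidence geometry of $\mathcal E_X$.
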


\begin{proof}
Let $i _1, \ldots i _m \in [ 1, n ]$ be a sequence of integers such that
\begin{eqnarray}
\mathfrak X ^{\prime} \subset P _{i _1} P _{i _2} \cdots P _{i _m} \overline{\mathfrak X}. \label{incl-seq}
\end{eqnarray}
We assume that $(\star)$: $\mathfrak X ^{\prime} \not\subset P _{i _1 ^{\prime}} P _{i _2 ^{\prime}} \cdots P _{i _{m ^{\prime}} ^{\prime}} \mathfrak X$ does not holds for any sequence $i _1 ^{\prime}, \ldots, i _{m ^{\prime}} ^{\prime}$ if $m ^{\prime} < m$. This implies that $s _{i _1} s _{i _2} \cdots s _{i _m}$ is a reduced expression. We prove that there exists a sequence $\mathfrak X _1, \mathfrak X _2, \ldots, \mathfrak X _m \in \mathrm{Comp} ( \mathcal O )$ which satisfies the required condition. By $(\ref{incl-seq})$ and $(\star)$, we deduce
$$\mathfrak Z := B \dot{s} _{i _1} \mathfrak X ^{\prime} \cap B \dot{s} _{i _2} B \cdots B \dot{s} _{i _m} \overline{\mathfrak X} \neq \emptyset.$$
\begin{claim}
We have $\dim \mathfrak Z = \dim \mathfrak X ^{\prime}$.
\end{claim}

\begin{proof}
By $(\star)$, we have $P _{i _1} \mathfrak X ^{\prime} \neq \mathfrak X ^{\prime}$. Hence, we have
$$P _{i _1} \mathfrak X ^{\prime} = \dim \mathfrak X ^{\prime} +1.$$
Since $\mathfrak Z$ is an open subset of a codimension one subscheme of $\overline{P _{i _1} \mathfrak X ^{\prime}}$, we deduce the result. 
\end{proof}

We return to the proof of Proposition \ref{linkage}.

We have $B \dot{s} _{i _1} \mathfrak X ^{\prime} \subset \mathbb V ( i _1 )$. We put $w = s _{i _2} \cdots s _{i _m}$. This is a reduced expression. Since $\ell ( w ) < \ell ( s _{i _1} w )$, we have $\mathbf v [ - \alpha _{i _1} ^{\vee} ] \not\in \dot{w} \mathbb V ^+$, where $\alpha _i ^{\vee} = \alpha _i$ $(i \neq n)$ or $\epsilon _n$ $(i=n)$. It follows that
$$( \mathbb K ^{\times} \mathbf v [ - \alpha _{i _1} ^{\vee} ] + \mathbb V [ 0 ] + \mathbb V ^+ ) \cap B \dot{w} \mathbb V ^+ = \emptyset$$
by a weight comparison. (Here we need to replace $\mathbb K$ with $\Bbbk$ when the subscripts are $\Bbbk$.) Hence, we have
$\mathbb V ( i _1 ) \cap B \dot{w} \mathbb V ^+ \subset \mathbb V ^+$. Taking account into $(\star)$, this implies that
$$\emptyset \neq B \dot{s} _{i _1} \mathfrak X ^{\prime} \cap B \dot{w} \overline{\mathfrak X} \subset \mathbb V ^+.$$
In particular, there exists an irreducible component $\mathfrak X ^{\prime \prime} \subset P _{i _1} \mathfrak X ^{\prime} \cap \mathbb V ^+$ such that
$$\mathfrak X ^{\prime \prime} \cap B \dot{s} _{i _2} B \cdots B \dot{s} _{i _m} \overline{\mathfrak X} \neq \emptyset, \mathfrak X ^{\prime} \subset P _{i _1} \mathfrak X ^{\prime\prime},$$
and the intersection at the most LHS is a maximal dimensional irreducible component of $\mathfrak Z$. By Lemma \ref{inheritance}, it suffices to prove the assertion for $\mathfrak X, \mathfrak X ^{\prime\prime} \in \mathrm{Comp} ( \mathcal O )$ with
$$\mathfrak X ^{\prime \prime} \subset \overline{B \dot{s} _{i _2} B \cdots B \dot{s} _{i _m} \mathfrak X} = P _{i _2} P _{i _3} \cdots P _{i _m} \overline{\mathfrak X}.$$
Since the assertion for $m = 1$ is proved in Lemma \ref{inheritance}, the downward induction on $m$ yields the result.
\end{proof}

\section{Comparison of Springer correspondences}

We work under the same setting as in \S \ref{NT}.

Let $\mathcal X$ be a $T$-equivariant scheme over $A$. Let $K ^{T _{*}} ( \mathcal X _{*} ) _{\mathbb Q}$ be the $\mathbb Q$-coefficient Grothendieck group of $T _{*}$-equivariant coherent sheaves on $\mathcal X _{*}$ which are flat over the base ($* = A, \mathbb K, \Bbbk$). Let $R ( T ) _{\mathbb Q}$ be the representation ring of $T$ with coefficient $\mathbb Q$. For a $T$-module $V$, we define $\mathrm{ch} _T V$ to be the class $[ V ] \in R ( T ) _{\mathbb Q}$.
Consider a map
$$p : K ^T ( \mathfrak n ) _{\mathbb Q} \longrightarrow R ( T ) _{\mathbb Q}$$
which sends a $T$-equivariant closed subset $C \subset \mathfrak n$ to the ratio
$$\mathrm{ch} _T \Gamma ( \mathfrak n, \mathcal O _{C} ) / \mathrm{ch} _T \Gamma ( \mathfrak n, \mathcal O _{\mathfrak n} ) \in R ( T ) _{\mathbb Q}.$$
Replacing $T$ and $\mathfrak n$ with $T _{*}$ and $\mathfrak n _*$ (where $* = A, \mathbb K, \Bbbk$), we define the corresponding maps $p _{*}$.
Similarly, consider a map
$$q : K ^T ( \mathbb V ^+ ) _{\mathbb Q} \longrightarrow R ( T ) _{\mathbb Q}$$
which sends a $T$-equivariant closed subset $C \subset \mathbb V ^+$ to the ratio
$$\mathrm{ch} _T \Gamma ( \mathbb V ^+, \mathcal O _{C} ) / \mathrm{ch} _T \Gamma ( \mathbb V ^+, \mathcal O _{\mathbb V ^+} ) \in R ( T ) _{\mathbb Q}.$$ Replacing $T$ and $\mathbb V ^+$ with $T _{*}$ and $\mathbb V _* ^+$ (where $* = \mathbb K, \Bbbk$), we define the corresponding maps $q _{*}$.

Let $\mathsf{fx} : R ( T ) \rightarrow \mathbb C [[ \mathfrak t ]]$ be the map given by the formal expansion of a function on $T$ along $1$. For $f \in R ( T )$, we denote the lowest non-zero homogeneous term of $\mathsf{fx} ( f )$ by $\mathsf{lt} ( f )$. By definition, $\mathsf{lt} ( f )$ is a homogeneous polynomial on $\mathfrak t$.

Let $\mathbb O$ be a $G$-orbit in $\mathcal N$. For each $\mathfrak Y _* \in \mathrm{Comp} ( \mathbb O _* )$ ($* = A, \mathbb K, \Bbbk$), we define the Joseph polynomial attached to $\mathfrak Y _*$ as $\mathsf{lt} \left( p _* ( \overline{\mathfrak Y} _* ) \right)$. Let $\mathcal O$ be a $G$-orbit in $\mathfrak N$. For each $\mathfrak X _* \in \mathrm{Comp} ( \mathcal O _* )$ ($* = \mathbb K, \Bbbk$), we define the Joseph polynomial attached to $\mathfrak X _*$ as $\mathsf{J} ( \mathfrak X _{*} ) := \mathsf{lt} \left( q _* ( \overline{\mathfrak X} _* ) \right)$.

We denote the set of $\mathbb Q$-multiples of Joseph polynomials attached to orbital varieties of $\mathcal O _*$ or $\mathbb O _*$ ($* = A, \mathbb K, \Bbbk$) by $\mathrm{Jos} ( \mathcal O _* )$ or $\mathrm{Jos} ( \mathbb O _* )$, respectively.

\begin{proposition}\label{flatK}
Let $\mathcal X \subset \mathbb V ^+$ and $\mathcal Y \subset \mathfrak n$ be $T$-equivariant flat subfamilies over $A$. Then, we have
$$q _{\mathbb K} ( \mathcal X _{\mathbb K} ) = q _{\Bbbk} ( \mathcal X _{\Bbbk} ) \text{ and } p _{\mathbb K} ( \mathcal Y _{\mathbb K} ) = p _{\Bbbk} ( \mathcal Y _{\Bbbk} ).$$
\end{proposition}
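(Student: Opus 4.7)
The statement is really a general feature of $T$-equivariant coherent sheaves that are flat over $A$, combined with the strict convexity of the cone of weights appearing on $\mathbb V^+$ and $\mathfrak n$. My approach is to verify weight-by-weight that the $T$-character of $\Gamma(\mathbb V^+,\mathcal O_{\mathcal X})$ (resp.\ $\Gamma(\mathfrak n,\mathcal O_{\mathcal Y})$) and of $\mathcal O_{\mathbb V^+}$ (resp.\ $\mathcal O_{\mathfrak n}$) are literally invariant under specialization to $\mathbb K$ and $\Bbbk$, after which their ratio $q_\ast$ (resp.\ $p_\ast$) is automatically invariant as well.

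First I would observe that the weights of $T$ on $\mathbb V^+$ and on $\mathfrak n$ all lie in the cone $\mathbb Q_{\ge 0}R^+\smallsetminus\{0\}$. Pairing against $\rho=\sum_{i=1}^n(n-i+1)\epsilon_i$ is strictly positive on every positive root, so this cone is strictly convex. Consequently, the coordinate ring $A[\mathbb V^+]$ (resp.\ $A[\mathfrak n]$) decomposes as a $T$-weight grading in which each weight component $A[\mathbb V^+]_\lambda$ is a finitely generated free $A$-module (the number of monomials of fixed weight is finite). Base change along $A\to\mathbb K$ or $A\to\Bbbk$ preserves the rank of each weight component, so the formal characters of $A[\mathbb V^+_\ast]$ and $A[\mathfrak n_\ast]$ are independent of $\ast\in\{\mathbb K,\Bbbk\}$.

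Next, for a $T$-equivariant closed subfamily $\mathcal X\subset\mathbb V^+$ that is flat over $A$, write $A[\mathcal X]=A[\mathbb V^+]/I$ with $I$ a $T$-stable ideal. For each weight $\lambda$, the component $A[\mathcal X]_\lambda$ is a quotient of the finitely generated free $A$-module $A[\mathbb V^+]_\lambda$; flatness of $A[\mathcal X]$ over $A$ combined with the weight splitting forces each $A[\mathcal X]_\lambda$ to be a flat, hence (being finitely generated over the local ring $A$) a free $A$-module of some rank $r_\lambda$. Specialization commutes with taking the $\lambda$-weight component, so
\[
\dim_{\mathbb K}A[\mathcal X_{\mathbb K}]_\lambda \;=\; r_\lambda \;=\; \dim_{\Bbbk}A[\mathcal X_{\Bbbk}]_\lambda.
\]
Thus $\mathrm{ch}_T\Gamma(\mathbb V^+_\ast,\mathcal O_{\mathcal X_\ast})$ is independent of $\ast$, and dividing by $\mathrm{ch}_T\Gamma(\mathbb V^+_\ast,\mathcal O_{\mathbb V^+_\ast})$ gives the equality $q_{\mathbb K}(\mathcal X_{\mathbb K})=q_{\Bbbk}(\mathcal X_{\Bbbk})$ in $R(T)_{\mathbb Q}$. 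The argument for $\mathcal Y\subset\mathfrak n$ and $p_\ast$ is identical, with $\mathbb V^+$ replaced by $\mathfrak n$ throughout (the weights of $\mathfrak n$ lie in the same strictly convex cone).

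There is no substantive obstacle: the entire content lies in recognizing that the relevant cone of weights is strictly convex, so that weight components are finitely generated over $A$, and that the local ring $A$ is one-dimensional and regular, so $A$-flatness in each weight upgrades to freeness. The only mild care needed is to work inside a completion of $R(T)$ where the formal characters live and to note that the ratio $q_\ast$ (resp.\ $p_\ast$) is computed weight-by-weight, so the literal equality of weight multiplicities passes to the ratio.
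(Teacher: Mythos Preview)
Your proposal is correct and follows essentially the same approach as the paper: both arguments reduce to the observation that a $T$-equivariant $A$-flat module has its $T$-character preserved under specialization, so that numerator and denominator of $q_\ast$ (and $p_\ast$) are separately invariant. The paper states this in one line (``specialization of a $T$-equivariant flat $A$-module of rank one preserves the character''), while you unpack the same idea more carefully by invoking the strict convexity of the weight cone to get finite-rank weight components and then using that finitely generated flat modules over the local ring $A$ are free; this extra precision is welcome but does not change the underlying strategy.
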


\begin{proof}
Each character of tori is defined over $A$. In particular, specialization (to $\mathbb K$ or $\Bbbk$) of a $T$-equivariant flat $A$-module of rank one preserves the character. Hence, the assumption implies that the coordinate rings $\mathbb K [ \mathcal X _{\mathbb K} ]$ and $\Bbbk [ \mathcal X _{\Bbbk} ]$ share the same character. Hence, we conclude the result for $\mathcal X$. The case $\mathcal Y$ is entirely the same.
\end{proof}

\begin{proposition}\label{deformK}
Let $\mathcal C$ be a $T _{\Bbbk}$-stable flat subfamily of $\mathcal V ^+$ over $\mathbb A ^1 _{\Bbbk}$ whose fibers are irreducible schemes. Let $\mathcal C _t := C \dot{\cap} \pi ^{-1} ( t )$. Then, we have
$$\mathsf{lt} ( p _{\Bbbk} ( [ \mathcal C _1 ] ) ) \in \mathbb Z _{\ge 1} \mathsf{lt} ( q _{\Bbbk} ( [ \mathsf{F} _1 ^{-1} ( \mathcal C _0 ) ] ) ).$$
\end{proposition}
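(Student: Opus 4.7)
The plan is to reduce both sides of the asserted relation to the same element of $R ( T ) _{\mathbb Q}$, after which their lowest-degree terms coincide and the positive integer in the statement is simply $1$. The argument rests on three separate flatness/character-constancy observations.

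First, since $\mathcal C \to \mathbb A ^1 _{\Bbbk}$ is a $T _{\Bbbk}$-equivariant flat family with trivial $T$-action on the base, I would decompose $\Gamma ( \mathcal C, \mathcal O )$ into $\Bbbk [ t ]$-submodules indexed by $T$-weights (on each graded piece coming from the cone grading of the ambient space). Each such weight space is a finitely generated flat, hence free, $\Bbbk [ t ]$-module, so its rank is constant in $t$. This yields
$$\mathrm{ch} _T \Bbbk [ \mathcal C _1 ] = \mathrm{ch} _T \Bbbk [ \mathcal C _0 ].$$
The same argument applied to the ambient family $\mathcal V ^+$ produces the identity
$$\mathrm{ch} _T \Bbbk [ \mathfrak n _{\Bbbk} ] = \mathrm{ch} _T \Bbbk [ ( V _1 ^+ ) _{\Bbbk} ^{[1]} \oplus ( V _2 ^+ ) _{\Bbbk} ],$$
which can also be read off by weight-matching: the positive roots of $\mathfrak{sp} ( 2 n )$, namely $\{ 2 \epsilon _i \} \cup \{ \epsilon _i \pm \epsilon _j \} _{i < j}$, are precisely the $T$-weights of $( V _1 ^+ ) ^{[1]} \oplus V _2 ^+$.

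Second, the map $\mathsf F _1 : \mathbb V ^+ _{\Bbbk} \longrightarrow ( V _1 ^+ ) _{\Bbbk} ^{[1]} \oplus ( V _2 ^+ ) _{\Bbbk}$ is a finite flat $T$-equivariant morphism: on coordinate rings it makes $\Bbbk [ \mathbb V ^+ _{\Bbbk} ]$ free of rank $2 ^n$ over its image, with basis the square-free monomials in the $V _1 ^+$-coordinates. Consequently the scheme-theoretic preimage satisfies
$$\mathrm{ch} _T \Bbbk [ \mathsf F _1 ^{-1} ( \mathcal C _0 ) ] \; = \; \mathrm{ch} _T \Bbbk [ \mathcal C _0 ] \cdot \frac{ \mathrm{ch} _T \Bbbk [ \mathbb V ^+ _{\Bbbk} ] }{ \mathrm{ch} _T \Bbbk [ ( V _1 ^+ ) _{\Bbbk} ^{[1]} \oplus ( V _2 ^+ ) _{\Bbbk} ] }.$$

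Assembling these three identities gives
$$q _{\Bbbk} ( [ \mathsf F _1 ^{-1} ( \mathcal C _0 ) ] ) \; = \; \frac{ \mathrm{ch} _T \Bbbk [ \mathcal C _0 ] }{ \mathrm{ch} _T \Bbbk [ ( V _1 ^+ ) _{\Bbbk} ^{[1]} \oplus ( V _2 ^+ ) _{\Bbbk} ] } \; = \; \frac{ \mathrm{ch} _T \Bbbk [ \mathcal C _1 ] }{ \mathrm{ch} _T \Bbbk [ \mathfrak n _{\Bbbk} ] } \; = \; p _{\Bbbk} ( [ \mathcal C _1 ] )$$
in $R ( T ) _{\mathbb Q}$, and applying $\mathsf{lt}$ yields the proposition. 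The main technical obstacle is the constancy of $T$-characters across the flat family $\mathcal C$: one must verify that the weight-space decomposition of $\Gamma ( \mathcal C, \mathcal O )$ genuinely breaks $\Gamma ( \mathcal C, \mathcal O )$ into finitely generated flat $\Bbbk [ t ]$-pieces. The irreducibility hypothesis on the fibers together with the fact that $\mathcal V ^+ \to \mathbb A ^1 _{\Bbbk}$ is a graded vector bundle (so $\mathcal C$ is an affine cone in each fiber) is what ensures this decomposition is well behaved; the rest of the argument is routine.
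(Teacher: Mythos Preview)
Your argument is essentially the same as the paper's: both use constancy of $T$-characters along the flat family $\mathcal C$ and the fact that $\mathcal O_{\mathbb V^+}$ is free of rank $2^n$ over $\mathcal O_{\mathsf F_1(\mathbb V^+)}$ to identify $p_{\Bbbk}([\mathcal C_1])$ with the $q_{\Bbbk}$-image of the \emph{sheaf-theoretic} pullback $\mathsf F_1^{*}\mathcal O_{\mathcal C_0}$.

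The one point where you diverge from the paper is your conclusion that the integer is exactly $1$. Your character formula in the second step computes $\mathrm{ch}_T$ of the scheme-theoretic fibre product, i.e.\ of $\mathsf F_1^{*}\mathcal O_{\mathcal C_0}$; you then identify this with $\Bbbk[\mathsf F_1^{-1}(\mathcal C_0)]$. Under the paper's conventions (intersections and preimages are taken set-theoretically, i.e.\ reduced), $\mathsf F_1^{-1}(\mathcal C_0)$ denotes the reduced preimage, and the scheme-theoretic pullback need not be reduced: for instance, if $\mathcal C_0$ is cut out by a coordinate of weight $2\epsilon_i$ coming from the $(V_1^+)^{[1]}$ factor, its pullback equation is a square. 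What you have actually shown is
\[
\mathsf{lt}\bigl(p_{\Bbbk}([\mathcal C_1])\bigr)=\mathsf{lt}\bigl(q_{\Bbbk}([\mathsf F_1^{*}\mathcal O_{\mathcal C_0}])\bigr).
\]
The paper closes the remaining gap by observing that $\mathsf F_1^{*}\mathcal O_{\mathcal C_0}$ is generically a rank-$m$ vector bundle on the irreducible variety $\mathsf F_1^{-1}(\mathcal C_0)$ for some $m\ge 1$, and that passing to $\mathsf{lt}$ kills contributions from the complement of that open set; hence $\mathsf{lt}(q_{\Bbbk}([\mathsf F_1^{*}\mathcal O_{\mathcal C_0}]))=m\cdot \mathsf{lt}(q_{\Bbbk}([\mathsf F_1^{-1}(\mathcal C_0)]))$. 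With that sentence added your proof is complete; without it, the claim ``$m=1$'' is not justified in general.
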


\begin{proof}
For the sake of simplicity, we drop the subscripts ${} _{\Bbbk}$ during this proof. A $T$-character does not admit a non-trivial deformation. We express $\mathcal O _{\mathcal C _0}$ by a $T$-equivariant free resolution
$$0 \to \mathcal F _{N} \to \cdots \to \mathcal F _2 \to \mathcal F _1 \to \mathcal O _{\mathcal C _0} \to 0$$ 
such that $N < \infty$ and each $\mathcal F _i$ is isomorphic to a product of $T$-module and $\mathcal O _{\mathsf F _1 ( \mathbb V ^+ )}$ (c.f. Chriss-Ginzburg \cite{CG} \S 5.1 or Sumihiro \cite{Su}). Utilizing the Frobenius splitting of $V _1$, we deduce that $\mathcal O _{\mathbb V ^+}$ is a free $\mathcal O _{\mathsf{F} _1 ( \mathbb V ^+ )}$-module of rank $2 ^n$. It follows that $q ( [ \mathsf{F} _1 ^* ( \mathcal C _0 ) ] ) = p ( [ \mathcal C _1 ] )$. Now the sheaf $\mathsf F _1 ^* ( \mathcal O _{\mathcal C _0} )$ is a vector bundle of rank $m$ along an open subset of $\mathsf F _1 ^{-1} ( \mathcal C _0 )$ for some $m > 0$. Taking the lowest term of $q$ neglects the effects from subsets of $\mathsf F _1 ^{-1} ( \mathcal C _0 )$ which has codimension $\ge 1$. Therefore, we conclude that
$$m \mathsf{lt} q ( [ \mathsf{F} _1 ^{-1} ( \mathcal C _0 ) ] ) = \mathsf{lt} q ( [ \mathsf{F} _1 ^* ( \mathcal C _0 ) ] ) = \mathsf{lt} p ( [ \mathcal C _1 ] )$$
as desired.
\end{proof}

\begin{theorem}[Joseph \cite{J, J2}]\label{JosK}
Let $\mathbb O$ be a $G$-orbit of $\mathcal N$ and let $\mathcal O$ be a $G$-orbit of $\mathfrak N$. The $\mathbb C$-span of $\mathrm{Jos} ( \mathbb O _{\mathbb K} )$ or $\mathrm{Jos} ( \mathcal O _\mathbb K )$ form an irreducible $W$-module with a basis $\mathrm{Jos} ( \mathbb O _{\mathbb K} )$ or $\mathrm{Jos} ( \mathcal O _\mathbb K )$.
\end{theorem}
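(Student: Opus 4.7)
The plan is to run Joseph's original argument essentially verbatim for the classical case $\mathbb O$, and then adapt it to the exotic case $\mathcal O$ using the geometric ingredients already established in the paper. For $\mathbb O$ the statement is exactly \cite{J, J2}, so the substance lies in the exotic version. Throughout, I work over $\mathbb K$ and drop the subscript.

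First I would set up the Hotta-style construction of the Springer representation on $\mathcal O$. Fix $X \in \mathcal O$ and consider the exotic Springer fiber $\mathcal E_X = \nu^{-1}(X)$. By Lemma \ref{oc_A}, the irreducible components $\{\mathcal E_X^i\}$ of $\mathcal E_X$ are in bijection with $\mathrm{Comp}(\mathcal O)$ via $\mathcal E_X^i \mapsto \mathcal G_X^i X =: \mathfrak X_i$, and by Theorem \ref{equigen} each $\mathfrak X_i$ has dimension $\tfrac{1}{2}\dim\mathcal O$, so each $\mathcal E_X^i$ has dimension $\tfrac{1}{2}(\dim\mathfrak N - \dim\mathcal O)$. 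The top Borel-Moore homology $H_{\mathrm{top}}(\mathcal E_X,\mathbb C)$ thus has a basis $\{[\mathcal E_X^i]\}$ indexed by $\mathrm{Comp}(\mathcal O)$. Following the generalization of Hotta's construction (cf.\ \cite{Hot, J2, CG}) to the strictly semi-small map $\nu$ (Remark \ref{exotic str semi-small}), this space carries a natural $W$-action; by Borho-MacPherson's theorem applied to the strictly semi-small setting, $H_{\mathrm{top}}(\mathcal E_X,\mathbb C)$ is an irreducible $W$-module (the one attached to $\mathcal O$ under the exotic Springer correspondence).

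Next I would identify this representation with $\mathrm{Span}_{\mathbb C}\mathrm{Jos}(\mathcal O)$. The link is the equivariant Chern-character map: given $\mathfrak X_i \in \mathrm{Comp}(\mathcal O)$, the class $q([\overline{\mathfrak X_i}]) \in R(T)_{\mathbb Q}$ is obtained by integration of the structure sheaf of $\overline{\mathfrak X_i}$ against the $T$-equivariant Koszul complex of $\mathbb V^+$. Passing to the leading term $\mathsf{lt}$ amounts to a localization computation at the origin of $\mathbb V^+$; this extracts a homogeneous polynomial on $\mathfrak t$ of degree $\dim\mathbb V^+ - \dim\mathfrak X_i$ (a constant independent of $i$, since orbital varieties are equidimensional). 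As in Joseph \cite{J}, the natural $W$-action on $\mathbb C[\mathfrak t]$ preserves this common degree, and the assignment $[\mathcal E_X^i] \mapsto \mathsf J(\mathfrak X_i)$ extends by $\mathbb C$-linearity to a $W$-equivariant map from $H_{\mathrm{top}}(\mathcal E_X,\mathbb C)$ onto $\mathrm{Span}_{\mathbb C}\mathrm{Jos}(\mathcal O)$. The ordinary case is identical after replacing $\nu$, $\mathbb V^+$, and $\mathcal O$ with $\mu$, $\mathfrak n$, and $\mathbb O$, and appealing to the Steinberg-Spaltenstein theorem in place of Theorem \ref{equigen}.

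Finally I would argue that this surjection is an isomorphism, which gives simultaneously that $\mathrm{Span}_{\mathbb C}\mathrm{Jos}(\mathcal O)$ is irreducible and that $\mathrm{Jos}(\mathcal O)$ is a basis. Since the source $H_{\mathrm{top}}(\mathcal E_X,\mathbb C)$ is irreducible, the map is either zero or injective, so it suffices to show $\mathsf J(\mathfrak X_i) \neq 0$ for at least one $i$. For this I would use Proposition \ref{descwseed}, which gives some $\mathfrak X_i$ of the form $\overline{B(\mathbb V^+ \cap {}^w\mathbb V^+)}$ whose Joseph polynomial is (up to a positive constant) the leading term of the character of a polynomial ring on $\mathbb V^+ \cap {}^w\mathbb V^+$; this leading term is the product of the weights of $\mathbb V^+/(\mathbb V^+\cap {}^w\mathbb V^+)$, which is manifestly non-zero in $\mathbb C[\mathfrak t]$. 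The main obstacle in the whole argument is the identification of the Joseph-polynomial action with the geometric Springer action; one must verify that the $W$-intertwiner given on basis vectors by $[\mathcal E_X^i]\mapsto \mathsf{J}(\mathfrak X_i)$ is actually $W$-equivariant, and this is where the strict semi-smallness of $\nu$ is essential: it guarantees that the convolution action of the Steinberg variety $Z = F\times_{\mathfrak N} F$ on $H_{\mathrm{top}}(\mathcal E_X,\mathbb C)$ is computed purely from top-dimensional cycles, matching the algebraic $W$-action on leading terms defined by Joseph.
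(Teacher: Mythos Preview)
Your approach is essentially the same as the paper's: both defer to the Chriss--Ginzburg construction (\cite{CG} 6.5 and 7.4), with the paper simply noting that the exotic case $\mathcal O_{\mathbb K}$ follows from the ordinary one by uniformly replacing $\mathfrak n$, $\mathcal N$, $\mathbb O$ with $\mathbb V^+$, $\mathfrak N$, $\mathcal O$. Your write-up unpacks that citation into the Hotta--Borho--MacPherson picture, which is fine and more informative.

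One small slip: in your nonvanishing argument you assert that the Joseph polynomial of $\overline{B(\mathbb V^+ \cap {}^w\mathbb V^+)}$ is (up to scalar) the product of the weights of $\mathbb V^+/(\mathbb V^+\cap {}^w\mathbb V^+)$. That would be true if the orbital variety closure were the linear space $\mathbb V^+\cap {}^w\mathbb V^+$ itself, but in general the $B$-saturation is strictly larger and not linear, so the character ratio is not simply that product. The conclusion you need is much cheaper anyway: for any nonempty $T$-stable closed subvariety $C\subset\mathbb V^+$, the class $q([C])\in R(T)_{\mathbb Q}$ is nonzero (its formal expansion has a well-defined nonzero lowest term), so $\mathsf{lt}(q([C]))\neq 0$ automatically. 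With that correction your argument goes through.
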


\begin{proof}
The proof for the case $\mathbb O _{\mathbb K}$ is the original case and is treated in \cite{CG} 6.5.13 and 7.4.1. The case $\mathcal O _{\mathbb K}$ follows from the same construction as in \cite{CG} 6.5 and 7.4 if we replace $\mathfrak n _{\mathbb K}$ with $\mathbb V _{\mathbb K} ^+$, $\mathcal N _{\mathbb K}$ with $\mathfrak N _{\mathbb K}$, and $\mathbb O _{\mathbb K}$ with $\mathcal O _{\mathbb K}$ uniformly.
\end{proof}

\begin{corollary}[$(\spadesuit)_1$ in \S \ref{eovs}]\label{PW}
We have $\sum _{\mathcal O _{\mathbb K} \in G _{\mathbb K} \backslash \mathfrak N _{\mathbb K}} \left( \mathrm{Comp} ( \mathcal O _{\mathbb K} ) \right) ^2 = \# W$.
\end{corollary}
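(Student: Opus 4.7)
The plan is to combine Joseph's theorem (Theorem \ref{JosK}) with the exotic Springer correspondence of \cite{K1} and the standard Plancherel identity for $W$.

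First, I would unpack Theorem \ref{JosK} for each $G_{\mathbb K}$-orbit $\mathcal O_{\mathbb K} \subset \mathfrak N_{\mathbb K}$. By that theorem, $\mathrm{Jos}(\mathcal O_{\mathbb K})$ is a $\mathbb C$-basis of an irreducible $W$-module, call it $V_{\mathcal O}$. Since each element of $\mathrm{Comp}(\mathcal O_{\mathbb K})$ contributes exactly one (nonzero) Joseph polynomial, this gives
\begin{equation*}
\#\mathrm{Comp}(\mathcal O_{\mathbb K}) \;=\; \#\mathrm{Jos}(\mathcal O_{\mathbb K}) \;=\; \dim V_{\mathcal O}.
\end{equation*}

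Second, I would invoke the exotic Springer correspondence, which exists in this setting because $\nu$ is strictly semi-small (see Lemma \ref{exotic semi-small} and Remark \ref{exotic str semi-small}): the Borho--MacPherson construction then assigns, to each $G_{\mathbb K}$-orbit $\mathcal O_{\mathbb K}$, an irreducible $W$-module built from top-dimensional components of $\nu_{\mathbb K}^{-1}(X)$ for $X \in \mathcal O_{\mathbb K}$, and this assignment is a bijection
\begin{equation*}
G_{\mathbb K}\backslash \mathfrak N_{\mathbb K} \;\xrightarrow{\;\sim\;}\; W^\vee.
\end{equation*}
Via Lemma \ref{oc_A}, the top-dimensional components of $\nu_{\mathbb K}^{-1}(X)$ are in bijection with $\mathrm{Comp}(\mathcal O_{\mathbb K})$, so this is exactly the module $V_{\mathcal O}$ appearing above (up to the canonical identification furnished by Joseph's realization; this is precisely the content of \cite{J2} / \cite{CG} 7.4 as adapted in the proof of Theorem \ref{JosK}).

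Finally I would conclude with the standard identity $\sum_{V \in W^\vee} (\dim V)^2 = \#W$. Combining the three inputs,
\begin{equation*}
\sum_{\mathcal O_{\mathbb K} \in G_{\mathbb K}\backslash \mathfrak N_{\mathbb K}} \bigl(\#\mathrm{Comp}(\mathcal O_{\mathbb K})\bigr)^2
\;=\; \sum_{\mathcal O_{\mathbb K}} (\dim V_{\mathcal O})^2
\;=\; \sum_{V \in W^\vee} (\dim V)^2 \;=\; \#W,
\end{equation*}
as required. The only nontrivial step is identifying the $W$-module built from Joseph polynomials with the Springer representation attached by Borho--MacPherson; this is folded into the proof of Theorem \ref{JosK} (following \cite{CG} 7.4) and hence is available here, so the expected main ``obstacle'' is really a bookkeeping check that the two copies of the same irreducible $W$-representation agree, rather than a substantive new argument.
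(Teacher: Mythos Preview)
Your proposal is correct and follows essentially the same approach as the paper: invoke Theorem \ref{JosK} to identify $\#\mathrm{Comp}(\mathcal O_{\mathbb K})$ with the dimension of an irreducible $W$-module, use the exotic Springer correspondence to see that $\mathcal O_{\mathbb K} \mapsto V_{\mathcal O}$ is a bijection onto $W^\vee$, and conclude by Wedderburn. Your extra discussion of identifying the Joseph module with the Borho--MacPherson module is not strictly necessary for the count (one only needs that \emph{some} bijection orbits $\leftrightarrow W^\vee$ exists with matching dimensions, which is already packaged into Theorem \ref{JosK} via \cite{CG} 7.4), but it is harmless and in fact clarifies what the paper's terse ``Theorem \ref{JosK} and the Wedderburn theorem'' is really using.
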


\begin{proof}
Since our exotic Springer correspondence is a bijection between the orbits of $\mathfrak N$ and $W ^{\vee}$, Theorem \ref{JosK} and the Wedderburn theorem yields the result.
\end{proof}

\begin{corollary}[$(\spadesuit)_2$ in \S \ref{eovs}]\label{Jind}
For a $G$-orbit $\mathcal O$ of $\mathfrak N$, there are at least $\# \mathrm{Comp} ( \mathcal O _{\mathbb K})$ orbital varieties of $\mathcal O _{\Bbbk}$ contained in the closure of that of $\mathcal O _{\mathbb K}$.
\end{corollary}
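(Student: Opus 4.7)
The plan is to lift each orbital variety of $\mathcal{O}_{\mathbb{K}}$ to an $A$-flat subfamily of $\mathbb V^+$, decompose its special fiber as a cycle supported (at top dimension) on members of $\mathrm{Comp}(\mathcal{O}_{\Bbbk})$, and exploit the linear independence of Joseph polynomials furnished by Theorem \ref{JosK} to force enough distinct members of $\mathrm{Comp}(\mathcal{O}_{\Bbbk})$ to appear.

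Concretely, for each $\mathfrak X_{\mathbb{K}}\in\mathrm{Comp}(\mathcal{O}_{\mathbb{K}})$ I form the scheme-theoretic closure $\overline{\mathfrak X}\subset \mathbb V^+$ of $\overline{\mathfrak X_{\mathbb{K}}}$. Being integral and dominating the one-dimensional base $\mathrm{Spec}\,A$, it is $A$-flat, so the special fiber $\overline{\mathfrak X}_{\Bbbk}$ is equidimensional of dimension $\dim \mathfrak X_{\mathbb{K}}=\frac12\dim\mathcal O_{\mathbb K}$ by Theorem \ref{equigen}. Since $\overline{\mathfrak X}\subset\overline{\mathcal O}$ we also have $\overline{\mathfrak X}_{\Bbbk}\subset\overline{\mathcal O}_{\Bbbk}$; Theorem \ref{flat_prolong_A} together with the bijection $\mathsf{df}$ yields $\dim \mathcal O_{\Bbbk}=\dim\mathcal O_{\mathbb K}$ while every strictly smaller orbit in $\overline{\mathcal O}_{\Bbbk}$ has strictly smaller dimension (so its orbital varieties are strictly lower-dimensional by Theorem \ref{equigen}). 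Consequently every top-dimensional irreducible component of $\overline{\mathfrak X}_{\Bbbk}$ belongs to $\mathrm{Comp}(\mathcal{O}_{\Bbbk})$.

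Write the resulting cycle as $[\overline{\mathfrak X}_{\Bbbk}]=\sum_i m_i[\mathfrak Y_i^{\mathfrak X_{\mathbb K}}]+(\text{strictly lower-dimensional})$ with $m_i\in\mathbb Z_{\ge 1}$ and $\mathfrak Y_i^{\mathfrak X_{\mathbb K}}\in\mathrm{Comp}(\mathcal{O}_{\Bbbk})$. Proposition \ref{flatK} gives $q_{\mathbb K}([\overline{\mathfrak X_{\mathbb K}}])=q_{\Bbbk}([\overline{\mathfrak X}_{\Bbbk}])$, and the lower-dimensional contributions affect only strictly higher-degree homogeneous terms after the formal expansion, so applying $\mathsf{lt}$ yields
$$\mathsf J(\mathfrak X_{\mathbb K})=\sum_i m_i\,\mathsf J(\mathfrak Y_i^{\mathfrak X_{\mathbb K}}).$$
By Theorem \ref{JosK} the polynomials $\{\mathsf J(\mathfrak X_{\mathbb K})\}_{\mathfrak X_{\mathbb K}\in\mathrm{Comp}(\mathcal O_{\mathbb K})}$ form the basis of an irreducible $W$-module, hence are linearly independent in $\mathbb C[\mathfrak t]$. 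Each of them lies in the $\mathbb Q$-span of the Joseph polynomials of those $\mathfrak Y\in\mathrm{Comp}(\mathcal{O}_{\Bbbk})$ that occur in some specialization $\overline{\mathfrak X}_{\Bbbk}$; so this set of $\mathfrak Y$'s must have cardinality at least $\#\mathrm{Comp}(\mathcal O_{\mathbb K})$, which is exactly the claim.

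The only substantive verification is the dimension-preservation in the second paragraph, namely that a top-dimensional component of $\overline{\mathfrak X}_{\Bbbk}$ cannot migrate into a strictly smaller $G_{\Bbbk}$-orbit closure; this rests crucially on the equidimensionality $\dim\mathcal O_{\Bbbk}=\dim\mathcal O_{\mathbb K}$ secured by the flat extension $\mathcal O$ of Theorem \ref{flat_prolong_A}. The remainder is a routine leading-term computation using that $T$-characters are preserved under $T$-equivariant flat specialization.
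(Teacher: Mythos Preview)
Your argument is correct and is precisely the intended expansion of the paper's one-line proof, which cites only Proposition~\ref{flatK} and Theorem~\ref{JosK}. The additional inputs you invoke (Theorem~\ref{flat_prolong_A} for $\dim\mathcal O_{\Bbbk}=\dim\mathcal O_{\mathbb K}$ and Theorem~\ref{equigen} over $\Bbbk$ to pin the top components into $\mathrm{Comp}(\mathcal O_{\Bbbk})$) are already established earlier without reliance on $(\spadesuit)_2$, so there is no circularity.
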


\begin{proof}
Straight-forward consequence of Proposition \ref{flatK} and Theorem \ref{JosK}. 
\end{proof}

\begin{theorem}\label{Josmain}
Let $\mathbb O$ be a $G$-orbit in $\mathcal N$. Let $\mathcal O$ be a $G$-orbit in $\mathfrak N$ such that $\mathsf{df} ( \mathcal O  _{\Bbbk} ) \subset \mathbb O _{\Bbbk}$ is a open dense subset. Then, we have
$$\mathrm{Jos} ( \mathbb O _{\mathbb K} ) = \mathrm{Jos} ( \mathcal O _{\mathbb K} ).$$
\end{theorem}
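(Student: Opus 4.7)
The plan is to chain together three set equalities by passing to the base change over $\Bbbk$:
\begin{equation*}
\mathrm{Jos}(\mathcal O_{\mathbb K}) \;=\; \mathrm{Jos}(\mathcal O_{\Bbbk}) \;=\; \mathrm{Jos}(\mathbb O^{\prime}_{\Bbbk}) \;=\; \mathrm{Jos}(\mathbb O_{\mathbb K}),
\end{equation*}
where $\mathbb O^{\prime}_{\Bbbk}$ denotes the unique open dense $G_{\Bbbk}$-orbit of $\mathbb O_{\Bbbk}$ produced by Lemma \ref{ordorb}; by hypothesis $\mathbb O^{\prime}_{\Bbbk} = \mathsf{df}(\mathcal O_{\Bbbk})$. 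The outer two equalities are flat specializations over $\mathrm{Spec}\,A$, for which $T$-characters are rigid (Proposition \ref{flatK}); the middle equality is the deformation bridge furnished by the family $\mathcal N_S$ of \S \ref{gfn} together with Proposition \ref{deformK}. Throughout, the sets $\mathrm{Jos}(-)$ are viewed as $\mathbb Q$-scaling-closed subsets of $\mathbb C[\mathfrak t]$, so that the equalities are set-theoretic identifications modulo scalars.

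For the first equality, pick $\mathfrak X \in \mathrm{Comp}(\mathcal O)$: Proposition \ref{fdef} gives flatness of $\mathfrak X$ over $A$ and Proposition \ref{irrpres} (1) gives irreducibility of $\mathfrak X_{\Bbbk}$, so $\mathfrak X_{\Bbbk}$ is an orbital variety of $\mathcal O_{\Bbbk}$, and Proposition \ref{flatK} yields $\mathsf{J}(\mathfrak X_{\mathbb K}) = \mathsf{J}(\mathfrak X_{\Bbbk})$. The analogous argument for $\mathfrak Y \in \mathrm{Comp}(\mathbb O)$ via Proposition \ref{irrpres} (2) gives the third equality, with the observation (from the proof of that proposition) that $\mathfrak Y_{\Bbbk}$ lies in the dense orbit $\mathbb O^{\prime}_{\Bbbk}$. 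For the middle equality, given $\mathfrak X_{\Bbbk} \in \mathrm{Comp}(\mathcal O_{\Bbbk})$ I use the equi-dimensional family $\overline{\mathbb O_S}$ of Theorem \ref{pcflatex} to construct a $T_{\Bbbk}$-stable flat family $\mathcal C \subset \mathcal V^+$ over $\mathbb A^1_{\Bbbk}$ whose central fiber descends along $\mathsf{F}_1$ to $\overline{\mathfrak X_{\Bbbk}}$ and whose general fiber $\mathcal C_1$ is (the closure of) an orbital variety of $\mathbb O^{\prime}_{\Bbbk}$; Proposition \ref{deformK} then gives $\mathsf{J}(\mathcal C_1) \in \mathbb Z_{\geq 1} \cdot \mathsf{J}(\mathfrak X_{\Bbbk})$, i.e.\ equality up to $\mathbb Q$-scalar.

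The main obstacle will be the middle step: one must construct $\mathcal C$ so that $\mathcal C_1$ is genuinely a top-dimensional irreducible component of $\overline{\mathbb O^{\prime}_{\Bbbk}} \cap \mathfrak n_{\Bbbk}$ (and not something of strictly smaller dimension), and must show that every orbital variety on each side is captured, so that the correspondence is bijective rather than a one-sided inclusion. The ingredients are the equi-dimensionality of orbital varieties (Theorem \ref{equigen}), the identification of orbital varieties with components of Springer fibers (Lemma \ref{oc_A}), and the local triviality of $\overline{\mathbb O_S}$ over $\mathbb A^1_{\Bbbk} \setminus \{0\}$ from Theorem \ref{pcflatex}. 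The inclusions are then closed up into equalities by the bijective count coming from Corollary \ref{Jind} combined with Theorem \ref{JosK}, which asserts that $\mathrm{Jos}(\mathcal O_{\mathbb K})$ and $\mathrm{Jos}(\mathbb O_{\mathbb K})$ are each bases of irreducible $W$-modules.
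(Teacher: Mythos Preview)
Your proposal is correct and follows essentially the same route as the paper: the same three-step chain $\mathrm{Jos}(\mathcal O_{\mathbb K}) = \mathrm{Jos}(\mathcal O_{\Bbbk}) = \mathrm{Jos}(\mathbb O'_{\Bbbk}) = \mathrm{Jos}(\mathbb O_{\mathbb K})$ via Propositions \ref{fdef}, \ref{irrpres}, \ref{flatK}, and \ref{deformK}, with the middle bridge realized by the family $\mathsf{ml}(\mathbb A^1_{\Bbbk} \times \overline{\mathfrak X_{\Bbbk}}) \subset \mathcal V^+$ and the one-sided inclusions closed up using the irreducibility of the $W$-modules from Theorem \ref{JosK}. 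The paper's argument is slightly terser in the middle step (it directly observes that $\mathsf{ml}(\overline{\mathfrak X}_{\Bbbk})$ is an equidimensional $B_{\Bbbk}$-stable subfamily of $\mathcal V^+$, hence a flat family of orbital varieties by dimension comparison), but the substance is the same.
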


\begin{proof}
Since the construction of Joseph polynomials factors through the closures of orbital varieties, we may refer an orbital variety closure as an orbital variety during this proof (for the sake of simplicity). We prove the following identities:
\begin{eqnarray}
\mathrm{Jos} ( \mathbb O _{\mathbb K} ) = \mathrm{Jos} ( \mathbb O _{\Bbbk} ) = \mathrm{Jos} ( \mathcal O _{\Bbbk} ) = \mathrm{Jos} ( \mathcal O _{\mathbb K} ).\label{tight-rope}
\end{eqnarray}
\item (Proof of $\mathrm{Jos} ( \mathbb O _{\mathbb K} ) \subset \mathrm{Jos} ( \mathbb O _{\Bbbk} )$) Let $\mathfrak Y \in \mathrm{Comp} ( \mathbb O )$ be such that $\mathfrak Y _{\mathbb K} \in \mathrm{Jos} ( \mathbb O _{\mathbb K} )$. The variety $\mathfrak Y _{\Bbbk}$ is irreducible by Proposition \ref{irrpres} 2). Since $\mathfrak Y$ dominates $A$, we deduce that $\mathfrak Y$ is a flat over $A$. Therefore, $\mathrm{Jos} ( \mathbb O _{\mathbb K} ) \subset \mathrm{Jos} ( \mathbb O _{\Bbbk} )$ follows from Proposition \ref{flatK} as desired.

\item (Proof of $\mathrm{Jos} ( \mathbb O _{\Bbbk} ) = \mathrm{Jos} ( \mathcal O _{\Bbbk} )$) Let $\mathfrak X _{\Bbbk} \in \mathrm{Comp} ( \mathcal O _{\Bbbk} )$. Consider a family $\mathsf{ml} ( \overline{\mathfrak X} _{\Bbbk} )$. This is a $B _{\Bbbk}$-stable equidimensional subfamily of $\mathcal V ^+$. By the comparison of dimensions, it is a flat family of orbital varieties over $\mathbb A ^1 _{\Bbbk}$. Hence the equality $\mathrm{Jos} ( \mathbb O _{\Bbbk} ) = \mathrm{Jos} ( \mathcal O _{\Bbbk} )$ follows from Proposition \ref{deformK}.

\item (Proof of $\mathrm{Jos} ( \mathcal O _{\mathbb K} ) = \mathrm{Jos} ( \mathcal O _{\Bbbk} )$) Let $\mathfrak X \in \mathrm{Comp} ( \mathcal O )$.
The variety $\mathfrak X _{\Bbbk}$ is irreducible by Proposition \ref{irrpres} 1). Since $\mathfrak X$ is flat over $A$, the equality $\mathrm{Jos} ( \mathcal O _{\mathbb K} ) = \mathrm{Jos} ( \mathcal O _{\Bbbk} )$ follows from Proposition \ref{flatK}.\\
Thanks to Theorem \ref{JosK}, we deduce (\ref{tight-rope}), which implies the result.
\end{proof}

Let $\mathbb O _{\mathbb K}$ be a $G _{\mathbb K}$-orbit of $\mathcal N _{\mathbb K}$. Let $Y \in \mathbb O _{\mathbb K}$. We define
$$\mathcal B _Y := \{ g \in G _{\mathbb K} ; Y \in \mathrm{Ad} ( g ) \mathfrak n _{\mathbb K} \} / B _{\mathbb K} \subset \mathcal B _{\mathbb K}$$
and call it the Springer fiber along $Y$.

\begin{corollary}
Let $\mathbb O$ be a $G$-orbit of $\mathcal N$ with its codimension $2 d$. Let $\mathcal O$ be a $G$-orbit of $\mathfrak N$ such that $\mathsf{df}( \mathcal O _{\Bbbk} ) \subset \mathbb O _{\Bbbk}$ is a dense open subset. Let $X \in \mathcal O _{\mathbb K}$ and let $Y \in \mathbb O _{\mathbb K}$. Let $C _Y := \mathrm{Stab} _{G _{\mathbb K}} ( Y ) / \mathrm{Stab} _{G _{\mathbb K}} ( Y ) ^{\circ}$. Then, we have a $W$-equivariant isomorphism
$$H _{2 d} ( \mathcal B _{Y}, \mathbb C ) ^{C _Y} \cong H _{2 d} ( \mathcal E _X, \mathbb C ),$$
compatible with their embeddings into $H _{2d} ( \mathcal B, \mathbb C )$. Moreover, the bases given by irreducible components of $\mathcal B _{Y}$ and $\mathcal E _X$ coincide up to scalar multiplication.
\end{corollary}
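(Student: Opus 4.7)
The plan is to realize both $H_{2d}(\mathcal B_Y,\mathbb C)^{C_Y}$ and $H_{2d}(\mathcal E_X,\mathbb C)$ as subspaces of $\mathbb C[\mathfrak t_{\mathbb C}]$ via the Joseph polynomial construction, and then to identify them through the equality provided by Theorem \ref{Josmain}. The dimension $2d$ is chosen so that the corresponding homology groups are spanned by fundamental classes of top-dimensional irreducible components.

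First, I would invoke the Joseph/Chriss--Ginzburg realization that underlies the proof of Theorem \ref{JosK}. On the ordinary side one obtains a $W$-equivariant injection
$$\mathsf{J}_{\mathrm{ord}} : H_{2d}(\mathcal B_Y,\mathbb C)^{C_Y} \hookrightarrow \mathbb C[\mathfrak t_{\mathbb C}]$$
which sends the $C_Y$-orbit sum of the fundamental class $[\mathfrak Y_{\mathbb K}]$ of each $\mathfrak Y_{\mathbb K} \in \mathrm{Comp}(\mathbb O_{\mathbb K})$ to a nonzero scalar multiple of the Joseph polynomial $\mathsf{J}(\mathfrak Y_{\mathbb K})$; by Theorem \ref{JosK} its image is the $\mathbb C$-span of $\mathrm{Jos}(\mathbb O_{\mathbb K})$. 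Running the same construction with $\mathfrak n_{\mathbb K}$ replaced by $\mathbb V^+_{\mathbb K}$, $\mathcal N_{\mathbb K}$ by $\mathfrak N_{\mathbb K}$, and $\mathbb O_{\mathbb K}$ by $\mathcal O_{\mathbb K}$ yields a $W$-equivariant injection
$$\mathsf{J}_{\mathrm{ex}} : H_{2d}(\mathcal E_X,\mathbb C) \hookrightarrow \mathbb C[\mathfrak t_{\mathbb C}]$$
whose image is the $\mathbb C$-span of $\mathrm{Jos}(\mathcal O_{\mathbb K})$, sending each $[\overline{\mathfrak X_{\mathbb K}}]$ with $\mathfrak X_{\mathbb K} \in \mathrm{Comp}(\mathcal O_{\mathbb K})$ to a scalar multiple of $\mathsf{J}(\mathfrak X_{\mathbb K})$. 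Here Lemma \ref{oc_A} identifies irreducible components of $\mathcal E_X$ with members of $\mathrm{Comp}(\mathcal O_{\mathbb K})$, and no invariants are needed because $\mathrm{Stab}_{G_{\mathbb K}}(X)$ is connected by \cite{K1} Proposition 4.5 (as cited in the proof of Lemma \ref{oc_A}).

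Second, I would apply Theorem \ref{Josmain}: since $\mathsf{df}(\mathcal O_{\Bbbk}) \subset \mathbb O_{\Bbbk}$ is dense open, one has $\mathrm{Jos}(\mathbb O_{\mathbb K}) = \mathrm{Jos}(\mathcal O_{\mathbb K})$ as sets of rays in $\mathbb C[\mathfrak t_{\mathbb C}]$. Therefore $\mathsf{J}_{\mathrm{ord}}$ and $\mathsf{J}_{\mathrm{ex}}$ have the same image, and the composition $\mathsf{J}_{\mathrm{ord}}^{-1} \circ \mathsf{J}_{\mathrm{ex}}$ is the desired $W$-equivariant isomorphism. Since each basis element on either side is characterized as the unique class mapping to a prescribed member of the common set $\mathrm{Jos}(\mathbb O_{\mathbb K}) = \mathrm{Jos}(\mathcal O_{\mathbb K})$ (up to nonzero scalar), the identification matches the two bases up to scalar. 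Compatibility with the inclusions into $H_{2d}(\mathcal B,\mathbb C)$ is automatic: both $\mathsf{J}_{\mathrm{ord}}$ and $\mathsf{J}_{\mathrm{ex}}$ are built from the same Steinberg-variety machinery applied to the closed embeddings $\mathcal B_Y \hookrightarrow \mathcal B$ and $\mathcal E_X \hookrightarrow \mathcal B$, so they factor through the pushforwards into $H_{2d}(\mathcal B,\mathbb C)$, and the $W$-action on $H_{2d}(\mathcal B,\mathbb C)$ coming from $Z$ is the common source of the $W$-actions on both sides.

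The only point requiring care — not a real obstacle — is the bookkeeping around $C_Y$ on the ordinary side: the irreducible components of $\mathcal B_Y$ carry a $C_Y$-action, and only $C_Y$-orbit sums contribute to the invariants and hence to the Joseph polynomial image. The equality $\mathrm{Jos}(\mathbb O_{\mathbb K}) = \mathrm{Jos}(\mathcal O_{\mathbb K})$ implicitly encodes a bijection between $C_Y$-orbits on $\mathrm{Comp}(\mathbb O_{\mathbb K})$ and $\mathrm{Comp}(\mathcal O_{\mathbb K})$, which matches precisely the asymmetry in the statement (invariants on the left, full space on the right) and delivers the basis correspondence.
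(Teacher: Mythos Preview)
Your proposal is correct and follows essentially the same approach as the paper: the paper's proof simply says this is a direct consequence of Theorem \ref{Josmain} together with \cite{CG} 6.5.13 (and its exotic analogue obtained by replacing $\mathcal N$, $\mathbb O$, $\mathcal B_Y$ by $\mathfrak N$, $\mathcal O$, $\mathcal E_X$). You have unpacked exactly this, writing out explicitly the two $W$-equivariant injections into $\mathbb C[\mathfrak t]$ furnished by the Chriss--Ginzburg machinery and then matching their images via Theorem \ref{Josmain}; your additional remarks on the $C_Y$-bookkeeping and the connectedness of $\mathrm{Stab}_{G_{\mathbb K}}(X)$ are accurate and make explicit what the paper leaves implicit.
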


\begin{proof}
This is a direct consequence of Theorem \ref{Josmain} and \cite{CG} 6.5.13. Here the counter-part of \cite{CG} 6.5.13 for $\mathfrak N$ is obtained by merely by replacing the meaning of the symbols as $\mathcal N$ by $\mathfrak N$, $\mathbb O$ by $\mathcal O$, and $\mathcal B _Y$ by $\mathcal E _X$.
\end{proof}

\section{An explicit description of the correspondence}

Keep the setting of the previous section, but fix the base to be $\mathbb K$ (or rather its scalar extension to $\mathbb C$). Here we use the notation $\mathbb V ^{\bm \lambda}, \mathbb V ^{\bm \lambda} _{01}, \ldots$ defined in \S \ref{seW}.

We start from a dimension formula which seems to go back to Kraft-Procesi \cite{KP3} \S 8.1. Here we present a slightly modified form which is suitable for applications.

\begin{theorem}[Kraft-Procesi \cite{KP3}]\label{dim_formula}
Let $\lambda$ be a partition of $n$ and let $\mathbf 0 = ( 0, 0, \ldots )$ be a sequence of zeros. We put ${\bm \lambda} := ( \lambda, \mathbf 0 )$. Then, we have
$$\dim \mathcal O _{( \lambda, \mathbf 0 )} = 2 \dim ( \mathcal O _{( \lambda, \mathbf 0 )} \cap \mathbb V _0 ) = 4 \sum _{i < j} ( d _i ^{{\bm \lambda}} - d _{i-1} ^{{\bm \lambda}} ) ( d _j ^{{\bm \lambda}} - d _{j-1} ^{{\bm \lambda}} ),$$
where $\{ d _i ^{{\bm \lambda}} \} _{i \ge 1}$ is a sequence obtained from $\bm \lambda$ as in Definition \ref{se}.
\end{theorem}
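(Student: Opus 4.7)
The plan is to match both equalities of the theorem against the Kraft-Procesi formula \cite{KP3} for $Sp(2n)$-orbits in $\wedge^2 V_1$, after a short combinatorial unwinding.

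First, the reduction of the sum. Since all marks of ${\bm \lambda} = (\lambda, \mathbf 0)$ vanish, Theorem \ref{mp<->bp} forces $b_i = 0$ for every $i$, so the associated bi-partition is $(\mu^{\bm \lambda}, \nu^{\bm \lambda}) = (\mathbf 0, \lambda)$. Definition \ref{se} then collapses to $d_0 = 0$ and $d_j = ({}^t\lambda)_1 + \cdots + ({}^t\lambda)_j$ for $1 \le j \le \lambda_1$, yielding consecutive differences $d_j - d_{j-1} = ({}^t\lambda)_j$. Since $\sum_j ({}^t\lambda)_j = n$, the elementary identity $2 \sum_{i<j} ({}^t\lambda)_i ({}^t\lambda)_j = n^2 - \sum_j ({}^t\lambda)_j^2$ rewrites the rightmost quantity of the theorem as $2\bigl(n^2 - \sum_j ({}^t\lambda)_j^2\bigr)$; it thus suffices to show $\dim \mathcal O_{(\lambda, \mathbf 0)} = 2\bigl(n^2 - \sum_j ({}^t\lambda)_j^2\bigr)$ and $\dim(\mathcal O_{(\lambda, \mathbf 0)} \cap \mathbb V_0) = n^2 - \sum_j ({}^t\lambda)_j^2$.

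Next, the middle equality. Definition \ref{kinv} says every $X \in \mathcal O_{(\lambda, \mathbf 0)}$ has the form $(0, X_2)$ with $X_2$, viewed as a nilpotent endomorphism of $V_1$ via (\ref{Howto}), of Jordan type $(\lambda_1, \lambda_1, \lambda_2, \lambda_2, \ldots)$. The Levi decomposition $V_1 = \mathbb V_{-1} \oplus \mathbb V_1$ identifies $\mathbb V_0 \cap V_2 \cong \mathbb V_{-1} \otimes \mathbb V_1 \cong \mathrm{Mat}(n)$ as $G_0$-modules with $G_0 = GL(n)$ acting by conjugation, and a direct calculation via the symplectic form shows that $X_2 \in \mathbb V_0 \cap V_2$ corresponding to $Z \in \mathrm{Mat}(n)$ operates on $V_1$ as the block-diagonal endomorphism $\mathrm{diag}({}^tZ, Z)$. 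Hence $X_2$ has the prescribed doubled Jordan type precisely when $Z$ is nilpotent of Jordan type $\lambda$, making $\mathcal O_{(\lambda, \mathbf 0)} \cap \mathbb V_0$ a single $G_0$-conjugacy class of the classical dimension $n^2 - \sum_j ({}^t\lambda)_j^2$.

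Finally, the outer equality $\dim \mathcal O_{(\lambda, \mathbf 0)} = 2 \dim(\mathcal O_{(\lambda, \mathbf 0)} \cap \mathbb V_0)$ is equivalent to $\dim \mathrm{Stab}_G(X_2) = n + 2\sum_j ({}^t\lambda)_j^2$. This is the Kraft-Procesi centralizer count for a self-adjoint nilpotent of doubled Jordan type in the $Sp(2n)$-action on $\wedge^2 V_1$ \cite{KP3}. An explicit verification proceeds by writing $g \in Sp(2n)$ in block form $\bigl(\begin{smallmatrix} A & B \\ C & D \end{smallmatrix}\bigr)$ along $V_1 = \mathbb V_{-1} \oplus \mathbb V_1$: the equation $gX_2 = X_2 g$ splits as $A \in Z_{GL(n)}({}^tZ)$, $D \in Z_{GL(n)}(Z)$, together with intertwiners $BZ = {}^tZ B$ and $C\,{}^tZ = ZC$, and combining these with the symplectic constraint produces the claimed centralizer dimension. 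The main obstacle is this bookkeeping; as a cleaner alternative one could invoke Theorem \ref{equigen} to identify an orbital variety of $\mathcal O_{(\lambda, \mathbf 0)}$ with $\overline{B \mathbb V_0^{\bm \lambda}}$ (of half-dimension) and then compute its dimension directly from $\dim \mathbb V_0^{\bm \lambda} = \sum_{i<j}({}^t\lambda)_i({}^t\lambda)_j$ together with the transverse $B$-directions, though this trades one bookkeeping problem for another.
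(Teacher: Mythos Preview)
The paper does not supply a proof of this statement at all: it is stated as a citation to Kraft--Procesi \cite{KP3}, introduced with the remark that the formula ``seems to go back to'' that reference and is merely being recast in notation adapted to Definition~\ref{se}. So there is no paper proof to compare against; your write-up is effectively a reconstruction of why the Kraft--Procesi dimension count yields the displayed formula once translated through the $d_i^{\bm\lambda}$ combinatorics.

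Your reconstruction is sound. The unwinding of $(\mu^{\bm\lambda},\nu^{\bm\lambda})=(\emptyset,\lambda)$ and $d_j-d_{j-1}=({}^t\lambda)_j$ is correct, and the identification of $\mathcal O_{(\lambda,\mathbf 0)}\cap\mathbb V_0$ with the $GL(n)$-nilpotent orbit of type $\lambda$ is exactly what the $\mathrm{Alt}(2n)_0$ computation in the proof of Proposition~\ref{defeq} sets up. The outer equality is then precisely the centralizer dimension from \cite{KP3} for the symmetric-pair action of $Sp(2n)$ on $\wedge^2 V_1$, which is where the paper is pointing.

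One caution on your proposed alternative: invoking Theorem~\ref{equigen} to identify an orbital variety with $\overline{B\mathbb V_0^{\bm\lambda}}$ and then reading off the dimension is essentially the content of Lemma~\ref{irrcomp}, whose proof \emph{uses} Theorem~\ref{dim_formula}. So within the paper's logical order that route is circular; your primary argument via the direct Kraft--Procesi count is the right one.
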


\begin{lemma}\label{irrcomp}
For each marked partition ${\bm \lambda}$, there exists $\mathfrak X _{{\bm \lambda}} \in \mathrm{Comp} ( \mathcal O _{{\bm \lambda}} )$ such that
$$\overline{\mathfrak X _{{\bm \lambda}}} = \overline{B \mathbb V ^{{\bm \lambda}}}.$$
\end{lemma}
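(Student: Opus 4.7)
The plan is to show that $\overline{B\mathbb V^{\bm \lambda}}$ is an irreducible $B$-stable closed subvariety of $\overline{\mathcal O_{\bm \lambda}}\cap\mathbb V^+$ of dimension exactly $\tfrac12\dim\mathcal O_{\bm \lambda}$, whence by the equidimensionality in Theorem \ref{equigen} it coincides with $\overline{\mathfrak X_{\bm \lambda}}$ for a unique orbital variety $\mathfrak X_{\bm \lambda}\in\mathrm{Comp}(\mathcal O_{\bm \lambda})$. Irreducibility is clear because $\overline{B\mathbb V^{\bm \lambda}}$ is the closure of the image of the irreducible product $B\times\mathbb V^{\bm \lambda}$; the containment in $\overline{\mathcal O_{\bm \lambda}}\cap\mathbb V^+$ follows from Proposition \ref{base_set}, which implies that $\mathbb V^{\bm \lambda}\cap\mathcal O_{\bm \lambda}$ is open dense in $\mathbb V^{\bm \lambda}$. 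Since $\mathcal O_{\bm \lambda}\cap\mathbb V^+$ decomposes into finitely many orbital varieties, irreducibility forces $\overline{B\mathbb V^{\bm \lambda}}\subset\overline{\mathfrak X_{\bm \lambda}}$ for a unique $\mathfrak X_{\bm \lambda}$, and Theorem \ref{equigen} yields the upper bound $\dim\overline{B\mathbb V^{\bm \lambda}}\le\tfrac12\dim\mathcal O_{\bm \lambda}$.

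For the matching lower bound my plan is to replace $\mathbb V^{\bm \lambda}$ by $\mathbb V^{\bm \lambda}_{01}=W_1\oplus W_2^{\bm \lambda}$ via Lemma \ref{basic identity}, where $W_1=\bigoplus_{i\le|\mu|}\mathbb V[\epsilon_i]\subset V_1$ is the sum of the top weight spaces of the vector representation (hence $B$-stable) and $W_2^{\bm \lambda}\subset V_2$. Because $V_1$ and $V_2$ are separate $G$-submodules of $\mathbb V$, the $B$-equivariance of the splitting yields
\[
\dim\overline{B\mathbb V^{\bm \lambda}_{01}} = |\mu| + \dim\overline{BW_2^{\bm \lambda}},
\]
so by Corollary \ref{dim_change} the lemma reduces to the identity $\dim\overline{BW_2^{\bm \lambda}}=\tfrac12\dim\mathcal O_{(\lambda,\mathbf 0)}$.

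For this last identity, the subspace $W_2^{\bm \lambda}\subset\mathbb V_0\cong\mathrm{Mat}(n)$ is, by the definition of $d^{\bm \lambda}$, the nilradical of the block upper-triangular parabolic of $GL(n)$ whose block sizes are the gaps $d_{l+1}^{\bm \lambda}-d_l^{\bm \lambda}$, and the proof of Proposition \ref{base_set} shows that a generic $x\in W_2^{\bm \lambda}$ is Richardson of Jordan type $\lambda$, hence lies in $\mathcal O_{(\lambda,\mathbf 0)}$. Since $T$ and $N_0\subset G_0$ stabilize $W_2^{\bm \lambda}$ by the Richardson theorem, we have $\overline{BW_2^{\bm \lambda}}=\overline{G_2W_2^{\bm \lambda}}$; and since $\mathfrak g_2\cdot\mathbb V_0\subset\mathbb V_2$ is transverse to $\mathbb V_0\supset W_2^{\bm \lambda}$, the projection $G_2W_2^{\bm \lambda}\to W_2^{\bm \lambda}$ onto the $\mathbb V_0$-component has generic fiber $\mathfrak g_2\cdot x\subset\mathbb V_2$, giving
\[
\dim\overline{G_2W_2^{\bm \lambda}} = \dim W_2^{\bm \lambda} + \dim(\mathfrak g_2\cdot x).
\]
The Kraft--Procesi formula (Theorem \ref{dim_formula}) together with Corollary \ref{dim_change} yields $\dim W_2^{\bm \lambda}=\tfrac14\dim\mathcal O_{(\lambda,\mathbf 0)}$, so it remains to verify $\dim(\mathfrak g_2\cdot x)=\dim W_2^{\bm \lambda}$; using the explicit action $t\cdot(a\wedge b^*)=a\wedge(t\cdot b^*)$ for $t\in\mathfrak g_2=\mathsf{Sym}^2V_1[+1]$ and $a\wedge b^*\in\mathbb V_0$, this reduces to computing the rank of the Lyapunov-type map $\mathrm{Sym}(n)\to\mathrm{Alt}(n)$, $S\mapsto xS-(xS)^\top$, whose kernel I expect to be the symmetric part of the $(x,x^\top)$-intertwiner space of dimension $\tfrac12(n+\sum_i({}^t\lambda)_i^2)$. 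I expect this rank identification to be the main technical obstacle.
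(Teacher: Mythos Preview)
Your overall strategy matches the paper's proof almost exactly: reduce via Lemma~\ref{basic identity} and Corollary~\ref{dim_change} to the $V_2$-part, observe that $\mathbb V_0^{\bm\lambda}$ (your $W_2^{\bm\lambda}$) is $TN_0$-stable so that $\overline{B\mathbb V_0^{\bm\lambda}}=\overline{G_2\mathbb V_0^{\bm\lambda}}$, and then show that for generic $x\in\mathbb V_0^{\bm\lambda}$ one has $\dim(\mathfrak g_2\cdot x)=\dim\mathbb V_0^{\bm\lambda}=\sum_{i<j}d_i^\nabla d_j^\nabla$. The paper also only proves the lower bound $\dim B\mathbb V^{\bm\lambda}\ge\tfrac12\dim\mathcal O_{\bm\lambda}$ and invokes Theorem~\ref{equigen} for the rest, just as you do.

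Where you diverge is at the last step, which you flag as the ``main technical obstacle'': you propose to compute $\dim(\mathfrak g_2\cdot x)$ directly as the rank of the Lyapunov-type map $\mathrm{Sym}(n)\to\mathrm{Alt}(n)$, $S\mapsto xS-(xS)^\top$. The paper bypasses this computation entirely with a two-line symmetry argument. First, since $x\in\mathbb V_0\cong\mathfrak g_0$ and the invariant form on $\mathfrak g$ pairs $\mathfrak g_2$ with $\mathfrak g_{-2}$ nondegenerately, the maps $\mathfrak g_{\pm 2}\to\mathbb V_{\pm 2}$ given by $y\mapsto y\cdot x$ have the same rank, i.e.\ $\dim\mathfrak g_2 x=\dim\mathfrak g_{-2} x$. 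Second, Theorem~\ref{dim_formula} asserts not only the numerical value but also $\dim\mathcal O_{(\lambda,\mathbf 0)}=\dim\mathfrak g\cdot x=2\dim\mathfrak g_0\cdot x$. Combining,
\[
2\dim\mathfrak g_2 x \;=\; \dim\mathfrak g_2 x+\dim\mathfrak g_{-2}x \;=\; \dim\mathfrak g\cdot x-\dim\mathfrak g_0\cdot x \;=\; \tfrac12\dim\mathcal O_{(\lambda,\mathbf 0)},
\]
which is exactly $2\sum_{i<j}d_i^\nabla d_j^\nabla$. So your expected rank identity is correct, but there is no need to analyze the intertwiner space; the obstacle dissolves once you use the $\pm$-symmetry of the grading together with the equality $\dim\mathfrak g\cdot x=2\dim\mathfrak g_0\cdot x$ already contained in Theorem~\ref{dim_formula}. (A minor remark: the $N_0$-stability of $W_2^{\bm\lambda}$ is just because it is the nilradical of a standard parabolic of $G_0$; Richardson's theorem is not needed for that.)
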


\begin{proof}
We set $d ^{\nabla} _i := d _i ^{\bm \lambda} - d _{i-1} ^{\bm \lambda}$. By Theorem \ref{equigen} and Corollary \ref{dim_change}, it suffices to prove $\dim B \mathbb V ^{{\bm \lambda}} \ge \left| \mu \right| + 2 \sum _{i < j} d _i ^{\nabla} d _j ^{\nabla}$. The subspace $\mathbb V ^{{\bm \lambda}} _{1}$ is $B$-stable and has dimension $\left| \mu \right|$. Since $B \mathbb V ^{{\bm \lambda}} = B \mathbb V ^{{\bm \lambda}} _{01}$, we have only to prove $\dim B \mathbb V ^{{\bm \lambda}} _{0} \ge 2 \sum _{i < j} d _i ^{\nabla} d _j ^{\nabla}$. Here $\mathbb V ^{{\bm \lambda}} _{0}$ is $N _0$-stable and $\dim \mathbb V ^{{\bm \lambda}} _{0} = \sum _{i < j} d _i  ^{\nabla} d _j ^{\nabla}$. Thus, it suffices to prove that $\dim G _2 x \ge \sum _{i < j} d _i ^{\nabla} d _j ^{\nabla}$ for a generic element $x \in \mathbb V ^{{\bm \lambda}} _{0}$. Since the dimension of the $G _2$-stabilizer is an upper semi-continuous function along $\mathbb V ^{{\bm \lambda}} _{0}$, it is enough to show $\dim G _2 x \ge \sum _{i < j} d _i ^{\nabla} d _j ^{\nabla}$ for some $x \in \mathbb V ^{{\bm \lambda}} _{0}$. For $x \in \mathbb V _0$, we have
$$\dim \mathfrak g _2 x = \dim \mathfrak g _{-2} x.$$
Theorem \ref{dim_formula} implies that
$$\dim \mathcal O _{( \lambda, \mathbf 0 )} = \dim \mathfrak g x = 2 \dim \mathfrak g _0 x = 4 \sum _{i < j} d _i ^{\nabla} d _j ^{\nabla}.$$
In particular, we have $\dim \mathfrak g _2 x = \sum _{i < j} d _i ^{\nabla} d _j ^{\nabla}$ as desired.
\end{proof}

\begin{lemma}\label{orbtoA}
Let ${\bm \lambda} = ( \lambda, a )$ be a marked partition. Then, we have an equality
$$\overline{\mathfrak X _{{\bm \lambda}}} = \mathbb V _1 ^{{\bm \lambda}} \oplus \overline{\mathfrak X}$$
for some orbital variety $\mathfrak X$ of $\mathcal O _{(\lambda, {\mathbf 0})}$.
\end{lemma}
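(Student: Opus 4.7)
The plan is to factor the $B$-sweep giving $\overline{\mathfrak X _{{\bm \lambda}}}$ along the $G$-stable direct-sum decomposition $\mathbb V = V _1 \oplus V _2$ and to recognize the $V _2$-factor as an orbital variety of the ``$a = \mathbf 0$'' orbit $\mathcal O _{(\lambda, \mathbf 0)}$.

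First I would combine Lemma \ref{irrcomp} with Lemma \ref{basic identity} to write $\overline{\mathfrak X _{{\bm \lambda}}} = \overline{B \mathbb V ^{{\bm \lambda}} _{01}}$, where the defining space splits as $\mathbb V ^{{\bm \lambda}} _{01} = \mathbb V ^{{\bm \lambda}} _1 \oplus \mathbb V ^{{\bm \lambda}} _0$ with $\mathbb V ^{{\bm \lambda}} _1 \subset V _1$ and $\mathbb V ^{{\bm \lambda}} _0 \subset V _2$. Since $V _1$ and $V _2$ are disjoint $G$-submodules of $\mathbb V$, the Borel $B$ preserves each summand, so it suffices to check that $\mathbb V ^{{\bm \lambda}} _1 = \bigoplus _{i \le d _{\mu _1}} V _1 [ \epsilon _i ]$ is itself $B$-stable; this is a direct weight argument, since positive roots of type $\epsilon _j - \epsilon _k$ with $j < k$ act on $V _1 ^+$ by strictly lowering the index, while $\epsilon _j + \epsilon _k$ and $2 \epsilon _j$ annihilate $V _1 ^+$. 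Hence $B ( \mathbb V ^{{\bm \lambda}} _1 \oplus \mathbb V ^{{\bm \lambda}} _0 ) = \mathbb V ^{{\bm \lambda}} _1 \oplus B \mathbb V ^{{\bm \lambda}} _0$, and taking closures gives $\overline{\mathfrak X _{{\bm \lambda}}} = \mathbb V ^{{\bm \lambda}} _1 \oplus \overline{B \mathbb V ^{{\bm \lambda}} _0}$.

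Next I would show that $\overline{B \mathbb V ^{{\bm \lambda}} _0}$ is the closure of an orbital variety of $\mathcal O _{(\lambda, \mathbf 0)}$. On the dimension side, Theorem \ref{equigen}, Corollary \ref{dim_change}, and the equality $\dim \mathbb V ^{{\bm \lambda}} _1 = \left| \mu \right|$ combine to give
$$\dim \overline{B \mathbb V ^{{\bm \lambda}} _0} = \tfrac{1}{2} \dim \mathcal O _{{\bm \lambda}} - \left| \mu \right| = \tfrac{1}{2} \dim \mathcal O _{(\lambda, \mathbf 0)}.$$
For the containment, let $p : \mathbb V \to V _2$ denote the projection. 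Every element of $\mathcal O _{{\bm \lambda}}$ has $X _2$-component of the prescribed Jordan type, so $p ( \mathcal O _{{\bm \lambda}} ) = \mathcal O _{(\lambda, \mathbf 0)}$ under the identification $\{ 0 \} \oplus V _2 \cong V _2$. Continuity of $p$, combined with the inclusion $\overline{B \mathbb V ^{{\bm \lambda}} _0} \subset V _2 \cap \overline{\mathcal O _{{\bm \lambda}}}$ and the fact that $p$ restricts to the identity on $V _2$, then yields $\overline{B \mathbb V ^{{\bm \lambda}} _0} \subset \overline{\mathcal O _{(\lambda, \mathbf 0)}}$.

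Finally, since $\overline{B \mathbb V ^{{\bm \lambda}} _0}$ is irreducible, $B$-stable and of the top dimension inside $\overline{\mathcal O _{(\lambda, \mathbf 0)}} \cap \mathbb V ^+$ (proper boundary orbit closures having strictly smaller dimension), the set $\mathfrak X := \overline{B \mathbb V ^{{\bm \lambda}} _0} \cap \mathcal O _{(\lambda, \mathbf 0)}$ is a nonempty dense open subset which, by Theorem \ref{equigen}, is an orbital variety of $\mathcal O _{(\lambda, \mathbf 0)}$ with $\overline{\mathfrak X} = \overline{B \mathbb V ^{{\bm \lambda}} _0}$. The step requiring the most care is the containment $V _2 \cap \overline{\mathcal O _{{\bm \lambda}}} \subset \overline{\mathcal O _{(\lambda, \mathbf 0)}}$, i.e., the semi-continuity of the Jordan-type stratum on $V _2$ under the degeneration $X _1 \to 0$; the projection $p$ handles this cleanly.
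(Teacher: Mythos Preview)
Your proof is correct and follows the same overall architecture as the paper's: split $\overline{\mathfrak X_{\bm\lambda}} = \mathbb V_1^{\bm\lambda} \oplus \overline{B\mathbb V_0^{\bm\lambda}}$ via Lemma~\ref{basic identity} and $B$-stability of $\mathbb V_1^{\bm\lambda}$, match the dimension of the $V_2$-factor to $\tfrac12\dim\mathcal O_{(\lambda,\mathbf 0)}$ using Corollary~\ref{dim_change}, and invoke Theorem~\ref{equigen}.

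The one step you handle differently is the verification that $\overline{B\mathbb V_0^{\bm\lambda}}$ actually meets $\mathcal O_{(\lambda,\mathbf 0)}$. The paper does this by a combinatorial check: the block sizes $d_i^\nabla = d_i^{\bm\lambda}-d_{i-1}^{\bm\lambda}$ of the block-upper-triangular space $\mathbb V_0^{\bm\lambda}$ form the same multiset as the parts of ${}^t\lambda$ (because $\lambda_j=\mu_j^{\bm\lambda}+\nu_j^{\bm\lambda}$), so a generic element of $\mathbb V_0^{\bm\lambda}$ already has the correct Jordan type. You instead use the $G$-equivariant projection $p:\mathbb V\to V_2$: since Definition~\ref{kinv} forces the $X_2$-component of any point of $\mathcal O_{\bm\lambda}$ to have Jordan type $(\lambda_1,\lambda_1,\ldots)$, continuity gives $\overline{B\mathbb V_0^{\bm\lambda}}\subset p(\overline{\mathcal O_{\bm\lambda}})\subset\overline{\mathcal O_{(\lambda,\mathbf 0)}}$, and then a dimension comparison against boundary strata forces nonempty intersection with the open orbit. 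Your route trades the partition-transpose bookkeeping for an appeal to the orbit parametrization; both are short, and yours is arguably more conceptual.
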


\begin{proof}
The number $d ^{\nabla} _i$ from the proof of Lemma \ref{irrcomp} is some parts of ${}^t( \mu ^{\bm \lambda})$ if $i \le \mu _1 ^{\bm \lambda}$ and count some parts of ${}^t( \nu ^{\bm \lambda})$ if $i > \mu _1 ^{\bm \lambda}$. We define $\{ d _i ^{\prime} \}$ to be the parts of the partition ${}^t \lambda$. Since $\lambda _j = \mu ^{\bm \lambda} _j + \nu ^{\bm \lambda} _j$ for all $j$, we deduce that the two sequences $\{d _i ^{\nabla} \} _i$ and $\{ d _i ^{\prime} \} _i$ coincides up to changing their ordering.

Here $\{ d _i ^{\prime} \} _i$ is a decreasing sequence. It follows that the dense open part of $\mathbb V _0 ^{\bm \lambda}$ has the same Jordan normal form as that of $\mathbb V _0 ^{(\lambda, \mathbf 0)}$. Here, $\mathbb V _1 ^{\bm \lambda}$ is $B$-stable. Therefore, Lemma \ref{irrcomp} and Corollary \ref{dim_change} implies that
$$\dim \overline{\mathfrak X} = \dim \overline{\mathfrak X _{{\bm \lambda}}} - d _{\mu _1 ^{\bm \lambda}} = \frac{1}{2} \dim \mathcal O _{\bm \lambda} - | \mu ^{\bm \lambda} | = \frac{1}{2} \dim \mathcal O _{( \lambda, \mathbf 0)}.$$
By Theorem \ref{equigen}, the closure $\overline{B \mathbb V_0 ^{\bm \lambda}}$ must be the closure of an orbital variety of $\mathcal O_{(\lambda, \mathbf 0)}$ as desired.
\end{proof}

\begin{definition}[Special vectors]
Let $\bm \lambda$ be a marked partition of $n$ and let $(\mu ^{\bm \lambda}, \nu ^{\bm \lambda})$ be its associated bi-partition (c.f. Theorem \ref{mp<->bp} and the line below it). We define
$$\mathsf{D} ^0 _i ( {\bm \lambda} ) := \prod _{d _i < k < l \le d _{i + 1}} ( \epsilon _k ^2 - \epsilon _l ^2 ), \text{ and } \mathsf{D} ^+ _i ( {\bm \lambda} ) := \mathsf{D} ^0 _i ( {\bm \lambda} ) \prod _{d _i < k \le d _{i + 1}} \epsilon _k.$$
Using this, we define
$$\mathsf{D} ( \mu ^{\bm \lambda}, \nu ^{\bm \lambda} ) := \prod _{i = 0} ^{\mu _1 - 1} \mathsf{D} ^0 _i ( {\bm \lambda} ) \times \prod _{i = \mu _1} ^{\mu _1 + \nu _1 - 1} \mathsf{D} ^+ _i ( {\bm \lambda} ).$$
\end{definition}

For an arbitrary bi-partition $(\mu, \nu)$ of $n$, we define the Macdonald representation attached to $( \mu, \nu )$ as
$$L ( \mu, \nu ) := \mathbb C [ W ] \mathsf{D} ( \mu, \nu ) \subset \mathbb C [ \mathfrak t ].$$
We remark that $L (\mu, \nu)$ is well-defined due to Theorem \ref{mp<->bp}. Let $\mathbb C [ \mathfrak t ] _m$ denote the degree $m$-part of the polynomial ring $\mathbb C [\mathfrak t ]$. For a subset $I \subset \mathbb C [ \mathfrak t ]$, we put $I _m := I \cap \mathbb C [ \mathfrak t ] _{m}$.

\begin{theorem}[Macdonald c.f. Lusztig-Spaltenstein \cite{LS}]
For each bi-partition $( \mu, \nu )$ of $n$, the Macdonald representation $L ( \mu, \nu )$ is an irreducible representation of $W$. Moreover, we have
$$\Hom _W ( L ( \mu, \nu ), \mathbb C [ \mathfrak t ] _{m} ) = \begin{cases} 0 & (m < \deg \mathsf{D} (\mu, \nu)) \\ 1 & (m = \deg \mathsf{D} (\mu, \nu)) \end{cases}.$$
\end{theorem}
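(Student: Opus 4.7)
The plan is to realise $\mathsf{D}(\mu,\nu)$ as the product of the positive roots of a suitable reflection subgroup $W_{\mu,\nu}\subset W$, and then invoke Macdonald's theorem on $j$-induced Weyl-group representations, in the form refined by Lusztig--Spaltenstein \cite{LS}. First, write $d^{\nabla}_i := d^{\bm\lambda}_i - d^{\bm\lambda}_{i-1}$ for the block sizes arising from Definition \ref{se}, where $\bm\lambda$ is the marked partition associated to $(\mu,\nu)$ via Theorem \ref{mp<->bp}; these partition $\{1,\ldots,n\}$ into consecutive blocks $B_i = \{d^{\bm\lambda}_{i-1}+1,\ldots,d^{\bm\lambda}_i\}$. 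Define $W_{\mu,\nu}$ as the subgroup of $W$ that preserves this block decomposition and acts on each $B_i$ with $i\le \mu_1$ as the type-$D$ Weyl group $D_{d^\nabla_i}$ (signed permutations of $B_i$ with an even number of sign changes), and on each $B_i$ with $i>\mu_1$ as the full type-$C$ Weyl group $C_{d^\nabla_i}$ (arbitrary signed permutations of $B_i$). Then $W_{\mu,\nu}$ is a reflection subgroup of $W$ isomorphic to $\prod_{k=1}^{\mu_1} D_{({}^t\mu)_k}\times\prod_{k=1}^{\nu_1} C_{({}^t\nu)_k}$.

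Next, a direct root calculation matches $\mathsf{D}(\mu,\nu)$ with the Weyl denominator $\pi_{W_{\mu,\nu}} := \prod_{\alpha\in R^+(W_{\mu,\nu})}\alpha$ up to a nonzero scalar. Indeed, the positive roots of the $D$-factor on $B_i$ are $\{\epsilon_p\pm\epsilon_q : p<q \text{ in } B_i\}$, whose product equals $\prod_{p<q\in B_i}(\epsilon_p^2-\epsilon_q^2)=\mathsf{D}^0_{i-1}(\bm\lambda)$; for $i>\mu_1$ the $C$-factor adds the short roots $\{2\epsilon_p:p\in B_i\}$, contributing $2^{d^\nabla_i}\prod_{p\in B_i}\epsilon_p$ and producing $\mathsf{D}^+_{i-1}(\bm\lambda)$. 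Consequently $\mathsf{D}(\mu,\nu)$ is $W_{\mu,\nu}$-semi-invariant of character $\mathrm{sgn}_{W_{\mu,\nu}}$, and $\deg\mathsf{D}(\mu,\nu)=|R^+(W_{\mu,\nu})|$.

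The final step is to invoke the Macdonald--Lusztig--Spaltenstein theorem \cite{LS}: for any reflection subgroup $W'\subset W$, the $W$-submodule $\mathbb{C}[W]\pi_{W'}\subset \mathbb{C}[\mathfrak{t}]$ is irreducible, and coincides with $j_{W'}^W(\mathrm{sgn}_{W'})$, the unique irreducible constituent of $\Ind_{W'}^W\mathrm{sgn}_{W'}$ which appears in $\mathbb{C}[\mathfrak{t}]_{|R^+(W')|}$ (with multiplicity one) and which does not appear in $\mathbb{C}[\mathfrak{t}]_m$ for any smaller $m$. Specialising to $W' = W_{\mu,\nu}$ and $\pi_{W'} = \mathsf{D}(\mu,\nu)$ immediately gives both the irreducibility of $L(\mu,\nu)$ and the claimed values of $\Hom_W(L(\mu,\nu),\mathbb{C}[\mathfrak{t}]_m)$.

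The principal technical content lies in the invoked Macdonald theorem. Its proof rests on the Chevalley--Shephard--Todd presentation of $\mathbb{C}[\mathfrak{t}]$ as a free module over its $W$-invariants: this forces $\pi_{W'}$ to be, up to scalar, the unique $W'$-antisymmetric harmonic polynomial of degree $|R^+(W')|$, and a Frobenius-reciprocity count inside the coinvariant algebra (which is the regular representation of $W$) then pins down $\mathbb{C}[W]\pi_{W'}$ as a single irreducible constituent appearing uniquely in the lowest admissible degree. In the type-$C_n$ case one may alternatively verify these properties directly by comparison with the standard character-theoretic construction of irreducible representations of $W(C_n)$ indexed by bi-partitions.
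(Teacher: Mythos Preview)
The paper does not supply a proof of this theorem; it is quoted as a classical result of Macdonald (with the reference to Lusztig--Spaltenstein \cite{LS}) and used as a black box. Your argument is correct and is precisely the standard proof underlying the citation: one identifies $\mathsf{D}(\mu,\nu)$, up to a nonzero scalar, with the product of positive roots of the reflection subgroup $\prod_k W(D_{({}^t\mu)_k})\times\prod_k W(C_{({}^t\nu)_k})\subset W$ and then invokes Macdonald's theorem on $W$-modules generated by such products (equivalently, the $j$-induction formalism of \cite{LS}).
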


We define
$$W _l := \left< s _i s _{i + 1} \cdots s _{n - 1} s _n s _{n - 1} \cdots s _i ; 1 \le i \le n \right> \subset W.$$
We have $W _l \cong ( \mathbb Z / 2 \mathbb Z ) ^n$. Moreover, we have a short exact sequence of groups
$$\{ 1 \} \longrightarrow W _l \longrightarrow W \longrightarrow \mathfrak S _n \longrightarrow \{ 1 \}.$$

\begin{corollary}\label{descEX}
Let $\lambda$ be a partition of $n$. For each integer $m$, we have
\begin{enumerate}
\item $\Hom _{W} ( L ( \lambda, \emptyset ), \mathbb C [ \mathfrak t ] _{m} ) = \Hom _{\mathfrak S _n} ( L ( \lambda, \emptyset ), \mathbb C [ \epsilon _1 ^2, \ldots, \epsilon _n ^2 ] _{m} )$;
\item $\Hom _{W} ( L ( \emptyset, \lambda ), \mathbb C [ \mathfrak t ] _{m} ) = \Hom _{\mathfrak S _n} ( L ( \emptyset, \lambda ), \left( \epsilon _1 \epsilon _2 \cdots \epsilon _n \mathbb C [ \epsilon _1 ^2, \ldots, \epsilon _n ^2 ] \right) _{m} )$.
\end{enumerate}
\end{corollary}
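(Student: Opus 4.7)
The plan is to exploit the semidirect product structure $W = W_l \rtimes \mathfrak S_n$ (coming from the short exact sequence displayed just before the corollary) together with the $W_l$-isotypic decomposition of $\mathbb C[\mathfrak t]$. Since $W_l \cong (\mathbb Z/2\mathbb Z)^n$ acts on $\mathbb C[\mathfrak t] = \mathbb C[\epsilon_1,\dots,\epsilon_n]$ by independent sign flips of the $\epsilon_i$, the isotypic decomposition reads
$$\mathbb C[\mathfrak t] = \bigoplus_{S \subseteq \{1,\dots,n\}} \Bigl(\prod_{i \in S}\epsilon_i\Bigr)\,\mathbb C[\epsilon_1^2,\dots,\epsilon_n^2],$$
where the summand indexed by $S$ is the $\chi_S$-isotypic part (with $\chi_S(w_i) = -1$ iff $i \in S$). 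The two subspaces appearing on the right-hand sides of (1) and (2) are precisely the $W_l$-isotypic components for $S = \emptyset$ and $S = \{1,\dots,n\}$, the only two characters of $W_l$ that are themselves $\mathfrak S_n$-stable. Once I know that $L(\lambda,\emptyset)$ and $L(\emptyset,\lambda)$ each sit entirely inside one of these two particular isotypic components, the $W$-Hom automatically reduces to an $\mathfrak S_n$-Hom because the $W_l$-equivariance becomes tautologous on both sides.

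First I would pin down the $W_l$-type of the generators $\mathsf D(\lambda,\emptyset)$ and $\mathsf D(\emptyset,\lambda)$ by direct inspection of their formulas. When $\nu^{\bm\lambda} = \emptyset$, only the factors $\mathsf D^0_i$ appear, each of which is a product of differences $(\epsilon_k^2 - \epsilon_l^2)$, so $\mathsf D(\lambda,\emptyset) \in \mathbb C[\epsilon_1^2,\dots,\epsilon_n^2]$ is $W_l$-invariant. When $\mu^{\bm\lambda} = \emptyset$, only the factors $\mathsf D^+_i = \mathsf D^0_i \prod_{d_i < k \le d_{i+1}}\epsilon_k$ appear, and as $i$ ranges over $0,\dots,\nu_1 - 1$ the one-variable factors collectively cover each $\epsilon_k$ for $1 \le k \le n$ exactly once. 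Hence $\mathsf D(\emptyset,\lambda) \in \epsilon_1\cdots\epsilon_n\,\mathbb C[\epsilon_1^2,\dots,\epsilon_n^2]$, transforming by the sign character of $W_l$.

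Next I would upgrade these statements from the generators to the whole modules $L(\lambda,\emptyset) = \mathbb C[W]\mathsf D(\lambda,\emptyset)$ and $L(\emptyset,\lambda) = \mathbb C[W]\mathsf D(\emptyset,\lambda)$ using the normality of $W_l$ in $W$. Concretely, if $u \in W_l$ acts on $\mathsf D$ by a character $\chi$ and $w \in W$ is arbitrary, then $u(w\mathsf D) = w(w^{-1}uw)\mathsf D = \chi(w^{-1}uw)\,w\mathsf D$; since $\chi$ is $\mathfrak S_n$-invariant in our two cases (being either trivial or the total sign), $\chi(w^{-1}uw) = \chi(u)$, and the whole $W$-orbit of $\mathsf D$, hence $L(\lambda,\emptyset)$ or $L(\emptyset,\lambda)$ in turn, lives in the single $W_l$-isotypic component of type $\chi$.

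Finally, any $W$-homomorphism from a $\chi$-isotypic $W$-module to $\mathbb C[\mathfrak t]_m$ is forced to land in the $\chi$-isotypic component $\mathbb C[\mathfrak t]_m^{\chi}$, and on the pair (source, $\chi$-isotypic component) the $W_l$-action agrees on both sides, so $W$-equivariance is equivalent to equivariance under the complementary subgroup $\mathfrak S_n = \langle s_1,\dots,s_{n-1}\rangle$. Applying this twice yields both identities. I do not anticipate a serious obstacle; the only point that requires a little care is verifying that the one-variable factors inside $\mathsf D(\emptyset,\lambda)$ genuinely exhaust all of $\epsilon_1,\dots,\epsilon_n$, which follows from $d_{\nu_1}^{\bm\lambda} = |\mu| + |\nu| = n$ combined with the telescoping of the index ranges $d_i < k \le d_{i+1}$.
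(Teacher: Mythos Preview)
Your proposal is correct and follows essentially the same approach as the paper: both arguments identify the $W_l$-isotypic component in which $L(\lambda,\emptyset)$ (resp.\ $L(\emptyset,\lambda)$) sits---namely the trivial (resp.\ sign) component $\mathbb C[\epsilon_1^2,\dots,\epsilon_n^2]$ (resp.\ $\epsilon_1\cdots\epsilon_n\,\mathbb C[\epsilon_1^2,\dots,\epsilon_n^2]$)---and then observe that a $W$-homomorphism from such a module into $\mathbb C[\mathfrak t]_m$ must land in that isotypic piece, whence $W$-equivariance reduces to $\mathfrak S_n$-equivariance. Your write-up is simply more explicit than the paper's, in particular in verifying from the formula for $\mathsf D$ which $W_l$-character appears and in spelling out the normality argument that carries this from the generator to the whole cyclic module.
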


\begin{proof}
An irreducible $W$-module for which $W _l$ acts trivially yields a $\mathfrak S _n$-module. The invariant ring $\mathfrak C [ \mathfrak t ] ^{W _l}$ is generated by $\epsilon _1 ^2, \ldots, \epsilon _n ^2$. This implies the first assertion. The whole ring $\mathfrak C [ \mathfrak t ]$ is a free $\mathfrak C [ \mathfrak t ] ^{W _l}$-module of rank $2 ^n$. The sign representation of $W _l$ also defines a rank one $\mathfrak C [ \mathfrak t ] ^{W _l}$-submodule $\mathfrak C [ \mathfrak t ] ^{W _l} ( \epsilon _1 \epsilon _2 \cdots \epsilon _n ) \subset \mathfrak C [ \mathfrak t ]$ via taking the isotypical component. Hence, we deduce the second assertion.
\end{proof}

\begin{theorem}\label{MacG}
Let ${\bm \lambda}$ and $( \mu ^{{\bm \lambda}}, \nu ^{{\bm \lambda}})$ be a marked partition of $n$ and its associated bi-partition. Let $\mathfrak X _{\bm \lambda}$ be the orbital variety of $\mathcal O _{\bm \lambda}$ defined in Lemma \ref{irrcomp} $($c.f. Definition \ref{mp<->bp}$)$. We have
$$\mathsf{J} ( \mathfrak X _{{\bm \lambda}} ) \in \mathbb Q \mathsf{D} ( \mu ^{{\bm \lambda}}, \nu ^{{\bm \lambda}} ).$$
In particular, the $\mathbb Q$-span of $\mathrm{Jos} ( \mathcal O _{{\bm \lambda}} )$ is equal to $L ( \mu ^{{\bm \lambda}}, \nu ^{{\bm \lambda}})$ as $W$-submodules of $\mathbb Q [\mathfrak t]$.
\end{theorem}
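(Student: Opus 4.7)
The strategy combines the factorisation of Lemma \ref{orbtoA} with the irreducibility of the $W$-submodule $L:=\mathbb{Q}\text{-}\mathrm{span}\,\mathrm{Jos}(\mathcal O_{\bm\lambda})\subset\mathbb C[\mathfrak t]$ (Theorem \ref{JosK}) and the Macdonald--Lusztig--Spaltenstein (MLS) characterisation of $L(\mu^{\bm\lambda},\nu^{\bm\lambda})$ quoted above. First I verify that the degrees match: $\mathsf J(\mathfrak X_{\bm\lambda})$ is homogeneous of degree $\mathrm{codim}_{\mathbb V^+}\overline{\mathfrak X_{\bm\lambda}}$, and by Theorem \ref{equigen}, Corollary \ref{dim_change} and Theorem \ref{dim_formula} (together with the identity $2\sum_{l<m}d^\nabla_l d^\nabla_m+\sum_l(d^\nabla_l)^2=n^2=\dim\mathbb V^+$, where $d^\nabla_l:=d^{\bm\lambda}_l-d^{\bm\lambda}_{l-1}$) this codimension equals $\sum_l(d^\nabla_l)^2-|\mu^{\bm\lambda}|$; a direct factor count in the definition of $\mathsf D(\mu^{\bm\lambda},\nu^{\bm\lambda})$ yields the same number.

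Next I would use Lemma \ref{orbtoA} to reduce to a smaller case. The direct product decomposition $\overline{\mathfrak X_{\bm\lambda}}=\mathbb V^{\bm\lambda}_1\oplus\overline{\mathfrak X}$, where $\mathbb V^{\bm\lambda}_1$ is the linear subspace of $V_1^+$ of $T$-weights $\epsilon_1,\ldots,\epsilon_{|\mu^{\bm\lambda}|}$, makes the character ratio split as a product, and so
\[
\mathsf J(\mathfrak X_{\bm\lambda})=\Big(\prod_{i>|\mu^{\bm\lambda}|}\epsilon_i\Big)\cdot\mathsf J^{V_2^+}(\mathfrak X),
\]
where $\mathfrak X\in\mathrm{Comp}(\mathcal O_{(\lambda,\mathbf 0)})$ and $\mathsf J^{V_2^+}$ is the Joseph polynomial computed relative to $V_2^+$. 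The first factor is exactly the $V_1$-contribution to $\mathsf D(\mu^{\bm\lambda},\nu^{\bm\lambda})$, so everything reduces to showing $\mathsf J^{V_2^+}(\mathfrak X)\in\mathbb Q\prod_{\text{same block }k<l}(\epsilon_k^2-\epsilon_l^2)$, i.e.\ to the case $\mu=\mathbf 0$.

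The core step is to identify $L$ with $L(\mu^{\bm\lambda},\nu^{\bm\lambda})$. By the MLS theorem, $L(\mu^{\bm\lambda},\nu^{\bm\lambda})$ is the unique irreducible $W$-submodule of $\mathbb C[\mathfrak t]$ whose minimal non-zero graded piece lies in degree $\deg\mathsf D(\mu^{\bm\lambda},\nu^{\bm\lambda})$ and is one-dimensional, spanned by $\mathsf D(\mu^{\bm\lambda},\nu^{\bm\lambda})$. Combined with the degree match this forces $\mathsf J(\mathfrak X_{\bm\lambda})\in\mathbb Q\,\mathsf D(\mu^{\bm\lambda},\nu^{\bm\lambda})$, and simultaneously $L=L(\mu^{\bm\lambda},\nu^{\bm\lambda})$. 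The missing input is that the minimal-degree element of $L$ transforms, under the Young/sign parabolic subgroup $W_{\bm\lambda}\subset W$ stabilising the $d$-block partition, by the expected sign character (sign-alternating within each block, with the $W_l$-part governed by the $\mu/\nu$ split).

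The hard part will be establishing this block-alternating transformation law directly from the geometry of $\overline{B\mathbb V^{\bm\lambda}_{01}}$: in contrast to type $A$, this variety is not itself a linear subspace, so the Joseph polynomial cannot be written as a plain product of complementary weights. I would attempt this through the \textit{symmetry of Joseph polynomials} trick alluded to in the introduction: average the classes of the $W_{\bm\lambda}$-translates of orbital variety closures (each of the form $\overline{B(\mathbb V^+\cap{}^w\mathbb V^+)}$ afforded by Theorem \ref{equigen}(2)), use $W$-equivariance of the leading-term map $\mathsf{lt}$, and compare against the easily computable leading terms of the characters of the linear $T$-invariants ${}^w\mathbb V^{\bm\lambda}_{01}$, which by Lemma \ref{wdlambda} are explicit products of roots. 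The comparison pins down the alternating behaviour of $\mathsf J(\mathfrak X_{\bm\lambda})$ and, via MLS, delivers both the proportionality with $\mathsf D(\mu^{\bm\lambda},\nu^{\bm\lambda})$ and the conclusion $L=L(\mu^{\bm\lambda},\nu^{\bm\lambda})$.
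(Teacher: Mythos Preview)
Your opening steps---the degree match and the factorisation coming from Lemma \ref{orbtoA}---are exactly what the paper does (the factorisation is (\ref{linkingL}) in the paper, written in the equivalent form $(\prod_{i\le|\mu|}\epsilon_i)\,\mathsf J(\mathfrak X_{\bm\lambda})=\mathsf J(\mathfrak X_{\bm\lambda}^{\sim})$). The divergence is in the ``core step'', and there your argument is incomplete.

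The paper does \emph{not} establish the block-alternating transformation law by averaging $W_{\bm\lambda}$-translates of orbital variety closures; the ``symmetry trick'' announced in the introduction is something else entirely. What the paper actually uses is a downward induction on $\deg\mathsf D(\mu^{(\lambda,\mathbf 0)},\nu^{(\lambda,\mathbf 0)})$, fuelled by the global bijection of the exotic Springer correspondence (each $G$-orbit of $\mathfrak N$ corresponds to exactly one Macdonald representation). The mechanism is: (i) the $W_l$-isotypic decomposition of $\mathbb C[\mathfrak t]$ (Corollary \ref{descEX}) splits the problem into $L(\gamma,\emptyset)$-type and $L(\emptyset,\gamma)$-type pieces; (ii) the elementary divisibility fact that for $f\in\mathbb C[\epsilon_1^2,\ldots,\epsilon_n^2]$, $(\epsilon_i-\epsilon_j)\mid f$ iff $(\epsilon_i+\epsilon_j)\mid f$, upgrades the easy divisor $q_0(\bm\lambda)$ to the full $\mathsf D$; (iii) once all orbits with larger degree have been assigned their Macdonald representations, the bijection forces every remaining $\mathsf J(\mathfrak X_{\bm\lambda})$ in the current degree to lie in $(\epsilon_1\cdots\epsilon_n)\mathbb C[\mathfrak t]^{W_l}$, so (ii) applies and closes the induction. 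Claim \ref{indL} then propagates from $(\lambda,\mathbf 0)$ to all $(\lambda,a)$ via the factorisation.

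Your averaging idea, as stated, does not obviously work: the $W$-action on Joseph polynomials is the natural $W$-action on $\mathbb C[\mathfrak t]$, not an action permuting orbital varieties, so ``$W_{\bm\lambda}$-translates of orbital variety closures'' do not directly compute the $W_{\bm\lambda}$-character of $\mathsf J(\mathfrak X_{\bm\lambda})$. Without the paper's induction/bijection input (or a genuine replacement), the identification $L=L(\mu^{\bm\lambda},\nu^{\bm\lambda})$ is not established.
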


\begin{proof}[Proof of Theorem \ref{MacG}]
We define
$$q _0 ( {\bm \lambda} ) := \prod _{i > \left| \mu \right|} \epsilon _i \times \prod _{i \ge 0} \prod_{d _i < j < k \le d _{i + 1}} ( \epsilon _j - \epsilon _k ).$$
The map $\texttt{pr} : \mathbb V ^+ \longrightarrow \mathbb V ^+ / ( \mathbb V _1 \oplus \mathbb V _2 )$ is a $B$-equivariant fibration. Hence, it induces the associated map $\mathfrak X _{{\bm \lambda}} \longrightarrow \texttt{pr} ( \mathfrak X _{{\bm \lambda}} )$, which is generically a flat fibration. By Lemma \ref{basic identity}, we have $\overline{\texttt{pr} ( \mathfrak X _{{\bm \lambda}} )} = \mathbb V ^{\bm \lambda} \cap \mathbb V _0$. Moreover, each $\overline{\mathfrak X _{\bm \lambda}}$ is written as a product of $\bigoplus _{1 \le i \le \left| \mu \right|} V _1 [ \epsilon _i ]$ and $\overline{\mathfrak X _{\bm \lambda}} \cap V _2$. It follows that
\begin{eqnarray}
q _0 ( {\bm \lambda} ) \text{ divides } \mathsf{J} ( \mathfrak X _{{\bm \lambda}} ).\label{sdivide}
\end{eqnarray}

By a dimension counting, we deduce that
\begin{eqnarray}
d _Y := \dim \mathcal E _Y = \frac{1}{2} \mathrm{codim} _{\mathfrak N} G Y = \deg \mathsf{J} ( \mathfrak X _{{\bm \lambda}} ) = \deg \mathsf{D} ( \mu, \nu )\label{degcomp}
\end{eqnarray}
for each $Y \in \mathcal O _{\bm \lambda}$.

Applying the argument of \cite{CG} 6.5.3 and 7.4.1, we know that
$$H _{2 d _Y} ( \mathcal E _Y, \mathbb C ) \hookrightarrow H _{\bullet} ( \mathcal B, \mathbb C ) \hookrightarrow \mathbb C [ \mathfrak t ]$$
is some Macdonald representation. (The second inclusion is realized by the harmonic polynomials determined by $T$-equivariant fundamental classes.) By \cite{K1} Theorem 8.3 and \cite{CG} \S 8.9, this establishes a one-to-one correspondence between the set of Macdonald representations and the set of $G$-orbits of $\mathfrak N$.

By Lemma \ref{orbtoA}, we have $\overline{\mathfrak X _{\bm \lambda}} \cap V _2 = \overline{\mathfrak X _{\bm \lambda} ^{\sim}}$ for some $\mathfrak X _{\bm \lambda} ^{\sim} \in \mathrm{Comp} ( \mathcal O_{(\lambda, \mathbf 0 )})$. Hence, we have an equality
\begin{eqnarray}
( \prod _{i \le \left| \mu \right|} \epsilon _i ) \mathsf{J} ( \mathfrak X _{{\bm \lambda}} ) = \mathsf{J} ( \mathfrak X _{\bm \lambda} ^{\sim} ).\label{linkingL}
\end{eqnarray}
For $f \in \mathbb C [ \epsilon _1 ^2, \ldots, \epsilon _n ^2 ]$, it is standard to see
\begin{eqnarray}
( \epsilon _i - \epsilon _j )\text{ (resp. $\epsilon _i$) divides $f$ if and only if $( \epsilon _i + \epsilon _j )$ (resp. $\epsilon _i$) divides $f$}.\label{GAL}
\end{eqnarray}
By Corollary \ref{descEX}, if $\mathsf{J} ( \mathfrak X _{{\bm \lambda}} ) \in L ( \gamma, \emptyset )$ or $\mathsf{J} ( \mathfrak X _{{\bm \lambda}} ) \in L ( \emptyset, \gamma )$ for a partition $\gamma$, then we have
\begin{eqnarray}
\mathsf{J} ( \mathfrak X _{{\bm \lambda}} ) \in \mathbb C D ( \mu ^{\bm \lambda}, \nu ^{\bm \lambda} )\label{weakpa}
\end{eqnarray}
by a degree counting. 

We prove the condition (\ref{weakpa}) by induction on the usual partition $\lambda$ of $n$ constituting ${\bm \lambda} = ( \lambda, a )$.

\begin{claim}
If $\lambda = (1,1,\ldots,1)$, then (\ref{weakpa}) holds.
\end{claim}
\begin{proof}
We have either $a = \{ 0 \}$ or $\{ 1, 0, \cdots \}$. If $a = \{ 0 \}$, then the weight configuration of $\mathbb V ^+$ is the same as the positive roots of $\mathop{SO} ( 2n + 1 )$. It follows that $\mathsf{J} ( \mathfrak X _{{\bm \lambda}} ) = D ( \emptyset, \lambda )$, which implies $\mathsf{J} ( \mathfrak X _{{\bm \lambda}} ) \in L ( \emptyset, \lambda )$. If $a = \{ 1, 0, \cdots \}$, then we have $\mathsf{J} ( \mathfrak X _{{\bm \lambda}} ) = D ( \lambda, \emptyset ) \in L ( \lambda, \emptyset )$. This proves (\ref{weakpa}) in all cases.
\end{proof}

\begin{claim}\label{indL}
Assume that (\ref{weakpa}) holds for $\bm \lambda = ( \lambda, {\bm 0} )$. Then, we have (\ref{weakpa}) for all marked partition of shape $(\lambda, *)$.
\end{claim}

\begin{proof}
We have $L ( \mu ^{( \lambda, \mathbf 0 )}, \nu ^{( \lambda, \mathbf 0 )} ) \subset \mathbb C [ \mathfrak t ] ^{W _l} ( \epsilon _1 \cdots \epsilon _n )$. Hence, for every marked partition of shape $(\lambda, a ^{\prime})$, we have $\mathsf{J} ( \mathfrak X _{(\lambda, a ^{\prime})} ^{\sim} ) \in \mathbb C [ \mathfrak t ] ^{W _l} ( \epsilon _1 \cdots \epsilon _n )$. Hence, we deduce (\ref{weakpa}) for $(\lambda, a ^{\prime})$ from (\ref{sdivide}), (\ref{linkingL}), and (\ref{GAL}).
\end{proof}

We return to the proof of Theorem \ref{MacG}

Assume that we know (\ref{weakpa}) for all $\bm \lambda = ( \lambda, a )$ when $\deg D ( \mu ^{( \lambda, \mathbf 0 )}, \nu ^{( \lambda, \mathbf 0 )} ) > M$. We prove the case $\deg D ( \mu ^{( \lambda, \mathbf 0 )}, \nu ^{( \lambda, \mathbf 0 )} ) = M$. We a priori knows that $\{ L ( \mu, \nu ) \}_{(\mu, \nu)}$ gives a complete collection of $W ^{\vee}$. Therefore, the induction hypothesis implies that each $D ( \mu ^{\bm \lambda}, \nu ^{\bm \lambda} ) \in \mathbb C [ \mathfrak t ] _M$ which we have not yet proved (\ref{weakpa}) admits a factorization by $( \epsilon _1 \ldots \epsilon _n )$ and a $W _l$-invariant polynomial. Hence, we deduce (\ref{weakpa}) for $\bm \lambda = ( \lambda, \mathbf 0 )$ such that $\deg D ( \mu ^{( \lambda, \mathbf 0 )}, \nu ^{( \lambda, \mathbf 0 )} ) = M$ from (\ref{sdivide}) and (\ref{GAL}). Now Claim \ref{indL} proceeds the induction and we obtain (\ref{weakpa}) for every marked partition.

Since an equivariant Hilbert polynomial has rational coefficients, we deduce 
$$\mathsf{J} ( \mathfrak X _{{\bm \lambda}} ) \in \mathbb Q D ( \mu ^{\bm \lambda}, \nu ^{\bm \lambda} )$$
as desired.
\end{proof}

{\bf Acknowledgement:} The author would like to thank Michel Brion for pointing out a shortcut presented in \S \ref{nnt}. The author also like to thank Corrado De Concini, Andrea Maffei, Hisayosi Matumoto, Toshiaki Shoji, Tonny A. Springer, Toshiyuki Tanisaki for their interests and remarks. George Lusztig carried an attention to the paper \cite{X} to the author. The author want to express his gratitude to all of them. Part of this paper is written during his stay at MPIfM Bonn. The author thanks their hospitality and comfortable atmosphere. Finally, S.K. likes to express his thanks to Midori Kato for her constant support.

{\footnotesize
}

\begin{thebibliography}{NONAME}
\bibitem[Bou02]{B} N. Bourbaki, Lie groups and Lie algebras, Chapters 4--6, Elements of Mathematics, Springer-Verlag Berlin 2002 xii+300pp ISBN 0-8176-4191-2
\bibitem[BK04]{BK} M. Brion, and S. Kumar, Frobenius splitting methods in Geometry and Representation theory, PM {\bf 234} Birkh\"auser, 2004. viii+250 pp. ISBN 0-8176-4191-2
\bibitem[CG97]{CG} N. Chriss, and V. Ginzburg, Representation theory and complex geometry. Birkh\"auser Boston, Inc., Boston, MA, 1997. x+495 pp. ISBN 0-8176-3792-3
\bibitem[DK85]{DK} J. Dadok, and V. Kac, Polar representations. J. Algebra 92 (1985), no. 2, 504--524.
\bibitem[Gin97]{Gi} V. Ginzburg, Geometric methods in representation theory of Hecke algebras and quantum groups. Notes by Vladimir Baranovsky. NATO Adv. Sci. Inst. Ser. C Math. Phys. Sci., 514, Representation theories and algebraic geometry (Montreal, PQ, 1997), 127--183, Kluwer Acad. Publ., Dordrecht, 1998.
\bibitem[Hot84]{Hot} R. Hotta, On Joseph's construction of Weyl group representations, T\^ohoku Math. J. (2)  36  (1984),  no. 1, 49--74. 
\bibitem[Hes79]{Hes} W. H. Hesselink, Nilpotency in Classical Groups over a Field of Characteristic 2, Math. Zeit. 166, 165--181, (1979)
\bibitem[Jos83]{J} A. Joseph, On a variety associated to highest weight modules, J. Algebra 88 238--278, (1984).
\bibitem[Jos89]{J2} A. Joseph, On the characteristic polynomials of orbital varieties, Ann. Sci. \'Ecole Norm. Sup. (4) 22 no.4 569--603, (1989).
\bibitem[Jos97]{Jo} A. Joseph, Orbital varieties, Goldie rank polynomials, and unitary highest weight modules, in: Algebraic and analytic methods in representation theory, Perspectives in Math. {\bf 17} 53--99, Academic Press, 1997.
\bibitem[Kal06]{Kal} D. Kaledin, Symplectic singularities from the Poisson point of view, J. reine angew. Math. 600, 135--156 (2006). 
\bibitem[K09]{K1} S. Kato, An exotic Deligne-Langlands correspondence for symplectic groups, Duke Math. {\bf 148} no. 2, 305--371 (2009)
\bibitem[KL87]{KL2} D. Kazhdan and G. Lusztig, Proof of the Deligne-Langlands conjecture for Hecke algebras, Invent. Math. 87 (1987), no. 1, 153--215.
\bibitem[KP82]{KP3} H. Kraft, and C. Procesi, On the geometry of conjugary clases in classical groups, Comment. Math. Helv. 57 (1982), 539--602.
\bibitem[LS79]{LS} G. Lusztig, and N. Spaltenstein, Induced unipotent classes, J. London Math. Soc., {\bf 19}, (1979), 41--52.
\bibitem[Lus84]{L1} G. Lusztig, Intersection cohomology complexes on a reductive group, Invent. Math. 75 (1984), 205--272.
\bibitem[Lus88]{L2} G. Lusztig, Cuspidal local systems and graded Hecke algebras. I, Inst. Hautes \'Etudes Sci. Publ. Math., {\bf 67}, (1988), 145--202.
\bibitem[Oht86]{Oh} T. Ohta, The singularities of the closures of nilpotent orbits in certain symmetric pairs, T\^ohoku Math. J., 38 (1986), 441-468..
\bibitem[Spa77]{S77} N. Spaltenstein, On the fixed point set of a unipotent element on the variety of Borel subgroups, Topology. 16, 203--204 (1977).
\bibitem[Spa83]{Spa} N. Spaltenstein, Nilpotent Classes and Sheets of Lie algebras in Bad characteristic, Math. Zeit. 181 31--48, (1983).
\bibitem[Spr76]{Spr} T. A. Springer, Trigonometric sums, Green functions of finite groups and representation theory of Weyl groups, Invent. Math. 36 (1976) 173--207. 
\bibitem[Ste74]{Ste} R. Steinberg, Conjugacy classes in algebraic groups, LMN {\bf 366}, vi+159pp, Springer 1974.
\bibitem[Sum74]{Su} H. Sumihiro, Equivariant completion.  J. Math. Kyoto Univ.  14  (1974), 1--28.
\bibitem[Tan85]{T} T. Tanisaki, Holonomic systems on a flag variety associated to Harish-Chandra modules and representation of a Weyl group, In: Algebraic groups and related topics, ASPM {\bf 6}, 1985 139--154.
\bibitem[Xue08]{X} T. Xue, Nilpotent orbits in classical Lie algebras over $\textbf{F}_{2^n}$ and the Springer correspondence, Proceedings of National Academy of
Science, online edition Jan.17, 2008, arxiv:0709.4515.
\end{thebibliography}
\end{document}